\theoremstyle{plain}
\newtheorem{theorem}{Theorem}[section]
\theoremstyle{plain}
\theoremstyle{plain}
\newtheorem{lemma}[theorem]{Lemma}
\theoremstyle{remark}
\newtheorem{remark}[theorem]{Remark}
\theoremstyle{plain}
\newtheorem{proposition}[theorem]{Proposition}
\newtheorem{corollary}[theorem]{Corollary}
\numberwithin{equation}{section}
\theoremstyle{plain}
\newtcbox{\redbox}{colback=red!5!white,colframe=red!75!black, tcbox raise base}
\newcommand{%
	\begingroup
	\def\svgwidth{0.8\columnwidth}
	\import{./figures/}{.pdf_tex}
	\endgroup
}[2][0.8]{%
	\begingroup
	\def\svgwidth{#1\columnwidth}
	\import{./figures/}{#2.pdf_tex}
	\endgroup
}
\begin{document}
\title[Scalar curvature comparison]{Scalar curvature comparison of rotationally symmetric sets}

\author{Xiaoxiang Chai}
\address{Department of Mathematics, POSTECH, Pohang, Gyeongbuk, South Korea}
\email{xxchai@kias.re.kr, xxchai@postech.ac.kr}

\author{Gaoming Wang}
\address{Department of Mathematics, Cornell University, Ithaca, NY, 14853, and Yau Mathematical Sciences Center, Tsinghua University, Beijing, China}
\email{gmwang@mail.tsinghua.edu.cn}

\begin{abstract}
  Let $(M, g)$ be a compact 3-manifold with nonnegative scalar curvature $R_g
  \geq 0$. The boundary $\partial M$ is diffeomorphic to the boundary of
  a rotationally symmetric and weakly convex body $\bar{M}$ in $\mathbb{R}^3$.
  We call $(\bar{M}, \delta)$ a model or a reference. Let $H_{\partial M}$ and
  $\bar{H}_{\partial M}$ be respectively the mean curvatures of $\partial M$
  in $(M, g)$ and $\partial M$ in $(\bar{M}, \delta)$, $\sigma$ and
  $\bar{\sigma}$ be the induced metric from $g$ and $\delta$. We show that for
  some classes of $\partial M$, if $H_{\partial M} \geq \bar{H}_{\partial
  M}$, $\sigma \geq \bar{\sigma}$ and the dihedral angles at the nonsmooth
  part of $\partial M$ are no greater than the model, then $M$ is flat. We
  also generalize this result to the hyperbolic case and some spaces with
  $\mathbb{S}^1$-symmetry. Our approach is inspired by Gromov and uses stable capillary surface with varying prescribed contact angles.
\end{abstract}

\subjclass{53C42.}

\keywords{Stability, minimal, capillary, mean curvature, scalar curvature, rigidity, foliation.}

{\maketitle}

\section{Introduction}\label{introduction}




Let $(M^3, g)$ be a compact Riemannian 3-manifold with
nonnegative scalar curvature $R_g$. Assume its boundary $\partial M$ is diffeomorphic to some embedded surface in $\mathbb{R}^3$. Using this diffeomorphism, we can identify $\partial M$ with its image in $\mathbb{R}^3$ and we can also identify a tubular neighborhood of $\partial M$ with a tubular neighborhood of the image. Now let 
   $H_{\partial M}$ denote the mean curvature of $\partial M$ in $M$ computed with respect to $g$
   and
   $\bar{H}_{\partial M}$ denote the mean curvature  of $\partial M$ in $M$
   computed with respect to the Euclidean metric $\delta$, and let $\sigma$ and $\bar{\sigma}$ be the induced metric of $\partial M$ from $g$
and $\delta$, Gromov (see {\cite[Section 5.8.1]{gromov-four-2021}}; 
\text{{\itshape{spin}}}-\text{{\itshape{extremality}}} \text{{\itshape{of}}}
\text{{\itshape{doubly}}} \text{{\itshape{punctured}}}
\text{{\itshape{balls}}}) showed the following (not rigorously) for the
standard unit ball via stable capillary surface with varying prescribed contact angles (in his terminology, stable $\mu$-bubble).
\begin{theorem}
  \label{gromov}Assume that the boundary $\partial M$ of $(M^3, g)$ is diffeomorphic to a standard 2-sphere, and satisfies the comparisons of the mean curvature $H_{\partial M} \geq \bar{H}_{\partial M}$ and
of the induced metrics $\sigma \geq \bar{\sigma}$,
  then $(M, g)$ is flat.
\end{theorem}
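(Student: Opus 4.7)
My strategy is to foliate $M$ by stable capillary minimal disks whose configuration mirrors the planar cross-sections of the unit ball, and to extract rigidity on each leaf from the stability inequality combined with Gauss--Bonnet.

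The model $\bar M$ is sliced by flat disks meeting $\partial\bar M$ at contact angles $\theta\in(0,\pi)$, and these sweep out $\bar M$. In $(M,g)$, for each $\theta$ I would construct a stable capillary minimal disk $\Sigma_\theta$ with contact angle $\theta$ along $\partial M$, by minimizing the capillary area functional in the disk class. Both hypotheses enter here: $H_{\partial M}\geq\bar H_{\partial M}=2>0$ gives weak mean-convexity, preventing the minimizer from collapsing into $\partial M$, while $\sigma\geq\bar\sigma$ gives an intrinsic lower bound on the free-boundary length that rules out degeneration.

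On each leaf, the Gauss equation reads $K_{\Sigma_\theta}=\tfrac12 R_g-\mathrm{Ric}(\nu,\nu)-\tfrac12|A|^2$. Substituting this into Gauss--Bonnet $\int_{\Sigma_\theta}K_{\Sigma_\theta}+\int_{\partial\Sigma_\theta}\kappa_g^{\Sigma_\theta}=2\pi$ and combining with the stability inequality tested against $\varphi\equiv 1$ (whose boundary term involves $H_{\partial M}$ and $\cot\theta$), I expect an integrated inequality of the form
\[
\int_{\Sigma_\theta} R_g + \int_{\Sigma_\theta}|A|^2 + \int_{\partial\Sigma_\theta}\bigl(H_{\partial M}-\bar H_{\partial M}\bigr)f(\theta)\leq 0,
\]
with an explicit positive factor $f(\theta)$, where the comparison $\sigma\geq\bar\sigma$ controls the residual intrinsic contribution coming from Gauss--Bonnet applied to the model leaf. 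The hypotheses $R_g\geq 0$ and $H_{\partial M}\geq\bar H_{\partial M}$ then pinch every inequality to an equality: $R_g\equiv 0$ on $\Sigma_\theta$, $A\equiv 0$ (so $\Sigma_\theta$ is totally geodesic), and $\sigma=\bar\sigma$, $H_{\partial M}=\bar H_{\partial M}$ along $\partial\Sigma_\theta$.

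Varying $\theta$, the leaves $\Sigma_\theta$ sweep out $M$; strict stability, granted by the leafwise rigidity together with the strong maximum principle, lets the implicit function theorem upgrade this family to a smooth foliation of $M$ by totally geodesic disks, whose orthogonal trajectories are unit-speed geodesics. Matching boundary data with $(\bar M,\delta)$ then identifies $(M,g)$ with the Euclidean unit ball. The main obstacle I anticipate is the first step: producing a minimizer for every $\theta$ and promoting the resulting family to a genuine smooth foliation. Existence requires barrier arguments at the contact set on $\partial M$, and uniform strict stability across the family is particularly delicate as $\theta\to 0,\pi$, where the leaves shrink, and near the corner $\partial\Sigma_\theta\subset\partial M$, where regularity of capillary surfaces is subtle.
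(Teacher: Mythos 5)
Your plan is essentially the capillary strategy that this paper develops for its Theorem \ref{main}, but you should be aware that the paper does \emph{not} prove Theorem \ref{gromov} this way: it cites Gromov's spin-geometric argument (Dirac operators on doubly punctured balls) for the statement as written, and its own minimal-capillary machinery only recovers the unit-ball case under the \emph{additional} hypothesis $g\geq\delta$ at the two poles (Case \ref{case spherical} of Theorem \ref{main}); removing that hypothesis is explicitly listed as an unsolved problem in Section \ref{generalizations}. The obstruction sits exactly where you wave your hands at the end: near the poles the capillary leaves shrink to points, and one needs either a barrier or a genuine CMC capillary foliation emanating from the pole to anchor the minimization. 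The paper builds such a foliation by a delicate rescaled implicit-function-theorem argument (Propositions \ref{prop:solution}, \ref{prop:positiveCMC}, \ref{prop:CMCg0}), and that construction requires comparing $g$ with a constant model metric $g_0$ at the pole; without $g\geq\delta$ there the sign of the limiting mean curvature $\lim_{t\to 0}\lambda_t$ cannot be controlled and the barrier fails. So the step you flag as ``delicate'' is not a technicality — it is the reason your route does not close for this theorem.

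There is also a structural issue in how you set up the variational problem. You propose a one-parameter family of problems, each with a \emph{constant} prescribed contact angle $\theta$. The paper instead minimizes a single functional $I(E)=|\partial E\cap\operatorname{int}M|-\int_{\partial E\cap\partial M}\cos\bar\gamma$ in which $\bar\gamma$ is a prescribed \emph{function} on $\partial M$ (the angle the model's horizontal slices make with the sphere at each height). This matters: in the stability inequality tested with $\varphi\equiv 1$, the boundary term is $\tfrac{H_{\partial M}}{\sin\bar\gamma}+\tfrac{1}{\sin^2\bar\gamma}\partial_\eta\cos\bar\gamma-\kappa$, and it is precisely the combination of $\bar H_{\partial M}/\sin\bar\gamma$ with the derivative term $\partial_{\bar\eta}\bar\gamma$ that collapses to the planar geodesic curvature $\langle D_{e_1}e_1,-e_2\rangle$ of the horizontal circle (Lemma \ref{lemma lower bound over separating curve}), whose integral over any curve separating the poles is at least $2\pi$ (Lemma \ref{lem_curve_inte}). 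With a constant angle $\theta$ per leaf the derivative term is absent, the free boundary of your minimizer can land at any height, and your ``residual intrinsic contribution'' controlled by $\sigma\geq\bar\sigma$ is exactly the quantity you have not estimated; the inequality you ``expect'' is the content of the two lemmas above and does not come for free. Finally, note that the theorem's conclusion is only that $(M,g)$ is flat; the paper reaches this from a single infinitesimally rigid leaf via a local CMC foliation and an ODE for the mean curvature (Theorems \ref{foliation near infinitesimal} and \ref{H ode}), not from a global foliation of $M$, and your claim to identify $(M,g)$ with the unit ball outright overshoots what is proved.
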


For the sake of exposition, Theorem \ref{gromov} is actually only a simplified version of what Gromov have considered. On the other hand, Gromov have used regularity of stable capillary surface up to dimesion 8, which is actually known to hold in dimension 3 (see \cite{de-philippis-regularity-2015}). This explains the three dimensional restriction in Theorem \ref{gromov}. We also note a more general version \cite[Theorem 3.2]{wang-gromovs-2023} for strictly convex smooth Euclidean domains using spinorial approach.

Theorem \ref{gromov} is a natural boundary analog of Llarull's theorem \cite{llarull-sharp-1998} regarding the scalar curvature rigidity of the unit sphere. See some recent developments \cite{hu-rigidity-2022-arxiv}, \cite{hirsch-rigid-2022}. Also, the case when $\sigma=\bar{\sigma}$ in Theorem \ref{gromov} was proved by \cite{miao-positive-2002}, \cite{shi-positive-2002} via a reduction to Shoen-Yau's positive mass theorem \cite{schoen-proof-1979}. Miao and Shi-Tam allowed more general $\partial M$. See also analogs of their results in the hyperbolic space
{\cite{bonini-positive-2008,shi-rigidity-2007}}.

Results involving the scalar curvature rigidity of compact manifolds in close relation with Theorem \ref{gromov} dated back to the Schoen-Yau \cite{schoen-existence-1979} and Gromov-Lawson \cite{gromov-positive-1983} resolution of the Geroch conjecture. The Geroch conjecture states that an $n$-torus does not admit nonnegative scalar curvature metrics except the flat one (see also \cite{stern-scalar-2022-1}). Lohkamp \cite{lohkamp-scalar-1999} observed a closed relationship between the Geroch conjecture and the positive mass theorem. The relationship carries over to the convex polytopes and the scalar curvature rigidity of polytopes is called the Gromov dihedral rigidity conjecture \cite{gromov-dirac-2014}. The conjecture was originally formulated to find a weak notion of scalar curvature. Towards the Gromov dihedral rigidity conjecture, we have seen a lot of progress recently using various methods. For instance, stable minimal surface \cite{li-polyhedron-2020,li-dihedral-2020-1,li-dihedral-2020,chai-dihedral-2022-arxiv}, harmonic function method \cite{chai-scalar-2022,tsang-dihedral-2021-arxiv} and spinors \cite{wang-gromovs-2022-arxiv,brendle-scalar-2023,brendle-rigidity-2023,wang-dihedral-2023,wang-gromovs-2023}.

We view Theorem \ref{gromov} as a scalar curvature rigidity result of a compact
3-manifold bounded by a standard 2-sphere, and it is natural to ask the same
questions for 3-manifolds bounded by other surfaces. We consider compact 3-manifolds
which are bounded by a weakly convex surface that is also rotationally
symmetric with respect to the $x^3$-coordinate axis in $\mathbb{R}^3$. Without
loss of generality, we assume that the surface $\partial M$ lies between the
two coordinate planes
\[ P_{\pm} = \{x \in \mathbb{R}^3 : x^3 = t_\pm \} \]
and $\partial M \cap P_{\pm}$ are nonempty. Here $t_-<t_+$. For convenience, we take $t_+=0$.
We fix the points $p_{\pm} = (0,
0, t_\pm )$ and we call $p_+$ ($p_-$) north (south) pole. We call $\partial M
\cap \{x^3 = s\}$ a boundary $s$-level set.

We have three cases depending on the geometry of $\partial M \cap P_{\pm}$:

1. The set $\partial M \cap P_{\pm}$ is a disk;

2. The set $\partial M \cap P_{\pm}$ contains only $p_{\pm}$ and $\partial M$
is conical at $p_{\pm}$;

3. The set $\partial M \cap P_{\pm}$ contains only $p_{\pm}$ and $\partial M$
is at least $C^1$ at $p_{\pm}$.

If $\partial M$ is smooth at $p_\pm$, we say $\partial M$ is \textit{spherical} at $p_{\pm}$ if there exist a smooth function $\phi(t)$ near $0$ with $\phi(0)=0$ and $\phi'(0)>0$ such that $\partial M$ can be written as a graph of function $\phi(x_1^2+x_2^2)$ near $p_\pm $. This obviously includes the important case when $M$ is a standard $3$-sphere.

\begin{figure}[ht]
    \centering
	\begingroup
	\def\svgwidth{0.8\columnwidth}
\begingroup%
  \makeatletter%
  \providecommand\color[2][]{%
    \errmessage{(Inkscape) Color is used for the text in Inkscape, but the package 'color.sty' is not loaded}%
    \renewcommand\color[2][]{}%
  }%
  \providecommand\transparent[1]{%
    \errmessage{(Inkscape) Transparency is used (non-zero) for the text in Inkscape, but the package 'transparent.sty' is not loaded}%
    \renewcommand\transparent[1]{}%
  }%
  \providecommand\rotatebox[2]{#2}%
  \newcommand*\fsize{\dimexpr\f@size pt\relax}%
  \newcommand*\lineheight[1]{\fontsize{\fsize}{#1\fsize}\selectfont}%
  \ifx\svgwidth\undefined%
    \setlength{\unitlength}{584.51500119bp}%
    \ifx\svgscale\undefined%
      \relax%
    \else%
      \setlength{\unitlength}{\unitlength * \real{\svgscale}}%
    \fi%
  \else%
    \setlength{\unitlength}{\svgwidth}%
  \fi%
  \global\let\svgwidth\undefined%
  \global\let\svgscale\undefined%
  \makeatother%
  \begin{picture}(1,0.38234235)%
    \lineheight{1}%
    \setlength\tabcolsep{0pt}%
    \put(0,0){\includegraphics[width=\unitlength,page=1]{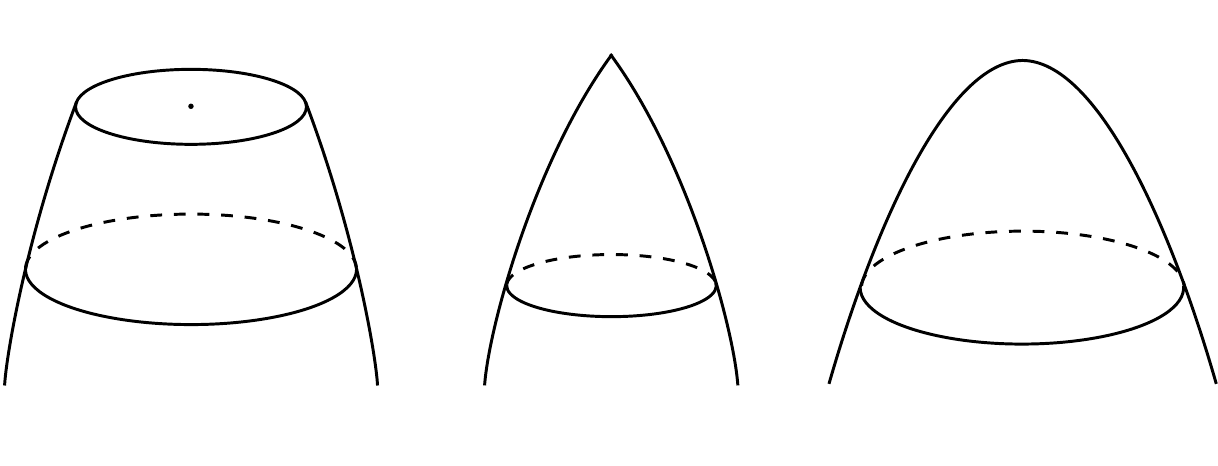}}%
    \put(0.14671637,0.34338091){\color[rgb]{0,0,0}\makebox(0,0)[lt]{\lineheight{0}\smash{\begin{tabular}[t]{l}$p_+$\end{tabular}}}}%
    \put(0.49059121,0.3536459){\color[rgb]{0,0,0}\makebox(0,0)[lt]{\lineheight{0}\smash{\begin{tabular}[t]{l}$p_+$\end{tabular}}}}%
    \put(0.82676736,0.3536459){\color[rgb]{0,0,0}\makebox(0,0)[lt]{\lineheight{0}\smash{\begin{tabular}[t]{l}$p_+$\end{tabular}}}}%
    \put(0.10601586,0.01062753){\color[rgb]{0,0,0}\makebox(0,0)[lt]{\lineheight{0}\smash{\begin{tabular}[t]{l}Case 1\end{tabular}}}}%
    \put(0.46785429,0.01062753){\color[rgb]{0,0,0}\makebox(0,0)[lt]{\lineheight{0}\smash{\begin{tabular}[t]{l}Case 2\end{tabular}}}}%
    \put(0.80916278,0.01062753){\color[rgb]{0,0,0}\makebox(0,0)[lt]{\lineheight{0}\smash{\begin{tabular}[t]{l}Case 3\end{tabular}}}}%
    \put(0,0){\includegraphics[width=\unitlength,page=2]{various-types-of-p.pdf}}%
  \end{picture}%
\endgroup%

	\endgroup

    \caption{Various types of $p_+$.}
    \label{fig:various-types-of-p}
\end{figure}
It might happen that at $p_+$ and $p_-$ have different geometries.

We establish the following scalar curvature rigidity.

\begin{theorem}
  \label{main}Let $(M^3, g)$ be a compact 3-manifold with nonnegative scalar
  curvature such that its boundary $\partial M$ is diffeomorphic to a weakly
  convex rotationally symmetric surface in $\mathbb{R}^3$. The boundary
  $\partial M$ bounds a region $\bar{M}$ (which we call a model or a
  reference) in $\mathbb{R}^3$, let the induced metric of the flat metric be
  $\bar{\sigma}$ and the induced metric of $g$ on $\partial M$ be $\sigma$. We
  assume that $\sigma \geq \bar{\sigma}$ and $H_{\partial M} \geq
  \bar{H}_{\partial M}$ on $\partial M \cap \{x \in \mathbb{R}^3 : t_- < x^3 <t_+\}$ and assume $\partial M$ satisfies one of the following three conditions near $p_\pm$.
  \begin{enumerate}
    \item \label{case slab}$\partial M \cap P_{\pm}$ is a disk. For this case, we further
    assume that $H_{\partial M} \geq 0$ at $\partial M \cap P_{\pm}$ and
    the dihedral angles forming by $P_{\pm}$ and $\partial M\backslash (P_+
    \cup P_-)$ are no greater than the Euclidean reference.
    
    \item \label{case conical}$\partial M$ is conical at $p_{\pm}$.
	\item \label{case spherical} $\partial M$ is spherical at $p_\pm $.
  \end{enumerate}
  Then $(M, g)$ is flat.
\end{theorem}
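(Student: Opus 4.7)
The plan is to follow Gromov's strategy of constructing a capillary minimal foliation of $M$ whose leaves mirror the planar horizontal slices $\bar\Sigma_s := \bar M \cap \{x^3 = s\}$ of the reference. Each $\bar\Sigma_s$ is a flat disk, totally geodesic and of zero mean curvature in $\bar M$, meeting $\partial\bar M$ at a contact angle $\bar\theta(s)$ determined by the rotationally symmetric profile. The first goal is to produce, for every $s$ in the open slab $(t_-,t_+)$, a capillary minimal surface $\Sigma_s \subset M$ with $\partial\Sigma_s \subset \partial M$ meeting $\partial M$ at prescribed contact angle $\bar\theta(s)$ along a curve isotopic in $\partial M$ to $\partial\bar\Sigma_s$ under the boundary identification.

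For fixed $s$, such a $\Sigma_s$ is obtained by minimizing the capillary area functional
\[\mathcal{F}(\Omega) = \mathrm{Per}_M(\Omega) - \cos\bar\theta(s)\,\mathrm{Area}_g(\partial M \cap \Omega)\]
over Caccioppoli sets $\Omega \subset M$ in the prescribed isotopy class. A minimizer is automatically capillary minimal, with interior regularity and boundary regularity away from possible corners of $\partial M$. Evaluating its stability on the constant test function and combining with the Gauss equation and Gauss--Bonnet on the disk $\Sigma_s$ yields an integral inequality of the form
\[\tfrac12\int_{\Sigma_s}\!R_g\,dA + \tfrac12\int_{\Sigma_s}\!|A_{\Sigma_s}|^2\,dA + B(\Sigma_s, H_{\partial M}, \sigma, \bar\theta(s)) \leq 2\pi\chi(\Sigma_s),\]
where $B$ is a boundary integral along $\partial\Sigma_s$ mixing its geodesic curvature in $\Sigma_s$ with the capillary contact-angle operator involving $H_{\partial M}$. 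The analogous inequality for the reference leaf $\bar\Sigma_s$ holds with equality. Comparing the two and invoking $R_g\geq 0$, $H_{\partial M}\geq \bar H_{\partial M}$ on $\partial M$, and $\sigma\geq\bar\sigma$ along $\partial M$ (the latter dominating the geodesic curvature of $\partial\Sigma_s$ in $\Sigma_s$ by its reference counterpart) forces each term to have the favourable sign and hence, by the reverse inequality from stability, to vanish. Consequently $\Sigma_s$ is totally geodesic, $R_g \equiv 0$ on $\Sigma_s$, and the boundary comparisons are saturated along $\partial\Sigma_s$.

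Varying $s$, the implicit function theorem promotes the individual leaves to a smooth capillary minimal foliation of the slab portion of $M$. Since each leaf is totally geodesic and the lapse function of the normal flow is constant (from the Jacobi equation together with equality in the stability above), the normal flow from a fixed leaf is an isometry onto the corresponding reference region. Hence $(M,g)$ is isometric to $(\bar M,\delta)$ on the open slab $\{t_- < x^3 < t_+\}$.

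The remaining task is to extend this to the poles $p_\pm$, and this is where the three cases diverge and the principal obstacle lies. In case~(1), the flat cap disks $\partial M \cap P_\pm$ with $H\geq 0$ serve as one-sided barriers for the capillary foliation, and the dihedral-angle comparison at the edge prevents boundary detachment as $s \to t_\pm$, with closure following from a dihedral rigidity argument in the spirit of the polytope results cited in the introduction. In case~(2), the conical tip at $p_\pm$ supplies a Lipschitz barrier so that $\Sigma_s$ shrinks smoothly to the cone tip. In case~(3), the assumption $g\geq\delta$ at the spherical pole yields the pointwise control on the Jacobi fields of the foliation required to close it smoothly at $p_\pm$. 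The crux of the argument is thus the existence, regularity, and non-degeneracy of $\Sigma_s$ up to a corner (case~(1)) or up to a shrinking point (cases~(2) and~(3)), which is exactly where each of the three distinct boundary hypotheses near $p_\pm$ is indispensable.
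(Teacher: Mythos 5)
Your overall strategy (capillary minimal surfaces, stability plus Gauss--Bonnet, promotion to a foliation, rigidity) is the right one and matches the paper in spirit, but two of your steps have genuine gaps. First, the minimization setup: you propose, for each height $s$, to minimize a functional with the \emph{constant} prescribed angle $\bar\theta(s)$ over sets in a fixed isotopy class. Isotopy classes are not closed under the convergences used for Caccioppoli sets, so this is not a well-posed minimization; more importantly, nothing forces the minimizer's boundary curve to stay near the level $\{x^3=s\}$, so your comparison of $\Sigma_s$ with the reference leaf $\bar\Sigma_s$ ``at the same height'' is unjustified. The paper resolves this by prescribing the contact angle as a \emph{function} $\cos\bar\gamma$ on all of $\partial M$ (read off from the reference), minimizing a single functional, and then proving the key inequality $\int_{\partial\Sigma}\bigl(\tfrac{H_{\partial M}}{\sin\bar\gamma}-\tfrac{1}{\sin\bar\gamma}\partial_\eta\bar\gamma\bigr)\,\mathrm{d}\lambda\geq 2\pi$ for an \emph{arbitrary} rectifiable separating curve (Lemmas \ref{lemma lower bound over separating curve} and \ref{lem_curve_inte}), which is where the hypotheses $\sigma\geq\bar\sigma$ and weak convexity actually enter. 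Relatedly, producing the foliation by running independent minimizations for each $s$ does not guarantee the resulting surfaces foliate anything; the paper instead builds the foliation locally around one infinitesimally rigid leaf by the implicit function theorem and controls the leaves' mean curvature by an ODE (Theorems \ref{foliation near infinitesimal} and \ref{H ode}).

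Second, in Cases \ref{case conical} and \ref{case spherical} you assert that the conical tip ``supplies a Lipschitz barrier'' and that $g\geq\delta$ gives ``pointwise control on the Jacobi fields,'' but a cone point or a pointwise metric inequality is not, by itself, a barrier for a capillary minimal surface: one needs an actual surface near $p_\pm$ with nonnegative mean curvature meeting $\partial M$ at angles no smaller than $\bar\gamma$, and constructing it is the bulk of the paper (Sections \ref{conical case} and \ref{sec:spherical}). This requires a CMC capillary foliation near the pole whose leaves are shown to have nonnegative mean curvature via a singular ODE of the form $\lambda'+\bigl(2t^{-1}+O(1)\bigr)\lambda\geq 0$ together with an integral identity (in the spirit of Miao--Piubello) identifying $\lim_{t\to0}\lambda_t$ with an averaged mean-curvature defect; and when $g\neq\delta$ at the pole a separate, direct barrier construction (a tilted plane, justified by the cone-comparison Proposition \ref{prop_coneG0}, or an ellipsoidal cap) is needed. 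None of this is present in your outline, so the reduction of Cases \ref{case conical} and \ref{case spherical} to the slab case remains unproved.
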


The model when $\bar{M}$ is a standard unit ball (Theorem \ref{gromov}) is a special case of our third case.
Another interesting model that motivates our main theorem is the right circular cone whose Gauss-Bonnet type inequalities were discussed by Gromov (see {\cite[Section 5.9]{gromov-four-2021}}). We have the following scalar curvature rigidity.
\begin{corollary}
Assume that the boundary $\partial M$ of a compact 3-manifold $(M^3, g)$ with nonnegative scalar curvature is diffeomorphic to a right circular cone in $\mathbb{R}^3$, and satisfies the comparisons of the mean curvature $H_{\partial M} \geq \bar{H}_{\partial M}$, the dihedral angles forming by base face and side face are no greater than the model in the Euclidean space, and
of the induced metrics $\sigma \geq \bar{\sigma}$,
  then $(M, g)$ is flat.    
\end{corollary}

Gromov proposed the use of
a minimal capillary surface to Theorem \ref{gromov}.
We use Gromov's argument and adapt it to
handle various geometries of $\partial M \cap P_{\pm}$. When $\partial M \cap
P_{\pm}$ are disks, we know by a maximum principle they are natural barriers for the existence
of minimal capillary surfaces.  

As noted before, the case that $\partial M$ is spherical at either $p_{\pm}$ includes Theorem \ref{gromov} as a special case. However, Gromov did not have a strategy on how to construct a nontrivial minimizing surface  (to the functional \eqref{action}). The case when $\partial M$ at $p_\pm$ are conical at both $p_\pm$ has the same issue. It turns out that the presence of a spherical or a conical point at $\pm$ poses essential difficulty. By constructing a constant mean curvature foliation near the
$p_{\pm}$ if $\partial M$ is conical or spherical and showing that any leaf of the foliation would serve as a barrier, we are able to reduce to the case when $\partial M\cap P_\pm$ are both disks. This is the major constribution of our work and the construction has the potential to apply to other problems.
In particular, this leads to a rigorous proof of Theorem \ref{gromov} of Gromov which provides an approach different from \cite[Theorem 3.2]{wang-gromovs-2023}.

Using similar procedures, we are also able to generalize to the hyperbolic case using the upper half
space model and $\mathbb{S}^1$-invariant space forms with nonpositive curvature.
We remark that Gromov also discussed the case when $\bar{M}$ is a geodesic
ball in the hyperbolic space (see {\cite[Section 3.5]{gromov-four-2021}};
\text{{\itshape{on}}} \text{{\itshape{non-spin}}} \text{{\itshape{manifolds}}}
\text{{\itshape{and}}} \text{{\itshape{on}}} $\sigma < 0$).

\

The article is organized as follows:

In Section \ref{sec variational problem}, we introduce the variational problem
whose minimiser is vital to our proof. In Section \ref{slab case}, we give the
proof of Case \ref{case slab} of Theorem \ref{main} by studying the minimiser
of the variational problem introduced in Section \ref{sec variational
problem}. In Section \ref{conical case}, we give a proof of the Case \ref{case
conical} of Theorem \ref{main} by constructing a constant mean curvature
foliation near the conical point. In Section \ref{sec:spherical}, we prove the Case \ref{case spherical} of Theorem \ref{main} by similar construction of a constant mean curvature foliation near the spherical point. In Section \ref{generalizations}, we discuss
generalizations to the hyperbolic space (Theorem \ref{hyperbolic}) and spaces
with a circle factor (Theorem \ref{circle factor}). We also discuss unsolved
cases.

\section*{{\textbf{Acknowledgements}}}

Part of this work was carried out when X. Chai was a member of Korea Institute for Advanced Study under the individual grant MG074402. Currently, X. Chai is partially supported by National Research Foundation of Korea grant No. 2022R1C1C1013511.
G. Wang would like to thank to Prof. Xin Zhou for his support and encouragement.

\section{The variational problem}\label{sec variational problem}

In this section, we introduce the functional \eqref{action} and study its variational properties.
\subsection{Notations}We set up some notations, in particular, various labeling
of the normal vectors. Let $E \subset M$ such that $\partial E \cap M$ is a regular surface with
boundary, we name it $\Sigma$. We set
\begin{itemize}
  \item
  $N$: unit normal vector of $\Sigma$ in $M$ pointing inside $E$;
  
  \item $\nu$: unit normal vector of $\partial \Sigma$ in $\Sigma$ pointing
  outside of $\Sigma$;
  
  \item $\eta$: unit normal vector of $\partial \Sigma$ in $\partial M$
  pointing outside of $\partial E \cap \partial M$;
  
  \item $X$: unit normal vector of $\partial M$ in $M$ pointing outside of
  $M$
  \item $\gamma$: the contact angle formed by $\Sigma$ and $\partial M$, and the magnitude of the angle is given by $\cos \gamma = \langle X, N \rangle$.
  \item $\left<X,Y\right>=\left<X,Y\right>_g$: the inner product of vector $X$ and $Y$ under metric $g$.
  \item $X\cdot Y=\left<X,Y\right>_\delta$: the inner product of vector $X$ and $Y$ under Euclidean metric $\delta$.
\end{itemize}
See Figure \ref{fig:naming-of-various-vectors}.
We use a bar on every quantity to denote that the quantity is computed with respect to the flat metric except $\bar{\gamma}$ which is defined by \eqref{angle def}.

\subsection{The functional and its stability}

Let $\bar{\gamma} \in [0, \pi]$ be the contact angle formed by the coordinate
planes $\{x^3 = s\}$ ($t_- < s < t_+$) with the boundary $\partial M$ under the flat metric,
that is,
\begin{equation}
  \cos \bar{\gamma} = \langle \bar{X}, \tfrac{\partial}{\partial x^3}
  \rangle_{\delta} . \label{angle def}
\end{equation}
Obviously, $\cos \bar{\gamma}$ is a function on $\partial M$ to $\mathbb{R}$.

\begin{figure}[ht]
    \centering
	\begingroup
	\def\svgwidth{0.5\columnwidth}
	\import{./figures/}{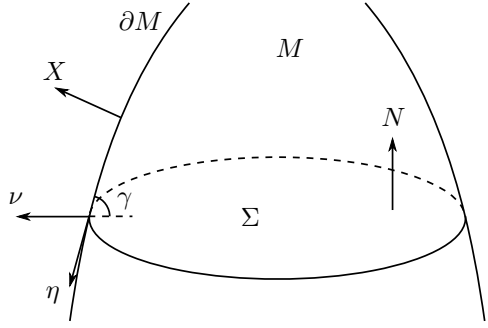}
	\endgroup

    \caption{Naming of various vectors}
    \label{fig:naming-of-various-vectors}
\end{figure}

Consider the following functional
\begin{equation}
  I (E) = | \partial E \cap \ensuremath{\operatorname{int}}M| -
  \int_{\partial E \cap \partial M} \cos \bar{\gamma} \label{action}
\end{equation}
and the variational problem
\begin{equation}
  \mathcal{I}= \inf \{I (E) : E \in \mathcal{E}\}, \label{infimum}
\end{equation}
where $\mathcal{E}$ is the collection of contractible open subsets $E'$ such
that $\partial M \cap P_+ \subset E'$ and $\partial M \cap P_- \subset E'$. For an interesting construction of capillary surface with prescribed mean curvature via the min-max approach, see \cite{zhou-existence-2020,li-min-max-capillary-2021-arxiv}. If
the solution to \eqref{infimum} is regular, then $\Sigma := \partial E \cap
\ensuremath{\operatorname{int}}M$ is minimal and meets $\partial M$ with the
prescribed angles $\bar{\gamma}$.

Let $\Sigma$ be a surface with boundary $\partial \Sigma \subset \partial M$
away from $P_{\pm}$. Then $\partial \Sigma$ separates $\partial M$ into two
components and the component closer to $P_+$ is just $E$. We can reformulate
the functional \eqref{action} in terms of $\Sigma$. We define
\begin{equation}
  F (\Sigma) : = I (E) = | \Sigma | - \int_{\Omega} \cos \bar{\gamma} ,
  \label{action in terms of surface}  
\end{equation}
where $\Omega = \partial E \cap \partial M$. 
Let $\phi_t$ be a family of diffeomorphisms $\phi_t : \Sigma \to M$ such that
$\phi_t (\partial \Sigma) \subset \partial M$ and $\phi_0 (\Sigma) = \Sigma$.
Let $\Sigma_t$ be $\phi_t (\Sigma)$ and $E_t$ be the corresponding component
separated by $\Sigma_t$. Let $Y$ be the vector $\tfrac{\partial
\phi_t}{\partial t}$. Define $\mathcal{A} (t)$ by
\begin{equation}
  \mathcal{A} (t) = F (\Sigma_t) .
\end{equation}
Letting $f = \langle Y, N \rangle$, by first variation formula,
\begin{equation}
  \mathcal{A}' (0) = \int_{\Sigma} H f + \int_{\partial \Sigma} \langle Y, \nu
  - \eta \cos \bar{\gamma} \rangle . \label{first variation}
\end{equation}
We know that $\Sigma$ is a critical point of \eqref{action in terms of
surface} if and only if $H \equiv 0$ on $\Sigma$ and $\nu - \eta \cos
\bar{\gamma}$ is normal to $\partial M$ (that is, the angle formed by the
vectors $N$ and $X$ is $\bar{\gamma}$). We call $\Sigma$ a
\text{{\itshape{minimal}}} \text{{\itshape{capillary}}} surface if $\Sigma$ is
a critical point of \eqref{action in terms of surface}.

Assume that $\Sigma$ is minimal capillary, we have the second variation
formula
\begin{equation}
  \mathcal{A}'' (0) = Q (f, f) := - \int_{\Sigma} (f \Delta f + (|A|^2
  +\ensuremath{\operatorname{Ric}}(N)) f^2) + \int_{\partial \Sigma} f
  (\tfrac{\partial f}{\partial \nu} - q f), \label{stability}
\end{equation}
where $q$ is defined to be
\begin{equation}
  q = \tfrac{1}{\sin \bar{\gamma}} A_{\partial M} (\eta, \eta) - \cot
  \bar{\gamma} A (\nu, \nu) + \tfrac{1}{\sin^2 \bar{\gamma}} \partial_{\eta}
  \cos \bar{\gamma} . \label{q}
\end{equation}
We call $\Sigma$ \text{{\itshape{stable}}} \text{{\itshape{minimal}}}
\text{{\itshape{capillary}}} if $\mathcal{A}'' (0) \geq 0$ for all $f \in
C^{\infty} (\Sigma)$. Define
\begin{equation}
  L f = - \Delta f - (\ensuremath{\operatorname{Ric}}(N) + |A|^2) f,
\end{equation}
and
\begin{equation}
  B f = \tfrac{\partial f}{\partial \nu} - q f.
\end{equation}
We call $L$ the \text{{\itshape{stability}}} (or Jacobi)
\text{{\itshape{operator}}} and $B$ the \text{{\itshape{boundary}}}
\text{{\itshape{stability}}} \text{{\itshape{operator}}}.

\begin{proof}[Proof of \eqref{q}]
  Let $Y$ be a vector field tangent to $\partial M$. The key part of the computation really reduces to the calculations of the variations of the mean curvature $\delta_Y H$ and the variations of the angle difference $\delta_Y (\langle X, N \rangle - \cos \bar{\gamma})$. The first variation of the mean curvature is given by
  \begin{equation}
    \delta_Y H = \nabla_{Y^{\top}} H - \Delta  f -
    (\ensuremath{\operatorname{Ric}}(N) + |A|^2) f, 
  \end{equation}
  where $Y^\top$ is the component of $Y$ tangential to $M$.
   Let $Y^{\partial M}$ be the projection of $Y$ to $\partial M$. We have
  \[ \delta_Y (\langle X, N \rangle - \cos \bar{\gamma}) = - \sin \bar{\gamma}
     \tfrac{\partial f}{\partial \nu} + \sin \bar{\gamma} q f +
     \nabla_{Y^{\partial M}} (\langle X, N \rangle - \cos \bar{\gamma}) . \]
  Indeed, we have $Y-Y^{\partial M} = - \tfrac{f}{\sin
  \bar{\gamma}} \bar{\nu}$. The term
  \begin{equation}
    \delta_Y \langle X, N \rangle = - \sin \bar{\gamma} \tfrac{\partial
    f}{\partial \nu} + [A_{\partial M} (\eta, \eta) - \cos \bar{\gamma} A
    (\nu, \nu)] f + \nabla_{Y^{\partial M}} \langle X, N \rangle \label{angle first
    variation}
  \end{equation}
  is already computed in {\cite[Appendix]{ros-stability-1997}}, we only have
  to compute $\delta_Y \cos \bar{\gamma}$. 
  We have
  \begin{equation}
    \delta_Y \cos \bar{\gamma} = - \tfrac{f}{\sin \bar{\gamma}}
    \partial_{\eta} \cos \bar{\gamma} + \nabla_{Y^{\partial M}} \cos \bar{\gamma} .
    \label{background angle first variation}
  \end{equation}
    So we have obtained the expression of our $q$.
\end{proof}

\subsection{Rewrite of the second variation}
We rewrite the second variation. Using Schoen-Yau's rewrite (essentially Gauss equation, see
{\cite{schoen-proof-1979}}),
\begin{equation}
  |A|^2 +\ensuremath{\operatorname{Ric}} (\nu) = \tfrac{1}{2} R_g - K +
  \tfrac{1}{2} |A|^2 + \tfrac{1}{2} H^2 \label{sy}
\end{equation}
where $K$ is the Gauss curvature of $\Sigma$. We also have a boundary analog of this rewrite as follows. See 
\cite[Lemma 3.1]{ros-stability-1997} and {\cite[(4.13)]{li-polyhedron-2020}}.

\begin{lemma}
	\label{lem:IIandMean}
  Along the boundary $\partial \Sigma$, we have that
  \begin{equation}
    \tfrac{1}{\sin \bar{\gamma}} A_{\partial M} (\eta, \eta) - \cot
    \bar{\gamma} A (\nu, \nu) = - H \cot \bar{\gamma} + \tfrac{H_{\partial
    M}}{\sin \bar{\gamma}} - \kappa \label{boundary sy},
  \end{equation}
  where $\kappa$ is the geodesic curvature of $\partial \Sigma$ in $\Sigma$.
\end{lemma}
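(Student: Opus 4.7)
The plan is to read off \eqref{boundary sy} from an orthogonal decomposition of $\nabla_\tau\tau$ along the curve $\partial\Sigma$, where $\tau$ denotes a unit tangent vector. The 2-plane normal to $\tau$ inside $TM$ carries two natural orthonormal bases: $\{N,\nu\}$, adapted to the Gauss splitting for $\Sigma\hookrightarrow M$, and $\{X,\eta\}$, adapted to that for $\partial M\hookrightarrow M$. Since $\langle X,N\rangle=\cos\bar\gamma$ and $\nu,\eta$ are both outer conormals of $\partial\Sigma$, these two bases are related by the planar rotation
\begin{equation*}
  X=\cos\bar\gamma\,N+\sin\bar\gamma\,\nu,\qquad \eta=-\sin\bar\gamma\,N+\cos\bar\gamma\,\nu.
\end{equation*}

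Next I compute the components of $\nabla_\tau\tau$ in each of these frames. Differentiating $\langle\tau,N\rangle=0$ along $\partial\Sigma$ and applying compatibility of the connection yields $\langle\nabla_\tau\tau,N\rangle=-\langle\tau,\nabla_\tau N\rangle=-A(\tau,\tau)=A(\nu,\nu)-H$, where the last equality uses the trace identity $A(\tau,\tau)+A(\nu,\nu)=H$. The analogous argument with $X$ in place of $N$ gives $\langle\nabla_\tau\tau,X\rangle=A_{\partial M}(\eta,\eta)-H_{\partial M}$. Finally, $\langle\nabla_\tau\tau,\nu\rangle=-\kappa$ by the definition of the signed geodesic curvature of $\partial\Sigma$ in $\Sigma$ with respect to the outer conormal $\nu$.

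Feeding these into the scalar identity $\langle\nabla_\tau\tau,X\rangle=\cos\bar\gamma\,\langle\nabla_\tau\tau,N\rangle+\sin\bar\gamma\,\langle\nabla_\tau\tau,\nu\rangle$, which comes from the rotation formula, I obtain
\begin{equation*}
  A_{\partial M}(\eta,\eta)-H_{\partial M}=\cos\bar\gamma\,(A(\nu,\nu)-H)-\sin\bar\gamma\,\kappa,
\end{equation*}
and \eqref{boundary sy} follows after dividing by $\sin\bar\gamma$ and rearranging. The main hazard here is sign bookkeeping: the orientation of the rotation between the two frames, the sign convention for the second fundamental form in the relation $\langle\nabla_\tau\tau,N\rangle=\pm A(\tau,\tau)$, and the sign of $\kappa$ must all be pinned down consistently. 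As a sanity check, specializing to $\bar\gamma=\pi/2$ collapses the identity to the well-known free-boundary relation $A_{\partial M}(\eta,\eta)=H_{\partial M}-\kappa$, and the equatorial disk of the unit ball (where $H_{\partial M}=2$, $A_{\partial M}(\eta,\eta)=1$, $\kappa=1$) confirms that both sides agree.
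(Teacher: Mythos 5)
Your proof is correct and follows essentially the same route as the paper's: both decompose $\nabla_T T$ against the rotated frames $\{N,\nu\}$ and $\{X,\eta\}$ via $X=\cos\bar\gamma\,N+\sin\bar\gamma\,\nu$, and then invoke the trace identities $A(T,T)+A(\nu,\nu)=H$ and $A_{\partial M}(T,T)+A_{\partial M}(\eta,\eta)=H_{\partial M}$ together with the definition of $\kappa$. The sign conventions you fix are consistent with the paper's, so no further changes are needed.
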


\begin{proof}
  We show by direct calculation. Since $\Sigma$ is of mean curvature $H$, so
  $A (\nu, \nu) = H - A (T, T)$. So
\begin{align}
& - \cos \bar{\gamma} A (\nu, \nu) + \kappa \sin \bar{\gamma} \\
= & - H \cos \bar{\gamma} + \cos \bar{\gamma} A (T, T) + \langle \nabla_T
\nu, T \rangle \sin \bar{\gamma} \\
= & - H \cos \bar{\gamma} - \langle \nabla_T T, \sin \bar{\gamma} \nu +
\cos \bar{\gamma} N \rangle \\
= & - H \cos \bar{\gamma} - \langle \nabla_T T, X \rangle \\
= & - H \cos \bar{\gamma} + A_{\partial M} (T, T) .
\end{align}
  Since $T$ and $\eta$ form an orthonormal basis of $\partial M$, we have that
  \begin{equation}
    A_{\partial M} (T, T) = H_{\partial M} - A_{\partial M} (\eta, \eta)
  \end{equation}
  and hence
  \begin{equation}
    - \cos \bar{\gamma} A (\nu, \nu) + \kappa \sin \bar{\gamma} = - H \cos
    \bar{\gamma} + H_{\partial M} - A_{\partial M} (\eta, \eta) .
  \end{equation}
  Dividing both sides by $\sin \bar{\gamma}$ finishes the proof of the lemma.
\end{proof}

\section{Weakly convex sets between coordinate planes}\label{slab case}

In this section, we deal with Case \ref{case slab} of Theorem \ref{main}, that is, both $\partial M\cap P_\pm$ are disks. This
is the easiest case since the boundary $\partial M \cap P_{\pm}$ provide
natural barriers for the minimal capillary surface. Provided the existence of a
minimiser to \eqref{action}, it is therefore quite direct to obtain the rigidity
via constructing a CMC capillary foliation near the minimiser.

\subsection{Minimiser}

Since $H_{\partial M} \geq 0$ at $\partial M \cap P_{\pm}$ and the
dihedral angles forming by $P_{\pm}$ and $\partial M\backslash (P_+ \cup P_-)$
are no greater than the Euclidean reference, there exists a minimiser $E$ to
the problem \eqref{action}. By the classical interior maximum principle and
{\cite[Proposition 2.2]{li-polyhedron-2020}}, $\partial E$ either is either
$P_+$, $P_-$ or lies away from $P_{\pm}$.

Let $\Sigma = \partial E \cap \ensuremath{\operatorname{int}}M$, we know that
$\Sigma$ is a stable minimal capillary surface. By {\cite[Theorem
1.3]{de-philippis-regularity-2015}}, $\Sigma$ is free of singularities when
the dimension of $M$ is 3. Taking $f$ to be 1 in \eqref{stability}, and using
\eqref{sy} and \eqref{boundary sy}, we have that
\begin{equation}
  \int_{\Sigma} K + \int_{\partial \Sigma} \kappa \geq \int_{\partial
  \Sigma} \tfrac{H_{\partial M}}{\sin \bar{\gamma}} + \tfrac{1}{\sin^2
  \bar{\gamma}} \partial_{\eta} \cos \bar{\gamma} + \tfrac{1}{2} \int_{\Sigma}
  R_g + |A|^2 . \label{taking constant}
\end{equation}
Using the bounds $R_g + |A|^2 \geq 0$, $H_{\partial M} \geq
\bar{H}_{\partial M}$ and the Gauss-Bonnet theorem,
\begin{equation}
  2 \pi \chi (\Sigma) \geq \int_{\partial \Sigma} \left(
  \tfrac{H_{\partial M}}{\sin \bar{\gamma}} + \tfrac{1}{\sin^2 \bar{\gamma}}
  \partial_{\eta} \cos \bar{\gamma} \right) \mathrm{d} \lambda . \label{after
  gauss-bonnet}
\end{equation}
In the above, we have made the line element $\mathrm{d} \lambda$ under the
metric $g$ explicit, because we have to deal with $\mathrm{d}
\bar{\lambda}$ under the flat metric as well.

\subsection{Comparison of $\sigma$ and $\bar{\sigma}$}

We show the left-hand side of \eqref{after gauss-bonnet} has a favorable lower
bound.

\begin{lemma}
  \label{lemma lower bound over separating curve}If $\partial \Sigma$ is any
  closed curve separating $\partial M \cap P_{\pm}$, then
  \begin{equation}
    \int_{\partial \Sigma} \left( \frac{H_{\partial M}}{\sin \bar{\gamma}} -
    \frac{1}{\sin \bar{\gamma}}  \frac{\partial \bar{\gamma}}{\partial \eta}
    \right) \mathrm{d} \lambda \geq 2 \pi . \label{lower bound for
    separating curve}
  \end{equation}
\end{lemma}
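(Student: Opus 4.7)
The strategy is to establish a pointwise lower bound $(H_{\partial M}-\partial_\eta\bar\gamma)/\sin\bar\gamma \geq 1/r$ for the integrand, and then to show $\int_{\partial\Sigma}(1/r)\,\mathrm{d}\lambda \geq 2\pi$ by a winding-number argument, since $\partial\Sigma$ must wind at least once about the $x^{3}$-axis in order to separate $\partial M\cap P_{+}$ from $\partial M\cap P_{-}$.

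First I would rewrite $\partial_\eta\bar\gamma$ in terms of the Euclidean second fundamental form $\bar A_{\partial M}$ of $\partial M\subset\mathbb{R}^{3}$. Differentiating $\cos\bar\gamma=\langle\bar X,\partial_{x^{3}}\rangle_\delta$ along $\eta$, using that $\partial_{x^{3}}$ is Euclidean parallel and that $\bar\nabla_{\eta}\bar X$ is tangent to $\partial M$, one gets
\[
\partial_\eta\cos\bar\gamma = \langle\bar\nabla_\eta\bar X,\partial_{x^{3}}\rangle = \bar A_{\partial M}\bigl(\eta,(\partial_{x^{3}})^{\top}\bigr) = \sin\bar\gamma\,\bar A_{\partial M}(\eta,n),
\]
where $n:=(\partial_{x^{3}})^{\top}/\sin\bar\gamma$ is the unit tangent vector of $\partial M$ in the upward meridian direction; hence $\partial_\eta\bar\gamma=-\bar A_{\partial M}(\eta,n)$.

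Next I exploit the rotational symmetry. Parametrizing the meridian of $\partial M$ by arclength as $(r(s),t(s))$, the flat induced metric is $\bar\sigma=\mathrm{d}s^{2}+r(s)^{2}\,\mathrm{d}\theta^{2}$, $\sin\bar\gamma=t'$, $n=\partial_{s}$, and $\bar A_{\partial M}$ is diagonal in the orthonormal frame $\{\partial_{s},(1/r)\partial_\theta\}$ with principal curvatures $\bar\kappa_{m}$ and $\bar\kappa_{p}=t'/r$, both $\geq 0$ by the weak convexity of $\bar M$. Decomposing $\eta=\tilde b\,\partial_{s}+\tilde a\,(1/r)\partial_\theta$, the hypothesis $\sigma\geq\bar\sigma$ yields $\tilde a^{2}+\tilde b^{2}=|\eta|_{\bar\sigma}^{2}\leq|\eta|_{\sigma}^{2}=1$, so $1+\tilde b\geq 0$ and $\bar A_{\partial M}(\eta,n)=\tilde b\,\bar\kappa_{m}$. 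Using $H_{\partial M}\geq\bar H_{\partial M}=\bar\kappa_{m}+\bar\kappa_{p}$ then gives the pointwise bound
\[
\frac{H_{\partial M}}{\sin\bar\gamma}-\frac{1}{\sin\bar\gamma}\frac{\partial\bar\gamma}{\partial\eta}
=\frac{H_{\partial M}+\tilde b\,\bar\kappa_{m}}{\sin\bar\gamma}
\geq\frac{(1+\tilde b)\bar\kappa_{m}+\bar\kappa_{p}}{\sin\bar\gamma}
\geq\frac{\bar\kappa_{p}}{\sin\bar\gamma}=\frac{1}{r}.
\]

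Finally, since $\sigma\geq\bar\sigma$ we have $\mathrm{d}\lambda\geq\mathrm{d}\bar\lambda$ along $\partial\Sigma$, while the angular $1$-form $\mathrm{d}\theta$ on $\partial M$ away from the rotation axis has $|\mathrm{d}\theta|_{\bar\sigma}=1/r$. Therefore
\[
\int_{\partial\Sigma}\frac{1}{r}\,\mathrm{d}\lambda
\geq\int_{\partial\Sigma}\frac{1}{r}\,\mathrm{d}\bar\lambda
\geq\left|\int_{\partial\Sigma}\mathrm{d}\theta\right|
=2\pi\,|W|\geq 2\pi,
\]
where $W\in\mathbb{Z}$ is the winding number of $\partial\Sigma$ about the $x^{3}$-axis; since $\partial\Sigma$ separates $\partial M\cap P_{+}$ from $\partial M\cap P_{-}$ on the topological sphere $\partial M$, one has $|W|\geq 1$. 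The main obstacle is engineering the pointwise bound: the identity $\partial_\eta\bar\gamma=-\bar A_{\partial M}(\eta,n)$ combined with the symmetry diagonalization is exactly what makes the sign-indefinite term $\tilde b\,\bar\kappa_{m}$ be absorbed by the $\bar\kappa_{m}$ contribution of $\bar H_{\partial M}$, once $1+\tilde b\geq 0$ is extracted from $\sigma\geq\bar\sigma$; after that, the winding bound and $\mathrm{d}\lambda\geq\mathrm{d}\bar\lambda$ close the argument.
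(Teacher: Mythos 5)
Your proof is correct and follows essentially the same route as the paper's: both reduce the integrand pointwise to the geodesic curvature $1/r$ of the latitude circle using weak convexity together with $|\eta|_{\bar\sigma}\leq|\eta|_\sigma=1$, and then conclude with the winding-number bound (the paper's Lemma \ref{lem_curve_inte}) and $\mathrm{d}\lambda\geq\mathrm{d}\bar\lambda$. The only cosmetic difference is that you absorb the meridian component of $\eta$ directly via the principal-curvature decomposition $\bar H_{\partial M}=\bar\kappa_m+\bar\kappa_p$, whereas the paper first replaces $\partial_\eta\bar\gamma$ by $\partial_{\bar\eta}\bar\gamma$ and then verifies the identity $\bar H_{\partial M}/\sin\bar\gamma-\partial_{\bar\eta}\bar\gamma/\sin\bar\gamma=\langle D_{e_1}e_1,-e_2\rangle$ by an explicit frame computation.
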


\begin{proof}
  First, since $\bar{\gamma} \in (0, \pi)$, we have
  \[ \tfrac{H_{\partial M}}{\sin \bar{\gamma}} \geq
     \tfrac{\bar{H}_{\partial M}}{\sin \bar{\gamma}} . \]
  Let $\ell_s$ be the curve $\partial M \cap \{x^3 = s\}$ and $\bar{\eta}$
  unit normal of $\ell_s$ in $\partial M$ pointing in the direction of $-
  \tfrac{\partial}{\partial x^3}$. Since $M$ is rotationally symmetric,
  $\ell_s$ is a circle and $\bar{\gamma}$ is constant on $\ell_s$, so if
  $\tfrac{\partial \bar{\gamma}}{\partial \bar{\eta}} = 0$, we obviously have
  $\tfrac{\partial \bar{\gamma}}{\partial \eta} = \tfrac{\partial
  \bar{\gamma}}{\partial \bar{\eta}} = 0$. If $\tfrac{\partial
  \bar{\gamma}}{\partial \bar{\eta}} \neq 0$, then by weak convexity, we have
  that $\tfrac{\partial \bar{\gamma}}{\partial \bar{\eta}} > 0$ and the
  gradient of $\bar{\gamma}$ under $\bar{\sigma}$ is parallel to $\bar{\eta}$.
  Since
  \[ | \eta |_{\bar{\sigma}} \leq | \eta |_{\sigma} = 1, \]
  so
  \begin{equation}
    \tfrac{\partial \bar{\gamma}}{\partial \bar{\eta}} \geq
    \tfrac{\partial \bar{\gamma}}{\partial \eta} \label{derivative of angle comparison}
  \end{equation}
  and
  \begin{equation}
    \int_{\partial \Sigma} \left( \frac{H_{\partial M}}{\sin \bar{\gamma}} -
    \frac{1}{\sin \bar{\gamma}}  \frac{\partial \bar{\gamma}}{\partial \eta}
    \right) \mathrm{d} \lambda \geq \int_{\partial \Sigma} \left(
    \frac{\bar{H}_{\partial M}}{\sin \bar{\gamma}} - \frac{1}{\sin
    \bar{\gamma}}  \frac{\partial \bar{\gamma}}{\partial \bar{\eta}} \right)
    \mathrm{d} \lambda . \label{flatten integrand}
  \end{equation}
  Now we work with the flat metric. Let $e_1$ be the unit tangent vector field
  of $\ell_s$, $e_2$ the unit normal vector field of $\ell_s$ pointing outside
  of the region bounded by $\ell_s$ in the plane $\{x^3 = s\}$ and $e_3$ be
  the third positive coordinate direction. Now we can calculate explicitly
  $\bar{X} = \cos \bar{\gamma} e_3 + \sin \bar{\gamma} e_2$, normal vector
  field of $\partial B$, $\bar{\eta} = - \sin \bar{\gamma} e_3 + \cos
  \bar{\gamma} e_2$. We use $D$ to denote the covariant derivative in
  $\mathbb{R}^3$. So
\begin{align}
\bar{H}_{\partial M} = & \langle D_{e_1} e_1, - \bar{X} \rangle + \langle
D_{\bar{\eta}}  \bar{\eta}, - \bar{X} \rangle \\
= & \sin \bar{\gamma} \langle D_{e_1} e_1, - e_2 \rangle + \cos
\bar{\gamma} \langle D_{e_1} e_1, - e_3 \rangle + \langle D_{\bar{\eta}}
\bar{\eta}, - \bar{X} \rangle \\
= & \sin \bar{\gamma} \langle D_{e_1} e_1, - e_2 \rangle + \langle
D_{\bar{\eta}}  \bar{\eta}, - \bar{X} \rangle,
\end{align}
  where $\langle D_{e_1} e_1, - e_3 \rangle$ vanishes because $\ell_s$ is
  planar. Meanwhile, by \eqref{angle def},
  
  \begin{align*}
    - \langle D_{\bar{\eta}}  \bar{\eta}, \bar{X} \rangle - \frac{\partial
    \bar{\gamma}}{\partial \bar{\eta}} = & \langle D_{\bar{\eta}}  \bar{X},
    \bar{\eta} \rangle - \frac{\partial \bar{\gamma}}{\partial \bar{\eta}}\\
    = & \langle D_{\bar{\eta}}  \bar{X}, \bar{\eta} \rangle + \frac{1}{\sin
    \bar{\gamma}}  \frac{\partial}{\partial \bar{\eta}} \langle \bar{X}, e_3
    \rangle\\
    = & \frac{1}{\sin \bar{\gamma}}  \langle D_{\bar{\eta}}  \bar{X}, - \sin^2
    \bar{\gamma} e_3 + \sin \bar{\gamma} \cos \bar{\gamma} e_2 + e_3
    \rangle\\
    = & \frac{1}{\sin \bar{\gamma}}  \langle D_{\bar{\eta}}  \bar{X}, \cos
    \bar{\gamma}  \bar{X} \rangle\\
    = & \cot \bar{\gamma} \langle D_{\bar{\eta}}  \bar{X}, \bar{X} \rangle\\
    = & 0.
  \end{align*}
  
  Hence, we have
  \begin{equation}
    \frac{\bar{H}_{\partial M}}{\sin \bar{\gamma}} - \frac{1}{\sin
    \bar{\gamma}}  \frac{\partial \bar{\gamma}}{\partial \bar{\eta}} = \langle
    D_{e_1} e_1, - e_2 \rangle . \label{euclidean pointwise relation}
  \end{equation}
  The right-hand side is the geodesic curvature of the circle $\ell_s$ on the
  plane $\{x^3 =s\}$ and so it is positive. Considering $\mathrm{d} \lambda
  \geq \mathrm{d} \bar{\lambda}$ in \eqref{flatten integrand}, we have
\begin{align}
& \int_{\partial \Sigma} \left( \frac{H_{\partial M}}{\sin \bar{\gamma}}
- \frac{1}{\sin \bar{\gamma}}  \frac{\partial \bar{\gamma}}{\partial \eta}
\right) \mathrm{d} \lambda \\
\geq & \int_{\partial \Sigma} \langle D_{e_1} e_1, - e_2
\rangle_{\delta} \mathrm{d} \lambda \\
\geq & \int_{\partial \Sigma} \langle D_{e_1} e_1, - e_2
\rangle_{\delta} \mathrm{d} \bar{\lambda} .
\end{align}
  We conclude our proof by invoking the following lemma.
\end{proof}

\begin{remark}\label{angle comparison}
  We have a different proof of \eqref{lower bound for separating curve} in
  Section \ref{generalizations} where we show the hyperbolic analog \eqref{hyperbolic decomposition}.
\end{remark}

\begin{lemma}
  \label{lem_curve_inte}For any rectifiable curve $\ell$ separating $p_{\pm}$
  ($P_{\pm}$ if we are in Case \ref{case slab} of Theorem \ref{main})
  \[ \int_{\ell} \langle D_{e_1} e_1, - e_2 \rangle_{\delta} \mathrm{d}
     \bar{\lambda} \geq 2 \pi . \label{lower bound of integral of
     geodesic curvature} \]
  and equality holds if and only if $\ell$ is at the same height of $\partial
  M$.
\end{lemma}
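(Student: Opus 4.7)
My plan begins by unpacking the integrand pointwise on $\partial M$. The vector fields $e_1$ and $e_2$ are defined at each point $p \in \partial M$ (off the axis) as the unit tangent and outward unit normal of the level circle $\ell_{x^3(p)}$ inside the plane $\{x^3 = x^3(p)\}$. Differentiating along this planar circle of radius $r(p)$ gives $D_{e_1}e_1 = -e_2/r$, so the integrand is simply $\langle D_{e_1}e_1, -e_2\rangle_\delta = 1/r$, where $r$ denotes the distance to the $x^3$-axis. Thus the lemma reduces to proving
\begin{equation}
\int_\ell \frac{1}{r}\,d\bar{\lambda} \;\geq\; 2\pi.
\end{equation}

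To get this, I parametrize $\ell$ by Euclidean arclength and work in cylindrical coordinates $(r,\theta,x^3)$. Since $\ell$ is rectifiable, the tangent exists almost everywhere, and the identity $\dot r^2 + r^2\dot\theta^2 + (\dot{x}^3)^2 = 1$ holds a.e.; in particular $r|\dot\theta|\leq 1$ and hence $r^{-1} \geq |\dot\theta|$. This yields
\begin{equation}
\int_\ell \frac{1}{r}\,d\bar{\lambda} \;\geq\; \int_\ell |\dot\theta|\,d\bar{\lambda} \;\geq\; \Bigl|\int_\ell d\theta\Bigr| \;=\; 2\pi|n|,
\end{equation}
where $n$ is the winding number of $\ell$ about the $x^3$-axis, understood via the closed $1$-form $d\theta$ on $\mathbb{R}^3\setminus\{r=0\}$.

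The step I anticipate being the main obstacle is the topological claim $|n|\geq 1$. The plan here is to exploit that $\partial M\setminus\{p_+,p_-\}$ (respectively $\partial M\setminus(P_+\cup P_-)$ in Case \ref{case slab}) is an open topological annulus whose first homology is generated by any latitude $\ell_s$ with winding number $\pm 1$; the separation hypothesis says exactly that $[\ell]$ is a nonzero element there, so $n\neq 0$. Formulating this via the integral $\int_\ell d\theta$, rather than in terms of smooth homotopies, handles the rectifiable case. Finally, for the equality case each of the three inequalities must be saturated: $r|\dot\theta|\equiv 1$ a.e.\ forces $\dot r = \dot{x}^3 = 0$, so $r$ and $x^3$ are constant along $\ell$; $\dot\theta$ must have constant sign; and $|n|=1$. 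Combined, these conditions force $\ell$ to be a single traversal of a level circle $\ell_{s_0}\subset \partial M$, which is the asserted equality configuration.
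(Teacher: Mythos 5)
Your proof is correct and follows essentially the same route as the paper's: both reduce the integrand to $1/r$, bound $\tfrac{1}{r}\,\mathrm{d}\bar{\lambda} \geq |\mathrm{d}\theta|$ via the arclength identity in cylindrical coordinates, and conclude by integrating $\mathrm{d}\theta$ over a full revolution. You are in fact more explicit than the paper about why the angular integral is at least $2\pi$ (the paper simply writes $\int \dot{\theta}\,\mathrm{d}t = 2\pi$); just note that for a non-embedded separating loop the winding number can vanish (e.g.\ a ``theta-shaped'' trace traversed with cancelling orientations), so the airtight version of this step is that if the total angular variation were less than $2\pi$ then some meridian $\{\theta=\theta_0\}$ would join $p_+$ to $p_-$ without meeting $\ell$, contradicting separation.
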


\begin{proof}
  It suffices to prove for such $\ell$ which is $C^1$. Since every height of
  $\partial M$ is a circle, so we parameterize $\partial M$ by
  \[ (r (s) \cos \theta, r (s) \sin \theta, s) \in \mathbb{R}^3, \text{ } s
     \in [- 1, 1], \theta \in \mathbb{S}^1 . \]
  Then $\langle D_{e_1} e_1, - e_2 \rangle = \tfrac{1}{r (s)}$ if the third
  coordinate is $s$. We assume that the curve $\ell$ is parameterized by $t
  \in [0, t_0]$ and we write $\ell$ as
  \[ (r (s (t)) \cos \theta (t), r (s (t)) \sin \theta (t), s (t)) . \]
  The tangent vector
  \[ \partial_t = \tfrac{\mathrm{d} s}{\mathrm{d} t} (r' \cos \theta, r' \sin
     \theta, 1) + \tfrac{\mathrm{d} \theta}{\mathrm{d} t} (- r \sin \theta, r
     \cos \theta, 0), \]
  with length
  \[ | \partial_t |^2 = | \tfrac{\mathrm{d} s}{\mathrm{d} t} |^2 (1 + (r')^2)
     + (\tfrac{\mathrm{d} \theta}{\mathrm{d} t})^2 r^2 . \]
  The line element is $\mathrm{d} \bar{\lambda} \geq | \tfrac{\mathrm{d}
  \theta}{\mathrm{d} t} | r$. Then
  \[ \int_{\ell} \langle D_{e_1} e_1, - e_2 \rangle_{\delta} \geq
     \int_0^{t_0} | \tfrac{\mathrm{d} \theta}{\mathrm{d} t} | \mathrm{d} t
     \geq \int_0^{t_0} \tfrac{\mathrm{d} \theta}{\mathrm{d} t} \mathrm{d}
     t = \int_{\mathbb{S}^1} \mathrm{d} \theta = 2 \pi . \]
  By tracing back the proof, we see that equality holds if and only if
  $\ell$ is at the same height of $\partial M$.
\end{proof}

\subsection{Infinitesimally rigid surface}

Since $\Sigma$ has at least one boundary component, so $\chi (\Sigma)
\leq 1$ and by Lemma \ref{lemma lower bound over separating curve},
\begin{align}
2 \pi & \geq 2 \pi \chi (\Sigma) \\
& \geq \int_{\partial \Sigma} \left( \frac{H_{\partial M}}{\sin
\bar{\gamma}} - \frac{1}{\sin \bar{\gamma}}  \frac{\partial
\bar{\gamma}}{\partial \eta} \right) \mathrm{d} \lambda \\
& \geq \int_{\partial \Sigma} \left( \frac{\bar{H}_{\partial M}}{\sin
\bar{\gamma}} - \frac{1}{\sin \bar{\gamma}}  \frac{\partial
\bar{\gamma}}{\partial \bar{\eta}} \right) \mathrm{d} \bar{\lambda}
\\
& \geq 2 \pi .
\end{align}
All inequalities are equalities, by tracing the proof we find that
\begin{equation}
  R_g = |A| = 0 \text{ in } \Sigma \label{infinitesimal interior}
\end{equation}
and
\begin{equation}
  H_{\partial M} = \bar{H}_{\partial M}, \text{ } \langle X, N \rangle = \cos
  \bar{\gamma}, \sigma = \bar{\sigma} \text{ along } \partial \Sigma
  \label{infinitesimal boundary}
\end{equation}
for some constant angle $\bar{\gamma} \in (0, \pi)$. Because $\Sigma$ is
stable, then the eigenvalue problem
\begin{equation}
  \left\{\begin{array}{ll}
    L f & = \mu f \text{ in } \Sigma\\
    B f & = 0 \text{ on } \partial \Sigma
  \end{array}\right. \label{eigen problem}
\end{equation}
has a nonnegative first eigenvalue $\mu_1 \geq 0$. The analysis now is
similar to {\cite{fischer-colbrie-structure-1980}}. Letting $f \equiv 1$ in
\eqref{stability}, and using both \eqref{sy}, \eqref{boundary sy},
\eqref{infinitesimal interior} and \eqref{infinitesimal boundary} we have
\begin{equation}
  Q (1, 1) = \int_{\Sigma} K + \int_{\partial \Sigma} \kappa - \int_{\partial
  \Sigma} (\tfrac{H_{\partial M}}{\sin \bar{\gamma}} - \tfrac{1}{\sin
  \bar{\gamma}} \tfrac{\partial \bar{\gamma}}{\partial \eta}) = 0.
  \label{constant in bilinear form}
\end{equation}
So $\mu_1 = 0$ and the constant 1 is its corresponding eigenfunction. By
\eqref{infinitesimal interior} and \eqref{sy}, the stability operator reduces
to $L = - \Delta + K$; by \eqref{boundary sy} and \eqref{infinitesimal
boundary}, the boundary stability operator reduces to $B = \partial_{\nu} -
(\tfrac{H_{\partial M}}{\sin \bar{\gamma}} - \tfrac{1}{\sin \bar{\gamma}}
\tfrac{\partial \bar{\gamma}}{\partial \eta} - \kappa)$. So
\begin{equation}
  K = 0 \text{ in } \Sigma, \text{ } \kappa = \tfrac{H_{\partial M}}{\sin
  \bar{\gamma}} - \tfrac{1}{\sin \bar{\gamma}} \tfrac{\partial
  \bar{\gamma}}{\partial \eta} . \label{vanishing gauss and constant geodesic}
\end{equation}
We call $\Sigma$ satisfying \eqref{infinitesimal interior},
\eqref{infinitesimal boundary} and \eqref{vanishing gauss and constant
geodesic} an \text{{\itshape{infinitesimally}}} \text{{\itshape{rigid}}}
\text{{\itshape{surface}}}.

\begin{remark}
  It is direct to see that
  \begin{equation}
    \kappa = \langle D_{e_1} e_1, - e_2 \rangle_{\delta}
  \end{equation}
  and $\Sigma$ has to be a flat disk of radius $\langle D_{e_1} e_1, - e_2
  \rangle_{\delta}^{- 1}$.
\end{remark}

\subsection{CMC capillary foliation}\label{construction of CMC foliation} We now construct a CMC foliation with prescribed contact angles near an infinitesimally rigid surface. See for instance the works {\cite{ye-foliation-1991}},
{\cite{bray-rigidity-2010}} and {\cite{ambrozio-rigidity-2015}} on
constructing CMC foliations.

Let $\phi (x, t)$ be a local
flow of a vector field $Y$ which is tangent to $\partial M$ and transverse to
$\Sigma$ and that $\langle Y, N \rangle = 1$. In the following theorem, we
only require that the scalar curvature of $(M, g)$ and the mean curvature of
$\partial M$ are bounded below.

\begin{theorem}
  \label{foliation near infinitesimal}Suppose $(M, g)$ is a three-manifold
  with boundary, if $\Sigma$ is an infinitesimally rigid surface, then there
  exists $\varepsilon > 0$ and a function $w (x, t)$ on $\Sigma \times (-
  \varepsilon, \varepsilon)$ such that for each $t \in (- \varepsilon,
  \varepsilon)$, the surface
  \begin{equation}
    \Sigma_t = \{\phi (x, w (x, t)) : x \in \Sigma\}
  \end{equation}
  is a constant mean curvature surface intersecting $\partial M$ with
  prescribed angle $\bar{\gamma}$. Moreover, for every $x \in \Sigma$ and
  every $t \in (- \varepsilon, \varepsilon)$,
  \begin{equation}
    w (x, 0) = 0, \text{ } \int_{\Sigma} (w (x, t) - t) = 0 \text{ and }
    \tfrac{\partial}{\partial t} w (x, t) |_{t = 0} = 1.
  \end{equation}
\end{theorem}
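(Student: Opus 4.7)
The plan is to apply the implicit function theorem on Hölder spaces, in the spirit of the works of Ye, Bray, and Ambrozio cited just before the theorem. For a small $u \in C^{2,\alpha}(\Sigma)$, set $\Sigma_u = \{\phi(x, u(x)) : x \in \Sigma\}$ and denote by $H(u) \in C^{0,\alpha}(\Sigma)$ and $\cos \theta(u) \in C^{1,\alpha}(\partial \Sigma)$ the mean curvature of $\Sigma_u$ and the cosine of its contact angle with $\partial M$, both pulled back to $\Sigma$ by the parameterization $\phi$. Treat the prescribed mean curvature value $h$ as a separate unknown and consider
\[
F : C^{2,\alpha}(\Sigma) \times \mathbb{R} \times \mathbb{R} \to C^{0,\alpha}(\Sigma) \times C^{1,\alpha}(\partial \Sigma) \times \mathbb{R},
\]
\[
F(u, h, t) = \Bigl( H(u) - h, \; \cos \theta(u) - \cos \bar{\gamma}, \; \tfrac{1}{|\Sigma|} \int_\Sigma u - t \Bigr).
\]
Since $\Sigma$ is minimal capillary with angle $\bar{\gamma}$, one has $F(0, 0, 0) = 0$, and a zero of $F$ at parameter $t$ corresponds to a CMC capillary surface whose average normal displacement from $\Sigma$ equals $t$.

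The partial derivative at the origin is
\[
D_{(u,h)} F(0, 0, 0)[v, k] = \Bigl( L v - k, \; - \sin \bar{\gamma} \cdot B v, \; \tfrac{1}{|\Sigma|} \int_\Sigma v \Bigr),
\]
using the first-variation computations from the proof of \eqref{q}. The crucial point is that infinitesimal rigidity collapses these operators: the analysis following \eqref{constant in bilinear form} shows $\mu_1 = 0$ with constant first eigenfunction, so $L(1) = 0$ and $B(1) = 0$; combined with \eqref{infinitesimal interior}--\eqref{infinitesimal boundary} this forces $L = - \Delta$ and $B = \partial_\nu$. The kernel is then trivial: $- \Delta v = k$, $\partial_\nu v = 0$, $\int_\Sigma v = 0$ gives $k = 0$ by Green's identity, $v$ constant because it is harmonic with Neumann data, and finally $v = 0$. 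Surjectivity follows from the Fredholm alternative for the Neumann Laplacian: given a target $(f, g, c)$, one chooses $k$ so that the compatibility $\int_\Sigma (f + k) = - \int_{\partial \Sigma} g / \sin \bar{\gamma}$ holds, solves the resulting Neumann problem modulo additive constants, and fixes the constant by $\tfrac{1}{|\Sigma|} \int_\Sigma v = c$.

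With $D_{(u,h)} F$ an isomorphism, the implicit function theorem yields $\varepsilon > 0$ and smooth curves $t \mapsto (u(\cdot, t), h(t))$ on $(- \varepsilon, \varepsilon)$ with $u(\cdot, 0) = 0$, $h(0) = 0$, and $F(u(\cdot, t), h(t), t) = 0$. Setting $w(x, t) = u(x, t)$, the identities $w(x, 0) = 0$ and $\int_\Sigma (w - t) = 0$ are built into $F = 0$, and $\Sigma_t = \Sigma_{w(\cdot, t)}$ is CMC of mean curvature $h(t)$ with contact angle $\bar{\gamma}$. Differentiating $F = 0$ in $t$ at $t = 0$ yields the linear system $L(\partial_t w) = \partial_t h$, $B(\partial_t w) = 0$, $\tfrac{1}{|\Sigma|} \int_\Sigma \partial_t w = 1$, which is solved by $(\partial_t w, \partial_t h) = (1, 0)$ because $L(1) = B(1) = 0$; uniqueness from the kernel analysis gives $\partial_t w |_{t = 0} = 1$.

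The main technical step is the reduction of the Jacobi operator $L$ to $- \Delta$ and of the boundary stability operator $B$ to $\partial_\nu$ on an infinitesimally rigid surface; once this is in hand the rest is standard Fredholm theory for the Neumann Laplacian plus the implicit function theorem in Hölder spaces. The tangency and transversality conditions on $Y$ imposed before the theorem statement ensure that $\partial \Sigma_u \subset \partial M$ and that $\Sigma_u$ is a smooth normal graph for all sufficiently small $u$, so that $H(u)$ and $\cos \theta(u)$ depend smoothly on $u$ in the relevant Hölder spaces.
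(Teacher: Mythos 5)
Your proof is correct and follows essentially the same route as the paper: both use infinitesimal rigidity to collapse the linearized operators to $L=-\Delta$ and $B=\partial_\nu$, identify the linearization with the Neumann Laplacian (an isomorphism on the appropriate H\"older spaces), and conclude via the implicit function theorem, recovering $\partial_t w|_{t=0}=1$ by differentiating the resulting identity. The only difference is bookkeeping: you carry the CMC value $h$ and the constraint $\tfrac{1}{|\Sigma|}\int_\Sigma u = t$ as an extra unknown and equation, whereas the paper restricts to mean-zero perturbations via the ansatz $w=t+u$ and projects the mean curvature onto mean-zero functions; the two setups are equivalent.
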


\begin{proof}
  Given a function in the H{\"o}lder space $C^{2, \alpha} (\Sigma)$ ($0 <
  \alpha < 1$), we consider
  \[ \Sigma_u = \{\phi (x, u (x)) : x \in \Sigma\}, \]
  which is a properly embedded surface if the norm of $u$ is small enough. We
  use the subscript $u$ to denote the quantities associated with $\Sigma_u$.
  
  Consider the space
  \[ \mathcal{Y}= \{u \in C^{2, \alpha} (\Sigma) \cap C^{1,\alpha}(\bar{\Sigma}): \int_{\Sigma} u = 0\} \]
  and
  \[ \mathcal{Z}= \{u \in C^{0, \alpha} (\Sigma) : \int_{\Sigma} u = 0\} . \]
  Given small $\delta > 0$ and $\varepsilon > 0$, we define the map
  \[ \Phi : (- \varepsilon, \varepsilon) \times B (0, \delta) \to \mathcal{Z}
     \times C^{1, \alpha} (\partial \Sigma) \]
  given by
  \begin{equation}
    \Phi (t, u) = \left( H_{t + u} - \tfrac{1}{| \Sigma |} \int_{\Sigma} H_{t
    + u}, \langle X_{t + u}, N_{t + u} \rangle - \cos \bar{\gamma}_{t + u}
    \right) . \label{phi}
  \end{equation}
  Here $\bar{\gamma}_{t+u}$ is the function $\bar{\gamma}$ restricted on $\partial \Sigma_{t+u}$. Evidently, for a $(t,u)$ such that $\Phi(t,u)=(0,0)$ the surface $\Sigma_{t+u}$ is of constant mean curvature and meets $\partial M$ with contact angles $\bar{\gamma}$.
  For each $v \in \Sigma$, the map
  \[ f : (x, s) \in \Sigma \times (- \varepsilon, \varepsilon) \to \phi (x, s
     v (x)) \in M \]
  gives a variation with
  \[ \tfrac{\partial f}{\partial s} |_{s = 0} = \tfrac{\partial}{\partial s}
     \phi (x, s v (x)) |_{s = 0} = v N. \]
  Since $\Sigma$ is infinitesimally rigid, we obtain that using \eqref{angle
  first variation} and \eqref{background angle first variation},
  \[ D \Phi_{(0, 0)} (0, v) = \tfrac{\mathrm{d}}{\mathrm{d} s} \Phi (0, s v)
     |_{s = 0} = \left( - \Delta v + \tfrac{1}{| \Sigma |} \int_{\partial
     \Sigma} \Delta v, - \sin \bar{\gamma} \tfrac{\partial v}{\partial \nu}
     \right) . \]
  It follows from the elliptic theory for the Laplace operator with Neumann
  type boundary conditions that $D \Phi (0, 0)$ is an isomorphism when
  restricted to $0 \times \mathcal{Y}$.
  
  Now we apply the implicit function theorem: For some smaller $\varepsilon$,
  there exists a function $u (t) \in B (0, \delta) \subset \mathcal{X}$, $t
  \in (- \varepsilon, \varepsilon)$ such that $u (0) = 0$ and $\Phi (t, u (t))
  = \Phi (0, 0) = (0, 0)$ for every $t$. In other words, the surfaces
  \[ \Sigma_{t + u (t)} = \{\phi (x, t + u (t)) : x \in \Sigma\} \]
  are constant mean curvature surfaces with prescribed angles $\bar{\gamma}$.
  
  Let $w (x, t) = t + u (t) (x)$ where $(x, t) \in \Sigma \times (-
  \varepsilon, \varepsilon)$. By definition, $w (x, 0) = 0$ for every $x \in
  \Sigma$ and $w (\cdot, t) - t = u (t) \in B (0, \delta) \subset \mathcal{X}$
  for every $t \in (- \varepsilon, \varepsilon)$. Observe that the map $s
  \mapsto \phi (x, w (x, s))$ gives a variation of $\Sigma$ with variational
  vector field given by
  \[ \tfrac{\partial \phi}{\partial t} \tfrac{\partial w}{\partial s} |_{s =
     0} = \tfrac{\partial w}{\partial s} |_{s = 0} Y. \]
  Since for every $t$ we have that
  \[ 0 = \Phi (t, u (t)) = \left( H_{w (\cdot, t)} - \tfrac{1}{| \Sigma |}
     \int_{\Sigma} H_{w (\cdot, t)}, \langle X_{t + u}, N_{t + u} \rangle -
     \cos \bar{\gamma}_{t + u} \right), \]
  by taking the derivative at $t = 0$ we conclude that
  \[ \langle \tfrac{\partial w}{\partial t} |_{t = 0} Y, N \rangle =
     \tfrac{\partial w}{\partial t} |_{t = 0} \]
  satisfies the homogeneous Neumann problem. Therefore, it is constant on
  $\Sigma$. Since
  \[ \int_{\Sigma} (w (x, t) - t) = \int_{\Sigma} u (x, t) = 0 \]
  for every $t$, by taking derivatives at $t = 0$ again, we conclude that
  \[ \int_{\Sigma} \tfrac{\partial w}{\partial t} |_{t = 0} = | \Sigma | . \]
  Hence, $\tfrac{\partial w}{\partial t} |_{t = 0} = 1$. Taking $\varepsilon$
  small, we see that $\phi (x, w (x, t))$ parameterize a foliation near
  $\Sigma$.
\end{proof}

\begin{theorem}
  \label{H ode}There exists a continuous function $\Psi (t)$ such that
  \[ \tfrac{\mathrm{d}}{\mathrm{d} t} \left( \exp \left(- \int_0^{t} \Psi
     (\tau) \mathrm{d} \tau\right) H \right) \leq 0. \]
\end{theorem}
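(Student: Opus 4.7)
The plan is to derive the linear differential inequality $H'(t) \leq \Psi(t)\,H(t)$, which is equivalent to the stated monotonicity upon multiplication by the integrating factor $\exp\!\left(-\int_0^t \Psi(\tau)\,\mathrm{d}\tau\right)$. Here $H(t)$ denotes the (leaf-constant) mean curvature of $\Sigma_t$.

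Since $H_t$ is constant on each leaf, the first variation of mean curvature yields the pointwise identity
$$H'(t) \,=\, -\Delta_t \rho_t \,-\, (|A_t|^2 + \mathrm{Ric}(N_t))\,\rho_t \qquad \text{on}\ \Sigma_t,$$
with lapse $\rho_t$ coming from the foliation of Theorem \ref{foliation near infinitesimal}; differentiating the preserved contact-angle condition in $t$, using \eqref{angle first variation}--\eqref{background angle first variation}, forces the Robin boundary condition $\partial_\nu \rho_t = q_t \rho_t$ on $\partial \Sigma_t$, with $q_t$ from \eqref{q}. Integrating this pointwise identity over $\Sigma_t$ against the test function $1$, applying the Schoen--Yau rewrite \eqref{sy} in the interior and Lemma \ref{lem:IIandMean} on the boundary, and using the hypotheses $R_g \geq 0$ together with $|A|^2 \geq \tfrac{1}{2} H^2$, one arrives at
\begin{align*}
H'(t)\,|\Sigma_t| &\leq H(t) \int_{\partial\Sigma_t} \rho_t \cot\bar\gamma \;-\; \tfrac{3}{4} H(t)^2 \int_{\Sigma_t}\rho_t \\
&\quad + \int_{\Sigma_t}\rho_t K_t \;+\; \int_{\partial\Sigma_t}\rho_t \kappa_t \;-\; \int_{\partial\Sigma_t} \rho_t \Big( \tfrac{H_{\partial M}}{\sin\bar\gamma} + \tfrac{\partial_\eta \cos\bar\gamma}{\sin^2\bar\gamma} \Big).
\end{align*}

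The last three terms are controlled by splitting $\rho_t = 1 + (\rho_t - 1)$. The $\rho \equiv 1$ baseline is nonpositive: Gauss--Bonnet gives $\int K_t + \int \kappa_t = 2\pi \chi(\Sigma_t) = 2\pi$ because the leaves are disks, while Lemma \ref{lemma lower bound over separating curve} gives $\int_{\partial \Sigma_t}\!\big(\tfrac{H_{\partial M}}{\sin\bar\gamma} + \tfrac{\partial_\eta \cos\bar\gamma}{\sin^2\bar\gamma}\big) \geq 2\pi$, so their difference is $\leq 0$. The $(\rho_t - 1)$-weighted remainder is an error that vanishes at $t = 0$: indeed $\rho_0 \equiv 1$, and the infinitesimal rigidity identities \eqref{vanishing gauss and constant geodesic} give $K_0 \equiv 0$ on $\Sigma_0$ together with $\kappa_0 = \tfrac{H_{\partial M}}{\sin\bar\gamma} + \tfrac{\partial_\eta \cos\bar\gamma}{\sin^2\bar\gamma}$ pointwise on $\partial\Sigma_0$.

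The main technical obstacle is packaging this remainder into the form $\Psi(t) H(t)$ with $\Psi$ continuous. Since $H(0)=0$, the error must be shown to vanish at least as fast as $H$: this requires combining the smoothness of $\rho_t$ in $t$ (so that $\rho_t - 1$ is of order $t$ with controlled derivatives), the cascade of vanishings at $t = 0$ inherited from infinitesimal rigidity (so that $K_t$ and $\kappa_t - \big(\tfrac{H_{\partial M}}{\sin\bar\gamma} + \tfrac{\partial_\eta \cos\bar\gamma}{\sin^2\bar\gamma}\big)$ are of order $t$ as well), and Young's inequality to absorb any residual quadratic-in-$H$ contribution into the negative $-\tfrac{3}{4} H^2 \int\rho_t$ already on the right. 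Once the error has been packaged in this way, dividing by $|\Sigma_t|>0$ produces the desired ODI $H'(t) \leq \Psi(t) H(t)$ with continuous $\Psi$.
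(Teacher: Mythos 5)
There is a genuine gap, and it is exactly at the step you flag as ``the main technical obstacle.'' By pairing the linearized mean curvature equation with the test function $1$, you end up with the curvature terms weighted by the lapse $\rho_t$, and after splitting $\rho_t = 1 + (\rho_t - 1)$ you must control the remainder
\[
\int_{\Sigma_t}(\rho_t-1)K_t + \int_{\partial\Sigma_t}(\rho_t-1)\kappa_t - \int_{\partial\Sigma_t}(\rho_t-1)\Bigl(\tfrac{H_{\partial M}}{\sin\bar\gamma} + \tfrac{\partial_\eta\cos\bar\gamma}{\sin^2\bar\gamma}\Bigr)
\]
by $\Psi_1(t)\,H(t)$ for a continuous $\Psi_1$. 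The best you can say from smoothness and the infinitesimal rigidity of $\Sigma_0$ is that this remainder is $O(t^2)$ in absolute value; there is no mechanism forcing it to be bounded by a multiple of $H(t)$, since $H(t)$ may vanish to arbitrarily high order at $t=0$ (and vanishes identically in the rigid situation, which you cannot assume at this stage --- the infinitesimal rigidity of the leaves $\Sigma_t$ for $t\neq 0$ is only deduced \emph{after} Theorem \ref{H ode}, so invoking it here would be circular). Young's inequality against $-\tfrac34 H^2\int\rho_t$ does not help either, because the remainder is not quadratic in any quantity comparable to $H$. So the differential inequality $H'\leq \Psi H$ does not follow from your chain.

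The paper avoids this entirely by pairing the equation with $1/v_t$ instead of $1$: dividing \eqref{s variation} by the (positive) lapse $v_t$ and integrating gives
\[
\int_{\Sigma_t}\frac{\Delta_t v_t}{v_t} = \int_{\partial\Sigma_t}\frac{1}{v_t}\frac{\partial v_t}{\partial\nu_t} + \int_{\Sigma_t}\frac{|\nabla v_t|^2}{v_t^2} \;\geq\; \int_{\partial\Sigma_t} q_t,
\]
so the Robin condition enters exactly, the extra gradient term has a sign, and the Gauss curvature and geodesic curvature appear \emph{unweighted}; Gauss--Bonnet and Lemma \ref{lemma lower bound over separating curve} then apply with no error term, leaving only $-H(t)\int_{\partial\Sigma_t}\cot\bar\gamma$ on the right and yielding $\Psi(t) = \bigl(\int_{\Sigma_t}\tfrac{1}{v_t}\bigr)^{-1}\int_{\partial\Sigma_t}\cot\bar\gamma$ directly. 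This $1/v_t$ device is the missing idea; without it (or some equivalent substitute) your argument does not close.
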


\begin{proof}
  Let $\psi : \Sigma \times I \to M$ parameterize the foliation, $Y =
  \tfrac{\partial \psi}{\partial t}$, $v_{t} = \langle Y, N_{t}
  \rangle$. Then
  \begin{equation}
    - \tfrac{\mathrm{d}}{\mathrm{d} t} H (t) = \Delta_{t} v_{t} +
    (\ensuremath{\operatorname{Ric}}(N_{t}) + |A_{t} |^2) v_{t}
    \text{ in } \Sigma_{t}, \label{s variation}
  \end{equation}
  and
  \[ \tfrac{\partial v_{t}}{\partial t} = [- \cot \bar{\gamma}  A_{t}
     (\nu_{t}, \nu_{t}) + \tfrac{1}{\sin \bar{\gamma}} A_{\partial M}
     (\eta_{t}, \eta_{t}) + \tfrac{1}{\sin^2 \bar{\gamma}}
     \nabla_{\eta_{t}} \cos \bar{\gamma}] v_{t} . \]
  By shrinking the interval if needed, we assume that $v_{t} > 0$ for
  $t \in I$. By multiplying of \eqref{s variation} and integrating on
  $\Sigma_{t}$, we deduce that
\begin{align}
& - H' (s) \int_{\Sigma_{t}} \tfrac{1}{v_{t}} \\
= & \int_{\Sigma_{t}} \tfrac{\Delta_{t} v_{t}}{v_{t}} +
(\ensuremath{\operatorname{Ric}}(N_{t}) + |A_{t} |^2) \\
= & \int_{\partial \Sigma_{t}} \tfrac{1}{v_{t}} \tfrac{\partial
v_{t}}{\partial \nu_{t}} + \tfrac{1}{2} \int_{\Sigma_{t}} (R_g +
|A_{t} |^2 + H (t)^2) - \int_{\Sigma_{t}} K_{\Sigma_{t}}
\\
\geq & \int_{\partial \Sigma_{t}} \left[- \cot \bar{\gamma}  A_{t}
(\nu_{t}, \nu_{t}) + \tfrac{1}{\sin \bar{\gamma}} A_{\partial M}
(\eta_{t}, \eta_{t}) + \tfrac{1}{\sin^2 \bar{\gamma}}
\nabla_{\eta_{t}} \cos \bar{\gamma}\right] - \int_{\Sigma_{t}}
K_{\Sigma_{t}} \\
\geq & - \left[ \int_{\partial \Sigma_{t}} \kappa_{\partial
\Sigma_{t}} + \int_{\Sigma_{t}} K_{\Sigma_{t}} \right] -
\int_{\partial \Sigma_{t}} H (t) \cot \bar{\gamma} + \int_{\partial
\Sigma_{t}} \tfrac{H_{\partial M}}{\sin \bar{\gamma}} +
\tfrac{1}{\sin^2 \bar{\gamma}} \nabla_{\eta_{t}} \cos \bar{\gamma}
\end{align}
  where in the last line we have used the following version of \eqref{boundary
  sy}
  \[ \kappa_{\partial \Sigma_{t}} - \cot \bar{\gamma} A (\nu_{t},
     \nu_{t}) + \tfrac{1}{\sin \bar{\gamma}} A_{\partial M} (\eta_{t},
     \eta_{t}) = - H (t) \cot \bar{\gamma} + \tfrac{1}{\sin
     \bar{\gamma}} H_{\partial M} . \]
  By the Gauss-Bonnet theorem and by Lemma \ref{lemma lower bound over
  separating curve}, we have
  \[ - H' (t) \int_{\Sigma_{t}} \tfrac{1}{v_{t}} \geq - H (t)
     \int_{\partial \Sigma_{t}} \cot \bar{\gamma} . \]
  Let
  \[ \Psi (t) = \left( \int_{\Sigma_{t}} \tfrac{1}{v_{t}} \right)^{-
     1} \int_{\partial \Sigma_{t}} \cot \bar{\gamma}, \]
  then note that we have assume that $v_{t} > 0$ near $t = 0$, so $H
  (t)$ satisfies the ordinary differential inequality
  \begin{equation}
    H' - \Psi (t) H \leq 0. \label{original ode}
  \end{equation}
  We see then
  \[ \tfrac{\mathrm{d}}{\mathrm{d} t} \left( \exp \left( - \int_0^{t}
     \Psi (\tau) \mathrm{d} \tau \right) H \right) \leq 0. \]
\end{proof}
\begin{remark}
  It is only required that the mean curvature of each $\Sigma_{t}$ is a constant and $\gamma=\bar{\gamma}$ along $\partial\Sigma_{t}$. This
  theorem would be used also in later sections.
  \end{remark}
\subsection{From local foliation to rigidity}

Let $\Sigma_{t}$ be the constant mean curvature surfaces forming prescribed
contact angles $\bar{\gamma}$ with $\partial M$.

\begin{proposition}
  Every $\Sigma_{t}$ constructed in Theorem \ref{foliation near
  infinitesimal} is infinitesimally rigid.
\end{proposition}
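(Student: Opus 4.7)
The plan is to propagate the infinitesimal rigidity from $\Sigma_0 = \Sigma$ to every leaf of the foliation by showing that the whole family $\{\Sigma_t\}$ saturates the minimum $\mathcal{I}$ of the functional $F$ from \eqref{action in terms of surface}, whereupon the argument used for $\Sigma$ itself applies unchanged to each $\Sigma_t$.

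First I would compute $F'(t)$ along the foliation. Let $Y = \partial\psi/\partial t$ and $v_t = \langle Y, N_t\rangle$ as in Theorem \ref{H ode}. The first variation formula \eqref{first variation} gives
\[ F'(t) = \int_{\Sigma_t} H(t)\, v_t + \int_{\partial\Sigma_t} \langle Y, \nu_t - \eta_t\cos\bar{\gamma}\rangle. \]
Because each $\Sigma_t$ meets $\partial M$ at the prescribed angle $\bar{\gamma}$, the vector $\nu_t - \eta_t\cos\bar{\gamma}$ is a multiple of $X$, hence normal to $\partial M$; since $Y$ is tangent to $\partial M$ along $\partial\Sigma_t$, the boundary integral drops out and $F'(t) = H(t)\int_{\Sigma_t} v_t$. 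Shrinking $\varepsilon$ if necessary, $v_t > 0$ throughout the foliation.

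Next I would combine this with Theorem \ref{H ode} and the minimizing property of $\Sigma$. Since $\Sigma_0$ is infinitesimally rigid, $H(0) = 0$; then $e^{-\int_0^t \Psi}H(t)$ being nonincreasing forces $H(t) \leq 0$ for $t \geq 0$ and $H(t) \geq 0$ for $t \leq 0$, so $F'(t)\cdot t \leq 0$ and $F(\Sigma_t) \leq F(\Sigma_0) = \mathcal{I}$. On the other hand, each perturbed region $E_t$ is contractible and still contains $\partial M \cap P_\pm$ (because $\Sigma_0$ lies away from the planes $P_\pm$, and $\Sigma_t$ is a small normal graph over $\Sigma_0$), so $E_t \in \mathcal{E}$ and $F(\Sigma_t) \geq \mathcal{I}$. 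Together, $F \equiv \mathcal{I}$ along the foliation, which forces $F'(t) \equiv 0$, hence $H(t) \equiv 0$.

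Finally, each $\Sigma_t$ is now a minimal capillary surface realising the global infimum, so its second variation is nonnegative and it is stable. Running the stability argument of Section \ref{slab case} on $\Sigma_t$---plug $f \equiv 1$ into \eqref{stability}, apply the Schoen--Yau rewrite \eqref{sy} and its boundary analog \eqref{boundary sy}, then invoke Gauss--Bonnet together with Lemma \ref{lemma lower bound over separating curve}---collapses every inequality in the chain to an equality, yielding $R_g = |A| = 0$ on $\Sigma_t$, the boundary equalities \eqref{infinitesimal boundary} along $\partial\Sigma_t$, and the identities \eqref{vanishing gauss and constant geodesic}. The main potential obstacle is verifying that the boundary integral in $F'(t)$ vanishes identically along the foliation; this hinges on the capillary condition being preserved by construction and on $Y$ being tangent to $\partial M$ at $\partial\Sigma_t$, both of which are furnished by Theorem \ref{foliation near infinitesimal}.
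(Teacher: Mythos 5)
Your proposal is correct and follows essentially the same route as the paper: differentiate $F$ along the foliation via the first variation formula (with the boundary term vanishing by the capillary condition), use Theorem \ref{H ode} together with $H(0)=0$ to get $F(\Sigma_t)\le F(\Sigma_0)=\mathcal{I}$, invoke the minimizing property to force equality, and conclude each leaf is a minimiser and hence infinitesimally rigid. The extra details you supply (checking $E_t\in\mathcal{E}$ and rerunning the stability chain on each leaf) are exactly what the paper's terser argument implicitly relies on.
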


\begin{proof}
  We abuse the notation and let
  \[ F (t) = | \Sigma_{t} | - \int_{\partial \Omega_{t}} \cos
     \bar{\gamma} . \]
  By the first variation formula \eqref{first variation},
  \[ F (t_2) - F (t_1) = \int_{t_1}^{t_2} \mathrm{d} t
     \int_{\Sigma_{t}} H (t) v_{t} . \]
  By Theorem \ref{H ode},
  \[ H (t) \leq 0 \text{ if } t \geq 0 ; \text{ } H (t)
     \geq 0 \text{ if } t \leq 0, \]
  which in turn implies that
  \[ F (t) \leq 0 \text{ if } t \geq 0 ; \text{ } F (t)
     \leq 0 \text{ if } t \leq 0. \]
  However, $\Omega$ is a minimiser to the functional \eqref{action}, hence
  \[ F (t) \equiv F (0) . \]
  It then follows every $\Sigma_{t}$ is a minimiser, hence infinitesimally
  rigid.
\end{proof}

Now we finish the proof of Case \ref{case slab} of Theorem \ref{main} by using
Theorem \ref{H ode}.

\begin{proof}[Proof of Case \ref{case slab} of Theorem \ref{main}]
  Let $Y = \tfrac{\mathrm{d}}{\mathrm{d} t} \phi (x, w (x, t))$ where $\phi$
  and $w$ are as Theorem \ref{foliation near infinitesimal}, we show first
  that $N_{t}$ is a parallel vector field. Since every $\Sigma_{t}$ is
  infinitesimally rigid, from \eqref{eigen problem} and \eqref{constant in
  bilinear form}, we know that $\langle Y, N_{t} \rangle$ is a constant.
  Let $\partial_i$, $i = 1, 2$ are vector fields induced by local coordinates
  on $\Sigma$ and extended to a neighbtod of $\Sigma$ by $Y$, then $\nabla_{\partial_i} \langle Y, N \rangle = 0$. Note that
  $\Sigma_{t}$ are totally geodesic, so $\nabla_{\partial_i} N \equiv 0$
  and
\begin{align}
& 0 = \nabla_{\partial_i} \langle Y, N \rangle = \langle
\nabla_{\partial_i} Y, N \rangle + \langle Y, \nabla_{\partial_i} N
\rangle = \langle \nabla_{\partial_i} Y, N \rangle,
\end{align}
  and
  \[ 0 = \langle \nabla_{\partial_i} Y, N \rangle = \langle \nabla_Y
     \partial_i, N \rangle = \langle \nabla_{Y^{\bot}} \partial_i, N \rangle .
  \]
  So
  \[ 0 = \langle \nabla_{Y^{\bot}} \partial_i, N \rangle = Y^{\bot} \langle
     \partial_i, N \rangle - \langle \partial_i, \nabla_{Y^{\bot}} N \rangle =
     - \langle \partial_i, \nabla_{Y^{\bot}} N \rangle . \]
  We conclude that $N$ is a parallel vector field and since every
  $\Sigma_{t}$ is flat, then $\cup_{t} \Sigma_{t}$ foliates a subset
  of the Euclidean space $\mathbb{R}^3$ with the flat metric. So
  $\Sigma_{t}$ is a family of parallel disks in $\mathbb{R}^3$ and we can
  parameterize $\partial M \cap (\cup_{t} \partial \Sigma_{t})$ locally
  by
  \[ \vec{x} (t, \theta) = (\psi (t) \cos \theta + a_1 (t), \psi
     (t) \sin \theta + a_2 (t), - t) . \]
  By translation invariance, we can set $a_1 (0) = a_2 (0) = 0$. Then the
  tangent vectors are
  \[ \vec{x}_{t} = (\psi' \cos \theta + a_1', \psi' \sin \theta + a_2', -
     1), \vec{x}_{\theta} = (- \psi \sin \theta, \psi \cos \theta) . \]
  It is easy to see that $\tilde{X} = (\cos \theta, \sin \theta, \psi' + a_1'
  \cos \theta + a_2' \sin \theta)$ is normal to $\partial M \cap (\cup_{t}
  \partial \Sigma_{t})$. Since $\tilde{X}$ forms an angle with $N = (0, 0,
  1)$ independent of $\theta$ by \eqref{infinitesimal boundary}, then
  \[ \tfrac{\partial}{\partial \theta} \tfrac{\langle \tilde{X}, N
     \rangle_{\delta}}{| \tilde{X} |_{\delta}} = 0. \]
  By an easy calculation,
  \[ - a_1' \sin \theta + a_2' \cos \theta = 0 \]
  which requires that $a_1' = a_2' = 0$ and so $a_1 = a_2 = 0$. So $\partial M
  \cap (\cup_{t} \partial \Sigma_{t})$ is rotationally symmetric. Since
  $M$ is connected, we conclude that $M$ is a rotationally symmetric set in
  $\mathbb{R}^3$ with the flat metric.
\end{proof}

\section{Conical pole}\label{conical case}

In this section, we deal with the conical case of Theorem \ref{main}. 
In Subsection \ref{sub:foliation} and Subsection \ref{sub:MeanConial}, we show that if the metric at the vertex is isometric to the model,
then we can construct a non-negative, constant mean curvature foliation with prescribed contact angles near
the vertex.
In Subsection \ref{sub:non_euclidean}, we directly construct a surface with positive mean curvature and larger contact angles than the model when the metric is not equal to $\delta$ at the vertex.
In both cases, we have a barrier for the minimal capillary surface, which again reduces the problem to the previous section.

\subsection{Foliation near the conical point}
\label{sub:foliation}

We assume that $g = \delta$ at $p_+$ . We consider the case when $p_+$ is conical and 
construct CMC foliations near the conical point $p_+$.

Let $\Sigma_{t}$
be the disk given by
\begin{equation}
  \Sigma_{t} = (\psi (t) \hat{x},  - t) \quad\text{where }\psi(0)=0, \psi'(0)>0.
\end{equation}
We also understand $\Sigma_t$ as a map from $D$ to $\Sigma_t$, that is, $\Sigma_t(\hat{x}):=(\psi(t)\hat{x},-t)$ for any $\hat{x} \in D$.
Since $\partial M$ at $p_+$ is conical, so $\psi (t) = \psi' (0) t + O
(t^2)$. Let $D$ be the unit disc, assume that $| \hat{u} (\cdot, t)
|_{C^{2, \alpha} (D)} = O (t^2)$, define
\begin{equation}
  \Sigma_{t, \hat{u}} = (\psi (t + \hat{u}) \hat{x},  - t - \hat{u}),
\end{equation}
where $\hat{x} = (x_1, x_2) \in D$, then
\begin{equation}
  \Sigma_{t, \hat{u}} = \Sigma_{t} + (\psi' (t) x_1, \psi'
  (t) x_2, - 1)  \hat{u}+ O (\hat{u}^2)
\end{equation}
by the Taylor expansion. The normal to $\Sigma_{t}$ is
\begin{equation}
  N_{t} = g^{3 j} e_j / \sqrt{g^{33}},
\end{equation}
where $\{e_i = \frac{\partial}{\partial x_i} \}_{i = 1,2,3}$ is the standard orthonormal basis of $\mathbb{R}^3$ and $g^{33} = g^{33} (\psi x_1, \psi x_{2}, - t)$. 
Note that $\Sigma_{t, \hat{u}}$ is approximately a normal graph of a function over
$\Sigma_{t}$, it is given by
\begin{align}
& U \\
= & \hat{u} \langle (\psi' x_1, \psi' x_2, - 1), g^{3 j} e_j / \sqrt{g^{33}}
\rangle + O (\hat{u}^2) \\
= & - \tfrac{\hat{u}}{\sqrt{g^{3 3}}} + O (\hat{u}^2) \\
= & - \tfrac{\hat{u}}{\sqrt{g^{33}}} + O (t^3) \\
= & - \hat{u} + O (t^3) 
\end{align}
using the Taylor expansion of $\Sigma_{t,\hat{u}}$.


We use the subscript $t,\hat{u}$ on every geometric quantity defined on the surface
$\Sigma_{t,\hat{u}}$, for instance, $X_{t,\hat{u}}$ is the unit outward normal of $\partial M$ in $M$ along $\partial\Sigma_{t,\hat{u}}$. This is similarly applied to the use of the subscript $t$. However, $\bar{\gamma}_{t,\hat{u}}$ represents the value of the prescribed angle $\bar{\gamma}$ at $\partial \Sigma_{t,\hat{u}}$.

It is also easy to see that
\begin{equation}
  \Delta_{t}^D U = - \Delta_{t}^D \hat{u} + O (t^3) .
\end{equation}
Here, $\Delta_t^D$ is the Laplace-Beltrami operator under metric $\frac{1}{\psi^2(t)}\Sigma_t^*(g)$ on $D$. Note that $\Delta_t^D$ converges to the standard Laplace-Beltrami operator $\Delta$ on $D$ under Euclidean metric as $t \to 0$.
We also write $\nu_t^D$ the unit outer normal vector field of $\partial D$ under same metric $\frac{1}{\psi^2(t)}\Sigma_t^*(g)$ and $\nu_t^D$ converges to the unit outer normal vector field $\nu$ along $\partial D$ under Euclidean metric as $t\to 0$.

Consider the space
\[ \mathcal{Y}= \left\{ u \in C^{2, \alpha} (D)\cap C^{1,\alpha}(\bar{D}) : \int_D u = 0 \right\} \]
and
\begin{equation}
  \mathcal{Z}= \left\{ u \in C^{0, \alpha} (D) : \int_D u = 0 \right\} ,
\end{equation}
given small $\delta > 0$ and $\varepsilon > 0$, we define the map
\begin{equation}
 \Psi : (- \varepsilon, \varepsilon) \times B (0, \delta) \to \mathcal{Z}
  \times C^{1, \alpha} (\partial \Sigma)
\end{equation}
by
\begin{equation}
  \Psi (t, u) = \left( H_{t, t^2 u} - \frac{1}{|D|} \int_D H_{t,
  t^2 u}, t^{- 1} (\langle X_{t, t^2 u}, N_{t, t^2 u}
  \rangle - \cos \bar{\gamma}_{t, t^2 u}) \right) \label{Psi}
\end{equation}
for $t \neq 0$. Here the integration on $D$ is calculated with respect to
the flat metric. We extend $\Psi (t, u)$ to $t = 0$ by taking limits, that is,
\begin{equation}
  \Psi (0, u) = \lim_{t \to 0} \Psi (t, u) .
\end{equation}

Using the variational formulas of the mean curvature and the angles, we obtain
\begin{equation}
  H_{t, t^2 u} - H_{t} = \tfrac{t^2}{\psi (t)^2} \Delta_{t}^D
  u + t^2 (\ensuremath{\operatorname{Ric}}(N_{t}) + |A_{t} |^2) u + O
  (t^3), \label{mean curvature taylor expansion}
\end{equation}
and
\begin{align}
& t^{- 1} [\langle X_{t, t^2 u}, N_{t, t^2 u} \rangle -
\langle X_{t}, N_{t} \rangle] \\
= & \tfrac{t \sin \gamma}{\psi (t)} \tfrac{\partial u}{\partial
\nu_{t}^D} - t (- \cos \gamma A (\nu_{t}, \nu_{t}) + A_{\partial
M} (\eta_{t} {,} \eta_{t})) u + O (t^2), \label{angle variation}
\end{align}
and if we choose $g=\delta$ in \eqref{angle variation}, we have (see, e.g. \eqref{std angle difference})
\begin{equation}
  \cos \bar{\gamma}_{t, t^2 u} - \cos \bar{\gamma}_{t}=-t^2 \bar{A}_{\partial M}(\bar{\eta}_t,\bar{\eta}_t)u+O(t^3)= 
  \tfrac{t^2 u}{\sin \bar{\gamma}} \partial_{\eta} \cos \bar{\gamma} + O
  (t^3) . \label{background angle difference near pole}
\end{equation}
Since
\begin{align}
  & \cos \gamma_{t, t^2 u} - \cos \bar{\gamma}_{t, t^2 u} \\
  = & (\cos \gamma_{t, t^2 u} - \cos \gamma_{t}) + (\cos \gamma_{t} -
  \cos \bar{\gamma}_{t}) + (\cos \bar{\gamma}_{t} - \cos
  \bar{\gamma}_{t, t^2 u}),
\end{align}
and from \eqref{background angle difference near pole} and \eqref{angle variation},
we have
\begin{align}
& t^{-1} [\cos \gamma_{t,t^2u}-\cos \bar{\gamma}_{t,t^2u}] \\
= & \tfrac{t \sin \gamma}{\psi (t)} \tfrac{\partial u}{\partial
\nu_{t}^D} + t (\cos \gamma_t A(\nu_t,\nu_t)- A_{\partial
    M} (\eta_{t} {,} \eta_{t})+\bar{A}_{\partial M}(\bar{\eta}_t,\bar{\eta}_t)) u \\
    &+t^{- 1} (\cos \gamma_{t} - \cos \bar{\gamma}_{t})+ O (t^2), \label{angle variation2}
\end{align}
and
\begin{align}
  & t^{- 1}  (\cos \gamma_{t, t^2 u} - \cos \bar{\gamma}_{t, t^2 u}) \\
  = & \tfrac{t \sin \gamma}{\psi (t)} \tfrac{\partial u}{\partial
  \nu_{t}^D} + t (\cos \gamma_{t} A (\nu_{t}, \nu_{t}) -
  A_{\partial M} (\eta_{t}, \eta_{t}) + \tfrac{1}{\sin \bar{\gamma}}
  \partial_{\eta} \cos \bar{\gamma}) u \\
  & + t^{- 1} (\cos \gamma_{t} - \cos \bar{\gamma}_{t}) + O (t^2). \label{angle variation 3} 
\end{align}
\begin{remark}
The term $A_{\partial M}(\eta_t,\eta_t)=O(1)$, although $|A_{\partial M}|_\delta=O(1/t)$. This can be seen via observing that $|\eta_t-\bar{\eta}_t|_\delta=O(t)$ and \cite[Lemma 2.2]{miao-mass-2021}.
Note that we also have $\bar{A}_{\partial M}(\bar{\eta}_t,\bar{\eta}_t)=O(1)$.
\end{remark}

\begin{proposition}
  \label{prop cmc capillary at zero}
  For each $t \in [0,\varepsilon)$ with $\varepsilon$ small enough, we can find $u_t=u(\cdot,t) \in C^{2, \alpha} (D)\cap C^{1,\alpha}(\overline{D})$
  such that $\int_D u(\cdot,t) = 0$ and
  \begin{equation}
    \Psi (t, u(\cdot,t)) = (0, 0) . \label{cmc capillary at zero}
  \end{equation}
  In particular, the surfaces $\Sigma_{t,t^2u_t}$ have constant mean curvature and prescribed angles $\gamma_{t,t^2u_t}=\bar{\gamma}_{t,t^2u_t}$.
\end{proposition}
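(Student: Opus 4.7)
The plan is to obtain the family $u_t$ by applying the implicit function theorem at a base point $(0, u_0)$, where $u_0 \in \mathcal{Y}$ is a solution of the formally ``$t = 0$'' limit equation $\Psi(0, u_0) = (0,0)$. Three checks are needed: (i) the map $\Psi$ defined by \eqref{Psi} admits a continuous extension up to $t = 0$ as a map into $\mathcal{Z} \times C^{1,\alpha}(\partial D)$; (ii) this extension is surjective at a suitable $u_0 \in \mathcal{Y}$; (iii) the partial derivative $D_u \Psi(0, u_0)$ is an isomorphism $\mathcal{Y} \to \mathcal{Z} \times C^{1,\alpha}(\partial D)$.

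For step (i), I would substitute the expansions \eqref{mean curvature taylor expansion} and \eqref{angle variation 3} into \eqref{Psi} and take limits using the conical expansion $\psi(t) = \psi'(0) t + O(t^2)$. The bulk coefficient satisfies $t^2/\psi(t)^2 \to \psi'(0)^{-2}$, the boundary coefficient $t\sin\gamma_t/\psi(t) \to \sin \bar{\gamma}(p_+)/\psi'(0)$, and the remaining uniformly bounded factors (involving $A_{t}$, $A_{\partial M}$, $\partial_\eta \cos\bar\gamma$, etc.) have finite pointwise limits on $\partial D$. The critical summand is the formally singular piece $t^{-1}(\cos \gamma_{t} - \cos \bar\gamma_{t})$: because the hypothesis forces $g = \delta$ at $p_+$, a Taylor expansion of the angle defect at the shrinking circle $\partial \Sigma_t$ yields $\cos\gamma_t - \cos\bar\gamma_t = O(t)$, so this term also admits a finite limit. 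Thus $\Psi(0, u)$ is well-defined and affine in $u$.

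For (ii) and (iii), the equation $\Psi(0, u) = (0,0)$ is a Poisson-Neumann system on the unit disk $D$: a Laplace-type equation with inhomogeneous right-hand side in the first slot, and an inhomogeneous Neumann condition in the second. The average-subtraction in the first component of $\Psi$ and the mean-zero constraint in the definition of $\mathcal{Y}$ precisely encode the standard solvability/gauge conditions, so Schauder theory on the disk produces $u_0 \in \mathcal{Y}$. The linearization $D_u \Psi(0, u_0)$ reduces, to leading order, to
\begin{equation*}
v \;\longmapsto\; \left( \tfrac{1}{\psi'(0)^2}\Bigl(\Delta v - \tfrac{1}{|D|}\int_D \Delta v\Bigr),\ \tfrac{\sin \bar{\gamma}(p_+)}{\psi'(0)}\, \tfrac{\partial v}{\partial \nu} \right),
\end{equation*}
which is an isomorphism $\mathcal{Y} \to \mathcal{Z} \times C^{1,\alpha}(\partial D)$ by the standard theory of the Neumann Laplacian on the disk restricted to mean-zero functions. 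The implicit function theorem then yields, for some smaller $\varepsilon > 0$, a map $t \mapsto u_t \in \mathcal{Y}$ with $\Psi(t, u_t) = (0,0)$ and $u_0$ as above; by construction each $\Sigma_{t, t^2 u_t}$ is of constant mean curvature and meets $\partial M$ in the prescribed angle.

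The main obstacle is step (i): one must not only produce the pointwise limits, but show that $\Psi$ and $D_u \Psi$ extend continuously in $(t, u)$ up to $t = 0$ in the relevant H\"older norms, uniformly in the boundary variable $\theta \in \partial D$. The expansion $\cos \gamma_t - \cos \bar\gamma_t = O(t)$ (and the corresponding bounds on its $\theta$-derivatives) is delicate because both angles are evaluated on circles that are shrinking to $p_+$; it is precisely at this point that the hypothesis $g = \delta$ at the conical vertex enters. Once that continuity is established, the remainder of the argument is a clean application of Schauder theory and the implicit function theorem.
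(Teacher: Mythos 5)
Your proposal follows essentially the same route as the paper: extend $\Psi$ to $t=0$ via the expansions \eqref{mean curvature taylor expansion} and \eqref{angle variation 3} (with the key observation that $g=\delta$ at $p_+$ makes $t^{-1}(\cos\gamma_t-\cos\bar\gamma_t)$ converge), solve the resulting Neumann-type problem for $u_0$, verify that $D_u\Psi(0,u_0)$ is the (isomorphic) Neumann Laplacian on mean-zero functions, and conclude by the implicit function theorem. The only cosmetic difference is that the paper produces $u_0$ by minimizing an energy functional on $\{u\in W^{1,2}(D):\int_D u=0\}$ followed by elliptic regularity, whereas you invoke Schauder/Neumann solvability directly; these are interchangeable.
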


\begin{proof}

   First, we compute $\Psi(0,u)$ and solve $\Psi(0,u)=0$.
   We note
  \begin{equation}
    \lim_{t \to 0} H_{t, t^2 u} = \lim_{t \to 0} H_{t} +
    \tfrac{1}{\psi' (0)^2} \Delta u
  \end{equation}
  from \eqref{mean curvature taylor expansion}. Since $\lim_{t \to 0}
  H_{t}$ is a constant, we obtain that the first component $\Psi_1 (t,
  u)$ of $\Psi (t, u)$ at $(0, u)$ is
  \begin{equation}
    \Psi_1 (0, u) = \tfrac{1}{\psi' (0)^2} \left( \Delta u - \tfrac{1}{|D|}
    \int_D \Delta u \right) .
  \end{equation}
By \eqref{angle variation 3}, the second component $\Psi_2 (t, u)$ of $\Psi (t, u)$ at $(0, u)$ is
  \begin{equation}
    \Psi_2 (0, u) = \tfrac{\sin \bar{\gamma}_0}{\psi' (0)} \tfrac{\partial
    u}{\partial \nu} + \lim_{t \to 0} \frac{\cos \gamma_{t} - \cos
    \bar{\gamma}_{t}}{t} .
  \end{equation}
  Here, $\sin \bar{\gamma}_0:=\lim_{t\to 0}\sin \bar{\gamma}_t$ and indeed, we can find $\sin \bar{\gamma}_0=1/\sqrt{1+(\psi')^2(0)}$.
  Note that $\lim_{t \to 0} \tfrac{\cos \gamma - \cos \bar{\gamma}}{t}$
  exists for every point of $\partial D$ and is a function on $\partial D$.
  Then $\Psi (0, u) = (0, 0)$ is equivalent to the elliptic boundary value problem
\begin{align}
\Delta u & = \tfrac{1}{|D|} \int_D \Delta u \text{ in } D \\
\tfrac{\partial u}{\partial \nu} & = - \tfrac{\psi' (0)}{\sin
\bar{\gamma}_0} \lim_{t \to 0} \tfrac{\cos \gamma - \cos
\bar{\gamma}}{t} \text{ on } \partial D.
\end{align}
By minimizing the energy
\[
	\mathcal{E}(u):=\int_{ D}  \frac{\left|\nabla u\right|^2}{2} +\int_{ \partial  D} \left( \frac{\psi'(0)}{\sin \bar{\gamma}_0}\lim_{t\rightarrow 0} \frac{\cos \gamma_t -\cos \bar{\gamma}_t}{t} \right) u
\]
on the function space $\{u\in W^{1,2}(D):\int_{ D} u=0\}$, we can find a minimiser $u_0 \in W^{1,2}(D)$ to $\mathcal{E}$. We have that $u_0$ is unique and in $C^{2,\alpha}(D)\cap C^{1,\alpha}(\bar{D})$ by elliptic regularity.
Now we compute
\begin{equation}
  D \Psi_{(0, u_0)} (0, v) = \tfrac{\mathrm{d}}{\mathrm{d} s} |_{s = 0} \Psi
  (0, u_0 + s v) .
\end{equation}
In particular,
\begin{align}
D \Psi |_{(0, u_0)} (0, v)
= & \tfrac{\mathrm{d}}{\mathrm{d} s} |_{s = 0} \Psi (0, u_0 + s v)
\\
= & \left( \tfrac{1}{\psi' (0)^2} \left[ - \Delta v + \tfrac{1}{|D|} \int_D
\Delta v \right], \tfrac{\sin \bar{\gamma}_0}{\psi' (0)} \tfrac{\partial
v}{\partial \nu} \right)
\end{align}
because $u_0$ satisfies \eqref{cmc capillary at zero}. It follows from the
elliptic theory for the Laplace operator with Neumann-type boundary conditions
that $D \Phi |_{(0, u_0)}$ is an isomorphism when restricted to $0 \times
\mathcal{Y}$.

Now we apply the implicit function theorem. For some smaller $\varepsilon>0$,
there exists a function $u (\cdot, t) \in B (0, \delta) \subset
\mathcal{X}$, $t \in (- \varepsilon, \varepsilon)$ such that $u (\cdot, 0)
= u_0$ and
\[ \Phi (t, u (\cdot, t)) = \Phi (0, u_0) = (0, 0) \]
for every $t$. In other words, the surfaces $\Sigma_{t, t^2 u}$ are
constant mean curvature surfaces with prescribed angles $\gamma =
\bar{\gamma}$ at $\partial \Sigma$.
\end{proof}

\subsection{Behavior of the mean curvature of the leaf}

\label{sub:MeanConial}

Let $u = u (\hat{x}, t)$ be as constructed as above, let $\lambda_{t} =
H_{t, t^2 u}$, then $\lambda_{t}$ is a constant. Similar to
{\cite{li-polyhedron-2020}} and {\cite{chai-dihedral-2022-arxiv}}, we obtain
the limiting behavior of $\lambda_{t}$ as $t \to 0$.

\begin{lemma}
	\label{lem:Lambdt}
  For the foliation constructed in the above,
  \begin{equation}
    \lambda_{t} |D_{t} | = \int_{D_{t}} H_{t} + \int_{\partial
    D_{t}} \tfrac{1}{\sin \gamma} (\cos \bar{\gamma}_{t} - \cos
    \gamma_{t}) + O (t^3) . \label{asymptotic mean curvature}
  \end{equation}
\end{lemma}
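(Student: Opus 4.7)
The plan is to combine the constant-mean-curvature equation $H_{t,t^2u}\equiv\lambda_t$, the pointwise Taylor expansion \eqref{mean curvature taylor expansion}, the divergence theorem, and the prescribed-contact-angle boundary condition \eqref{angle variation 3} coming from $\Psi(t,u)=0$. Viewing $H_{t,t^2u}$ as a function on $D$ via the parametrization $\hat x\mapsto \Sigma_{t,t^2u}(\hat x)$, it is identically equal to the constant $\lambda_t$, and integrating against the area form of $D_t$ yields the tautological identity
\[
\lambda_t|D_t|=\int_{D_t}H_{t,t^2u}(\hat x)\,dA_{D_t}.
\]

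Applying \eqref{mean curvature taylor expansion} pointwise and integrating, the $(\operatorname{Ric}(N_t)+|A_t|^2)u$ contribution is $O(t^2)\cdot|D_t|=O(t^4)$ and the uniform $O(t^3)$ remainder contributes $O(t^3)|D_t|=O(t^5)$; both lie in the target error. This reduces the task to rewriting
\[
\int_{D_t}\tfrac{t^2}{\psi(t)^2}\Delta_t^Du\,dA_{D_t}
\]
as the claimed boundary term modulo $O(t^3)$.

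The next step is the divergence theorem. Since $\Sigma_t^{*}g=\psi(t)^2h$ with $h=\psi(t)^{-2}\Sigma_t^{*}g$, we have $\Delta_{D_t}=\psi(t)^{-2}\Delta_t^D$ and $\nu_{D_t}=\psi(t)^{-1}\nu_t^D$, so integration by parts on $(D_t,\Sigma_t^{*}g)$ transforms the above interior integral into
\[
\int_{\partial D_t}\tfrac{t^2}{\psi(t)}\tfrac{\partial u}{\partial\nu_t^D}\,dL_{\partial D_t}.
\]
Using $\Psi_2(t,u)=0$ from Proposition \ref{prop cmc capillary at zero}, equation \eqref{angle variation 3} rearranges to the pointwise identity along $\partial D_t$
\[
\tfrac{1}{\sin\gamma}(\cos\bar\gamma_t-\cos\gamma_t)=\tfrac{t^2}{\psi(t)}\tfrac{\partial u}{\partial\nu_t^D}+O(t^2),
\]
and since $|\partial D_t|=O(t)$, integrating the $O(t^2)$ remainder gives $O(t^3)$. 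Substituting produces exactly the boundary integral stated in \eqref{asymptotic mean curvature}.

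The main obstacle is the bookkeeping of orders in $t$: the relations $\psi(t)\sim t$, $|D_t|\sim t^2$, $|\partial D_t|\sim t$, together with the conformal rescaling between $h$ and $\Sigma_t^{*}g$, must be tracked through every line so that all residual terms collapse to $O(t^3)$ or smaller. A secondary point is verifying that the implicit constants in \eqref{mean curvature taylor expansion} and \eqref{angle variation 3} are uniform as $t\to 0$; this uses $g=\delta$ at the conical vertex together with the boundedness of $A_{\partial M}(\eta_t,\eta_t)$ and $\bar A_{\partial M}(\bar\eta_t,\bar\eta_t)$ noted in the remark following \eqref{angle variation 3}.
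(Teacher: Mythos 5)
Your proposal is correct and follows essentially the same route as the paper's proof: integrate the Taylor expansion \eqref{mean curvature taylor expansion} of the constant mean curvature $\lambda_t$, convert the Laplacian term to a boundary integral by the divergence theorem, and then use the prescribed-angle condition $\gamma_{t,t^2u}=\bar\gamma_{t,t^2u}$ together with \eqref{angle variation 3} (equivalently, the paper's three-term decomposition of $\cos\gamma_{t,t^2u}-\cos\gamma_t$ via \eqref{angle variation} and \eqref{background angle difference near pole}) to identify the boundary term as $\tfrac{1}{\sin\gamma}(\cos\bar\gamma_t-\cos\gamma_t)$ up to $O(t^3)$. The only cosmetic difference is that you work directly on $D_t$ while the paper computes on the rescaled disk $(D,\sigma^t)$ and rescales at the end; the order bookkeeping is identical.
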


\begin{proof}
  We integrate \eqref{mean
  curvature taylor expansion} with respect to the metric $\sigma^t:=\frac{1}{\psi^2(t)}\Sigma_t^*(g)$ (we
  omit the area element and line element) on $D$ and using the divergence theorem and
  applying \eqref{angle variation},
\begin{align}
& \lambda_{t} |D|_{\sigma^{t}} - \int_D H_{t} \\
= & \tfrac{t^2}{\psi^2 (t)} \int_D \Delta_{t}^D u + t^2 \int_D
(\ensuremath{\operatorname{Ric}}(N_{t}) + |A_{t} |^2) u + O (t^3)
\\
= & \tfrac{t^2}{\psi^2 (t)} \int_{\partial D} \tfrac{\partial
u}{\partial \nu_{t}^D} + O (t^2) \\
= & \tfrac{t}{\psi (t)} \left[ \int_{\partial D} \tfrac{t}{\sin
\gamma_t} (- \cos \gamma_t A (\nu_{t}, \nu_{t}) + A_{\partial M}
(\eta_{t}, \eta_{t})) u + O (t^2) \right] \\
& + \tfrac{t}{\psi (t)} \int_{\partial D} \frac{\langle X_{t,
t^2 u}, N_{t, t^2 u} \rangle - \langle X_{t}, N_{t}
\rangle}{t \sin \gamma_t} + O (t^2) \\
= & \tfrac{t}{\psi (t)} \int_{\partial D} \frac{\cos \gamma_{t,
t^2 u} - \cos \gamma_{t}}{t \sin \gamma_t} + O (t) .
\end{align}
  It follows from \eqref{background angle difference near pole} that
\begin{align}
& \cos \gamma_{t, t^2 u} - \cos \gamma_{t} \\
= & (\cos \gamma_{t, t^2 u} - \cos \bar{\gamma}_{t, t^2 u}) +
(\cos \bar{\gamma}_{t, t^2 u} - \cos \bar{\gamma}_{t}) + (\cos
\bar{\gamma}_{t} - \cos \gamma_{t}) \\
= & 0 + (\tfrac{t^2 u}{\sin^2 \bar{\gamma}_t} \nabla_{\eta} \cos
\bar{\gamma} + O (t^3)) + (\cos \bar{\gamma}_{t} - \cos
\gamma_{t}) .
\end{align}
  Hence
  \[ \lambda_{t} |D|_{\sigma^{t}} - \int_D H_{t} = \tfrac{1}{\psi
     (t)} \int_{\partial D} \frac{\cos \bar{\gamma}_{t} - \cos
     \gamma_{t}}{\sin \gamma_t} + O (t), \]
  and a rescaling proves the lemma.
\end{proof}

Now we recall a result {\cite[Proposition 2.1]{miao-mass-2021}} interpreted
slightly differently.

\begin{proposition}
	\label{prop_MeanCur}
  Let $M$ be a manifold with two metrics $g$ and $\bar{g}$. \ We use bar to
  denote geometric quantities computed with respect to the metric $\bar{g}$.
  Let $S$ be a hypersurface in $M$, $X$ be a chosen unit normal and at a point
  $p \in S$. Let $H$ and $A$ denote respectively the mean curvature and the
  second fundamental form of $S$ in $M$ and $\sigma$ be the induced metric on $S$. If at some point $p \in S$, the two
  metrics agree $g (p) = \bar{g} (p)$, then near $p$,
\begin{align}
2 (H - \bar{H}) = & (\mathrm{d} \ensuremath{\operatorname{tr}}_{\bar{g}} h
-\ensuremath{\operatorname{div}}_{\bar{g}} h) (\bar{X})
-\ensuremath{\operatorname{div}}_{\bar{\sigma}} W - \langle h, \bar{A}
\rangle_{\bar{\sigma}} \\
& + | \bar{A} |_{\bar{g}} O (|h|^2_{\bar{g}}) + O (| \bar{\nabla}^M
h|_{\bar{g}} |h|_{\bar{g}}), \label{miao piubello}
\end{align}
  where $h = g - \bar{g}$ and $W$ is the dual vector field on $S$ of the
  1-form $h (\cdot, \bar{X})$.
\end{proposition}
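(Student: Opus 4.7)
Since the identity is pointwise and first order in $h=g-\bar{g}$ (with explicit quadratic remainders), the plan is to work in Fermi coordinates adapted to $(S,\bar{g})$. I would choose local coordinates $(x^a,x^n)$ around $p$ so that $S=\{x^n=0\}$, $\bar{g}_{an}=0$ and $\bar{g}_{nn}=1$; in particular $\bar{X}=\partial_n$ along $S$, the induced metric is $\bar{\sigma}_{ab}=\bar{g}_{ab}|_{S}$, and $\bar{A}_{ab}=-\tfrac{1}{2}\partial_n\bar{g}_{ab}|_{S}$. It is convenient to further normalize $(x^a)$ to be $\bar{\sigma}$-normal coordinates on $S$ centered at $p$, so that all $\bar{\sigma}$-Christoffel symbols vanish at $p$ and $\bar{\sigma}$-covariant derivatives reduce to coordinate derivatives there. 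Since both sides of \eqref{miao piubello} are tensorial, verifying the identity at $p$ in this chart suffices.

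Next I would write out the $g$-mean curvature of $S$ in this chart. The unit $g$-normal is $X=g^{ni}\partial_i/\sqrt{g^{nn}}$, and $H=\mathrm{div}_g X$; the same formula with $g$ replaced by $\bar{g}$ produces $\bar{H}$. Writing $g=\bar{g}+h$, I would substitute the standard expansions
\begin{equation*}
g^{ij}=\bar{g}^{ij}-\bar{g}^{ik}\bar{g}^{jl}h_{kl}+O(|h|^2),\qquad (g^{nn})^{-1/2}=1-\tfrac{1}{2}h_{nn}+O(|h|^2),
\end{equation*}
together with
\begin{equation*}
\Gamma^{k}_{ij}[g]-\bar{\Gamma}^{k}_{ij}=\tfrac{1}{2}\bar{g}^{kl}\bigl(\bar{\nabla}_i h_{jl}+\bar{\nabla}_j h_{il}-\bar{\nabla}_l h_{ij}\bigr)+O(|h|\,|\bar{\nabla}h|).
\end{equation*}
The zeroth-order pieces reproduce $\bar{H}$ exactly, so $2(H-\bar{H})$ is entirely captured by the linear-in-$h$ correction plus the two remainder types.

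The heart of the computation is organizing the linear-in-$h$ contribution into the three named terms of \eqref{miao piubello}. The contractions with $\bar{g}$ and $\bar{X}$ split naturally into three groups: the pure normal contractions involving $\bar{X}^i\bar{X}^j\bar{g}^{kl}$ reassemble into $\bigl(d\,\mathrm{tr}_{\bar{g}}h-\mathrm{div}_{\bar{g}}h\bigr)(\bar{X})$; the mixed normal-tangential contractions, which are tangential derivatives of the components $h(\partial_a,\bar{X})$, combine after a single application of the product rule (using vanishing of the $\bar{\sigma}$-Christoffel symbols at $p$) into the tangential divergence $-\mathrm{div}_{\bar{\sigma}}W$, where $W^a=\bar{\sigma}^{ab}h(\partial_b,\bar{X})$; and the purely tangential contractions, which carry a single $\partial_n\bar{g}_{ab}|_S$ arising from $\bar{\nabla}h$ in the $n$-direction, produce $-\langle h,\bar{A}\rangle_{\bar{\sigma}}$ after using $\bar{A}_{ab}=-\tfrac{1}{2}\partial_n\bar{g}_{ab}|_{S}$. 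The first remainder $|\bar{A}|_{\bar{g}}O(|h|^2_{\bar{g}})$ collects contributions in which a quadratic $h$-expansion of $g^{ij}$ or $(g^{nn})^{-1/2}$ multiplies a factor carrying the tangential derivative of $\bar{g}$ in the $n$-direction, i.e.\ a factor of $\bar{A}$. The second remainder $O(|\bar{\nabla}^M h|_{\bar{g}}|h|_{\bar{g}})$ comes from the $O(|h|\,|\bar{\nabla}h|)$ correction to the Christoffel symbols contracted against bounded coefficients.

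The main obstacle is the bookkeeping: verifying that the mixed normal-tangential derivatives combine precisely into a total tangential divergence on $S$ with no bulk leftover, and that the tangential-tangential contractions produce exactly $-\langle h,\bar{A}\rangle_{\bar{\sigma}}$ with the stated sign. Both checks are coordinate invariant, so once they are made at $p$ in the Fermi chart, covariance of both sides upgrades the pointwise identity to the tensorial statement in a neighborhood of $p$, yielding \eqref{miao piubello}.
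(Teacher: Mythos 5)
Your proposal is a correct-in-outline direct computation, but it takes a different route from the paper only in the sense that the paper does not prove this proposition at all: it simply cites \cite[Proposition 2.1]{miao-mass-2021} and adds a remark that one must track the error terms carefully because in the application $|h|_{\bar g}=O(t)$ is small while $|\bar\nabla^M h|_{\bar g}=O(1)$ is not and $|\bar A|_{\bar g}=O(t^{-1})$ blows up. Your Fermi-coordinate linearization is essentially the standard derivation (and presumably the one in the cited reference), and, importantly, you do track the two features the paper actually needs: that every quadratic-in-$h$ term free of $\bar\nabla h$ carries an explicit factor of $|\bar A|_{\bar g}$, and that the remaining error is $O(|\bar\nabla^M h|\,|h|)$ rather than, say, $O(|\bar\nabla^M h|^2)$. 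One small logical slip to fix: the identity is asserted \emph{near} $p$, where $h\neq 0$, and "covariance of both sides" does not upgrade a verification at the single point $p$ (where $h=0$ and $W$, $\langle h,\bar A\rangle$ and both remainders all vanish) to a neighborhood. The correct statement is that your expansion in powers of $h$ is valid at every point $q$ where $|h|_{\bar g}$ is small; so either carry the $\bar\sigma$-Christoffel symbols (they recombine into the covariant operators $\operatorname{div}_{\bar\sigma}$ and $\operatorname{div}_{\bar g}$), or recenter the normal coordinates at each such $q$ and note that both sides are tensorial in $h$, $\bar\nabla h$, $\bar A$, $\bar X$. With that repair the argument is complete and consistent with what the paper imports from the literature.
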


\begin{remark}
  It suffices to follow the proof of {\cite[Proposition 2.1]{miao-mass-2021}}, we have to
  bear in mind that $|h|_{\bar{g}}$ is small but $| \bar{\nabla}^M h|$ is not small
  when getting closer to $p$ in our settings.
\end{remark}

Using the above lemma, we are able to show the following.

\begin{lemma}
  \label{limit of mean}For the foliation constructed in the above,
  \begin{equation}
    \lim_{t \to 0} \lambda_{t} \geq 0.
  \end{equation}
\end{lemma}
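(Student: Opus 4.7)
The plan is to expand the right-hand side of \eqref{asymptotic mean curvature} in Lemma \ref{lem:Lambdt} using Proposition \ref{prop_MeanCur}, and to match the bulk and boundary contributions so that the surviving $O(t^2)$ term is controlled by the boundary mean curvature comparison $H_{\partial M}\geq \bar H_{\partial M}$. Since $|D_t|=\pi\psi'(0)^2 t^2+O(t^3)$, the conclusion $\lim_{t\to 0}\lambda_t\geq 0$ reduces to showing the right-hand side of \eqref{asymptotic mean curvature} is bounded below by $O(t^3)$. Set $h:=g-\delta$. The hypothesis $g(p_+)=\delta(p_+)$ gives $h(p_+)=0$, so $|h|_{\bar g}=O(t)$ and $|\bar\nabla h|_{\bar g}=O(1)$ uniformly on $\bar D_t$.

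Since $\bar D_t$ is a flat disk with $\bar H_t=0$ and $\bar A_t=0$, Proposition \ref{prop_MeanCur} applied with $S=\bar D_t$ (and $\bar X$ its Euclidean unit normal) reduces to the pointwise expansion $2H_t=(d\operatorname{tr}_{\bar g}h-\operatorname{div}_{\bar g}h)(\bar X)-\operatorname{div}_{\bar\sigma}W+O(|\bar\nabla h|\,|h|)$. Integrating over $\bar D_t$ and applying the divergence theorem to $\operatorname{div}_{\bar\sigma}W$ yields
\[
2\int_{\bar D_t}H_t\,d\bar\sigma=\int_{\bar D_t}(d\operatorname{tr}_{\bar g}h-\operatorname{div}_{\bar g}h)(\bar X)\,d\bar\sigma-\int_{\partial\bar D_t}\langle W,\bar\nu\rangle\,d\bar\lambda+O(t^3).
\]
I would then Taylor-expand $\cos\gamma_t-\cos\bar\gamma_t$ along $\partial D_t$ as a linear-in-$h$ expression via the first-variation formulas \eqref{angle first variation} and \eqref{background angle first variation}. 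The bulk integrand is recognized as the three-dimensional Euclidean divergence of $V=(\operatorname{tr}_{\bar g}h)\bar X-h(\bar X,\cdot)^\sharp$; decomposing this divergence into its $\bar X$-transverse and $\bar D_t$-tangential parts produces an additional boundary contribution along $\partial\bar D_t$, and the transverse part is easily bounded by the $O(t^2)$ area estimate. Combining the resulting boundary integrals with $\int_{\partial\bar D_t}\langle W,\bar\nu\rangle\,d\bar\lambda$ and the angle expansion, and then applying the boundary Schoen--Yau rewrite Lemma \ref{lem:IIandMean} along $\partial\bar D_t$, the right-hand side of \eqref{asymptotic mean curvature} collapses to $\int_{\partial D_t}\frac{H_{\partial M}-\bar H_{\partial M}}{\sin\bar\gamma_t}\,d\lambda+O(t^3)$, which is non-negative by hypothesis.

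The main obstacle is the algebraic identification of the cancellation between the bulk divergence of $V$, the $\operatorname{div}_{\bar\sigma}W$ boundary term coming from Proposition \ref{prop_MeanCur}, and the linear-in-$h$ angle expansion, so that only the favorable $H_{\partial M}-\bar H_{\partial M}$ boundary integral survives at leading order. This is essentially a localized version of the Miao--Piubello capillary mass expansion, transplanted from asymptotic infinity to the neighborhood of the conical vertex via rescaling by $t$, and it requires careful bookkeeping of the Codazzi identity along $\partial M$ together with the Taylor expansion of $h$ to the needed order; the scalar curvature $R_g\geq 0$ does not contribute at this order, but enters only at $O(t^4)$ and is not needed for the stated lower bound.
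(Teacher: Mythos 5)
Your starting point---combining Lemma \ref{lem:Lambdt} with Proposition \ref{prop_MeanCur}---is the same as the paper's, but the execution has a genuine gap. You apply the Miao--Piubello expansion only on the disk $\Sigma_t$ and then try to absorb the first-order bulk term $\int_{\Sigma_t}(\mathrm{d}\operatorname{tr}_{\bar g}h-\operatorname{div}_{\bar g}h)(\bar X)$ by writing it as a three-dimensional divergence and peeling off tangential boundary contributions. The transverse part of that splitting, essentially $\int_{\Sigma_t}\partial_3(h_{11}+h_{22})$, is an integral of an $O(1)$ quantity with no sign over a region of area $O(t^2)$; it is therefore $O(t^2)$, which is exactly the order of $\lambda_t|D_t|$ itself, whereas the conclusion requires the total error to be $O(t^3)$ so that $\lambda_t\geq O(t)$. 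Your assertion that this transverse part "is easily bounded by the $O(t^2)$ area estimate" is precisely where the argument fails. Relatedly, your claimed final collapse to $\int_{\partial D_t}\tfrac{H_{\partial M}-\bar H_{\partial M}}{\sin\bar\gamma_t}\,\mathrm{d}\lambda+O(t^3)$ cannot be correct: the mean curvature comparison must enter through a two-dimensional integral over the lateral cap $\partial\Omega_t\cap\partial M$ (of order $O(t^2)$), not through a line integral over the circle $\partial\Sigma_t$ (of order $O(t)$); the two have different orders in $t$.

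The missing idea is to integrate \eqref{miao piubello} over the \emph{entire closed surface} $\partial\Omega_t$ bounding the small region $\Omega_t$ cut off by $\Sigma_t$ near the vertex. Then $\int_{\partial\Omega_t}(\mathrm{d}\operatorname{tr}_{\bar g}h-\operatorname{div}_{\bar g}h)(\bar X)$ is a flux through a closed surface and equals, by the divergence theorem, a volume integral of bounded second derivatives of $h$ over a region of volume $O(t^3)$---hence genuinely $O(t^3)$, with no leftover transverse term. On the lateral piece $\partial\Omega_t\cap\partial M$ the expansion produces $2(H_{\partial M}-\bar H_{\partial M})$ and $\langle h,\bar A_{\partial M}\rangle_{\bar\sigma}$, both nonnegative by hypothesis (the latter from $\sigma\geq\bar\sigma$ and weak convexity), and these are simply discarded. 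Only the $\operatorname{div}_{\bar\sigma}W$ terms survive; summing them over the two faces of $\partial\Omega_t$ yields the line integral $-\int_{\partial\Sigma_t}\tfrac{2}{\sin\bar\gamma_t}(\cos\bar\gamma_t-\cos\gamma_t)$, which matches \eqref{asymptotic mean curvature} and gives $\lambda_t|D_t|\geq O(t^3)$. You should rerun your computation on the closed surface $\partial\Omega_t$ rather than on $\Sigma_t$ alone.
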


\begin{proof}
  We integrate both sides of \eqref{miao piubello} on $\Omega_{t}$ (note that here $\bar{g}=\delta$), we
  obtain
  \[ \int_{\partial \Omega_{t}} 2 (H - \bar{H})
     +\ensuremath{\operatorname{div}}_{\bar{\sigma}} W + \langle h, \bar{A}
     \rangle_{\bar{\sigma}} = \int_{\partial \Omega_{t}} (\mathrm{d}
     \ensuremath{\operatorname{tr}}_{\bar{g}} h
     -\ensuremath{\operatorname{div}}_{\bar{g}} h) (\bar{X}) + O (t^3) . \]
  The $O (t^3)$ remainder terms follow from that $|h|_{\bar{g}} = O
  (t)$, $| \nabla^M h|_{\bar{g}} = O (1)$ and $| \bar{A} |_{\bar{g}} = O
  (t^{- 1})$. Applying the divergence theorem on the right,
  \[ \int_{\partial \Omega_{t}} (\mathrm{d}
     \ensuremath{\operatorname{tr}}_{\bar{g}} h
     -\ensuremath{\operatorname{div}}_{\bar{g}} h) (\bar{X}) =
     \int_{\Omega_{t}} \bar{\nabla}^M_i (\bar{\nabla}_i^M
     \ensuremath{\operatorname{tr}}_{\bar{g}} h - \bar{\nabla}^M_j h_{i j}) =
     O (t^3) . \]
  Here $i, j$ denotes the indices of a local $\bar{g}$-orthonormal frame at
  the tangent space of $\Omega_{t}$. Hence
  \[ \int_{\partial \Omega_{t}} 2 (H - \bar{H})
     +\ensuremath{\operatorname{div}}_{\bar{\sigma}} W + \langle h, \bar{A}
     \rangle_{\bar{\sigma}} = O (t^3) . \]
  On $\Sigma_{t}$, $H = - H_{t}$ and $\bar{H} = 0$, $\bar{A} = 0$, we
  obtain that
  \[ 2 \int_{\Sigma_{t}} H_{t} - \int_{\partial \Omega_{t}}
     \ensuremath{\operatorname{div}}_{\bar{\sigma}} W = \int_{\partial
     \Omega_{t} \cap \partial M} 2 (H_{\partial M} - \bar{H}_{\partial M})
     + \langle h, \bar{A}_{\partial M} \rangle_{\bar{\sigma}} + O (t^3) .
  \]
  From the assumptions of Theorem \ref{main} we have
  \[ 2 \int_{\Sigma_{t}} H_{t} - \int_{\Sigma_{t} \cup (\partial
     \Omega_{t} \cap \partial M)}
     \ensuremath{\operatorname{div}}_{\bar{\sigma}} W \geq O (t^3) .
  \]
  It follows from the same lines of {\cite[(3.18)]{miao-mass-2021}} that
\begin{align}
& \int_{\Sigma_{t}} \ensuremath{\operatorname{div}}_{\bar{\sigma}} W +
\int_{\partial \Omega_{t} \cap \partial M}
\ensuremath{\operatorname{div}}_{\bar{\sigma}} W \\
= & \int_{\partial \Sigma_{t}} g (\bar{X}, \bar{\eta}) + \int_{\partial
\Sigma_{t}} g (- \bar{N}, \bar{\nu}) \\
= & - \int_{\partial \Sigma_{t}} \tfrac{2}{\sin \bar{\gamma}_{t}}
(\cos \bar{\gamma}_{t} - \cos \gamma_{t}),
\end{align}
  which in turn implies that
  \[ 2 \int_{\Sigma_{t}} H_{t} + 2 \int_{\partial \Sigma_{t}}
     \tfrac{1}{\sin \bar{\gamma}_{t}} (\cos \bar{\gamma}_{t} - \cos
     \gamma_{t}) \geq O (t^3) . \]
  From \eqref{asymptotic mean curvature}, we arrive
  \begin{equation}
    \lambda_{t} |D_{t} | \geq O (t^3) .
  \end{equation}
  Since $|D_{t} | = \pi \psi (t)^2 + O (t^3)$, so $\lambda_{t}
  \geq O (t)$. By taking limits, we have that $\lim_{t \to 0}
  \lambda_{t} \geq 0$.
\end{proof}

\begin{remark}
  This lemma and \eqref{asymptotic mean curvature} are quite natural in the
  sense that the limit of $\lambda_{t}$ can be regarded as an averaged mean
  curvature near the conical point.
\end{remark}

In fact, we have a stronger result which asserts that every leaf of the foliation has
nonnegative mean curvature.

\begin{lemma}
  \label{nonnegativity of mean curvature of every leaf}For the foliation
  constructed in the above, for each $t \in (0, \varepsilon)$,
  \begin{equation}
    \lambda_{t} \geq 0.
  \end{equation}
\end{lemma}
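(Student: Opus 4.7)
The plan is to propagate the asymptotic lower bound from Lemma \ref{limit of mean} into a pointwise one along the foliation using the monotonicity supplied by Theorem \ref{H ode}.

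First, the foliation $\{\Sigma_{t,t^2 u_t}\}_{t\in (0,\varepsilon)}$ is CMC with prescribed contact angle by Proposition \ref{prop cmc capillary at zero}, and each boundary curve $\partial \Sigma_{t,t^2 u_t}$ is a small closed curve on $\partial M$ bounding a topological disk about $p_+$, hence separates $p_+$ from $p_-$. Therefore both Theorem \ref{H ode} and the ingredient Lemma \ref{lemma lower bound over separating curve} are available.

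A direction must be fixed in applying Theorem \ref{H ode}. The leaves recede from $p_+$ as our parameter $t$ increases, while $N_t$ points toward $p_+$, so $\langle \partial_t\phi, N_t\rangle<0$; we therefore apply Theorem \ref{H ode} with the opposite vector field $Y=-\partial_t\phi$ (equivalently, reparameterize by $s=\varepsilon-t$ so that $\langle \partial_s\phi, N\rangle>0$). Translating the conclusion of the theorem back to our parameter yields a continuous function $\Psi$ on $(0,\varepsilon)$ such that
\[
g(t):=\exp\!\Bigl(-\int_0^{\varepsilon-t}\Psi(\tau)\,d\tau\Bigr)\,\lambda_t
\]
is monotone non-decreasing in $t$ on $(0,\varepsilon)$.

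Fix $t\in(0,\varepsilon)$ and take $s\to 0^+$ in the inequality $g(s)\leq g(t)$. The refined asymptotic $\lambda_s\geq O(s)$ extracted from the proof of Lemma \ref{limit of mean}, combined with the fact that near the apex one has $\Psi(\tau)\sim c\,(\varepsilon-\tau)^{-1}$ with $c=\cot\bar{\gamma}_0>0$ (so that $\exp(-\int_0^{\varepsilon-s}\Psi)$ behaves like $s^{c}$ up to a positive constant), forces $\liminf_{s\to 0^+}g(s)\geq 0$. Hence $g(t)\geq 0$; since the exponential prefactor in $g(t)$ is positive, $\lambda_t\geq 0$, as required. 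The main technical obstacle is only the sign-bookkeeping in applying Theorem \ref{H ode} together with the limit analysis at the conical apex; once the orientation is fixed so that monotonicity points away from $p_+$ and the divergence of $\int\Psi$ at the tip is accounted for, the lemma follows at once from Theorem \ref{H ode} and Lemma \ref{limit of mean}.
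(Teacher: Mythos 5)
Your proof is correct and follows essentially the same route as the paper: reverse the orientation so that Theorem \ref{H ode} applies, obtain the differential inequality $\lambda' + \Psi(t)\lambda \geq 0$ with $\Psi$ blowing up like a positive multiple of $t^{-1}$ at the apex, and propagate $\liminf_{t\to 0}\lambda_t\geq 0$ forward via the vanishing integrating factor. One small slip: the coefficient of the $t^{-1}$ singularity in $\Psi$ is $2$, not $\cot\bar{\gamma}_0$ (the factor $\cot\bar{\gamma}_0=\psi'(0)$ from $\int_{\partial\Sigma_t}\cot\bar{\gamma}_t\approx 2\pi\psi(t)\psi'(0)$ cancels against the $\psi'(0)$ in $\psi(t)\approx\psi'(0)t$ appearing in $\int_{\Sigma_t}1/v_t\approx\pi\psi(t)^2$), but since only the positivity of this exponent is used, the argument is unaffected.
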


\begin{proof}
  We rename $\Sigma_{t, t^2 u}$ constructed earlier to $\Sigma_{t}$.
  We use the subscript $t$ on the geometric quantities of $\Sigma_{t}$.
  Note that the chosen unit normal $N_{t}$ points in the direction that
  $t$ decreases, so the ordinary differential inequality \eqref{original
  ode} established in Theorem \ref{H ode} is valid but with a reversed sign
  for the foliations near the conical point, that is,
  \begin{equation}
    \lambda' + \Psi (t) \lambda \geq 0, \text{ }t\geq 0 \label{H ode near conical
    point}
  \end{equation}
  where $\lambda (t) = \lambda_{t}$ and
  \[ \Psi (t) = \left( \int_{\Sigma_{t}} \tfrac{1}{v_{t}} \right)^{-
     1} \int_{\partial \Sigma_{t}} \cot \bar{\gamma}_t . \]
  where $v_t:=-\left<\frac{\mathrm{d}}{\mathrm{d}t}\Sigma_t,N_t\right>$.
  Since $\Sigma_{t}$ is constructed via a higher-order perturbation, we see
  that $v_{t} = 1 + O (t)$. Note that $\cot \bar{\gamma}_t=\psi'(0)+O(t)$, we have
  \[ \int_{\Sigma_{t}} \tfrac{1}{v_{t}} = \pi \psi (t)^2 + O
     (t^3), \text{ } \int_{\partial \Sigma_{t}} \cot \bar{\gamma}_t = 2
     \pi t (\psi' (0))^2 + O (t^2) . \]
  Therefore,
  \[ \Psi (t) = 2 t^{- 1} + C_1 (t) \]
  where $C_1 (t)$ is a continuous function of order $O (1)$. So it follows
  from \eqref{H ode near conical point} that $\lambda_{t}$ satisfies the
  ordinary differential inequality
  \begin{equation}
    \tfrac{\mathrm{d}}{\mathrm{d} t} \left[ \exp \left( \int^{t}_0
    C_1(s) \mathrm{d} s\right) t^2 \lambda (t) \right]
    \geq 0
  \end{equation}
  and the lemma now follows combining with Lemma \ref{limit of mean}.
\end{proof}

\subsection{Non-Euclidean metric at conical points}%
\label{sub:non_euclidean}

In this subsection, we assume $g\neq \delta$ at the conical point $O$.
Together with the assumption in Theorem \ref{main}, we actually assume, at the spherical point, we have
\begin{equation}
    |v|_g\ge |v|,\quad \forall 0\neq v\in T_O\partial M,
\end{equation}
and we can find at least one non-zero vector $v\in T_OM$ such that
$|v|_g\neq |v|$.
Note that it is possible that 
\begin{equation}
    |v|_g<|v| \quad\text{ for some non-zero }v \in T_OM.
    \label{eq:lessLengthAtConical}
\end{equation}

Before the construction of a barrier surface near $O$, we need a preliminary result regarding the comparison of cones in $\mathbb{R}^3$ with the standard Euclidean metric and some constant metric. 

For any $a_1,a_2>0$, we define a cone $C_{a_1a_2}\subset \mathbb{R}^3$ by
\[
	C_{a_1a_2}:=\left\{ (a_1t \hat{x}_1, a_2t \hat{x}_2, -t): t> 0, \hat{x} \in D \right\}.
\]
In short, we write $C_{a_1}:=C_{a_1a_2}$.

Given a plane $P$, we write
\[
	C_{a_1a_2}^P:=\text{ the bounded component of }C_{a_1a_2}\backslash P.
\]
Note that for some $P$, $C_{a_1a_2}^{P}$ may not be well-defined, we are only interested in the case of $P$ such that $C_{a_1a_2}^P$ is well-defined.

We write $\partial _F C^{P}_{a_1a_2}:=\partial C^P_{a_1a_2}\cap \partial C_{a_1a_2}$ as the side face of $C^P_{a_1a_2}$ and $\partial _B C^P_{a_1a_2}:=\partial C^{P}_{a_1a_2}\cap P$ the base face of $C^P_{a_1a_2}$.
Suppose $\mathbb{R}^3$ carries a metric $g$, we define the dihedral angle $\measuredangle _g^{C_{a_1a_2}^P}(p):=$the dihedral angle between $\partial _F C^P_{a_1a_2}$ and $\partial _B C^P_{a_1a_2}$ at point $p$ under metric $g$ for $p \in \partial _FC^P_{a_1a_2}\cap \partial _B C^P_{a_1a_2}$.

Now, let us fix $\bar{a}>0$ and write $\bar{C}=C_{\bar{a}}^{\bar{P}}$ where $\bar{P}=\left\{ x^3=-1 \right\}$, the model cone.
We have the following comparison result.
\begin{proposition}
	\label{prop_coneG0}
	Suppose $g_0$ is a constant metric on $\mathbb{R}^3$ and $\bar{C}$ satisfies the following comparisons: 
	\begin{enumerate}[(a)]
		\item $g_0|_{T_O\partial C_{\bar{a}}}\ge \delta|_{T_O\partial C_{\bar{a}}}$.
			\label{it:G0cone}
		\item $H^{g_0}_{\partial_F \bar{C}}(p)\ge H^\delta_{\partial_F \bar{C}}(p)$ for any $p \in \partial _F\bar{C}$.
			\label{it:G0mean}
	\end{enumerate}
	Then, we can find a plane $P$ such that
	\begin{equation}
		\measuredangle _{g_0}^{C_{\bar{a}}^P}(p)\ge \arctan\left(\frac{1}{\bar{a}}\right)=\measuredangle _\delta^{\bar{C}}(p),
		\label{eq:propAngleG0}
	\end{equation}
	for any $p \in \partial _F C_{\bar{a}}^P\cap \partial _B C^P_{\bar{a}}$. Moreover, if the inequality \eqref{eq:propAngleG0} is strict for every $p\in \partial _F C_{\bar{a}}^P\cap \partial _B C^P_{\bar{a}}$ except when $g_0=\delta$.
\end{proposition}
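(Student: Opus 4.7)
The plan is to pass from the constant metric $g_0$ to the Euclidean metric by a global linear isometry and then solve a purely Euclidean existence question. Let $A$ be the symmetric positive-definite square root of the matrix of $g_0$ in the standard basis; then $\Phi(x)=Ax$ defines a linear isometry $(\mathbb{R}^3,g_0)\to(\mathbb{R}^3,\delta)$ fixing the origin, sending planes to planes and cones-with-apex-$O$ to cones-with-apex-$O$, and preserving mean curvatures and dihedral angles provided each is measured in the appropriate metric. Under $\Phi$ the model cone $C_{\bar a}$ becomes an elliptic cone $\widetilde C=\Phi(C_{\bar a})\subset(\mathbb{R}^3,\delta)$, and any plane $P$ corresponds to the Euclidean plane $\widetilde P=\Phi(P)$; the proposition reduces to finding $\widetilde P$ such that the Euclidean dihedral angle along the edge $\partial_F\widetilde C\cap\widetilde P$ is at least $\arctan(1/\bar a)$ at every point.

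Next I would translate (a) and (b) into this Euclidean picture. Condition (a) becomes $\|Av\|_\delta\ge\|v\|_\delta$ for $v$ in the tangent data of $\partial C_{\bar a}$ at $O$, which pins down the cross-section of $\widetilde C$ at unit slant-distance. Condition (b), combined with the closed-form Euclidean mean curvature $H^\delta_{\partial_F\bar C}=\cot(\arctan\bar a)/s$ at slant-distance $s$ for a circular cone, gives a pointwise Euclidean lower bound on $H^\delta_{\partial_F\widetilde C}$, which becomes an algebraic inequality on the eigenvalues of $A$. Together these two comparisons express that $\widetilde C$ is no wider than $\bar C$ in every transverse direction at any given slant-distance.

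With these in hand I would take $\widetilde P$ perpendicular (in $\delta$) to the axis of $\widetilde C$, and compute the Euclidean dihedral angle at a generic edge point in the principal-axis frame of $\widetilde C$; the shape inequalities derived above are precisely what is needed to bound this angle from below by $\arctan(1/\bar a)$ pointwise. For strictness, if $g_0\neq\delta$ then one of the two comparisons is strict along some direction, making the dihedral angle strictly greater at the corresponding edge point; by the closed-form expression for the angle along the edge this strict inequality propagates to every point.

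The main obstacle is guaranteeing the dihedral-angle estimate at \emph{every} point of the (noncircular) edge rather than merely on average: condition (b) is a scalar pointwise bound, whereas the dihedral angle is governed by the full shape of the cross-section. Handling this requires working in the principal-axis frame of $g_0$ and using the rotational symmetry of $\bar C$ around the $e_3$-axis together with the freedom to tilt $\widetilde P$ away from horizontal, so that the worst edge-point is still under control.
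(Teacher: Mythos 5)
Your proposal follows essentially the same route as the paper: pull $g_0$ back to $\delta$ by a linear map (the paper's $L$; your $A=g_0^{1/2}$ composed with a rotation), reduce to an elliptic cone $C_{a_1a_2}$ versus the circular model, and verify the dihedral-angle bound for the plane perpendicular to the axis via the explicit edge formula. The one step you only assert --- that (a) and (b) together force both semi-axis slopes to satisfy $a_i\le\bar a$ --- is in fact the crux; the paper obtains it by combining the length bound from (a) with the mean-curvature comparison at $\theta=0,\pi/2$ to get $a_1\bar a\ge a_2^2$ and $a_2\bar a\ge a_1^2$, whence $\bar a\ge a_1,a_2$, after which the pointwise angle bound and the rigidity statement follow directly.
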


\begin{proof}
	[Proof of Proposition \ref{prop_coneG0}]
	We choose a linear map $L \in GL(3,\mathbb{R})$ such that $L_*g_0=\delta$ and assume $L(C_{\bar{a}})=C_{a_1a_2}$, $L(\bar{C})=C_{a_1a_2}^{P_0}$ for some plane $P_0$.
	For simplicity, we write $C=C_{a_1a_2}^{P_0}$.

	Let us prove the following claim at first.

	\noindent\textbf{Claim.} $\bar{a}\ge a_1$ and $\bar{a}\ge a_2$.

	We need to use condition \ref{it:G0cone} and \ref{it:G0mean} to prove this claim. See Figure \ref{fig:comparison-between-c1-and-c} for the comparison between two cones.

\begin{figure}[ht]
    \centering
	\begingroup
	\def\svgwidth{0.8\columnwidth}
	\import{./figures/}{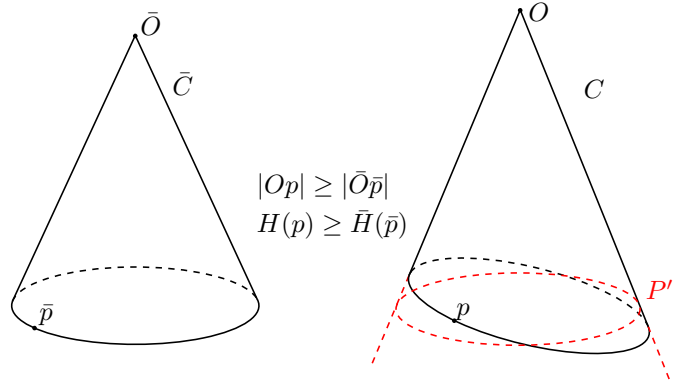}
	\endgroup

    \caption{Comparison between $\bar{C}$ and $C$}
    \label{fig:comparison-between-c1-and-c}
\end{figure}

	Based on condition \ref{it:G0cone}, for any $q \in \partial _F C \cap \partial _B C $, we have (recall that $|\bar{q}|=\sqrt{1+\bar{a}^2}$ for any $\bar{q} \in \partial _F \bar{C}\cap \partial _B \bar{C}$),
	\begin{align}
		|q|\ge{} & \sqrt{1+\bar{a}^2}.\label{eq:pfLength}
	\end{align}
	For any $\theta \in [0,2\pi]$, we write $t_\theta$ such that $(a_1t_\theta\cos \theta,a_2t_\theta\sin \theta,-t_\theta) \in \partial _F C \cap \partial _B C$ and denote $q_\theta=(a_1t_\theta\cos\theta,a_2t_\theta\sin\theta,-t_\theta)$.
	The condition \eqref{eq:pfLength} can be rewritten as,
	\begin{equation}
		t^2_\theta(a_1^2\cos ^2\theta+a_2^2\sin ^2\theta+1)\ge 1+\bar{a}^2.
		\label{eq:pfLenComp}
	\end{equation}

	On the other hand, we note the mean curvature of $\partial _F C$ is given by (see \eqref{append:mean} in Appendix \ref{appdsec:meanAngleCone})
	\begin{equation}
		H^\delta_{\partial _F C}(q)=
		\frac{a_1a_2(a_1^2\cos ^2\theta+a_2^2\sin ^2\theta+1)}{t\left( a_1^2a_2^2+a_2^2\cos ^2\theta+a_1^2\sin ^2\theta \right)^{\frac{3}{2}}},
		\label{eq:pfMeanC}
	\end{equation}
	if $q=(a_1t\cos \theta,a_2t\sin \theta,-t)$.
	In particular, if we choose $C=\bar{C}$ in \eqref{eq:pfMeanC}, we have
	\[
		H^\delta_{\partial _F\bar{C}}(\bar{q})=\frac{1}{\bar{a}\sqrt{\bar{a}^2+1}}
	\]
	for any $\bar{q}\in \partial _F\bar{C}\cap \partial _B\bar{C}$.
	Hence, for any $\theta \in [0,2\pi]$, based on condition \ref{it:G0mean}, we have
	\begin{equation}
		\frac{a_1a_2(a_1^2\cos ^2\theta+a_2^2\sin ^2\theta+1)}{t_\theta\left( a_1^2a_2^2+a_2^2\cos ^2\theta+a_1^2\sin ^2\theta \right)^{\frac{3}{2}}}\ge \frac{1}{\bar{a}\sqrt{\bar{a}^2+1}}.
		\label{eq:pfMeanComp}
	\end{equation}

	Now, if we choose $\theta=0$ in \eqref{eq:pfLenComp} and \eqref{eq:pfMeanComp}, we have
	\begin{align*}
		t_\theta \sqrt{a_1^2+1}\ge{}& \sqrt{1+\bar{a}^2},\\
		\frac{a_1}{t_\theta a_2^2\sqrt{ a_1^2+1 }}\ge{}& \frac{1}{\bar{a}\sqrt{\bar{a}^2+1}}.
	\end{align*}
	Hence, we have
	\begin{equation}
		a_1 \bar{a}\ge a_2^2 \frac{t_\theta \sqrt{a_1^2+1}}{\sqrt{\bar{a}^2+1}}\ge a_2^2.
		\label{eq:pfAbound}
	\end{equation}
	Similarly, we have
	\begin{equation}
		a_2 \bar{a}\ge a_1^2.
		\label{eq:pfBbound}
	\end{equation}
	From \eqref{eq:pfAbound} and \eqref{eq:pfBbound}, we get
	\begin{align*}
		a_1\bar{a}^3\ge{}& a_2^2\bar{a}^2\ge a_1^4\implies \bar{a}\ge a_1,\\
		a_2\bar{a}^3\ge{}& a_1^2\bar{a}^2\ge a_2^4\implies \bar{a}\ge a_2.
	\end{align*}
	Hence, we finish the proof of claim.

	Now, we choose $P'=\left\{ x_3=-1 \right\}$. See Figure \ref{fig:comparison-between-c1-and-c} for a location of $P'$.
	Let us show that
    \begin{equation}
        \cos\measuredangle _\delta^{C_{a_1a_2}^{P'}}(q)\le \frac{\bar{a}}{\sqrt{1+\bar{a}^2}}\quad \text{ for }q \in \partial _FC_{a_1a_2}^{P'}\cap \partial _B C_{a_1a_2}^{P'}. 
        \label{eq:pfAngleInq}
    \end{equation}
	Indeed, we know the dihedral angle $\measuredangle _g ^{C_{a_1a_2}^{P'}}(p)$ can be computed by the following formula (cf. \eqref{append:angle} in Appendix \ref{appdsec:meanAngleCone}):
	\[
		\cos \measuredangle _\delta ^{C_{a_1a_2}^{P'}}(q)=\frac{a_1a_2}{\sqrt{a_1^2a_2^2+a_2^2\cos \theta+a_1^2\sin ^2\theta}}.
	\]
	Hence, \eqref{eq:pfAngleInq} is equivalent to
	\begin{equation}
		\frac{a_2^2\cos ^2 \theta+a_1^2\sin ^2\theta}{a_1^2a_2^2}\ge \frac{1}{\bar{a}^2}.
		\label{eq:pfRightConeAngle}
	\end{equation}
	This is an esay consequence of $a_1\le \bar{a}, a_2\le \bar{a}$.
	
	Hence, $P:=L^{-1}(P')$ is the plane we are looking for.

	Now let us suppose equality \eqref{eq:propAngleG0} holds for some $p$, we know equality \eqref{eq:pfRightConeAngle} holds for some $\theta \in [0,2\pi]$, which implies at least one of $a_1,a_2$ equals to $\bar{a}$.
	We suppose $a_1=\bar{a}$.
	But based on the proof of $\bar{a}\ge a_1$, we know $\bar{a}=a_1$ only if \eqref{eq:pfAbound} and \eqref{eq:pfBbound} are all equalities.
	This implies $a_1=a_2=\bar{a}$.

	Hence, \eqref{eq:pfRightConeAngle} holds for every $\theta \in [0,2\pi]$.
\end{proof}

Now, we are ready to construct the desired surface.

\begin{proposition}
	\label{prop_barrierG0neqD}
	If $g_0\neq \delta$, then we can construct a surface $\Sigma$ near $0$ such that $\Sigma$ has positive mean curvature and the contact angle $\measuredangle _g(\Sigma,\partial M)>\bar{\gamma}$ at any point of $\partial \Sigma$.
\end{proposition}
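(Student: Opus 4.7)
The plan is to bend the plane $P$ supplied by Proposition \ref{prop_coneG0} into a slightly curved spherical cap, then rescale to bring it near the conical point $O$, and exploit the strict angle inequality of Proposition \ref{prop_coneG0} (available whenever $g_0\neq \delta$) to absorb the perturbations introduced by the bending and by replacing $g_0$ with $g$.

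Since $g_0\neq \delta$, Proposition \ref{prop_coneG0} supplies a plane $P$ such that every boundary point $p\in\partial _F C_{\bar a}^P\cap \partial _B C_{\bar a}^P$ satisfies $\measuredangle_{g_0}^{C_{\bar a}^P}(p)>\bar\gamma$; by compactness there is a uniform gap $\delta_0>0$ with $\measuredangle_{g_0}^{C_{\bar a}^P}(p)-\bar\gamma\geq 2\delta_0$. The flat disc $D_0:=\partial _B C_{\bar a}^P$ has zero mean curvature under both $\delta$ and $g_0$, so I would replace it by a spherical cap. For $R>0$ large, let $\Sigma_R$ be the portion of the Euclidean sphere of radius $R$ tangent to $P$ at its centre, bulging into $C_{\bar a}^P$ and truncated by $\partial C_{\bar a}$. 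As a convex surface, $\Sigma_R$ has mean curvature bounded below by a positive constant $c_R>0$ under both $\delta$ and $g_0$ (the latter via the linear map $L$ with $L_\ast g_0=\delta$, which preserves convexity). As $R\to\infty$, $\Sigma_R\to D_0$ in $C^1$, so the contact angle of $\Sigma_R$ with $\partial C_{\bar a}$ under $g_0$ converges uniformly to the dihedral angle of $P$; for $R$ sufficiently large this contact angle exceeds $\bar\gamma$ by at least $\delta_0$.

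Next I rescale: set $\Sigma:=t\Sigma_R$ for small $t>0$, so $\Sigma$ lies in a small neighbourhood of $O$. Because $g_0$ is constant, contact angles under $g_0$ are scale-invariant, and the mean curvature under $g_0$ becomes of order $c_R/t$, while $|\bar A_\Sigma|_{g_0}=O(1/(Rt))$. To pass to the ambient metric $g$, write $g=g_0+h$ with $h(O)=0$, so on $\Sigma$ one has $|h|_{g_0}=O(t)$ and $|\bar\nabla h|_{g_0}=O(1)$. The contact angle under $g$ differs from that under $g_0$ by $O(|h|_{g_0})=O(t)$, which is smaller than $\delta_0$ once $t$ is small, so the strict angle inequality persists. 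For the mean curvature, applying Proposition \ref{prop_MeanCur} locally to $\Sigma$ with respect to $g$ and $g_0$ yields
\[
H^g=H^{g_0}+O\bigl(|\bar\nabla h|_{g_0}+|h|_{g_0}|\bar A_\Sigma|_{g_0}\bigr)=\tfrac{c_R}{t}+O(1),
\]
which is strictly positive for $t$ sufficiently small. Hence $\Sigma=t\Sigma_R$ is the required barrier.

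The main obstacle is the tension between needing \emph{positive} mean curvature and preserving the strict angle inequality: the flat disc $D_0$ gives the angle but not the mean curvature, while bending $D_0$ necessarily perturbs the angle, and passing from $g_0$ to $g$ perturbs both. The strictness in Proposition \ref{prop_coneG0} produces the quantitative gap $\delta_0>0$ that accommodates both perturbations, and the scaling is designed so that the $O(1/t)$ leading term of $H^{g_0}$ on $t\Sigma_R$ dominates the $O(1)$ error incurred by the exchange of metrics.
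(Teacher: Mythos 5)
Your argument is correct in outline but follows a genuinely different route from the paper. The paper keeps the flat slices $\Sigma_t=tP\cap M$ and perturbs them by $t^2u$, where $u$ solves a linear Robin problem; since the homogeneous problem may have a nontrivial kernel, the Fredholm alternative (Lemma \ref{lem_exist_Robin}) is used to solve $\Psi(0,u)=(K,f)$ with $K$ an arbitrarily large constant and $f$ a correction in the cokernel, and the strict angle gap from Proposition \ref{prop_coneG0} then absorbs the $O(t)$ angle error caused by $f$. You instead build the barrier explicitly: bend $P$ into a spherical cap $\Sigma_R$, use convexity (transported by $L$) to get $H^{g_0}\geq c_R>0$, and let the $1/t$ scaling of the mean curvature under dilation dominate the $O(1)$ error from exchanging $g_0$ for $g$; this avoids all PDE theory and is more elementary, at the cost of producing a barrier with merely positive (rather than constant) mean curvature — which is all Proposition \ref{prop_barrierG0neqD} requires. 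Two points should be made explicit to close the argument. First, fix the orientation: you need the cap, curved toward the vertex, to have the correct sign of mean curvature relative to the normal used in the variational problem \eqref{action}, so that it is a barrier on the correct side of the minimiser. Second, your error budget for the angle only accounts for the bending ($O(1/R)$) and the metric change ($O(t)$); there are two further $O(t)$ discrepancies — between $\partial M$ and its tangent cone $C_{\bar a}$ (since $\psi(t)=\psi'(0)t+O(t^2)$, the tangent planes of $\partial M$ at height $t$ differ from those of the cone by $O(t)$), and between the prescribed angle $\bar\gamma$ at height $t$ and its limit $\arctan(1/\bar a)$ — both of which must also be absorbed by $\delta_0$. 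Neither affects the validity of the scheme, since all perturbations vanish as $R\to\infty$ and then $t\to 0$.
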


\begin{proof}
Recall that $\partial M$ is given by
\[
	\left\{ (\psi(t)\hat{x},-t):t \in [0,\varepsilon),\hat{x} \in \partial D \right\}
\]
near 0.
We assume $g=g_0+O(t)$ and $g_0\neq\delta$.

From Proposition \ref{prop_coneG0}, we know we can find a plane $P$, which is of distance 1 from $O$ such that
\[
	\lim_{t\rightarrow 0} \measuredangle _g(\partial M,tP)> \lim_{t\rightarrow 0} \bar{\gamma}_t,
\]
where we can actually find $\lim_{t\rightarrow 0} \bar{\gamma}_t=\arctan(\frac{1}{\psi'(0)})$.
Here $tP:=\left\{ tp:p \in P \right\}$.

Now, we choose $\Sigma_t:=tP\cap M$.
Let $P_0$ be the plane in $\mathbb{R}^3$ which is parallel to $P$ and also passes through $O$, $\bar{N}_0$ the unit normal vector of $P_0$ under metric $\delta$.
We write $E:=\lim_{t\rightarrow 0} \frac{1}{t}(tP\cap M-t\bar{N}_0)$ in the sense of sets and note that $E$ is indeed an ellipse in $P_0$.

By the implicit function theorem, we know $\Sigma_t$ can be parameterized by
\[
	\varphi(t,\hat{x})-t \bar{N}_0 \text{ for }\hat{x} \in E,
\]
where $\varphi(t,\hat{x}):[0,\varepsilon)\times E \rightarrow P_0$ such that $\varphi(t,\hat{x})=t\hat{x}+O(t^2)$.
For simplicity, we use $\Sigma_t(\hat{x})$ to denote such a parametrization.
Now for any $u \in C^{2,\alpha}(E)\cap C^{1,\alpha}(\bar{E})$, we can define
\[
	\Sigma_{t,u}:=\left\{ \Sigma_{t+\frac{u(\hat{x})}{\left<  Y_t(\hat{x}), N_t(\hat{x}) \right>_g}}(\hat{x}):\hat{x} \in E \right\}.
\]
where $Y_t(\hat{x}):=\frac{\partial }{\partial t}\Sigma_t(\hat{x})$ is the variational vector field generated by foliation $\Sigma_t$ and $N_t$ is the normal vector field of $\Sigma_t$ under metric $g$.

Note that
\begin{align*}
	\lim_{t\rightarrow 0} N_t(\hat{x})={}&N_0,\\
	\lim_{t\rightarrow 0} Y_t(\hat{x})={}& \hat{x}-\bar{N}_0,\\
	\lim_{t\rightarrow 0} \left< Y_t(\hat{x}), N_t(\hat{x}) \right> _g={}&-\left< N_0,\bar{N}_0 \right>_{g_0}.
\end{align*}
Hence, $-\left< Y_t(\hat{x}),N_t(\hat{x}) \right>$ has a positive lower bound if we choose $t$ sufficient small.
In particular, $\Sigma_{t,u}$ is well-defined if $u=o(t)$.
Replace $u$ by $t^2u$ and assume $u=O(1)$, we have
 \begin{align*}
 	H_{t,t^2u}={} & \Delta_t^E u + H_t+O(t)\\
   \frac{\cos \gamma_{t,t^2u}-\cos \gamma_t}{t}={} & \sin \gamma_t \frac{\partial u}{\partial \nu_t^E}+(-A_{\partial M}(\eta_t,\eta_t)+\cos \gamma_t A_t(\nu_t,\nu_t) )t u+O(t)
 \end{align*}
where $\Delta_t^E$ is the Laplace-Beltrami operator on $E$ under the metric $\frac{1}{t^2}\Sigma_t^*(g)$, $\nu_t^E$ the outer unit normal vector field of $\partial E$ in $E$ under metric $\frac{1}{t^2}\Sigma_t^*(g)$. It is easy to verify that the metric $\frac{1}{t^2}\Sigma_t^*(g)$ converges to the constant metric $g_0$ as $t\to 0$.
As usual, we use subscript $t$ to denote the quantities associated with surface $\Sigma_t$ under metric $g$.
\begin{remark}
    Note that $\gamma_t$ is defined as the dihedral between $\partial M$ and $\Sigma_t$ under metric $g$.
    But the value of $\bar{\gamma}$ at point $(\psi(t)\hat{x},t)$ is still defined as the dihedral angle between $\{x_3=-t\}$ and $\partial M$, and $\bar{\gamma}_{t}$ is the function $\bar{\gamma}$ restricted on boundary of $\Sigma_{t}$.
\end{remark}
We define a map $\Psi(t,u)$ as follows
\begin{equation}
  \Psi (t, u) = \left( H_{t, t^2 u},
 t^{- 1} ( \cos \gamma_{t, t^2 u}- \cos \gamma_t) \right), \text{ }t>0. 
\end{equation}
The difference to \eqref{Psi} is that we do not subtract the average of $H_{t,t^2 u}$ over $E$ from the first component of $\Psi$ and we subtract $\cos\gamma_t$ instead of $\cos\bar{\gamma}_{t,t^2u}$ from the second component of $\Psi$.  We extend the definition to $t=0$ by taking limits, we see that
\begin{equation}
\Psi(0,u) = \lim_{t\to 0} \Psi(t,u) =\left(\Delta^E_0 u+H_0, \frac{\partial u}{\partial \nu_{0}^E}+hu\right),
  \end{equation}
where 
\begin{align}
    h:={}&\lim_{t\to 0}(-A_{\partial M}(\eta_t,\eta_t)+\cos \gamma_t A_t(\nu_t,\nu_t) )t,
\end{align}
and $\Delta^E_0$ is the Laplace-Beltrami operator on $E$, $\nu^E_0$ the outer unit normal vector field of $\partial E$ in $E$ and both of them is defined  under constant metric $g_0$ restricted on $E$.
Note that $|A_t|, |A_{\partial M}|$ are all bounded by $\frac{C}{t}$ when $t$ is small for a fixed constant $C$, we know $h$ is well-defined.

With the standard arguments for the elliptic PDE (Fredholm alternative), we can prove the following existence results.

\begin{lemma}
	\label{lem_exist_Robin}
	We consider the following elliptic PDE problem with the Robin boundary condition:
	\begin{equation}
		\begin{cases}
		\Delta_0^Eu=f_1, & \text{ in }E \\
		\frac{\partial u}{\partial \nu^E_0}+hu=f_2, & \text{ on }\partial E.
		\end{cases}
		\label{eq:RobinPDE}
	\end{equation}
	Let $\mathcal{N}$ be the collection of all $u \in C^{2,\alpha}(E)\cap C^{1,\alpha}(\overline{E})$ such that it solves Problem \eqref{eq:RobinPDE} with $f_1=0$ and $f_2=0$ (the collection of solutions to the corresponding homogeneous problem).
	Then $\mathcal{N}$ is a finite dimensional space and Problem \eqref{eq:RobinPDE} has solutions if and only if $f_1,f_2$ satisfies
	\[
		\int_{ E} f_1u+\int_{ \partial E} f_2u=0,
	\]
	for any $u \in \mathcal{N}$.
\end{lemma}

Based on Lemma \ref{lem_exist_Robin}, we claim that we can find a solution to $\Psi(0,u)=(K,f)$ for any constant $K\in \mathbb{R}$ and some suitable function $f \in C^{1,\alpha}(\partial E)$.

To prove this claim, we choose $\left\{ v_k \right\}_{k=1}^m\subset C^{2,\alpha}(E)\cap C^{1,\alpha}(\overline{E})$ to be a basis of $\mathcal{N}$.
At first, we show that $\left\{ (v_k)|_{\partial E} \right\}_{k=1}^m$ is a linearly independent subset of $C^{1,\alpha}(\partial E)$.
If not, that means we can find a non-zero vector $(a_1,a_2,\cdots ,a_m)\in \mathbb{R}^m$ such that if we define $v:=\sum_{k =1}^{m}a_kv_k$, we have $v=0$ on $\partial E$ and $v$ also solves the Problem \eqref{eq:RobinPDE} with $f_1=0,f_2=0$.
But the only function satisfying $\Delta_0^Eu=0$ in $E$ and $u=0$ on $\partial E$ is the zero function.
Hence $v=0$ in $E$ and this contradicts that $\left\{ v_k \right\}$ is a basis of $\mathcal{N}$.
Hence, for any $K \in \mathbb{R}$, we can find the following linear system
\[
	\sum_{l=1 }^{m}a_l\int_{ \partial E}v_kv_l=-\int_{ E} v_k(K-H_0), \text{ for }k=1,2,\cdots ,m,
\]
has a unique solution $(a_1,a_2,\cdots ,a_m) \in \mathbb{R}^m$.
Then, we can choose $f=\sum_{i =1}^{m}a_l v_l$ and based on Lemma \ref{lem_exist_Robin}, we can find $u_0$ such that $\Psi(0,u_0)=(K,f)$ and finish the proof of this claim.

Now, we choose $u_t=u_0$ and consider the mean curvature $H_{t,t^2u_t}$ and contact angle $\gamma_{t,t^2u_t}$ based on $u_t$ we just constructed.
At first, we can choose $K$ to be a sufficiently large number such that $K-H_0>0$.
Then, by the continuity, we have $H_{t,t^2u_t}=\Psi_1(t,u_t)>0$ for any $t>0$ sufficient small.
On the other hand, we have
\[
	\cos \gamma_{t,t^2u_t}=t \Psi_2(t,u_t)+\cos \gamma_t.
\]
Based on the construction of $\Sigma_t$ (Proposition \ref{prop_barrierG0neqD}), we know there exists sufficient small positive numbers $\varepsilon>0$ and $\delta>0$ such that $\cos \gamma_t < \cos \overline{\gamma}_t-\varepsilon$ for all $0<t<\delta$.
By the continuity of $\Psi_2$, we can find $\cos \gamma_{t,t^2u_t}\le \cos \overline{\gamma}_t-\frac{\varepsilon}{2}$ if we choose $t$ sufficiently small.
This mean the contact angle $\gamma_{t,t^2u_t}> \overline{\gamma}_{t,t^2 u_t}$.
Hence, $\Sigma_{t,t^2u_t}$ is a barrier.
\end{proof}

\subsection{Proof of conical case of main theorem}%
\label{sub:conical}

Now we can finish the proof of the conical case.
\begin{proof}[Proof of case 2 of Theorem \ref{main}]
	Our goal is to choose a barrier $\Sigma$ near the conical point.
  If $g=\delta$ at the conical point, then by Lemma \ref{nonnegativity of mean curvature of every leaf}, we can find $t> 0$ small such that $\Sigma_{t,t^2u_t}$ is of nonnegative mean curvature and
  meets the boundary $\partial M$ at prescribed angles $\bar{\gamma}$.
  Hence, we choose $\Sigma=\Sigma_{t}$.

  If $g\neq \delta$ at the conical, then based on Proposition \ref{prop_barrierG0neqD}, we choose $\Sigma$ such that the conclusion of Proposition \ref{prop_barrierG0neqD} is true.

  So
  $\Sigma$ is a barrier for the existence of the minimiser of
  \eqref{action} and we can find a minimiser of \eqref{action} in between
  $\Sigma$. The rest of the proof is the same as Section \ref{slab
  case}.
\end{proof}

\section{Spherical case}%
\label{sec:spherical}

In this section, we consider the spherical case of Theorem \ref{main}. Unlike the conical case, we need a finer analysis of asymptotic behaviors of the mean curvatures and contact angles near the spherical points.
To illustrate the key idea to the construction of foliations, we will work under the assumption $g=\delta$ at the spherical point in Subsections \ref{sub:asymptotic_analysis}-\ref{sub:cmc_foliations} at first.
We carry out the asymptotic analysis in Subsection \ref{sub:asymptotic_analysis} and showed that we can construct a good foliation near the spherical point $O$ under the assumption $g=\delta$ at $O$ in Subsections \ref{sub:linearized}, \ref{sub:cmc_foliations}.

In Subsections \ref{sub:non_euclideanSphere}-\ref{sub:construction_of_foliation}, we only need to assume $\sigma \ge \bar{\sigma}$ at the spherical point only to construct the desired barrier. The analysis applies as well to the case considered in previous Subsections \ref{sub:asymptotic_analysis}-\ref{sub:cmc_foliations} where the stronger assumption that $g = \delta$ at the spherical point is used.
Finally, we can finish the proof of the last case of our main theorem in Subsection \ref{sub:proof_of_spherical_case_of_main_theorem}.


Now, we assume $\partial M$ is given by
\[
	(\phi(t)\hat{x}, -t^k),\quad \hat{x} \in \partial D, t \in [0,\varepsilon)
\]
near vertex $p_+$ for some smooth function $\phi(t)$ defined on $[0,\varepsilon)$ with $\phi'(0)=0$ and $\phi'(0)>0$.
Without loss of generality, we assume $p_+=O$.

In particular, if $k=1$, then $\partial M$ is of a conical case.
This is the case we have considered in the last section.
In general, we only consider the case of $k\ge 2$ in this section.
If $k=2$, then $\partial M$ is of a spherical case.
Note that $M$ is given by $(\phi(t)\hat{x},-t^k)$ for $\hat{x} \in D$ and $t \in [0,\varepsilon)$ near $O$.

We define surface $\Sigma_t$ by
\[
	\Sigma_t=\{ z=-t^k \}\cap M,
\]
and for any $u \in C^{2,\alpha}\cap C^{1,\alpha}(\bar{D})$, we define
\[
	\Sigma_{t,u}:= \{ \phi((t^k+u(\hat{x}))^{\frac{1}{k}}\hat{x},-t^k-u(\hat{x})):\hat{x}\in D \}.
\]

Note that $\Sigma_{t,u}$ is well-defined if $u=o(t^k)$ for small $t$ enough.

Now we assume $u=O(t^{k+1})$.
Replace $u$ by $t^{k+1}u$, similar from \eqref{mean curvature taylor expansion} and \eqref{angle variation2}, we have
\begin{align}
	\frac{H_{t,t^{k+1}u}}{t^{k-1}}={} & \frac{t^2}{\phi^2(t)}\Delta_t^D u +\frac{H_t}{t^{k-1}}+O(t)\label{eq:meank} \\
	\frac{\cos \gamma_{t,t^{k+1}u}-\cos \bar{\gamma}_{t,t^{k+1}u}}{t^{2k-1}}={} & \frac{\sin \gamma_t}{\phi(t)t^{k-2}} \frac{\partial u}{\partial \nu_t^D}+(A_{\partial M}(\eta_t,\eta_t)-\cos \gamma_tA(\nu_t,\nu_t)\nonumber \\
	&-\bar{A}_{\partial M}(\bar{\eta}_t,\bar{\eta}_t))\frac{u}{t^{k-2}}+\frac{\cos \gamma_t-\cos \bar{\gamma}_t}{t^{2k-1}}+O(t).
	\label{eq:anglek}
\end{align}
Here, we use the subscript $t$ to denote the quantities related to surface $\Sigma_t$ and the subscript $(t,t^{k+1}u)$ to denote the quantities related to surface $\Sigma_{t,t^{k+1}u}$.
In particular, if those quantities are computed with respect to the metric $\delta$, we add a bar for those quantities.

To construct a constant mean curvature foliation, we need several preliminary results to understand the asymptotic behavior of $\frac{H_t}{t^{k-1}}$ and $\frac{\cos \gamma_t -\cos \bar{\gamma}_t}{t^{2k-1}}$.

\subsection{Asymptotic analysis}%
\label{sub:asymptotic_analysis}
We now compute the asymptotic behavior of $\cos \gamma_{t}$ and $H_t$ near $t=0$.
Note that all the computation works for general $k$ here.
In this subsection, we will assume $g=\delta$ at $O$.
But indeed, all the result in this section is valid with $g_0$ in place of $\delta$ if $g=g_0$ at $O$ where $g_0$ is a constant metric on $\mathbb{R}^3$.

\begin{proposition}
	Suppose $P$ and $P'$ are two planes in $\mathbb{R}^{3}$ with $p \in P\cap P'$.
	Let $g$ be the metric on $\mathbb{R}^3$ such that $g=\delta+ht+O(t^2)$ at $p$ where $|h|\le C$ for some fixed positive constant $C$.
	If $|P-P'|\le Ct^k$ for some fixed constant $C$ and small $t>0$ with $k\ge 1$, then we have
	\[
		\cos \measuredangle _g(P,P')=\cos \measuredangle_\delta (P,P')+L(h)t^{2k+1}+O(t^{2k+2})
	\]
	at point $p$.
	\label{prop:angPlane}
\end{proposition}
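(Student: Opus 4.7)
The plan is to prove the proposition by a direct coordinate computation at the point $p$, exploiting the fact that a naive $O(t)$ correction to the angle must cancel because the two planes already agree at leading order in the small parameter $\tau$.

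First, I would choose coordinates centered at $p$ so that $P = \{x^3 = 0\}$, $P \cap P'$ is the $x^2$-axis, and $P' = \{x^3 = \tau x^1\}$ for some $\tau$ with $|\tau| = O(t^k)$ (this encodes the hypothesis $|P-P'| \le C t^k$). The intrinsic formula for $\cos\measuredangle_g(P, P')$ is $g(v, v')/(|v|_g |v'|_g)$, where $v \in P$ and $v' \in P'$ are the vectors $g$-orthogonal to $P \cap P'$ inside each plane. Solving the orthogonality conditions yields
\[
v = \partial_1 - t h_{12}\partial_2 + O(t^2), \qquad v' = \partial_1 + \tau\partial_3 - t(h_{12} + \tau h_{23})\partial_2 + O(t^2),
\]
with $h_{ij} := h(\partial_i, \partial_j)$. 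Plugging in produces the three expansions
\begin{align*}
g(v, v') &= 1 + th_{11} + t\tau h_{13} + O(t^2), \\
g(v, v) &= 1 + th_{11} + O(t^2), \\
g(v', v') &= (1+\tau^2) + t(h_{11} + 2\tau h_{13} + \tau^2 h_{33}) + O(t^2),
\end{align*}
and after combining them and expanding one obtains
\[
\cos\measuredangle_g(P, P') - \cos\measuredangle_\delta(P, P') = \tfrac{t\tau^2(h_{11}-h_{33})}{2(1+\tau^2)^{3/2}} + \tfrac{t\tau^3 h_{13}}{(1+\tau^2)^{3/2}} + O(t^2).
\]
Since $\tau = O(t^k)$, the first term is of the form $L(h) t^{2k+1}$ with $L$ linear in $h$, and the second is $O(t^{3k+1}) \subseteq O(t^{2k+2})$.

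The remaining issue is to upgrade the residual $O(t^2)$ error to $O(t^{2k+2})$. For this I would observe that when $\tau = 0$ the two planes coincide, so $\cos\measuredangle_g - \cos\measuredangle_\delta$ vanishes identically in $t$; combined with the reflection symmetry $x^1 \mapsto -x^1$, $\tau \mapsto -\tau$ (which flips $h_{12}, h_{13}$ but preserves the other components), this forces the whole difference to be divisible by $\tau^2$ as a formal series in $(t, \tau)$. Hence the nominal $O(t^2)$ error is actually $O(t^2 \tau^2) = O(t^{2k+2})$, as required.

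The main obstacle is the algebraic bookkeeping of the cancellation at order $t$: each of $g(v, v')$, $|v|_g$, and $|v'|_g$ individually carries a linear-in-$t$ perturbation, and it is only after the division in the formula for $\cos\measuredangle_g$ that the $h_{11}$ and $\tau h_{13}$ contributions collapse to the $O(t\tau^2)$ remainder. Organizing the expansion cleanly in the two small parameters $t$ and $\tau = O(t^k)$, and verifying in particular that the $t\tau h_{13}$ cross-terms cancel between $g(v, v')$ and $\sqrt{g(v', v')}$, is the delicate part of the argument.
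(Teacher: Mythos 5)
Your first-order computation is correct (the $t\tau^2(h_{11}-h_{33})$ and $t\tau^3h_{13}$ coefficients are exactly what one gets), and your route is genuinely different from the paper's. The paper first proves a general vector lemma (Lemma \ref{lem_angleG}) comparing $\langle u,v\rangle/\sqrt{\langle u,u\rangle\langle v,v\rangle}$ with $u\cdot v/(|u||v|)$ for $u=v+O(t^k)$, and then reduces the plane statement to it by projecting the Euclidean normals onto the $g$-normal directions and showing $\tilde\nu'=\tilde\nu+w+L(h)t^{k+1}+O(t^{k+2})$. Your in-plane coordinate computation is more concrete and makes the linear functional $L(h)$ fully explicit, which the paper's argument does not; the paper's lemma, on the other hand, is reused elsewhere (e.g.\ Proposition \ref{prop:angleg0}).

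The gap is in the final upgrade of the residual $O(t^2)$ to $O(t^2\tau^2)$, for two reasons. First, the reflection $x^1\mapsto-x^1$ sends $P'_\tau$ to $P'_{-\tau}$ but also flips $h_{12}$ and $h_{13}$, so it only yields $F(t,\tau;h)=F(t,-\tau;Rh)$, not evenness of $F$ in $\tau$ for a fixed $h$; by itself this does not produce the second factor of $\tau$. The correct mechanism is Cauchy--Schwarz: for every inner product, $\tau\mapsto\cos\measuredangle_g(P,P'_\tau)$ is a smooth (indeed algebraic) function attaining its maximum value $1$ at $\tau=0$, so $\partial_\tau F(t,0)=0$ and $F(t,\tau)=\tfrac{\tau^2}{2}\partial_\tau^2F(t,0)+O\bigl(\tau^3\sup|\partial_\tau^3F|\bigr)$. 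Second, the ``formal power series in $(t,\tau)$'' framing presupposes smoothness in $t$, but the hypothesis $g=\delta+ht+O(t^2)$ is only a bound --- neither $t\mapsto g_t$ nor the planes are assumed differentiable in $t$. You should instead freeze $t$, Taylor-expand in $\tau$ alone, and note that $\partial_\tau^2F(t,0)$ and $\partial_\tau^3F(t,\cdot)$ are algebraic functions of the entries of $g_t$ that vanish at $g_t=\delta$; hence $\partial_\tau^2F(t,0)=t\,D(h)+O(t^2)$ and $\partial_\tau^3F=O(t)$, giving $F=L(h)t\tau^2+O(t^2\tau^2)+O(t\tau^3)$, which suffices since $\tau=O(t^k)$ and $k\ge1$. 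With these repairs (and the minor remark that the rotation bringing $P$ and $P\cap P'$ to standard position depends on $t$ and conjugates $h$ by a bounded orthogonal matrix, which is harmless for the form of the conclusion), your argument is complete.
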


Here, $\measuredangle _g(P,P')$ denotes the angle between $P$ and $P'$ under metric $g$, and $L(h)$ denotes the term relying on $h$ linearly and it is uniformly bounded by a constant not related to $t$.

\begin{remark}
    Proposition \ref{prop:angPlane} is true even for $k=0$ in view of the formula (2.5) in \cite[Lemma 2.1]{miao-mass-2021}.
\end{remark}
We need the following lemma.
\begin{lemma}
	\label{lem_angleG}
	If $g=\delta+ht+O(t^2)$ with $h=O(1)$ is a symmetric two-tensor, and $u,v$ are two vectors such that $u=v+w$ where $w=O(t^{k})$ with $k\ge 1$. We also assume $\frac{1}{C}<|v|<C$ for some fixed constant $C$.
	Then we have
	\[
		\frac{\left<u,v\right>}{\sqrt{\left<u,u\right>\left<v,v\right>}}=1-
		\frac{|w|^2u\cdot v-(u\cdot w)(v\cdot w)}{2(u\cdot v)^2}+L(h)t^{2k+1}+O(t^{2k+2}).
	\]
In particular, we know
	\[
		\frac{\left<u,v\right>}{\sqrt{\left<u,u\right>\left<v,v\right>}}=
		\frac{u\cdot v}{|u||v|}+L(h)t^{2k+1}+O(t^{2k+2}).
	\]
\end{lemma}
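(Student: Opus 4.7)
The plan is to identify two structural identities satisfied by both the $g$-cosine
\[
F(w, t) := \frac{\langle v+w, v\rangle_g}{\sqrt{\langle v+w, v+w\rangle_g\,\langle v, v\rangle_g}}
\]
and its Euclidean analog
\[
F_0(w) := \frac{(v+w)\cdot v}{|v+w|\,|v|},
\]
and combine them with a Taylor expansion in $t$ to localize the difference $F(w,t) - F_0(w)$. The two identities, valid for every inner product, are: (i) $F(0, t) = 1$ for every $t$ and $F_0(0) = 1$, since the cosine of a vector with itself is $1$; (ii) by Cauchy--Schwarz applied to the $g$- and $\delta$-inner products, both $F(\cdot, t) \le 1$ and $F_0 \le 1$ with equality at $w = 0$, so $w = 0$ is a global maximum and consequently $\nabla_w F|_{w=0}(t) = 0$ and $\nabla_w F_0|_{w=0} = 0$.

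Using $g = \delta + h t + O(t^2)$, I would Taylor-expand $F$ in $t$,
\[
F(w, t) - F_0(w) = t\, A_1(w) + t^2\, A_2(w) + \cdots,
\]
where $A_1$ is linear in $h$ and higher $A_j$ depend polynomially on the first $j$ metric-correction tensors. Since identities (i) and (ii) hold as identities in $t$, every $t$-derivative inherits both vanishings, so $A_j(0) = 0$ and $\nabla_w A_j|_{w=0} = 0$ for each $j \ge 1$. Hence $A_j(w) = O(|w|^2)$ uniformly, and since $|w| = O(t^k)$,
\[
F(w, t) - F_0(w) = t\cdot O(t^{2k}) + t^2\cdot O(t^{2k}) + \cdots = L(h)\, t^{2k+1} + O(t^{2k+2}),
\]
with $L(h)$ linear in $h$. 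This is the second formula in the statement.

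For the first formula, I would Taylor-expand $F_0$ around $w = 0$. Using $u\cdot v = |v|^2 + v\cdot w$, $u\cdot w = v\cdot w + |w|^2$, the algebraic identity $|w|^2\, u\cdot v - (u\cdot w)(v\cdot w) = |w|^2 |v|^2 - (v\cdot w)^2$, and $(u\cdot v)^2 = |v|^4 + O(|w|)$, one obtains
\[
F_0(w) = 1 - \frac{|w|^2\, u\cdot v - (u\cdot w)(v\cdot w)}{2(u\cdot v)^2} + O(|w|^3).
\]
Since the spherical regime of this section has $k \ge 2$, the error $O(|w|^3) = O(t^{3k})$ is absorbed in $O(t^{2k+2})$, so substituting into the second formula yields the first.

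The hard part will be verifying the uniform $O(|w|^2)$ bound for each $A_j$: conceptually it follows from identities (i) and (ii) passing to Taylor coefficients, but one must justify that the hidden constant depends polynomially on the metric correction tensors with bounds independent of $t$. This reduces to observing that the coefficients of the $t$-Taylor expansion of a smooth rational function with nonvanishing denominator are polynomial in the ingredients, so the hypothesis $|h| \le C$ controls them uniformly.
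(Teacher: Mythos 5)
Your argument is correct in outline but takes a genuinely different route from the paper. The paper proves the first formula directly from the exact algebraic identity
\[
\left<u,u\right>\left<v,v\right>=\left<u,v\right>^2+\left<w,w\right>\left<u,v\right>-\left<u,w\right>\left<v,w\right>,
\]
which turns the cosine into $\bigl(1+x\bigr)^{-1/2}$ with $x=O(t^{2k})$, expands binomially, and only then swaps $\left<\cdot,\cdot\right>$ for the Euclidean product to isolate the $L(h)t^{2k+1}$ correction; the second formula follows by running the same computation with $\delta$ in place of $g$ and subtracting. You instead prove the second formula first by a soft, structural argument: Cauchy--Schwarz forces $F(\cdot,t)$ and $F_0$ to attain a common maximum value $1$ at $w=0$, so the difference vanishes to second order in $w$ uniformly in $t$, and the metric expansion $g=\delta+ht+O(t^2)$ converts this into $L(h)t^{2k+1}+O(t^{2k+2})$. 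This is more conceptual and avoids the explicit identity, at the cost of having to justify uniformity of the $O(|w|^2)$ bound on the $t$-Taylor remainders (which you correctly flag; it does go through since the cosine is a smooth function of $g$, $v$, $w$ with denominator bounded away from zero).

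There is one concrete slip in your derivation of the first formula: you expand $F_0(w)=1-\tfrac{|w|^2\,u\cdot v-(u\cdot w)(v\cdot w)}{2(u\cdot v)^2}+O(|w|^3)$ and then invoke $k\ge 2$ to absorb $O(t^{3k})$ into $O(t^{2k+2})$. The lemma, however, is stated for $k\ge 1$ and is in fact applied with $k=1$ later (via Remark \ref{lem_angleG0} in the proof of Proposition \ref{prop:angleg0}), where $O(t^{3})$ is \emph{not} $O(t^{4})$. The fix is that your error term is needlessly lossy: since $x:=\tfrac{|w|^2\,u\cdot v-(u\cdot w)(v\cdot w)}{(u\cdot v)^2}=O(|w|^2)$ exactly (the numerator equals $|w|^2|v|^2-(v\cdot w)^2$), the binomial remainder in $(1+x)^{-1/2}=1-\tfrac{x}{2}+O(x^2)$ is $O(|w|^4)=O(t^{4k})$, which is $O(t^{2k+2})$ for every $k\ge 1$. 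With that correction your proof covers the full range of the hypothesis.
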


\begin{proof}
	By direct computation, we have
	\begin{align*}
		\frac{\left<u,v\right>}{\sqrt{\left<u,u\right>\left<v,v\right>}}={} & \frac{\left<u,v\right>}{\sqrt{\left<u,v\right>^2+\left<w,w\right>\left<u,v\right>-\left<u,w\right>\left<v,w\right>}}\\
		={}& \left( 1+\frac{\left<w,w\right>\left<u,v\right>-\left<u,w\right>\left<v,w\right>}{\left<u,v\right>^2} \right)^{-\frac{1}{2}} \\
		={}& 1-\frac{\left<w,w\right>\left<u,v\right>-\left<u,w\right>\left<v,w\right>}{2\left<u,v\right>^2}+O(t^{2k+2})\quad \text{since }w=O(t^k) \\
		={}& 1-\frac{|w|^2u\cdot v-(u\cdot w)(v\cdot w)}{2(u\cdot v)^2}+L(h)t^{2k+1}+O(t^{2k+1}).
	\end{align*}

	For the last equality, we have used $g=\delta+ht+O(t^2)$.

	Now if we replace $g$ by $\delta$ in the above computation, we have
	\[
		\frac{u\cdot v}{|u||v|}=1-\frac{|w|^2u\cdot v-(u\cdot w)(v\cdot w)}{2(u\cdot v)^2}+O(t^{2k+2}).
	\]
	Hence, we find
	\[
		\frac{\left<u,v\right>}{\sqrt{\left<u,u\right>\left<v,v\right>}}=
		\frac{u\cdot v}{|u||v|}+L(h)t^{2k+1}+O(t^{2k+2}).
	\]
\end{proof}
\begin{remark}\label{lem_angleG0}
	Lemma \ref{lem_angleG} shows that if two vectors $u,v$ are sufficiently close to each other, then the lower order terms of the angle between $u,v$ is independent of $g$ as long as $g$ is close to $\delta$. The proof works for any constant metric $g_0$ replacing the Euclidean metric.
\end{remark}

\begin{proof}
	[Proof of Proposition \ref{prop:angPlane}]
	Suppose $\nu$ ($\nu'$ respectively) is the unit normal vector of $P$ ($P'$ respectively) under the metric $\delta$.


	We write $\left\{ \tau_1,\tau_2 \right\}$ ($\left\{ \tau_1',\tau_2' \right\}$ respectively) as the orthonormal basis of $P$ ($P'$ respectively).
	Note that since $|P-P'|\le Ct^k$, we can choose $\tau_\alpha'=\tau_\alpha+\xi_\alpha$ for $\alpha=1,2$ with $\xi_\alpha = O(t^k)$.

	We define
	\[
		g^P_{\alpha\beta}=\left< \tau_\alpha,\tau_\beta \right>,\quad g^{P'}_{\alpha\beta}=\left< \tau_\alpha',\tau_\beta' \right>,\quad 1\le \alpha,\beta\le 2,
	\]
	and write $g^{\alpha\beta}_{P}$ ($g^{\alpha\beta}_{P'}$) as the inverse of $g^P_{\alpha\beta}$ ($g_{\alpha\beta}^{P'}$ respectively).
	We write $\tilde{\nu}$ ($\tilde{\nu}'$ respectively) as the projection of $\nu$ ($\nu'$ respectively) to the normal of $P$ ($P'$ respectively) under metric $g$.
	In other words, we define
	\begin{align*}
		\tilde{\nu}:={}&\nu-g_{P}^{\alpha\beta}\left< \nu,\tau_\alpha \right>\tau_\beta ,\\
		\tilde{\nu}':={}&  \nu'-g_{P'}^{\alpha\beta}\left< \nu',\tau_\alpha' \right>\tau_\beta'.
	\end{align*}

	Now we only need to show
	\[
		\frac{\left< \tilde{\nu},\tilde{\nu}' \right> }{|\tilde{v}|_g|\tilde{v}'|_g}=\nu\cdot\nu'+L(h)t^{2k+1}+O(t^{2k+2}).
	\]

	To do that, let us suppose $\nu'=\nu+w$ for $w =O(t^k)$ and we prove $\tilde{\nu}'=\tilde{\nu}+w+L(h)t^{k+1}+O(t^{k+2})$ at first.

	Indeed, note that $g_{\alpha\beta}^{P'}=g_{\alpha\beta}^{P}+A+O(t^{k+1})$ since $\xi_\alpha=O(t^k)$, where $A$ is an $O(t^{k})$ term not related to $h$. Then we have
	\begin{align*}
		\tilde{\nu}'={} & \nu+w-g_{P'}^{\alpha\beta} h(\nu',\tau'_\alpha) \tau_\beta' \\
		={} & \nu+w-g^{\alpha\beta}_{P}h(\nu',\tau'_\alpha) \tau_\beta'+A h(\nu' \cdot \tau_\alpha')\tau_\beta'+O(t^{k+2})\\
		={}& \nu+w-g^{\alpha\beta}_P h(\nu,\tau_\alpha)\tau_\beta-g^{\alpha\beta}_Ph(\nu,\tau_\alpha )\xi_\alpha - g^{\alpha\beta}_P h(\nu,\xi_\alpha)\tau_\beta-g^{\alpha\beta}_P h(w,\tau_\alpha)\tau_\beta\\
		   &+L(h)t^{k+1}+O(t^{k+2})\\
		={}& \nu+w-g^{\alpha\beta}_P h(\nu,\tau_\alpha) +L(h)t^{k+1}+O(t^{k+2})\\
		={}& \tilde{\nu}+w+L(h)t^{k+1}+O(t^{k+2}).
	\end{align*}

	In view of Lemma \ref{lem_angleG}, we know
	\begin{align*}
		\frac{\left< \tilde{\nu},\tilde{\nu}' \right> }{|\tilde{\nu}||\tilde{\nu}'|}={} & 1- \frac{|w|^2 \tilde{\nu}\cdot \tilde{\nu}' -(\tilde{\nu}\cdot w)(\tilde{\nu}'\cdot w)}{2(\tilde{\nu}\cdot \tilde{\nu}')^2}+L(h)t^{2k+1}+O(t^{2k+2}) \\
		={} & 1-\frac{|w|^2 \nu\cdot \nu'-(\nu \cdot w)(\nu' \cdot w)}{2(\nu \cdot \nu')^2}+L(h)t^{2k+1}+O(t^{2k+2})\\
		={}& \nu\cdot \nu'+L(h)t^{2k+1}+O(t^{2k+2}).
	\end{align*}
	Here, we have used $\tilde{\nu}=\nu+O(t)$ and $\tilde{\nu} '=\nu'+O(t)$.
	This is what we want.
\end{proof}

\begin{corollary}
	\label{cor_angle}
	For $k\ge 2$, 
	we have the following results:
	\begin{align*}
		\cos \gamma_t={} & \cos \bar{\gamma}_t+L(h)t^{2k-1}+O(t^{2k}),\\
		\sin \gamma_t={} & \sin \bar{\gamma}_t+O(t^{k})=\frac{kt^{k-1}}{\phi'(0)}+O(t^k).
	\end{align*}
\end{corollary}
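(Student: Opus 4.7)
The strategy is to apply Proposition \ref{prop:angPlane} to the two tangent planes that meet at a boundary point of $\Sigma_t$, and then read off the asymptotics from the explicit Euclidean computation. Fix a point $p = (\phi(t)\hat{x}, -t^k) \in \partial M \cap \partial \Sigma_t$, and let $P = T_p\Sigma_t = \{z = -t^k\}$ and $P' = T_p\partial M$. The contact angle is by definition the dihedral angle between $P$ and $P'$, so $\cos\gamma_t = \cos\measuredangle_g(P,P')$ and $\cos\bar\gamma_t = \cos\measuredangle_\delta(P,P')$.

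First I would compute the $\delta$-unit normal to $\partial M$ at $p$ from the parametrization $(\phi(s)\hat{y}, -s^k)$. Taking the cross product of $\partial_s = (\phi'(t)\hat{x}, -kt^{k-1})$ with the angular tangent $(\phi(t)\hat{x}^\perp, 0)$ and normalizing yields
\begin{equation}
\bar{X}_t = \frac{(kt^{k-1}\hat{x},\, \phi'(t))}{\sqrt{k^2 t^{2(k-1)} + \phi'(t)^2}},
\end{equation}
while the $\delta$-unit normal to $P$ is simply $(0,0,1)$. Taking inner products gives the closed form
\begin{equation}
\cos\bar\gamma_t = \frac{\phi'(t)}{\sqrt{k^2 t^{2(k-1)} + \phi'(t)^2}}, \qquad \sin\bar\gamma_t = \frac{k t^{k-1}}{\sqrt{k^2 t^{2(k-1)} + \phi'(t)^2}},
\end{equation}
and Taylor-expanding in $t$ with $\phi'(t)=\phi'(0)+O(t)$ immediately produces $\sin\bar\gamma_t = \frac{kt^{k-1}}{\phi'(0)} + O(t^k)$.

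Next I would estimate $|P - P'|$. Since $\bar{X}_t - (0,0,1) = (O(t^{k-1}),\, O(t^{2(k-1)}))$, the two planes differ by at most $Ct^{k-1}$ in any reasonable norm on the Grassmannian. This is precisely the hypothesis of Proposition \ref{prop:angPlane} with its exponent $k$ replaced by $k-1$ (recall $k\ge 2$, so $k-1\ge 1$). Applying the proposition gives
\begin{equation}
\cos\gamma_t = \cos\measuredangle_g(P,P') = \cos\measuredangle_\delta(P,P') + L(h)t^{2(k-1)+1} + O(t^{2(k-1)+2}) = \cos\bar\gamma_t + L(h)t^{2k-1} + O(t^{2k}),
\end{equation}
which is the first claim. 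Finally, for the sine I would use
\begin{equation}
\sin^2\gamma_t - \sin^2\bar\gamma_t = (\cos\bar\gamma_t + \cos\gamma_t)(\cos\bar\gamma_t - \cos\gamma_t) = O(t^{2k-1}),
\end{equation}
and divide by $\sin\gamma_t + \sin\bar\gamma_t = O(t^{k-1})$ (both sines are positive with leading order $t^{k-1}$) to conclude $\sin\gamma_t - \sin\bar\gamma_t = O(t^k)$, which combined with the expansion of $\sin\bar\gamma_t$ finishes the proof.

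The only genuinely delicate point is matching exponents: one must verify that $|P-P'|=O(t^{k-1})$, not $O(t^k)$, so the conclusion of Proposition \ref{prop:angPlane} gives an $L(h)t^{2k-1}$ correction rather than $L(h)t^{2k+1}$. Everything else is bookkeeping in the Taylor expansion.
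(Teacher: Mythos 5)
Your proposal is correct and follows essentially the same route as the paper: both choose $P=T_p\Sigma_t$ and $P'=T_p\partial M$, observe that the Euclidean normals differ by $O(t^{k-1})$, and apply Proposition \ref{prop:angPlane} with exponent $k-1$ to get the $L(h)t^{2k-1}+O(t^{2k})$ correction, then read $\sin\bar\gamma_t$ off the explicit normal. The only cosmetic difference is in the sine estimate, where you factor $\sin^2\gamma_t-\sin^2\bar\gamma_t$ and divide by $\sin\gamma_t+\sin\bar\gamma_t\gtrsim t^{k-1}$ while the paper expands $\sqrt{1-\cos^2\gamma_t}$ directly; the two are equivalent.
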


\begin{proof}
	For any $p=(\phi(t)\hat{x},-t^k)$ with $\hat{x} \in \partial D$, we only need to choose $P=T_p\Sigma_t$ and $P'=T_p \partial M$.
	Then, under metric $\delta$, $e_3$ is the unit normal vector of $P$ and $\nu':=\frac{(kt^{k-1}\hat{x},\phi'(t))}{\sqrt{(\phi'(t))^2+k^2t^{2k-2}}}$ is the unit normal vector of $P'$.
	It is easy to verify that $\nu'=e_3+O(t^{k-1})$ and hence we can apply Proposition \ref{prop:angPlane} to get
	\[
		\cos \gamma_t=\cos \bar{\gamma}_t+L(h)t^{2k-1}+O(t^{2k}).
	\]

	For another result, we only need to use the standard trigonometric identity $\sin \gamma = \sqrt{1-\cos ^2\gamma}$ to get $\sin \gamma_t=\sin \bar{\gamma}_t+O(t^k)$.

	At last, we know
	\[
		\sin \bar{\gamma}_t=|\nu'\times  e_3|=\frac{k t^{k-1}}{\sqrt{(\phi'(t))^2+k^2t^{2k-2}}}=\frac{kt^{k-1}}{\phi'(0)}+O(t^k).
	\]
\end{proof}

Now, we need to find the lower order term of $H_t$.
For fixed $t$, we define two functions related to the mean curvature of $\partial M$.
For any $\hat{x} \in D$, we define
\begin{align*}
	H_{t, \partial M}(\hat{x}):={}&H_{\partial M}(\phi(t)\hat{x},-[\phi^{-1}(\phi(t)|\hat{x}|)]^k)\\
	\bar{H}_{t, \partial M}(\hat{x}):={}&\bar{H}_{\partial M}(\phi(t)\hat{x},-[\phi^{-1}(\phi(t)|\hat{x}|)]^k)
\end{align*}

Note that $(\phi(t)\hat{x},-[\phi^{-1}(\phi(t)|\hat{x}|)]^k)\in \partial M$, so $H_{t, \partial M}$ and $\bar{H}_{t, \partial M}$ are all well-defined.

We also define
\[
	\tilde{H}_t(\hat{x})=H_{\phi^{-1}(\phi(t)|\hat{x}|)}\left(\hat{x}/|\hat{x}|\right).
\]
In other word, $\tilde{H}_t(\hat{x})$ is the mean curvature of plane $\left\{ z=-[\phi^{-1}(\phi(t)|\hat{x}|)]^k \right\}$ at point $(\phi(t)\hat{x},-[\phi^{-1}(\phi(t)|\hat{x}|)]^k)$ under metric $g$.

\begin{proposition}
	For any $k\ge 2$, 
	we have
	\begin{equation}
		H_{t, \partial M}=\bar{H}_{t, \partial M}+\tilde{H}_t+L(D_{P_0}g(0))t^{k-1}+O(t^{k}).
		\label{eq:thmMean}
	\end{equation}
	\label{prop:meanTaylor}
\end{proposition}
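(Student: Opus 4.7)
The plan is to view $\tilde H_t(\hat x)$ as the ``metric-change correction'' for the horizontal plane $P_t^{\hat x} := \{z = -[\phi^{-1}(\phi(t)|\hat x|)]^k\}$ at the common point $p := (\phi(t)\hat x, -t^k) \in \partial M \cap P_t^{\hat x}$, and the left-hand side $H_{t,\partial M}(\hat x) - \bar H_{t,\partial M}(\hat x)$ as the same metric-change correction applied to $\partial M$ at $p$. The difference between the two corrections is controlled by how the two surfaces differ at $p$, namely through $\bar X_{\partial M} - e_3$ and $\bar A_{\partial M}$, which in turn are governed by the $k$-th order tangency of $\partial M$ to $P_0 := \{z=0\}$ at $O$.

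First I record the key asymptotics. Since $g(O) = \delta$, Taylor expansion gives $h := g - \delta = Dg(0)\cdot x + O(|x|^2)$, whence at $p$ (where $|x_p| = O(t)$)
\[
h(p) = L(D_{P_0} g(0))\,\phi(t) + L(\partial_3 g(0))\,t^k + O(t^2), \qquad |\nabla h|(p) = O(1).
\]
Direct computation from the parametrization $(\phi(t)\hat x, -t^k)$ yields $\bar X_{\partial M}(p) = e_3 + (k/\phi'(0))\,t^{k-1}\hat x + O(t^k)$, i.e.\ a purely $P_0$-tangential deviation at leading order; and the $k$-th order tangency of $\partial M$ to $P_0$ forces $|\bar A_{\partial M}|(p) = O(t^{k-2})$.

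Second, I apply the pointwise form of Proposition \ref{prop_MeanCur} to both $S = \partial M$ and $S = P_t^{\hat x}$ at $p$. For the horizontal plane $\bar A = 0$ and $\bar X = e_3$, so the identity reduces to an expression for $\tilde H_t$ purely in terms of $h$, $\nabla h$ and $e_3$ at $p$. Subtracting from the corresponding identity for $\partial M$, the terms involving $e_3$ cancel and the remaining contributions are: (i) $(d\operatorname{tr}_\delta h - \operatorname{div}_\delta h)(\bar X_{\partial M} - e_3)(p)$, which contracts $L(Dg(0))$ against the $P_0$-tangential vector of size $t^{k-1}$ (with no $e_3$-component at this order), giving $L(D_{P_0} g(0))\,t^{k-1} + O(t^k)$; (ii) $-\tfrac{1}{2}\langle h, \bar A_{\partial M}\rangle_{\bar\sigma}(p)$, where $\bar A_{\partial M}$ lives in the $P_0$-tangential directions up to $O(t^k)$ error and $h(p)$ restricted to those directions equals $L(D_{P_0}g(0))\phi(t) + O(t^2)$, contributing $L(D_{P_0} g(0))\,t^{k-1} + O(t^k)$; (iii) a divergence-of-$W$ difference bounded by $O(t^k)$ using that $\partial M$ osculates $P_t^{\hat x}$ to order $t^{k-1}$ together with $|\nabla h| = O(1)$.

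The main obstacle is reaching $O(t^k)$ precision in the remainder for $k \geq 3$: a naive bookkeeping in Proposition \ref{prop_MeanCur} only gives an $O(|\nabla h|\,|h|) = O(t)$ remainder, which swamps the desired $O(t^{k-1})$ leading correction. This is overcome precisely by first subtracting $\tilde H_t$, which absorbs the ``universal'' $O(t)$ contribution depending on $\partial_3 g(0)$, leaving behind only the surface-specific correction controlled by $\bar A_{\partial M}(p) = O(t^{k-2})$ and $\bar X_{\partial M}(p) - e_3 = O(t^{k-1})$; both of these live in the $P_0$-tangential directions, forcing the surviving $t^{k-1}$ coefficient to depend only on $D_{P_0} g(0)$, as claimed.
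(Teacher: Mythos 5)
Your strategy---apply Proposition \ref{prop_MeanCur} at $p$ to both $\partial M$ and the horizontal plane and subtract---is natural, and your preliminary asymptotics ($h(p)=L(D_{P_0}g(0))\phi(t)+O(t^2)$, $\bar X_{\partial M}-e_3=\tfrac{k}{\phi'(0)}t^{k-1}\hat x+O(t^{2k-2})$, $|\bar A_{\partial M}|=O(t^{k-2})$) are correct, as is item (ii). But two of your term-by-term claims fail. For (i): evaluating the $1$-form $d\operatorname{tr}_\delta h-\operatorname{div}_\delta h$ on a $P_0$-tangential vector does \emph{not} restrict to tangential derivatives of $g$, since $(\operatorname{div}_\delta h)(e_\alpha)=\partial_jh_{j\alpha}$ contains $\partial_3h_{3\alpha}=D_3g_{3\alpha}(0)$; so term (i) carries a genuine $D_3g(0)$-dependent contribution at order $t^{k-1}$ and is not of the form $L(D_{P_0}g(0))t^{k-1}+O(t^k)$. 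For (iii): although $W_1-W_2=O(t^k)$ pointwise, the \emph{divergences} differ at order $t^{k-1}$, e.g.\ because differentiating along $Y_1^{\partial M}=Y_1^{\Sigma_t}-kt^{k-1}e_3$ produces $-kt^{k-1}\partial_3 h_{13}$-type terms and because the tangential projection varies at rate $|\bar A_{\partial M}|=O(t^{k-2})$ against $|h|=O(t)$. In fact the $\partial_3 g(0)$ pieces of (i) and (iii) cancel against each other (this is the analogue of the cancellation $\Gamma(e_3,e_j,e_3)=\tfrac12 D_jg_{33}$ that the paper exploits), so the conclusion can only be reached by combining the terms, not by estimating them separately as you do.

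There is a second, structural gap: Proposition \ref{prop_MeanCur} only provides the remainder as an order bound $|\bar A|O(|h|^2)+O(|\bar\nabla^Mh||h|)=O(t^k)+O(t)$, and the $O(t)$ piece is surface-dependent (it is built from contractions of $\nabla h\otimes h$ against the frame and second fundamental form of $S$). Asserting that subtraction of $\tilde H_t$ ``absorbs'' it requires knowing that this remainder, evaluated on two surfaces whose frames agree to order $t^{k-1}$ and whose second fundamental forms differ by $O(t^{k-2})$, differs by $O(t^k)$ --- which cannot be read off from the statement of Proposition \ref{prop_MeanCur} and would force you to reopen its proof with an explicit remainder. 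That is essentially what the paper does instead: it bypasses Proposition \ref{prop_MeanCur} entirely, writes $H=g^{\alpha\beta}\langle D_{Y_\alpha}Y_\beta,X\rangle+g^{\alpha\beta}\Gamma(Y_\alpha,Y_\beta,X)$ in an explicit rotational frame, and proves the two halves of the comparison separately (Lemmas \ref{lem_meanOnBdry} and \ref{lem_meanWithFlat}), where the needed cancellations of $D_3g(0)$ are visible from the symmetries of $\Gamma_{ij,k}$. To repair your argument you would either have to carry out that explicit computation or restate (i)--(iii) as a single combined estimate with the cancellations exhibited.
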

Here, $P_0$ is the plane spanned by $e_1,e_2$ and $D_{P_0}g$ means the derivative of $g$ only along the directions in $P_0$.
Hence, $L(D_{P_0}g(0))$ just means the term relies on $D_{P_0}g(0)$ linearly and it is uniformly bounded by a constant not related to $t$.
Note that Proposition \ref{prop:meanTaylor} is also true for $k=1$ in view of \cite[Proposition 2.1]{miao-mass-2021}.

\begin{proof}
		Indeed, we only need to consider the case $\hat{x} \in \partial D$.
	This is because if we suppose \eqref{eq:thmMean} holds for any $\hat{x} \in \partial D$, we consider any other point $\hat{y} \in D$.
	We write $\tau=\phi^{-1}(\phi(t)|\hat{y}|)$ and we know $\left(\phi(\tau) \frac{\hat{y}}{|\hat{y}|},-\tau^k\right)\in \partial M$.
	We use the formula \eqref{eq:thmMean} with $\tau$ in place of $t$ and get
	\begin{align*}
		H_{t, \partial M}(\hat{y})={}&
		H_{\tau,\partial M}\left(\frac{\hat{y}}{|\hat{y}|}\right)=\bar{H}_{\tau,\partial M}\left(\frac{\hat{y}}{|\hat{y}|}\right)+\tilde{H}_\tau\left( \frac{\hat{y}}{|\hat{y}|} \right)+O(\tau^{k-1})\\
		={}& \bar{H}_{t, \partial M}(\hat{y})+\tilde{H}_t(\hat{y})+O(t^{k-1}).
	\end{align*}
	since $\tau\le t$.
	Hence, \eqref{eq:thmMean} holds for any $\hat{y} \in D$.



	Note that if $\hat{x} \in \partial M$, we can find \eqref{eq:thmMean} is equivalent to the following identity
	\begin{equation}
		H_{\partial M}(\phi(t)\hat{x},-t^k)=\bar{H}_{\partial M}(\phi(t)\hat{x},-t^k)+H_t(\hat{x})+O(t^{k-1}).
		\label{eq:pfmeanTaylor}
	\end{equation}

	To show \eqref{eq:pfmeanTaylor} holds for any $\hat{x} \in \partial D$, let us choose some parametrizations for $\partial M$ and $\Sigma_t$ near $p:=(\phi(t)\hat{x},-t^k)$ at first.
	Without loss of generality, we suppose $\hat{x}=(1,0)$.

	We consider the map
	\begin{align*}
		\varphi_{\partial M}(u_1,u_2)={}&(\phi(u_1)\cos u_2,\phi(u_1)\sin u_2,-u_1^k)\\
		\varphi_{\Sigma_t}(u_1,u_2)={}&  (\phi(u_1)\cos u_2,\phi(u_1)\sin u_2,-t^k)
	\end{align*}
	for $u_1$ close to $t$ and $u_2$ close to 0.

Denote $Y^{\partial M}_\alpha:=\frac{\partial \varphi_{\partial M}}{\partial u_\alpha}$ and $Y^{\Sigma_t}_\alpha:=\frac{\partial \varphi_{\Sigma_t}}{\partial u_\alpha}$.
Now, we consider the matrix $g^{\cdot}_{\alpha\beta}$ defined by
\[
	g^{\cdot}_{\alpha\beta}=\left< Y^{\cdot}_\alpha,Y^{\cdot}_{\beta} \right>.
\]
Here, we take $\cdot$ to be $\partial M$ or $\Sigma_t$.

Similarly, we write
\[
	\delta^{\cdot}_{\alpha\beta}=Y^{\cdot}_\alpha\cdot Y^{\cdot}_\beta.
\]

We also write $g^{\alpha\beta}_{\cdot}$ ($\delta^{\alpha\beta}_{\cdot}$ respectively) as the inverse matrix of $g_{\alpha\beta}^{\cdot}$ ($\delta_{\alpha\beta}^{\cdot}$ respectively).

With these notations, we can write mean curvatures in the following ways,
\begin{align*}
	H_{\partial M}={} & g^{\alpha\beta}_{\partial M}\left< \nabla_{Y^{\partial M}_\alpha}Y^{\partial M}_\beta,X \right> ,\\
	\bar{H}_{\partial M}={} & \delta^{\alpha\beta}_{\partial M} D_{Y^{\partial M}_\alpha}Y^{\partial M}_\beta\cdot\bar{X},\\
	H_t={}& g^{\alpha\beta}_{\Sigma_t}\left< \nabla_{Y^{\Sigma_t}_\alpha}Y^{\Sigma_t}_\beta,N \right>.
\end{align*}

The remaining proof relies on the following two lemmas.

\begin{lemma}
	\label{lem_meanOnBdry}
	We have
	\[
		g^{\alpha\beta}_{\partial M}\left< D_{Y^{\partial M}_\alpha}Y^{\partial M}_\beta,X \right>=\delta^{\alpha\beta}_{\partial M} D_{Y^{\partial M}_\alpha}Y^{\partial M}_\beta \cdot \bar{X}+L(h)t^{k-1}+O(t^{k}).
	\]
    if we write $g=\delta+ht+O(t^2)$ for some $|h|\le C$.
%
%
\end{lemma}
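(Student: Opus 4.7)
My plan is to prove the identity by Taylor expanding the three ingredients---the metric $g$, the unit normal $X$, and the inverse induced metric $g^{\alpha\beta}_{\partial M}$---around their Euclidean counterparts in powers of the perturbation parameter $t$, then carefully balancing orders. The nontrivial feature is the anisotropy $|Y_1^{\partial M}|=O(1)$ versus $|Y_2^{\partial M}|=O(t)$ near the vertex, which forces $\delta^{22}_{\partial M}=O(t^{-2})$; several apparently $O(1)$ contributions therefore appear and must cancel.

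The first step is to expand the three ingredients. Writing $X=\bar X+\zeta$ and imposing $g(X,Y_\alpha^{\partial M})=0$ and $g(X,X)=1$ with $g=\delta+ht+O(t^2)$, a direct first-order solve gives
\begin{equation*}
\zeta=-t\,\delta^{\alpha\beta}_{\partial M}\,h(\bar X,Y_\beta^{\partial M})\,Y_\alpha^{\partial M}-\tfrac12 t\,h(\bar X,\bar X)\,\bar X+O(t^2).
\end{equation*}
A Neumann series yields $g^{\alpha\beta}_{\partial M}=\delta^{\alpha\beta}_{\partial M}-t\,\delta^{\alpha\gamma}_{\partial M}h(Y_\gamma^{\partial M},Y_\eta^{\partial M})\delta^{\eta\beta}_{\partial M}+O(t^2)$, and I use the pointwise identity $\langle V,W\rangle_g=V\cdot W+t\,h(V,W)+O(t^2|V||W|)$. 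Substituting all three expansions into $g^{\alpha\beta}_{\partial M}\langle D_{Y_\alpha^{\partial M}}Y_\beta^{\partial M},X\rangle_g-\delta^{\alpha\beta}_{\partial M}(D_{Y_\alpha^{\partial M}}Y_\beta^{\partial M})\cdot\bar X$ produces three linear-in-$h$ terms
\begin{equation*}
T_1=t\,\delta^{\alpha\beta}_{\partial M}h(D_{Y_\alpha}Y_\beta,\bar X),\ T_2=\delta^{\alpha\beta}_{\partial M}(D_{Y_\alpha}Y_\beta)\cdot\zeta,\ T_3=-t\,\delta^{\alpha\gamma}_{\partial M}h(Y_\gamma^{\partial M},Y_\eta^{\partial M})\delta^{\eta\beta}_{\partial M}(D_{Y_\alpha}Y_\beta\cdot\bar X),
\end{equation*}
plus a quadratic-in-$h$ remainder.

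The main obstacle lies in the fact that each $T_i$ separately contains $O(1)$ contributions from the $(2,2)$-index combination via $\delta^{22}_{\partial M}$. A direct computation shows that the $(2,2)$-slice of $T_1$ and the piece of $T_2$ arising from the $Y_1^{\partial M}$-component of $\zeta$ both reduce to expressions proportional to $h(\partial_r,\partial_3)/\phi'(0)$, where $\partial_r$ and $\partial_3$ are the radial and vertical directions at the vertex, and they cancel by the symmetry of $h$. Making this precise requires the surface-of-revolution identity $\delta^{\alpha\beta}_{\partial M}(D_{Y_\alpha^{\partial M}}Y_\beta^{\partial M})\cdot Y_1^{\partial M}=-1/t+O(t)$, which follows from a direct Christoffel-symbol calculation for the induced metric $\delta^{\partial M}_{\alpha\beta}=\mathrm{diag}((\phi')^2+k^2t^{2k-2},\phi(t)^2)$. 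Once this cancellation is established, the surviving parts of $T_1,T_2,T_3$ are each bounded by $L(h)t^{k-1}$, while the quadratic remainders contribute $O(t^k)$, giving the claimed expansion.
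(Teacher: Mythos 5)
Your overall strategy (expand $X=\bar X+\zeta$, $g^{\alpha\beta}_{\partial M}$, and $\langle\cdot,\cdot\rangle_g$ to first order in $h$ and chase cancellations) is different from the paper's and could be made to work, but as written it has two concrete problems. First, the auxiliary identity $\delta^{\alpha\beta}_{\partial M}(D_{Y_\alpha^{\partial M}}Y_\beta^{\partial M})\cdot Y_1^{\partial M}=-1/t+O(t)$ is false. With $\delta^{\partial M}=\mathrm{diag}((\phi')^2+k^2t^{2k-2},\phi^2)$, $D_{Y_1}Y_1=(\phi'',0,-k(k-1)t^{k-2})$ and $D_{Y_2}Y_2=(-\phi,0,0)$, one gets $\tfrac{\phi'\phi''+k^2(k-1)t^{2k-3}}{(\phi')^2+k^2t^{2k-2}}-\tfrac{\phi'}{\phi}=-\tfrac1t+\tfrac{\phi''(0)}{2\phi'(0)}+O(t)$; the $O(1)$ defect is generically nonzero. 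Second, and more seriously, your cancellation scheme only closes at order $O(1)$. The pairing you propose (all of the $(2,2)$-slice of $T_1$ against the $Y_1$-part of $\zeta$ in $T_2$) is not the exact one, so it leaves residues of size $L(h)\,t$; moreover the tangential ($\phi''e_1$-) component of $D_{Y_1}Y_1$ contributes $t\,\delta^{11}_{\partial M}\phi''\,h(e_1,\bar X)=L(h)\,t$ to $T_1$ and does not appear in your cancellation at all. For $k=2$ these leftovers are harmless since $L(h)t=L(h)t^{k-1}$, but the lemma is stated and used (in Proposition \ref{prop:meanTaylor}) for all $k\ge 2$, where the error must be $L(h)t^{k-1}+O(t^k)$ with $k-1\ge 2$; an uncancelled $L(h)t$ term breaks the estimate.

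The fix is to cancel the terms the way the exact identities dictate: for each fixed $(\alpha,\beta)$, split $D_{Y_\alpha}Y_\beta$ into its Euclidean-tangential projection $v^{\top}=c^\gamma Y_\gamma$ plus a vertical remainder. Since $\langle v^{\top},X\rangle_g=0$ and $v^{\top}\cdot\bar X=0$ hold exactly, the tangential projections contribute nothing to either side of the identity; at first order in $h$ this is the statement $t\,h(v^{\top},\bar X)+v^{\top}\cdot\zeta=t\,c^\gamma h(Y_\gamma,\bar X)-t\,c^\gamma h(\bar X,Y_\gamma)=0$, an exact cancellation by symmetry of $h$ with no residue to estimate. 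This is essentially what the paper does, only more efficiently: it subtracts the tangential multiple of $Y_1$ from $D_{Y_\alpha}Y_\beta$ \emph{before} pairing with $X$ or $\bar X$, so that only the small vertical components $O(t^{k-2})e_3$ (for $(1,1)$) and $O(t^{k})e_3$ (for $(2,2)$) remain, and the $L(h)t^{k-1}$ bound then follows by multiplying these by the perturbations of $X$ and of $g^{\alpha\beta}_{\partial M}$ without any delicate bookkeeping.
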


\begin{proof}
	At first, we know
	\[
		D_{Y^{\partial M}_1}Y^{\partial M}_1=(\phi''(t),0,-k(k-1)t^{k-2}).
	\]
	Note that $\left<  Y^{\partial M}_1,X \right>=0$, then we have
	\begin{align*}
		\left< D_{Y^{\partial M}_1}Y^{\partial M}_1,X \right> ={} & \left< \left(0,0,-k(k-1)t^{k-2}+\frac{kt^{k-1}\phi''(t)}{\phi'(t)}\right),X \right>  \\
		={} & -k(k-1)t^{k-2}+\frac{kt^{k-1}\phi''(t)}{\phi'(t)}+L(h)t^{k-1}+O(t^{k}).
	\end{align*}

	Since $\left< Y^{\partial M}_2,X \right> =0$ and $D_{Y^{\partial M}_1}Y^{\partial M}_2=D_{Y^{\partial M}_2}Y^{\partial M}_1=(0,\phi'(t),0)$, we know
	\[
		\left< D_{Y^{\partial M}_1}Y^{\partial M}_2,X \right> =
		\left< D_{Y^{\partial M}_2}Y^{\partial M}_1,X \right> =0.
	\]

	At last, we have
	\begin{align*}
		\left< D_{Y^{\partial M}_2}Y^{\partial M}_2,X \right> ={} & \left< \left( 0,0,\frac{-kt^{k-1}\phi(t)}{\phi'(t)}\right),X  \right> =-kt^{k}+L(h)t^{k+1}+O(t^{k+2}).
	\end{align*}

	Similarly, we can find
	\begin{align*}
		D_{Y^{\partial M}_1}Y^{\partial M}_1 \cdot \bar{X}={} & -k(k-1)t^{k-2}+\frac{kt^{k-1}\phi''(t)}{\phi'(t)}+O(t^k) \\
		D_{Y^{\partial M}_1}Y^{\partial M}_2 \cdot \bar{X}={} & D_{Y^{\partial M}_2}Y^{\partial M}_1 \cdot \bar{X}=0\\
		D_{Y^{\partial M}_2}Y^{\partial M}_2 \cdot \bar{X}={}& -kt^{k}+O(t^{k+2})
	\end{align*}

	Hence, we find
	\begin{align}
		&\left( \left< D_{Y^{\partial M}_\alpha}Y^{\partial M}_\beta,X  \right>  \right)_{\alpha\beta}\nonumber \\
		={}&\left( D_{Y^{\partial M}_\alpha}Y^{\partial M}_\beta\cdot \bar{X}\right)_{\alpha\beta}+
		\begin{bmatrix}
			L(h)t^{k-1}+O(t^k)& 0\\
			0& L(h)t^{k+1}+O(t^{k+2}) \\
		\end{bmatrix}
		\label{eq:pf2ndExt}
	\end{align}

	Now, let's compute $g^{\partial M}_{\alpha\beta}$. Note
	\begin{align*}
		(g^{\partial M}_{\alpha\beta})_{\alpha\beta}={} & (g(Y^{\partial M}_\alpha,Y^{\partial M}_\beta))_{\alpha\beta} \\
		={} & 
		\begin{bmatrix}
			(\phi')^2+L(h)t+O(t^2) & L(h)t^2+O(t^3)\\
			L(h)t^2+O(t^3)& \phi^2(t)+L(h)t^3+O(t^4)
		\end{bmatrix}\\
		={}& 
		\begin{bmatrix}
			\phi' & 0\\
			0& \phi \\
		\end{bmatrix}(\mathrm{Id}+M)
		\begin{bmatrix}
			\phi' & 0\\
			0&\phi \\
		\end{bmatrix}+O(t^2)
	\end{align*}
	where $M$ is a matrix that each entry of $M$ is of $L(h)t$.
	Hence
	\begin{align*}
		(g^{\alpha\beta}_{\partial M})_{\alpha\beta}={}&
		\begin{bmatrix}
			\frac{1}{\phi'} & 0\\
			0& \frac{1}{\phi} \\
		\end{bmatrix}
		(\mathrm{Id}-M)
		\begin{bmatrix}
			\frac{1}{\phi'} & 0\\
			0& \frac{1}{\phi} \\
		\end{bmatrix}+O(t^2)\\
		={}& (\delta^{\alpha\beta}_{\partial M})_{\alpha\beta}-\frac{1}{(\phi'(0))^2}
		\begin{bmatrix}
			1 & 0\\
			0& \frac{1}{t} \\
		\end{bmatrix}M
		\begin{bmatrix}
			1 & 0\\
			0& \frac{1}{t} \\
		\end{bmatrix}+O(t^2)
	\end{align*}
	
	Combining with \eqref{eq:pf2ndExt}, we find
	\[
		g^{\alpha\beta}_{\partial M}
		\left< D_{Y^{\partial M}_\alpha}Y^{\partial M}_\beta,X \right> = \delta^{\alpha\beta}_{\partial M} D_{Y^{\partial M}_\alpha}Y^{\partial M}_\beta \cdot \bar{X}+L(h)t^{k-1}+O(t^k).
	\]
\end{proof}

Note that given metric $g$ which $g(0)=\delta$, if we consider the metric at point $p=(\phi(t)\hat{x},-t^{k})$ for $k\ge 2$, we know we can write $g=\delta+ht+O(t^2)$ where $h(e_i,e_j)=\hat{x}_\alpha D_{\alpha}g_{ij}(0)$, which is bounded.
Hence, we can also rewrite the result in Lemma \ref{lem_meanOnBdry} as
	\[
		g^{\alpha\beta}_{\partial M}\left< D_{Y^{\partial M}_\alpha}Y^{\partial M}_\beta,X \right>=\delta^{\alpha\beta}_{\partial M} D_{Y^{\partial M}_\alpha}Y^{\partial M}_\beta \cdot \bar{X}+L(D_{P_0}g(0))t^{k-1}+O(t^{k}).
	\]

For another lemma, let us define a tensor from Christoffel symbols in the following ways. For any three vectors $X=X^ie_i, Y=Y^ie_i,Z=Z^ie_i$ in $\mathbb{R}^3$, we define $\Gamma(X,Y,Z)$ by
\[
	\Gamma(X,Y,Z):=X^iY^jZ^k\Gamma_{ij,k}
\]
where
\[
	\Gamma_{ij,k}=\frac{1}{2}\left( D_ig_{kj}+D_jg_{ik}-D_kg_{ij} \right).
\]

In particular, we know
\[
	\left< \nabla_{X}Y,Z \right> = \left< D_XY,Z \right> +\Gamma(X,Y,Z).
\]

We note that when $t$ is small, the tensor $\Gamma$ is bounded and its leading term depends on the value of $D_i g_{j k}$ at $0$ linearly.
Hence, we can get the following lemma.
\begin{lemma}
	
	We have
	\[
		g^{\alpha\beta}_{\partial M}\Gamma(Y^{\partial M}_\alpha,Y^{\partial M}_\beta,X)=g^{\alpha\beta}_{\Sigma_t} \Gamma(Y^{\Sigma_t}_{\alpha},Y^{\Sigma_t}_\beta,N)+L(D_{P_0}g(0))t^{k-1}+O(t^k).
	\]
	\label{lem_meanWithFlat}
\end{lemma}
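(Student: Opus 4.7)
The plan is to expand the difference between the two contractions via multilinearity of $\Gamma$ and the bilinearity of the inverse-metric pairing. At the base point $p=(\phi(t),0,-t^k)$ (after setting $\hat{x}=(1,0)$), the relevant data on $\partial M$ and on $\Sigma_t$ coincide up to small corrections: one has $Y^{\partial M}_1-Y^{\Sigma_t}_1=-kt^{k-1}e_3$ and $Y^{\partial M}_2=Y^{\Sigma_t}_2$; the two induced Gram matrices differ only in the $(1,1)$-slot, by $k^2t^{2k-2}+O(t^{2k-1})$; and the unit normals satisfy $X-N=(k/\phi'(0))t^{k-1}e_1+O(t^k)$, because the tangential $O(t)$ corrections to $\bar{X}$ and $\bar{N}$ induced by $g-\delta$ cancel modulo $O(t^k)$ (the two tangent planes themselves differing only by $O(t^{k-1})$).

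Given these estimates, I would split the total difference as
\[
(g^{\alpha\beta}_{\partial M}-g^{\alpha\beta}_{\Sigma_t})\Gamma(Y^{\partial M}_\alpha,Y^{\partial M}_\beta,X)+g^{\alpha\beta}_{\Sigma_t}\bigl[\Gamma(Y^{\partial M}_\alpha,Y^{\partial M}_\beta,X)-\Gamma(Y^{\Sigma_t}_\alpha,Y^{\Sigma_t}_\beta,N)\bigr],
\]
and decompose the bracketed term by trilinearity of $\Gamma$ into three pieces, each isolating the small difference in one of the three slots. The first summand is $O(t^{2k-2})=O(t^k)$ for $k\geq 2$. The pieces carrying $Y^{\partial M}_\alpha-Y^{\Sigma_t}_\alpha$ or $Y^{\partial M}_\beta-Y^{\Sigma_t}_\beta$ activate only for index $1$, and using the symmetry $\Gamma_{ij,k}=\Gamma_{ji,k}$ they combine to a multiple of $t^{k-1}g^{1\beta}_{\Sigma_t}\Gamma(e_3,Y^{\Sigma_t}_\beta,e_3)$; since $Y^{\Sigma_t}_\beta$ is horizontal, only the components $\Gamma_{3\beta',3}=\tfrac{1}{2}D_{\beta'}g_{33}$ with $\beta'\in\{1,2\}$ enter, yielding $L(D_{P_0}g(0))t^{k-1}+O(t^k)$. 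The piece carrying $X-N$ gives $(kt^{k-1}/\phi'(0))\,g^{\alpha\beta}_{\Sigma_t}\Gamma(Y^{\Sigma_t}_\alpha,Y^{\Sigma_t}_\beta,e_1)+O(t^k)$; both tangent vectors being horizontal, only $\Gamma_{\alpha'\beta',1}=\tfrac{1}{2}(D_{\alpha'}g_{1\beta'}+D_{\beta'}g_{\alpha'1}-D_1g_{\alpha'\beta'})$ with $\alpha',\beta'\in\{1,2\}$ contributes, again involving only horizontal derivatives of $g$.

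The main book-keeping subtlety is that the inverse metric $g^{\alpha\beta}_{\Sigma_t}$ has a singular $(2,2)$-entry of size $\phi(t)^{-2}=O(t^{-2})$, so one must verify that every contribution picks up enough compensating powers of $t$ from the scalings $Y^{\Sigma_t}_2=O(t)$, $Y^{\partial M}_1-Y^{\Sigma_t}_1=O(t^{k-1})$, and $X-N=O(t^{k-1})$ to fall into the claimed error class. Once this scaling check is made, the identification of the leading $t^{k-1}$ coefficient as $L(D_{P_0}g(0))$ follows from the index analysis above: the Euclidean components forced into the three slots of $\Gamma$ by the decomposition never trigger a $D_3g_{ij}(0)$ derivative. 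All remaining corrections, coming from $\Gamma(p)=\Gamma(0)+O(|p|)$ and from the $O(t)$ renormalization of $X,N$ under $g\neq\delta$, collapse into the $O(t^k)$ remainder.
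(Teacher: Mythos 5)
Your proof is correct and follows essentially the same route as the paper's (compare the tangent vectors, unit normals and Gram matrices of $\partial M$ and $\Sigma_t$, then observe that only horizontal derivatives of $g$ at the origin survive at order $t^{k-1}$); you simply carry out explicitly the trilinear decomposition that the paper leaves implicit. One small imprecision: the $(1,1)$ Gram entries also differ by the cross term $-2kt^{k-1}\langle Y^{\Sigma_t}_1,e_3\rangle_g=O(t^k)$, not merely by $k^2t^{2k-2}+O(t^{2k-1})$, but this is absorbed into the $O(t^k)$ remainder and does not affect the conclusion.
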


\begin{proof}
	From the proof of Proposition \ref{prop:angPlane}, we find
	\begin{equation}
		X-N=\bar{X}-e_3+O(t^{k}).
		\label{eq:pfNormComp}
	\end{equation}
	From the definition of $Y^{\partial M}_\alpha$ and $Y^{\Sigma_t}_\alpha$, we have
	\begin{equation}
		Y^{\partial M}_1=Y^{\Sigma_t}_1-kt^{k-1}e_3,\quad Y^{\partial M}_2=Y^{\Sigma_t}_2.
		\label{eq:pfTanExpand}
	\end{equation}

	Similar as the proof of Lemma \ref{lem_meanOnBdry}, we can find
	\begin{align}
		( g_{\alpha\beta}^{\partial M} )_{\alpha\beta}={}& (g^{\Sigma_t}_{\alpha\beta})_{\alpha\beta}+
		\begin{bmatrix}
			O(t^2) & O(t^3)\\
			O(t^3)& 0 \\
		\end{bmatrix},\nonumber\\
		(g^{\alpha\beta}_{\partial M})_{\alpha\beta}={}&
		(g^{\alpha\beta}_{\Sigma_t})+
		\begin{bmatrix}
			O(t^2) & O(t)\\
			O(t)& O(1) \\
		\end{bmatrix}.
		\label{eq:pfMetTwoPlane}
	\end{align}
	Note that the leading term in $\Gamma(Y^{\partial M}_\alpha,Y^{\partial M}_\beta,X)-\Gamma(Y^{\Sigma_t}_{\alpha},Y^{\Sigma_t}_\beta,N)$ is not related to $h$.
So from \eqref{eq:pfNormComp},\eqref{eq:pfTanExpand}, and \eqref{eq:pfMetTwoPlane} we can get
	\[
		g^{\alpha\beta}_{\partial M}\Gamma(Y^{\partial M}_\alpha,Y^{\partial M}_\beta,X)=g^{\alpha\beta}_{\Sigma_t} \Gamma(Y^{\Sigma_t}_{\alpha},Y^{\Sigma_t}_\beta,N)+L(D_{P_0}g(0))t^{k-1}+O(t^k).
	\]
\end{proof}
Let us go back to the proof of Proposition \ref{prop:meanTaylor}.
Indeed, note that
\begin{align*}
	H_{\partial M}={}&g^{\alpha\beta}_{\partial M} \left< D_{Y^{\partial M}_\alpha}Y^{\partial M}_\beta,X \right>+g^{\alpha\beta}_{\partial M}\Gamma({Y^{\partial M}_\alpha},Y^{\partial M}_\beta,X)\\
	H_{\Sigma_t}={}& g^{\alpha\beta}_{\Sigma_t}\Gamma(Y^{\Sigma_t}_\alpha,Y^{\Sigma_t}_\beta,N)
\end{align*}
Combining with Lemma \ref{lem_meanOnBdry} and Lemma \ref{lem_meanWithFlat}, we have
\[
	H_{t, \partial M}=\bar{H}_{t, \partial M}+\tilde{H}_t+L(D_{P_0}g(0))t^{k-1}+O(t^{k}).
\]
\end{proof}

Note that if we have two points $p_1,p_2 \in \mathbb{R}^3$ such that $|p_1-p_2|\le Ct^k$, then we know the metric $g(p_1)=g(p_2)+O(t^k)$.
Hence, we know $\tilde{H}_t(\hat{x})-H_t(\hat{x})=O(t^k)$ for any $\hat{x} \in D$.
Hence, we have the following corollary.

\begin{corollary}
	\label{cor_meanDelta}
	The mean curvature of $\Sigma_t$ satisfies the following identity.
	\[
	H_{t, \partial M}=\bar{H}_{t, \partial M}+H_t+L(D_{P_0}g(0))t^{k-1}+O(t^k).
\]
\end{corollary}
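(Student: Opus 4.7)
The plan is to deduce this corollary directly from Proposition \ref{prop:meanTaylor}, which already gives the analogous expansion with $\tilde{H}_t$ in place of $H_t$. So the only task is to compare $\tilde{H}_t(\hat{x})$ with $H_t(\hat{x})$ and show the difference is of order $O(t^k)$, which can be absorbed into the error term.

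Recall that $H_t(\hat{x})$ is the mean curvature of the horizontal plane $\Sigma_t=\{z=-t^k\}\cap M$ at the point $(\phi(t)\hat{x},-t^k)$ under the metric $g$, while $\tilde{H}_t(\hat{x})$ is the mean curvature of the horizontal plane $\{z=-[\phi^{-1}(\phi(t)|\hat{x}|)]^k\}$ at the point $(\phi(t)\hat{x},-[\phi^{-1}(\phi(t)|\hat{x}|)]^k)$, again under $g$. Both planes are horizontal, so they have the same tangent space and the same Euclidean unit normal $e_3$; the only difference is the base point. Setting $\tau=\phi^{-1}(\phi(t)|\hat{x}|)\le t$, the two base points agree in the horizontal coordinates and differ in the vertical coordinate by $|t^k-\tau^k|=O(t^k)$.

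The mean curvature of a horizontal plane at a point $p$ under $g$ depends only on $g(p)$ and its first derivatives $Dg(p)$: schematically $H=g^{\alpha\beta}(p)\Gamma(Y^{\Sigma}_\alpha,Y^{\Sigma}_\beta,N)$ as in the proof of Proposition \ref{prop:meanTaylor}. Since $g$ is smooth and the two base points differ by a displacement of size $O(t^k)$, we have $g=g+O(t^k)$ and $Dg=Dg+O(t^k)$ between these two points, hence
\[
\tilde{H}_t(\hat{x})-H_t(\hat{x})=O(t^k).
\]

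Substituting this into Proposition \ref{prop:meanTaylor} yields
\[
H_{t,\partial M}=\bar{H}_{t,\partial M}+H_t+L(D_{P_0}g(0))t^{k-1}+O(t^k),
\]
which is the claimed identity. There is no genuine obstacle here: the only care needed is to verify that the vertical shift of size $O(t^k)$ in the base point produces a perturbation in the mean curvature of order $O(t^k)$ and not something larger, which is immediate since the horizontal plane's mean curvature formula involves $g$ and $Dg$ evaluated at a single point and both are locally Lipschitz.
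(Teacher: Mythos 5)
Your argument is correct and is essentially the paper's own: the paper likewise observes that the two base points of $\tilde{H}_t(\hat{x})$ and $H_t(\hat{x})$ differ by $O(t^k)$, so that $g$ (and its first derivatives, which is the point you make explicit) change by $O(t^k)$ between them, giving $\tilde{H}_t-H_t=O(t^k)$, and then substitutes into Proposition \ref{prop:meanTaylor}. No gap.
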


At last, we need to show the following result.
\begin{lemma}
	\label{lem_2ndFundForm}
	For any $k\ge 1$, we have
	\[
		A_{\partial M}(\eta_t,\eta_t)-\cos \gamma_t A(\nu_t,\nu_t)-\bar{A}_{\partial M}(\bar{\eta}_t,\bar{\eta}_t)=O(t^{k-1}).
	\]
\end{lemma}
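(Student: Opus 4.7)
The key idea is to reduce both $A_{\partial M}(\eta_t,\eta_t)-\cos\gamma_t A(\nu_t,\nu_t)$ and $\bar A_{\partial M}(\bar\eta_t,\bar\eta_t)$ to quantities whose asymptotics have already been controlled in this subsection: the mean curvatures $H_t$ and $H_{\partial M}$, the contact angle $\gamma_t$, and the geodesic curvature $\kappa_t$ of $\partial\Sigma_t$ in $\Sigma_t$. The natural tool is the boundary Gauss identity of Lemma \ref{lem:IIandMean}: its proof uses only the actual contact angle together with the ambient Gauss equation, so it applies verbatim with $\gamma_t$ in place of $\bar\gamma$. Multiplying \eqref{boundary sy} through by $\sin\gamma_t$ along $\partial\Sigma_t$ in $(M,g)$, and applying the same identity in $(\mathbb{R}^3,\delta)$ to $\bar\Sigma_t$ (a Euclidean flat disk, hence $\bar H_t=0$ and $\bar A(\bar\nu_t,\bar\nu_t)=0$), gives
\[
A_{\partial M}(\eta_t,\eta_t)-\cos\gamma_t A(\nu_t,\nu_t)=-H_t\cos\gamma_t+H_{\partial M}-\kappa_t\sin\gamma_t,\qquad \bar A_{\partial M}(\bar\eta_t,\bar\eta_t)=\bar H_{\partial M}-\bar\kappa_t\sin\bar\gamma_t.
\]

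Subtracting, the quantity to be estimated equals
\[
H_t(1-\cos\gamma_t)\;+\;\bigl(H_{\partial M}-\bar H_{\partial M}-H_t\bigr)\;-\;\bigl(\kappa_t\sin\gamma_t-\bar\kappa_t\sin\bar\gamma_t\bigr).
\]
The first two groups are immediate from earlier results. Since $\Sigma_t$ is a fixed Euclidean plane, its $g$-mean curvature $H_t$ is expressible through bounded Christoffel symbols of $g$ and is $O(1)$; together with $\cos\gamma_t=1+O(t^{2k-2})$ from Corollary \ref{cor_angle}, this gives $H_t(1-\cos\gamma_t)=O(t^{2k-2})\subset O(t^{k-1})$ for $k\ge 1$. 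Corollary \ref{cor_meanDelta} applied along $\partial\Sigma_t$ (where $\hat{x}\in\partial D$ so $\tilde H_t=H_t$) directly yields $H_{\partial M}-\bar H_{\partial M}-H_t=O(t^{k-1})$.

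The main obstacle is the geodesic curvature piece, because both $\kappa_t$ and $\bar\kappa_t$ blow up like $t^{-1}$ near the vertex while $\sin\gamma_t$ and $\sin\bar\gamma_t$ are only $O(t^{k-1})$, so each product is a priori just $O(t^{k-2})$. The plan is to split
\[
\kappa_t\sin\gamma_t-\bar\kappa_t\sin\bar\gamma_t=(\kappa_t-\bar\kappa_t)\sin\gamma_t+\bar\kappa_t(\sin\gamma_t-\sin\bar\gamma_t),
\]
and to exploit cancellation in each summand. The second summand is controlled using $\bar\kappa_t=1/\phi(t)=O(t^{-1})$ and $\sin\gamma_t-\sin\bar\gamma_t=O(t^k)$ from Corollary \ref{cor_angle}, giving $O(t^{k-1})$. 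For the first summand, since $\partial\Sigma_t$ lies in the region at distance $O(t)$ from the vertex and $g-\delta=O(t)$ there with bounded first derivatives, the induced metric on $\Sigma_t$ differs from that on $\bar\Sigma_t$ in $C^1$ by $O(t)$; the Christoffel correction in the geodesic curvature of the fixed circle $\partial\Sigma_t$ is therefore $O(1)$, so $(\kappa_t-\bar\kappa_t)\sin\gamma_t=O(1)\cdot O(t^{k-1})=O(t^{k-1})$. Collecting the three estimates finishes the proof.
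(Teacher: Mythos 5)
Your proof is correct and follows essentially the same route as the paper: apply the boundary rewrite of Lemma \ref{lem:IIandMean} once for $g$ and once for $\delta$ (where the flat disk contributes $\bar H_t=0$), subtract, and control the remainder via Proposition \ref{prop:meanTaylor}/Corollary \ref{cor_meanDelta}, the angle asymptotics of Corollary \ref{cor_angle}, and $\kappa_t=\bar\kappa_t+O(1)$. You are in fact slightly more careful than the paper in using the actual contact angle $\gamma_t$ (rather than $\bar\gamma_t$) in the identity on $(M,g)$, and the extra term $\bar\kappa_t(\sin\gamma_t-\sin\bar\gamma_t)=O(t^{k-1})$ that this produces is estimated correctly.
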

\begin{proof}
	From lemma \ref{lem:IIandMean}, we have
	\[
		A_{\partial M}(\eta_t,\eta_t)-\cos \bar{\gamma}_t A(\nu_t,\nu_t)=H_{\partial M}-\cos \bar{\gamma}_t H_t -\sin \bar{\gamma}_t\kappa.
	\]
	We apply this lemma again with $g=\delta$, we have
	\[
		\bar{A}_{\partial M}(\bar{\eta}_t,\bar{\eta}_t)=\bar{H}_{\partial M}-\sin \bar{\gamma}_t\bar{\kappa}.
	\]

	Using the fact $\cos \gamma_t=\cos \bar{\gamma}_t+O(t^{k-1})=1+O(t^{k-1})$, $\sin \bar{\gamma}_t=O(t^{k-1})$, and note $\kappa=\bar{\kappa}+O(1)$, we have
	\begin{align*}
		&A_{\partial M}(\eta_t,\eta_t)-\cos \bar{\gamma}_t A(\nu_t,\nu_t)-\bar{A}_{\partial M}(\bar{\eta}_t,\bar{\eta}_t)\\
		={}& H_{\partial M}-H_t -\bar{H}_{\partial M}-\sin \bar{\gamma}_t \kappa + \sin \bar{\gamma}_t \bar{\kappa}_t=O(t^{k-1})
	\end{align*}
	by Proposition \ref{prop:meanTaylor}.
\end{proof}

\subsection{Linearized operators and foliation near the spherical points}%
\label{sub:linearized}

Based on Proposition \ref{prop:meanTaylor}, we can find functions $f_0,f_1,f_2,\cdots ,f_{k-2}$ defined on $D$ smoothly such that
\[
	H_{t, \partial M}-\bar{H}_{t, \partial M}=F_t+O(t^{k-1})
\]
where $F_t=f_0+f_1 t+\cdots +f_{k-2}t^{k-2}$.
Note that $k\ge 2$, by our definition, we can easily find
\[
	f_0=\lim_{\partial M\ni p\rightarrow 0}H_{\partial M}- \bar{H}_{\partial M}
\]
and hence $f_0$ is indeed a nonnegative constant since $H_{\partial M}-\bar{H}_{\partial M}\ge 0$.



We define the operator $\Phi (t, u) = (\Phi_1 (t, u), \Phi_2 (t, u))$ by
setting
\begin{align}
  \Phi_1 (t, u) & = \frac{H_{t, t^{k + 1} u} - F_t}{t^{k - 1}} - \frac{1}{|D|}
  \int_D \left( \frac{H_{t, t^{k + 1} u} - F_t}{t^{k - 1}} \right),
 \\
  \Phi_2 (t, u) & = \frac{\cos \gamma_{t, t^{k + 1} u} - \cos \bar{\gamma}_{t,
  t^{k + 1} u}}{t^{2 k - 1}} . 
\end{align}
From Corollary \ref{cor_angle}, Proposition \ref{prop:meanTaylor}, and together with \eqref{eq:meank}, \eqref{eq:anglek}, we can find
\begin{align}
\Phi_1(t,u)={}&\frac{1}{(\phi'(0))^2} \Delta u+ \frac{H_t-F_t}{t^{k-1}}+O(t),\label{eq:OperatorL}\\
 \Phi_2(t,u)={}& \frac{k}{(\phi'(0))^2}\frac{\partial u}{\partial \bar{\nu}}+\frac{\cos \gamma_t -\cos \bar{\gamma}_t}{t^{2k-1}}+O(t).\label{eq:OperatorB}
\end{align}
Therefore, we can define $\Phi(0,u)$ by taking limits, that is,
\[\Phi_1(0,u)=\frac{1}{(\phi'(0))^2}\Delta u+ \lim_{t\rightarrow 0} \frac{H_{t}-F_t}{t^{k-1}}\] and \[\Phi_2(0,u)=\frac{k}{(\phi'(0))^2}\frac{\partial u}{\partial \bar{\nu}}+\lim_{t\rightarrow 0} \frac{\cos \gamma_t-\cos \bar{\gamma}_t}{t^{2k-1}},\] and we can make sure $\Phi(t,u)$ is a $C^{1}$ operator defined for $t\in[0,\varepsilon)$.
\begin{proposition}
	\label{prop:solution}
	For any $t \in [0,\varepsilon)$ for $\varepsilon$ small enough, there exists a function $u(\cdot,t)$ such that $\Phi(t,u(\cdot,t))=0$.
\end{proposition}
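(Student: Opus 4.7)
The plan is to follow the implicit function theorem strategy used in the proof of Proposition \ref{prop cmc capillary at zero}. The operator $\Phi$ has been set up so that its image lies in $\mathcal{Z} \times C^{1,\alpha}(\partial D)$ thanks to the subtraction of the average in $\Phi_1$, and so that it extends continuously to $t = 0$ by the asymptotic analysis of Subsection \ref{sub:asymptotic_analysis} (specifically Corollary \ref{cor_angle}, Proposition \ref{prop:meanTaylor}, and Lemma \ref{lem_2ndFundForm}). The two main tasks are therefore to produce a seed solution $u_0 \in \mathcal{Y}$ at $t = 0$, and to verify that the partial Fr\'echet derivative $D_u \Phi_{(0, u_0)}$ is an isomorphism from $\mathcal{Y}$ onto $\mathcal{Z} \times C^{1,\alpha}(\partial D)$.

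From the formulas \eqref{eq:OperatorL}--\eqref{eq:OperatorB}, solving $\Phi(0, u) = 0$ amounts to a Neumann boundary value problem of the form
\[
\Delta u = G_1(\hat{x}) \text{ in } D, \qquad \frac{\partial u}{\partial \bar{\nu}} = -\frac{(\phi'(0))^2}{k} \lim_{t \to 0} \frac{\cos \gamma_t - \cos \bar{\gamma}_t}{t^{2k-1}} \text{ on } \partial D,
\]
where $G_1$ collects the contributions involving $\lim_{t\to 0}(H_t - F_t)/t^{k-1}$ and the constant coming from the averaging. The key observation is that the very definition of $\Phi_1$ (subtracting the average) forces the Neumann compatibility condition $\int_D \Delta u = \int_{\partial D} \partial_{\bar{\nu}} u$ to hold identically in $u$, so the system is solvable. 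I would produce a unique $u_0 \in \mathcal{Y} \cap C^{2,\alpha}(D) \cap C^{1,\alpha}(\bar{D})$ by minimising the associated Dirichlet-type energy on $\{u \in W^{1,2}(D) : \int_D u = 0\}$ and invoking elliptic regularity, exactly as in the proof of Proposition \ref{prop cmc capillary at zero}.

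A direct computation then gives
\[
D_u \Phi_{(0, u_0)}(v) = \left( \frac{1}{(\phi'(0))^2}\left[\Delta v - \frac{1}{|D|} \int_D \Delta v\right],\; \frac{k}{(\phi'(0))^2} \frac{\partial v}{\partial \bar{\nu}} \right),
\]
which is an isomorphism onto $\mathcal{Z} \times C^{1,\alpha}(\partial D)$ by the same Neumann theory: for target data $(f_1, f_2) \in \mathcal{Z} \times C^{1,\alpha}(\partial D)$, the compatibility condition for the resulting Neumann problem reduces precisely to $\int_D f_1 = 0$, which is the defining property of $\mathcal{Z}$, while uniqueness in $\mathcal{Y}$ is enforced by the zero-integral constraint. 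The implicit function theorem then produces a $C^1$ family $u(\cdot, t) \in B(0, \delta) \subset \mathcal{Y}$ on $[0, \varepsilon)$ with $u(\cdot, 0) = u_0$ and $\Phi(t, u(\cdot, t)) = 0$. The main obstacle, already largely absorbed in the setup of $\Phi$, is that $\Phi$ must be genuinely $C^1$ in $t$ up to and including $t = 0$ in spite of the blow-up of ambient quantities such as $|A_{\partial M}|$ of order $1/t$; the cancellations encoded in the asymptotic expansions listed above, together with the choice of $F_t$ absorbing all terms of order $t^j$ with $j < k-1$, are what make this regularity at $t=0$ possible.
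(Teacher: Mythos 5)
Your proposal is correct and follows essentially the same route as the paper: produce the seed solution $u_0$ at $t=0$ by minimising the associated energy on the zero-mean subspace of $W^{1,2}(D)$, identify the linearisation at $(0,u_0)$ as the (averaged) Laplacian with Neumann boundary operator, check it is an isomorphism from $\mathcal{Y}$ onto $\mathcal{Z}\times C^{1,\alpha}(\partial D)$, and conclude by the implicit function theorem. Your explicit remarks on the Neumann compatibility condition and on why $\Phi$ remains $C^1$ up to $t=0$ (the cancellations from the asymptotic expansions and the role of $F_t$) are consistent with, and slightly more detailed than, the paper's outline.
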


\begin{proof}
	The proof is similar to the proof for Proposition \ref{prop cmc capillary at zero}. We outline the key steps.
	We write $f=\lim_{t\rightarrow 0} \frac{H_t-F_t}{t^{k-1}}$, $g=\lim_{t\rightarrow 0} \frac{\cos \gamma_t-\cos \bar{\gamma}_t}{t^{2k-1}}$.
At first, let us show we can find $u_0$ such that $\Phi(0,u_0)=0$.
Indeed, we just need to find a unique minimiser $u_0$ of the following energy,
	\[
		E(u):=\int_{ D}\left( \frac{\left|\nabla u\right|^2}{2(\phi'(0))^2}-fu \right) + \int_{ \partial D} \frac{kgu}{(\phi'(0))^2}
	\]
	on the space $\int_{ D} u=0$.
	Then, $u_0$ is the function such that $\Phi(0,u_0)=0$.
	Note that $D\Phi_{(0,u_0)}(0,v)=\frac{1}{(\phi'(0))^2}\left( \Delta v- \frac{1}{|D|}\int_{ D} \Delta v, k \frac{\partial u}{\partial  \bar{\nu}} \right)$.
	Hence, we can apply the implicit function theorem to conclude we can find a $C^1$ map $t\rightarrow u(\cdot,t)$ such that $\Phi(t,u(\cdot,t))=0$.
\end{proof}

\subsection{Behavior of the mean curvature of the leaf}%
\label{sub:cmc_foliations}

In this part, we want to show that each leaf of the foliation constructed in Proposition \ref{prop:solution} has non-negative mean curvature for $k=2$.

Based on Proposition \ref{prop:solution}, we know we can find $u_t(\cdot)=u(t,\cdot) \in C^{2,\alpha}(D)\cap C^{1,\alpha}(\bar{D})$ for each $t \in (0,\varepsilon)$ such that the mean curvature of $\Sigma_{t,t^{k+1}u_t}$ is given by $t^{k-1}\lambda_t+F_t$.
Here, we write $\lambda_t=\frac{H_{t,t^{k+1}u_t}}{t^{k-1}}$.
At first, we can obtain the limit behavior of $\lambda_t$ similar to Lemma \ref{limit of mean}.

\begin{lemma}
	For any $k\ge 2$, if $F_t=0$, we have
	\[
		\lim_{t\rightarrow 0} \lambda_t\ge 0.
	\]
	\label{lem:limLam}
\end{lemma}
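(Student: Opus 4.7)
The plan is to adapt the proof of Lemma \ref{limit of mean} to the spherical setting. The central tool is the Miao-Piubello formula (Proposition \ref{prop_MeanCur}) with reference metric $\bar{g} = \delta$, applied on the region $\Omega_t$ bounded by the leaf $\Sigma_{t, t^{k+1} u_t}$ and the cap of $\partial M$ lying above height $-t^k$.

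First, I would record the relevant decay rates near the pole $O$. Since $g = \delta$ at $O$ and every point of $\partial \Omega_t$ lies at Euclidean distance $O(t)$ from $O$, one has $|h|_{\bar{g}} = O(t)$ and $|\bar{\nabla}^M h|_{\bar{g}} = O(1)$. The leaf is an $O(t^{k+1})$ normal graph of the flat disk at height $-t^k$, so the Euclidean second fundamental form $\bar{A}$ of the leaf vanishes to leading order, while on $\partial M$ the spherical expansion gives $|\bar{A}_{\partial M}|_{\bar{g}} = O(t^{k-2})$ (bounded when $k=2$). Integrating Miao-Piubello over $\partial \Omega_t$ and applying the divergence theorem to the right-hand side, the volume bound $|\Omega_t| = O(t^{k+2})$ yields $\int_{\partial \Omega_t} (d \operatorname{tr}_{\bar{g}} h - \operatorname{div}_{\bar{g}} h)(\bar{X}) = O(t^{k+2})$. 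The Miao-Piubello remainder $|\bar{A}|_{\bar{g}} O(|h|^2_{\bar{g}}) + O(|\bar{\nabla}^M h|_{\bar{g}} |h|_{\bar{g}})$ contributes $O(t^k) \cdot O(t^2) = O(t^{k+2})$ after integration on $\partial M$, and an even smaller amount on the leaf.

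Secondly, on $\partial \Omega_t \cap \partial M$ the Miao-Piubello identity produces the boundary mean-curvature defect $2(H_{\partial M} - \bar{H}_{\partial M})$. Under $F_t \equiv 0$, Proposition \ref{prop:meanTaylor} gives $H_{\partial M} - \bar{H}_{\partial M} = O(t^{k-1})$, which is also nonnegative by the hypothesis of Theorem \ref{main}; integrated over area $O(t^2)$, this contributes a nonnegative quantity of size at most $O(t^{k+1})$. The divergence-of-$W$ boundary integrals collapse exactly as in the conical argument to $-\int_{\partial \Sigma_t} \tfrac{2}{\sin \bar{\gamma}_t}(\cos \bar{\gamma}_t - \cos \gamma_t) \, d\lambda$, and the metric comparison $\sigma \geq \bar{\sigma}$ together with the angle asymptotics of Corollary \ref{cor_angle} handle its sign and order. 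Collecting everything, one obtains
\[
  2 \int_{\Sigma_{t, t^{k+1} u_t}} H_{t, t^{k+1} u_t} \geq O(t^{k+2}).
\]
Since $H_{t, t^{k+1} u_t} = t^{k-1} \lambda_t$ is constant on each leaf of area $\sim \phi(t)^2 \sim t^2$, this forces $\lambda_t \geq O(t)$, and hence $\lim_{t \to 0} \lambda_t \geq 0$.

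The main obstacle will be careful order-tracking, in particular verifying that the Miao-Piubello remainder and the angle-comparison contribution are each $o(t^{k+1})$ after integration. The assumption $F_t = 0$ is essential: it cancels the otherwise dominant contribution from the boundary mean-curvature defect, leaving only subleading terms controlled by the estimates above. For $k \geq 3$ the boundary term $|\bar{A}_{\partial M}|$ is already small, so the argument becomes easier than at $k=2$, which is the threshold case and the one actually needed for the spherical part of Theorem \ref{main}.
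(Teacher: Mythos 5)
Your plan transplants the Miao--Piubello argument of Lemma \ref{limit of mean} to the spherical setting, but the order counting breaks down at exactly the critical case $k=2$, and the claimed bound $O(t^{k+2})$ for the integrated remainder is not correct. The quadratic remainder in Proposition \ref{prop_MeanCur} contains the term $O(|\bar{\nabla}^M h|_{\bar{g}}\,|h|_{\bar{g}})$, which is $O(1)\cdot O(t)=O(t)$ pointwise near the pole; integrated over $\partial\Omega_t$, whose area is $O(t^2)$, it contributes $O(t^3)$ \emph{for every} $k$. Since the quantity you are trying to bound is $2\int_{\Sigma_{t,t^{k+1}u_t}}H = 2\pi(\phi'(0))^2\,t^{k+1}\lambda_t + o(t^{k+1})$, you need every error to be $o(t^{k+1})$; for $k=2$ the remainder is $O(t^{k+1})$, not $o(t^{k+1})$, so the argument cannot conclude $\lim_{t\to 0}\lambda_t\ge 0$. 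The same criticality afflicts several other pieces you dismiss as lower order: the Euclidean mean curvature of the perturbed leaf is $O(t^{k-1})$ (not negligible, since the leaf is an $O(t^{k+1})$ graph over a disk of radius $O(t)$), and the angle-defect term $\int_{\partial\Sigma_t}\tfrac{1}{\sin\bar{\gamma}_t}(\cos\bar{\gamma}_t-\cos\gamma_t)$ is $O(t^{k-1})^{-1}\cdot O(t^{2k-1})\cdot O(t)=O(t^{k+1})$ with no definite sign. In the conical case all of these are genuinely one order smaller than $\int_{\Sigma_t}H_t\sim t^2\lambda_t$, which is why the crude estimates suffice there; at $k=2$ they are all borderline.

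The missing idea — and the actual content of the paper's proof — is a \emph{parity cancellation}, not an order estimate. The paper integrates the linearized identity $\lambda_t=\tfrac{1}{(\phi'(0))^2}\Delta_t^D u_t+\tfrac{H_t}{t^{k-1}}+O(t)$ over $D$, converts $\int_D\Delta u_t$ into the boundary angle term via the Robin condition, arriving at \eqref{eq:pfLambdaExp}, and then invokes Proposition \ref{prop:meanTaylor} and Corollary \ref{cor_angle} to write the critical-order corrections as $L(\hat{x})t^{k-1}$ and $B(\hat{x})t^{2k-1}$ with $L$ and $B$ depending \emph{linearly} on $D_{P_0}g(0)$. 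Because $g(p_{\hat{x}})+g(p_{-\hat{x}})-2\delta=O(t^2)$, these functions are odd, $L(\hat{x})=-L(-\hat{x})$ and $B(\hat{x})=-B(-\hat{x})$, so $\int_D L=0$ and $\int_{\partial D}B=0$; what survives is $\int_D\tfrac{H_{t,\partial M}-\bar{H}_{t,\partial M}}{t^{k-1}}\ge 0$. Your proposal never identifies, let alone cancels, these odd leading terms; the Miao--Piubello ``big-$O$'' remainders hide precisely the structure needed to see the cancellation. (It is conceivable the integral approach could be patched by expanding the remainder explicitly and checking it is also odd at leading order, but that is an additional argument you would have to supply, and it is not what your estimates deliver.)
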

\begin{remark}
	Note that Lemma \ref{lem:limLam} also holds for $k=1$ from Lemma \ref{limit of mean} since $F_t=0$ if $k=1$.
\end{remark}

\begin{proof}
	At first, we integrate $\frac{\lambda_t}{t^{k-1}}=\frac{1}{(\phi'(0))^2}\Delta u_t+\frac{H_t}{t^{k-1}}+O(t)$ on $D$ and use \eqref{eq:OperatorB}, we have
	\begin{align}
		\lambda_t |D|={} & \frac{1}{(\psi'(0))^2}\int_{ D} \Delta u_t + \frac{H_t}{t^{k-1}}+O(t)\nonumber \\
		={} & \frac{1}{(\phi'(0))^2} \int_{ D} \frac{H_t}{t^{k-1}}-\frac{1}{k}\int_{ \partial D} \frac{\cos \gamma_t-\cos \bar{\gamma}_t}{t^{2k-1}}+O(t).
		\label{eq:pfLambdaExp}
	\end{align}

	Based on Proposition \ref{prop:meanTaylor} and Corollary \ref{cor_angle}, we write $L=\lim_{t\rightarrow 0} \frac{H_{t, \partial M}-\bar{H}_{\partial M}-H_t}{t^{k-1}}$, $B=\lim_{t\rightarrow 0} \frac{\cos \gamma_t -\cos \bar{\gamma}_t}{t^{2k-1}}$.
	Note that $L$ ($B$ respectively) is a function on $D$ (on $\partial D$ respectively) and they rely on $D_{P_0}g(0)$ linearly.

	Now if we choose $\hat{x} \in D$ and consider the metric at $p_{\hat{x}}:=(\phi(t)\hat{x},-t^{k})$.
	We can write this metric as
	\[
		g_{ij}(p_{\hat{x}})=\delta_{ij}+\phi(t)\hat{x}_\alpha D_{\alpha}g_{ij}(0)+O(t^2).
	\]
	But note that the metric at $p_{-\hat{x}}$ can also be written as
	\[
		g_{ij}(p_{-\hat{x}})=\delta_{ij}-\phi(t)\hat{x}_\alpha D_{\alpha}g_{ij}(0)+O(t^2).
	\]
	In particular, $g_{ij}(p_{\hat{x}})+g_{ij}(p_{-\hat{x}})-2\delta_{ij}=O(t^2)$.
	Hence, $L(\hat{x})=-L(-\hat{x})$ since $L$ relies on $D_{P_0}g(0)$ linearly.

	In particular, this tells us
	\[
		\int_{ D} L =0.
	\]

	Similarly, we have
	\[
		\int_{ \partial D} B=0.
	\]

	Now from \eqref{eq:pfLambdaExp}, we have
	\begin{align*}
		\lambda_t|D|={} & \frac{1}{(\phi'(0))^2}\int_{ D} \frac{H_{t, \partial M}-\bar{H}_{t, \partial M}}{t^{k-1}}+L + O(t)-\frac{1}{k}\int_{ \partial  D} B+O(t)  \\
		={} & \frac{1}{(\phi'(0))^2}\int_{ D} \frac{H_{t, \partial M}-\bar{H}_{t, \partial M}}{t^{k-1}}+O(t)\ge O(t)
	\end{align*}
	since $H_{\partial M}\ge \bar{H}_{\partial M}$ on $\partial M$.
\end{proof}

Now, we are ready to show that each leaf has nonnegative mean curvature.

\begin{proposition}
	\label{prop:positiveCMC}
	For $k=2$, we can construct a surface $\Sigma$ near $p$ such that the mean curvature of $\Sigma$ is nonnegative and it has prescribed contact angle $\bar{\gamma}$ along $\partial \Sigma$.
\end{proposition}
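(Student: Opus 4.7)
The plan is to prove Proposition \ref{prop:positiveCMC} by mirroring the argument of Lemma \ref{nonnegativity of mean curvature of every leaf} from the conical case, namely by applying Theorem \ref{H ode} with reversed sign to the CMC foliation already constructed in Proposition \ref{prop:solution} and then combining with the limit information from Lemma \ref{lem:limLam}. For $k=2$ we have $F_t = f_0$ (a nonnegative constant since $H_{\partial M}\ge \bar H_{\partial M}$ forces $f_0\ge 0$), so each leaf $\Sigma_t' := \Sigma_{t,\,t^3 u_t}$ has constant mean curvature $H(t) = f_0 + t\lambda_t$ and meets $\partial M$ at the prescribed angle $\bar\gamma$ automatically (from $\Phi_2(t,u_t)=0$). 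The candidate barrier will simply be $\Sigma := \Sigma_t'$ for some small $t>0$.

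The first step is to set up the ODE. Because $N_t$ of $\Sigma_t'$ points \emph{toward} the pole while the foliation parameter $t$ increases \emph{away} from the pole, the sign in Theorem \ref{H ode} reverses exactly as in the conical case, yielding
\[
  H'(t) + \Psi(t)\, H(t) \geq 0, \qquad t \in (0,\varepsilon),
\]
with $\Psi(t) = \bigl(\int_{\Sigma_t'} 1/v_t\bigr)^{-1} \int_{\partial \Sigma_t'} \cot \bar\gamma_t$ and $v_t = -\langle \partial_t \Sigma_t', N_t\rangle > 0$.

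The second step is the asymptotic analysis $\Psi(t) = 2/t + C_1(t)$ with $C_1$ continuous. Unlike the conical case, here $v_t$ itself vanishes: since the $z$-coordinate of $\Sigma_t'$ moves at speed $-2t + O(t^2)$ and $N_t \to e_3$ (up to sign) as $t\to 0$, we get $v_t = 2t + O(t^2)$. Using Corollary \ref{cor_angle}, $\sin\bar\gamma_t = 2t/\phi'(0) + O(t^2)$, so $\cot\bar\gamma_t = \phi'(0)/(2t) + O(1)$. Combining these with $|\Sigma_t'| = \pi(\phi'(0))^2 t^2 + O(t^3)$ and $|\partial\Sigma_t'| = 2\pi\phi'(0)t + O(t^2)$, one finds
\[
  \int_{\Sigma_t'} \tfrac{1}{v_t} = \tfrac{\pi(\phi'(0))^2}{2}\, t + O(t^2), \qquad
  \int_{\partial\Sigma_t'} \cot\bar\gamma_t = \pi(\phi'(0))^2 + O(t),
\]
and hence $\Psi(t) = 2/t + O(1)$ as claimed. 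The striking feature is that although $v_t$ now vanishes linearly (rather than being $O(1)$ as in the conical case), the leading behavior of $\Psi$ is still $2/t$.

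With the same integrating factor as in Lemma \ref{nonnegativity of mean curvature of every leaf}, the ODE rewrites as
\[
  \tfrac{\mathrm{d}}{\mathrm{d} t}\!\left[t^2 \exp\!\left(\int_0^t C_1(s)\,\mathrm{d} s\right) H(t)\right] \geq 0,
\]
so the bracketed quantity is nondecreasing on $(0,\varepsilon)$. Since $\lambda_t$ extends continuously to $t=0$ by the $C^1$-dependence asserted in Proposition \ref{prop:solution}, $H(t) \to f_0$ as $t\to 0^+$, hence the nondecreasing quantity tends to $0$ at $t=0^+$. Therefore it is $\geq 0$ for every $t\in(0,\varepsilon)$, which forces $H(t)\geq 0$. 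Setting $\Sigma := \Sigma_t'$ for any sufficiently small $t>0$ concludes the proof.

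The main obstacle is the asymptotic analysis of $\Psi(t)$: one must keep careful track of the fact that $v_t$ now vanishes and of the precise order in $t$ of each geometric quantity appearing in $\Psi$, in order to confirm the crucial $2/t$ leading term that makes the same integrating-factor trick work. Once this is in place, the remainder of the proof is a direct transcription of the conical-case argument, with $\lambda_t$ there replaced by $H(t)$ here.
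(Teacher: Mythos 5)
Your proposal is correct and follows essentially the same route as the paper: the reversed-sign ODE from Theorem \ref{H ode}, the asymptotics $v_t = 2t + O(t^2)$, $\cot\bar\gamma_t = \phi'(0)/(2t) + O(1)$ giving $\Psi(t) = 2/t + C_1(t)$, and the integrating factor $t^2\exp(\int_0^t C_1)$ together with $H(t)\to f_0$ finite. The only cosmetic difference is that you run the ODE argument uniformly, whereas the paper first disposes of the cases $f_0>0$ and $\lim_{t\to 0}\lambda_t>0$ directly and reserves the ODE for the remaining case; your computation of $\int_{\Sigma_t}1/v_t$ also carries the correct factor $\tfrac{1}{2}$ that the paper's intermediate line drops.
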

\begin{proof}
	Note that when $k=2$, $F_t=f_0$ is a constant and it can be computed by	
	\[
		f_0=\lim_{\partial \ni p \rightarrow 0} H_{\partial M}-\bar{H}_{\partial M}\ge 0.
	\]
	If $f_0>0$, we can choose $t$ small such that $\Sigma_{t,t^3u_t}$ has positive mean curvature with prescribed contact angle $\bar{\gamma}$ since $\lim_{t\rightarrow 0} H_{t,t^3u_t}= f_0$.

	From now on, we work on the case of $F_t=f_0=0$.

	Again, if $\lim_{t\rightarrow 0} \lambda_t>0$, we can also choose $t \in (0,\varepsilon)$ small enough such that
	$\frac{H_{t,t^{3}u_t}}{t}>0$ and $\Sigma_{t,t^{3}u_t}$ is the desired surface.

	We only have to handle the case $\lim_{t\rightarrow 0} \lambda_t=0$.

	Let $\psi:D \times (0,\varepsilon)\rightarrow M$ pararameterize the foliation $\Sigma_{t,t^3u_t}$, $Y=\frac{\partial \psi}{\partial t}$, $v_t=\left< Y,N_t \right> $.

	We denote $h(t):=H_{t,t^3u_t}$.
	Note that by the construction of $\Sigma_{t,t^3u_t}$, we have $v_t=t+O(t^2)$ and hence $v_t>0$ for all $t$ sufficent small.
	Hence, following the proof of Theorem \ref{H ode} (see also the proof of Lemma \ref{nonnegativity of mean curvature of every leaf}), we have
	\begin{equation}
		\frac{\mathrm{d}h(t)}{\mathrm{d}t}+\Psi(t)h(t)\ge 0,
		\label{eq:pfODELambda}
	\end{equation} 
	where
	\[
		\Psi(t)=\left( \int_{ \Sigma_{t,t^3u_t}} \frac{1}{v_t} \right)^{-1} \int_{ \partial \Sigma_{t,t^3u_t}} \cos \bar{\gamma}_t,
	\]
    and $v_t:=-\left<Y_{t,t^3u_t},N_{t,t^3u_t}\right>$, $Y_{t,t^3u_t}$ is the variational vector field associated with the foliation $\Sigma_{t,t^3u_t}$ and $N_{t,t^3u_t}$ the unit normal vector field along $\Sigma_{t,t^3u_t}$ and pointing upward.
    Hence, we can see that
    \[
    v_t=-\left<Y_{t,t^3u_t},N_{t,t^3u_t}\right>=\left<2te_3,e_3\right>+O(t^2)=2t+O(t^2)
    \]
    by the definition of $\Sigma_t$ and $\Sigma_{t,t^3u_t}$.
	Hence, we have
	\begin{align*}
		\int_{ \Sigma_{t,t^3u_t}} \frac{1}{v_t}={}&\frac{\pi \phi^2(t)}{2t}+O(t^{2})=\pi (\phi'(0))^2t+O(t^{2}),\\
		\int_{ \partial \Sigma_{t,t^3u_t}} \cot \bar{\gamma}={}&
		\frac{2\pi\phi'(0) \phi(0)}{2t}+O(1)=\pi (\phi'(0))^2+O(t).
	\end{align*}
	where we have used Corollary \ref{cor_angle}.

	Therefore,
	\[
		\Psi(t)=2t^{-1}+C_1(t)
	\]
	where $C_1(t)$ is a continuous function of order $O(1)$.
	Hence, from \eqref{eq:pfODELambda}, we find $h(t)$ satisfies the following ordinary differential inequality
	\[
		\frac{\mathrm{d}}{\mathrm{d}t}\left[ \exp\left( \int_{ 0} ^t C_1(s)\mathrm{d}s \right)t^2 h(t) \right]\ge 0.
	\]
        Therefore, have $h(t)\ge 0$ for every $t \in (0,\varepsilon)$. We pick such a small $t$, then $\Sigma_{t,t^{3}u_t}$ is a surface with nonnegative mean curvature and prescribed contact angle $\bar{\gamma}$.
\end{proof}

\subsection{Non-Euclidean metric at spherical points}%
\label{sub:non_euclideanSphere}

In this section and next section, we will only assume 
\begin{equation}
    \sigma\ge \bar{\sigma} \quad \text{ at the spherical point }O.
    \label{eq:assumptionMetric}
\end{equation}
Similar to the conical case \eqref{eq:lessLengthAtConical}, it is also possible that we can find a non-zero vector $v \in T_OM$ such that $|v|_g<|v|$.
Note that we still need to assume $\sigma\ge \bar{\sigma}$ for any $p \in \partial M$ for Theorem \ref{main}.

We need to construct a new $\Sigma_t$, similar to the $\Sigma_t$ defined in the previous section.
We denote $\bar{\Sigma}_t:=M\cap \left\{ x_3=-t^2 \right\}$, the $\Sigma_t$ we defined in the previous section.

We assume $g=g_0+th+O(t^2)$ where $g_0=g(0)$ is a constant metric and $h$ is a bounded symmetric 2-tensor.
By rotating the coordinate if necessary, we assume $g_0=
\begin{bmatrix}
	a_{11} & 0 & a_{13}\\
	0 & a_{22} & a_{23} \\
	a_{13} & a_{23} & a_{33} \\
\end{bmatrix}$.
We write $a^{ij}$ as the inverse of metric $g_0$.
To simplify the computation in this section, we use the following notations.
Recall that $e_i=\frac{\partial }{\partial x^i}$ is the standard orthonormal basis of $\mathbb{R}^3$.
Since we are interested in the constant metric $g_0$ in this subsection, we write $\{e^i\}_{i=1}^3$ as the dual basis of $\left\{ e_i \right\}_{i=1}^3$ under metric $g_0$.
In other words, $e^i=a^{ij}e_j$ and $\left< e^i,e_j \right>_{g_0}=\delta^i_j$.

By our assumption of the metric \eqref{eq:assumptionMetric}, we know $a_{11}\ge 1$ and $a_{22}\ge 1$.

This time we assume $\partial M$ is given by
\[
	(x,-\varphi(\left|x\right|^2))
\]
with $\varphi(0)=0$ and $\varphi'(0)>0$.
We define
\begin{equation}
	b_1=\sqrt{a_{11}a^{33}}=\frac{a_{11}\sqrt{a_{22}}}{\sqrt{\mathrm{det}(g_0)}},\text{ }
	b_2=\sqrt{a_{22}a^{33}}=\frac{a_{22}\sqrt{a_{11}}}{\sqrt{\mathrm{det}(g_0)}}.
\end{equation}

For any $t>0,s\ge 0$,
we consider the function $G_{s,t}$ defined by
\[
	G_{s,t}(x_1,x_2)=\varphi'(0)x_1^2(b_1-1+s)+\varphi'(0)x_2^2(b_2-1+s)-t^2.
\]

We define $\Sigma_{s,t}$ be the intersection of graph of $G_{s,t}$ with the interior of $M$.
That means
\[
	\Sigma_{s,t}:=\left\{(x,G_{s,t}(x)):x \in D \text{ and }G_{s,t}(x)\le -\varphi(|x|^2) \right\}.
\]

If $s=0$, we define $\Sigma_t:=\Sigma_{0,t}$.
\begin{remark}
	Indeed, our construction of foliation is based on $\Sigma_t$ only, see Subsection \ref{sub:construction_of_foliation}.
	But that proof only works when $a_{11}=a_{22}=1$.
	Otherwise, we need to use $\Sigma_{s,t}$ to directly give a good barrier. See Proposition \ref{prop_sphereStrictBarrier}.
\end{remark}

By the implicit function theorem, we can write
\[
	\Sigma_{s,t}:=\left\{ (\phi_1(s,t,\hat{x}), \phi_2(s,t,\hat{x}),G_{s,t}(\phi_1(s,t,\hat{x}), \phi_2(s,t,\hat{x})):\hat{x} \in D \right\}
\]
where $\varphi_1,\varphi_2$ are all smooth functions such that
\[
	\phi_1(s,t,\hat{x})=\frac{t \hat{x}_1}{\sqrt{\varphi'(0)(b_1+s)}}+O(t^3),\quad 
	\phi_2(s,t,\hat{x})=\frac{t \hat{x}_2}{\sqrt{\varphi'(0)(b_2+s)}}+O(t^3).
\]

Hence, it is better to use an ellipse instead of a disk to parameterize $\Sigma_{s,t}$.
We define a set $E_s\subset \mathbb{R}^2$ by
\[
	E_s:=\left\{ \hat{x} \in \mathbb{R}^2: (b_1+s)\hat{x}_1^2+ (b_2+s)\hat{x}_2^2<1 \right\}.
\]
By the definition of $E_s$, it is easy to notice that $\frac{\sqrt{\varphi'(0)}}{t}\Sigma_{s,t}\to E_s$ as sets when we take $t\to 0$.

We also use $E=E_0$ for simplicity.

To simplify our notations, we assume
\[
	\Phi_{s,t}(\hat{x})=(\phi_1(s,t,\sqrt{b_1+s}\hat{x}), \phi_2(s,t,\sqrt{b_2+s}\hat{x}))\quad \text{ for }\hat{x} \in E_s.
\]
We also view $\Sigma_{s,t}$ as a map $E_s\rightarrow \Sigma_{s,t}$ such that
\begin{align}
    \Sigma_{s,t}(\hat{x}):={}&(\Phi_{s,t}(\hat{x}),G_{s,t}(\Phi_{s,t}(\hat{x})))\nonumber \\
    ={}&\left(\frac{t\hat{x}}{\sqrt{\varphi'(0)}}+O(t^3),t^2\hat{x}_1^2(b_1+s-1)+t^2\hat{x}_2^2(b_2+s-1)-t^2+O(t^4)\right).
	\label{eq:defSigmaST}
\end{align}
Note that by our choice of $\Phi_{s,t}(\hat{x})$, we have
$\Phi_{s,t}(\hat{x})=\frac{1}{\sqrt{\varphi'(0)}}t\hat{x}+O(t^3)$.

\subsubsection{Asympotoic behavior of angles near the spherical point}%
\label{ssub:asympotoic_behavior_of_angles_near_the_spherical_point}
We denote $\gamma_{s,t}(\hat{x})$ the angle between $\Sigma_{s,t}$ and $\partial M$ under metric $g$, $\gamma^{g_0}_{s,t}(\hat{x})$ the angle between $\Sigma_{s,t}$ and $\partial M$ under metric $g_0$ at point $\Sigma_{s,t}(\hat{x})$ for any $\hat{x} \in \partial E_s$.

%
%
Recall that for any $\hat{x} \in\partial E_s$, $\bar{\gamma}_{s,t}(\hat{x})$ is defined as the value of $\bar{\gamma}$ at point $\Sigma_{s,t}(\hat{x})$.
If we assume $s=0$, we write $\gamma_t:=\gamma_{0,t}, \gamma_t^{g_0}:=\gamma_{s,t}^{g_0}$, and $\bar{\gamma}_t:=\bar{\gamma}_{s,t}$.

Now, we need to find the relation of $\gamma_{s,t}^{g_0}$ and $\bar{\gamma}_{s,t}$.
This can be summarized as the following proposition.
\begin{proposition}
	\label{prop:angleg0}
	We have the following asymptotic behavior for $\gamma_{s,t}^{g_0}$ and $\bar{\gamma}_{s,t}$:
	\begin{align*}
		\cos \gamma^{g_0}_{s,t}(\hat{x})={} & \cos \bar{\gamma}_{s,t}(\hat{x})+2\varphi'(0)\hat{x}_\alpha^2 t^2\left( 1-\frac{(b_\alpha+s)^2}{a_{\alpha\alpha}a^{33}} \right)+A(\hat{x})t^3+O(t^4),
	\end{align*}
	for any $\hat{x} \in \partial E_s$.
	Here, we use $A(\hat{x})$ to denote the bounded term not related to $t$, and it is also odd symmetric with respect to $\hat{x}$, that is, $A(\hat{x})=-A(-\hat{x})$.
\end{proposition}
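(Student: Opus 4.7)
The plan is to express both cosines as normalized gradient inner products of level-set defining functions, expand in $t$, and compare. Writing $\partial M=\{F=0\}$ with $F=x_3+\varphi(|x|^2)$ and $\Sigma_{s,t}=\{F_\Sigma=0\}$ with $F_\Sigma=x_3-G_{s,t}(x_1,x_2)$, and using the $g_0$-dual frame $e^i:=a^{ij}e_j$ (so $\langle e^i,e^j\rangle_{g_0}=a^{ij}$), the $g_0$-gradients point upward and are given by
\begin{equation*}
u:=\nabla^{g_0}F=2\varphi'(|x|^2)x_\alpha e^\alpha+e^3,\qquad v:=\nabla^{g_0}F_\Sigma=-2\varphi'(0)(b_\alpha-1+s)x_\alpha e^\alpha+e^3.
\end{equation*}
Then $\cos\gamma_{s,t}^{g_0}(\hat{x})=\langle u,v\rangle_{g_0}/(|u|_{g_0}|v|_{g_0})$, whereas by definition
\begin{equation*}
\cos\bar{\gamma}_{s,t}(\hat{x})=\langle\nabla^\delta F,e_3\rangle_\delta/|\nabla^\delta F|_\delta=\bigl(1+4\varphi'(|x|^2)^2|x|^2\bigr)^{-1/2}=1-2\varphi'(0)|\hat{x}|^2t^2+O(t^4),
\end{equation*}
both evaluated at $x=\Phi_{s,t}(\hat{x})=t\hat{x}/\sqrt{\varphi'(0)}+O(t^3)$.

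Since $\varphi'(|x|^2)=\varphi'(0)+O(t^2)$ and $\Phi_{s,t}$ has no $t^2$ contribution, one gets $u=e^3+u^{(1)}t+u^{(3)}t^3+O(t^5)$ with $u^{(1)}=2\sqrt{\varphi'(0)}\hat{x}_\alpha e^\alpha$, and similarly for $v$ with $b_\alpha-1+s$ inserted. The crucial feature is that the $t^2$-term of $u,v$ vanishes. Expanding $\langle u,v\rangle_{g_0}$, $|u|_{g_0}^2$, $|v|_{g_0}^2$ to order $t^2$ using $\langle e^i,e^j\rangle_{g_0}=a^{ij}$ and then forming the ratio, the $t$-coefficient of $\cos\gamma_{s,t}^{g_0}$ vanishes because the $t$-term of $\langle u,v\rangle_{g_0}$ and the $t$-term of $\tfrac12(|u|_{g_0}^2+|v|_{g_0}^2)$ both equal $2\sqrt{\varphi'(0)}\hat{x}_\alpha a^{\alpha 3}(2-b_\alpha-s)$.

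The $t^2$-coefficient of $\cos\gamma_{s,t}^{g_0}$ splits into a \emph{diagonal} part (involving $a^{\alpha\beta}$) and a \emph{cross} part (involving $a^{\alpha 3}a^{\beta 3}$). Using the factorization $1+a+b+ab=(1+a)(1+b)$ with $a:=b_\alpha-1+s$ and $b:=b_\beta-1+s$, the diagonal part collapses to $-\tfrac{2\varphi'(0)}{a^{33}}a^{\alpha\beta}w_\alpha w_\beta$ where $w_\alpha:=\hat{x}_\alpha(b_\alpha+s)$, and an analogous manipulation of the cross part gives $+\tfrac{2\varphi'(0)}{(a^{33})^2}(a^{\alpha 3}w_\alpha)^2$. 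The main obstacle is then the algebraic identity
\begin{equation*}
a^{\alpha\beta}-\frac{a^{\alpha 3}a^{\beta 3}}{a^{33}}=(A^{-1})_{\alpha\beta},\qquad A:=\begin{pmatrix}a_{11}&0\\0&a_{22}\end{pmatrix},
\end{equation*}
which is the Schur-complement formula for the upper-left $2\times 2$ block of $g_0^{-1}$. Since $A$ is diagonal, $(A^{-1})_{\alpha\beta}w_\alpha w_\beta=\sum_\alpha w_\alpha^2/a_{\alpha\alpha}$, and combined with $b_\alpha^2=a_{\alpha\alpha}a^{33}$ this identifies the $t^2$-coefficient of $\cos\gamma_{s,t}^{g_0}$ as $-2\varphi'(0)\sum_\alpha\hat{x}_\alpha^2(b_\alpha+s)^2/(a_{\alpha\alpha}a^{33})$. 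Subtracting the $t^2$-coefficient of $\cos\bar{\gamma}_{s,t}$ and using $|\hat{x}|^2=\hat{x}_1^2+\hat{x}_2^2$ yields the claimed $2\varphi'(0)\sum_\alpha\hat{x}_\alpha^2\bigl(1-(b_\alpha+s)^2/(a_{\alpha\alpha}a^{33})\bigr)$.

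For the $t^3$ remainder, choose $\Phi_{s,t}$ to satisfy $\Phi_{s,t}(-\hat{x})=-\Phi_{s,t}(\hat{x})$; this is permissible because $\partial\Sigma_{s,t}\subset\partial M$ is invariant under $(x_1,x_2)\mapsto(-x_1,-x_2)$ and the defining equation $G_{s,t}=-\varphi(|\cdot|^2)$ involves only even powers of $x$. Then $u^{(1)},u^{(3)},v^{(1)},v^{(3)}$ are all odd in $\hat{x}$ while $u^{(2)}=v^{(2)}=0$, so $\langle u,v\rangle_{g_0}$, $|u|_{g_0}^2$, $|v|_{g_0}^2$ carry even-in-$\hat{x}$ coefficients at orders $t^0,t^2$ and odd-in-$\hat{x}$ coefficients at orders $t,t^3$. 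This parity pattern is preserved under reciprocal, square root and product, and therefore by $\cos\gamma_{s,t}^{g_0}$; combined with the fact that $\cos\bar{\gamma}_{s,t}$ has no odd-$t$ terms, the $t^3$ coefficient $A(\hat{x})$ of the difference is odd in $\hat{x}$, completing the proposition.
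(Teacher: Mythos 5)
Your proof is correct and follows essentially the same route as the paper: both represent the two tangent planes by their $g_0$-normals in the dual frame $e^i=a^{ij}e_j$, expand in $t$, and reduce the $t^2$-coefficient via the identity $a^{\alpha\beta}-a^{\alpha 3}a^{\beta 3}/a^{33}=\delta_{\alpha\beta}/a_{\alpha\alpha}$ (the paper writes this as $a^{\alpha\beta}a^{33}-a^{3\alpha}a^{3\beta}=\tfrac{a_{11}a_{22}}{a_{\alpha\alpha}\det g_0}\delta_{\alpha\beta}$; your Schur-complement formulation is the same identity). The only substantive differences are that the paper packages the second-order expansion of the normalized inner product in Lemma~\ref{lem_angleG} rather than expanding the ratio $\langle u,v\rangle/(|u||v|)$ order by order as you do, and that your explicit parity argument for the odd symmetry of $A(\hat{x})$ --- choosing $\Phi_{s,t}$ odd in $\hat{x}$ and tracking the parity of each $t$-coefficient through products, reciprocals and square roots --- supplies a detail that the paper asserts without proof but later relies on.
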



\begin{proof}
	For any $\hat{x} \in \partial E_s$, using \eqref{eq:defSigmaST},
	we can find that a basis of tangent space of $\Sigma_{s,t}$ is given by
	\[
		X_\alpha=\frac{e_\alpha}{\sqrt{\varphi'(0)}}+2t(b_\alpha+s-1)\hat{x}_\alpha e_3+\sum_{\beta=1 }^{2}O(t^2)e_\beta+O(t^3)e_3.
	\]

	Hence, we can choose the normal vector $u$ of $\Sigma_{s,t}$ at point $\Sigma_{s,t}(\hat{x})$ under metric $g_0$ by
	\[
		u=-\frac{(2t \hat{x}_\alpha (b_\alpha+s-1)+O(t^3))e^\alpha}{\sqrt{\varphi'(0)}}+\frac{1+O(t^2)}{\varphi'(0)}e^3.
	\]

	Similarly, we choose the normal vector $v$ of $\partial M$ at $\Sigma_{s,t}(\hat{x})$ under metric $g_0$ as,
	\[
		v=\frac{(2t+O(t^3))\hat{x}_\alpha e^\alpha}{\sqrt{\varphi'(0)}}+\frac{1}{\varphi'(0)}e^3.
	\]

	Hence
	\[
		\cos\measuredangle _{g_0}(\Sigma_{s,t},\partial M)=\frac{g_0(u,v)}{\sqrt{g_0(u,u)g_0(v,v)}}\quad \text{ at point }\Sigma_{s,t}(\hat{x}).
	\]

	Write $w=u-v=-\frac{2t\hat{x}_\alpha (b_\alpha+s) e^\alpha}{\sqrt{\varphi'(0)}}+O(t^2)$. 
	We use Lemma \ref{lem_angleG} with $g_0$ in place of $g$, $g_0$ in place of $\delta$, and $1$ in place of $k$ (see Remark \ref{lem_angleG0}), and note that $u=\frac{1}{\varphi'(0)}e^3+tA(\hat{x})+O(t^2)$, $v=\frac{1}{\varphi'(0)}e^3+tA(\hat{x})+O(t^2)$.
	Then we have
	\begin{align*}
	&\cos\measuredangle _{g_0}(\Sigma_{s,t},\partial M)\\
		={} & 1-\frac{g_0(w,w)g_0(u,v)-g_0(w,u)g_0(w,v)}{2g_0(u,v)^2} +O(t^4)\\
		={} & 1-\frac{[\varphi'(0)]^2g_0(w,w)g_0(e^3,e^3)-[\varphi'(0)]^2g_0(e^3,w)^2}{2g(e^3,e^3)^2}+A(\hat{x})t^3+O(t^4)\\
		={}& 1-\frac{2\varphi'(0)(b_\alpha+s)(b_\beta+s) \hat{x}_\alpha \hat{x}_\beta (a^{\alpha\beta}a^{33}-a^{3\alpha}a^{3\beta})}{(a^{33})^2}t^2+A(\hat{x})t^3+O(t^4) \\
		={}& 1-\frac{2\varphi'(0)(b_\alpha+s)^2\hat{x}_\alpha^2(a_{11}a_{22})}{(\mathrm{det}g_0)a_{\alpha\alpha}(a^{33})^2}t^2+A(\hat{x})t^3+O(t^4) \\
		={}& 1-\frac{2\varphi'(0)(b_\alpha+s)^2\hat{x}_\alpha^2}{a_{\alpha\alpha}a^{33}}t^2+A(\hat{x})t^3+O(t^4)
	\end{align*}
	Here, we have used $a^{\alpha\beta}a^{33}-a^{3\alpha}a^{3\beta}=\frac{a_{11}a_{22}}{a_{\alpha\alpha}\mathrm{det}g_0}\delta_{\alpha\beta}$ and $(\mathrm{det}g_0)a^{33}=a_{11}a_{22}$.

	Similarly, if we choose $g_0=\delta$ and repeat the previous computation, we find
	\begin{equation}
		\cos \bar{\gamma}_{s,t}(\hat{x})=1-2\varphi'(0)\hat{x}_\alpha^2t^2+A(\hat{x})t^3+O(t^4).
		\label{eq:pfGammaBarDelta}
	\end{equation}
	
	Hence,
	\[
		\cos \gamma_{s,t}^{g_0}-\cos \bar{\gamma}_{s,t}=2\varphi'(0)\hat{x}_\alpha^2 t^2\left( 1-\frac{(b_\alpha+s)^2}{a_{\alpha\alpha}a^{33}} \right)+A(\hat{x})t^3+O(t^4),
	\]
	for any $\hat{x} \in \partial E_s$.
\end{proof}

\begin{corollary}
	\label{cor_angleGtoD0}
	Suppose the metric $g$ can be written as $g=g_0+t h+O(t^2)$ where $g_0$ is a constant metric and $h$ is a bounded symmetric two-tensor.
	Then, we have the following formula for the behavior of angle $\gamma_{s,t}$,
	\begin{align*}
		\cos \gamma_{s,t}(\hat{x})={}&\cos \bar{\gamma}_{s,t}(\hat{x})+2\varphi'(0)\hat{x}_\alpha^2 t^2\left( 1-\frac{(b_\alpha+s)^2}{a_{\alpha\alpha}a^{33}} \right)+A(\hat{x})t^3+L(h)t^3+O(t^4),
	\end{align*}
    for any $\hat{x} \in E_s$.
	Here, $A(\hat{x})$ is a bounded term (not related to $t$ and $h$) which is also odd symmetric with respect to $\hat{x}$, $L(h)$ is a bounded term (not related to $t$) relying on $h$ linearly.
\end{corollary}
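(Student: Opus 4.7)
The plan is to reduce the statement to Proposition \ref{prop:angleg0} by quantifying the discrepancy between the angle computed under $g$ and the angle computed under the frozen constant metric $g_0$.

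First, I would establish a $g_0$-version of Proposition \ref{prop:angPlane}: if $P$ and $P'$ are two planes through a common point $p$ with $|P - P'| \leq C t^k$, then under $g = g_0 + ht + O(t^2)$ one has
\[
  \cos \measuredangle_g(P, P') = \cos \measuredangle_{g_0}(P, P') + L(h)\, t^{2k+1} + O(t^{2k+2}).
\]
The proof of Proposition \ref{prop:angPlane} carries over verbatim with $g_0$ in place of $\delta$ and $g_0$-orthonormal bases in place of $\delta$-orthonormal bases; the only analytic input is Lemma \ref{lem_angleG}, whose validity in this generality is precisely the content of Remark \ref{lem_angleG0}.

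Second, I would apply this extension to $P = T_p \Sigma_{s,t}$ and $P' = T_p \partial M$ at $p = \Sigma_{s,t}(\hat{x})$ for $\hat{x} \in \partial E_s$. Inspecting the proof of Proposition \ref{prop:angleg0}, the $g_0$-unit normals $u$ and $v$ to these two planes satisfy $u - v = -\tfrac{2t\hat{x}_\alpha(b_\alpha+s) e^\alpha}{\sqrt{\varphi'(0)}} + O(t^2) = O(t)$, so $|P - P'| \leq C t$. Taking $k = 1$ in the extended proposition yields
\[
  \cos \gamma_{s,t}(\hat{x}) = \cos \gamma_{s,t}^{g_0}(\hat{x}) + L(h)\, t^3 + O(t^4).
\]

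Third, I would substitute the asymptotic of $\cos \gamma_{s,t}^{g_0}$ from Proposition \ref{prop:angleg0} and combine the two $t^3$-order errors (the odd-symmetric $A(\hat{x}) t^3$ from Proposition \ref{prop:angleg0} and the $L(h)\,t^3$ correction just obtained) into the single expression appearing in the statement.

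The main technical point is verifying that the correction from $g_0$ to $g$ is genuinely linear in $h$ and uniformly bounded, i.e., of the form $L(h)\, t^3$. This requires tracing through the Taylor expansion in Lemma \ref{lem_angleG}: the numerator and denominator of the cosine ratio are polynomial in the entries of $g = g_0 + ht + O(t^2)$, so their first-order expansion in $t$ around $g_0$ contains exactly one factor of $h$, while all higher-order terms are controlled by the boundedness assumption on $h$. This is essentially the same computation that produces the $L(h)t^{2k+1}$ term in Proposition \ref{prop:angPlane}, and is the only nontrivial ingredient in the argument.
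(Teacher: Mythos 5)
Your proposal is correct and follows essentially the same route as the paper: the authors invoke the $g_0$-analogue of Corollary \ref{cor_angle} (with $2$ in place of $k$) to get $\cos\gamma_{s,t}=\cos\gamma^{g_0}_{s,t}+L(h)t^3+O(t^4)$ and then add Proposition \ref{prop:angleg0}, which is exactly your argument with the intermediate corollary unwound into its underlying application of (the $g_0$-version of) Proposition \ref{prop:angPlane} to the two tangent planes, justified by Remark \ref{lem_angleG0}. The exponent bookkeeping ($k=1$ in Proposition \ref{prop:angPlane} since the normals differ by $O(t)$, giving the $t^3$ correction) matches the paper.
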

\begin{proof}
	Note that Corollary \ref{cor_angle} is still true with $g_0$ in place of $\delta$, $\gamma_{s,t}^{g_0}$ in place of $\bar{\gamma}_{s,t}$, $2$ in place of $k$ for a fixed $s\ge 0$ by the exact same proof.
	Together with Proposition \ref{prop:angleg0}, we can finish the proof.
\end{proof}

If we take $s=0$ in Corollary \ref{cor_angleGtoD0} together with the trigonometric identity, we can get the following corollary.
\begin{corollary}
	\label{cor_angleGtoD}
	Suppose the conditions in Corollary \ref{cor_angleGtoD0} hold.
	Then, we have the following formulas for the behavior of angle $\gamma_t$.
	\begin{align}
	\label{eq:corAngleCos}
		\cos \gamma_{t}(\hat{x})={}&\cos \bar{\gamma}_{t}(\hat{x})+A(\hat{x})t^3+L(h)t^3+O(t^4),\\
		\sin \gamma_{t}(\hat{x})={}&\sin \bar{\gamma}_t(\hat{x})+O(t^2)=2\sqrt{\varphi'(0)}|\hat{x}|t+O(t^2),\label{eq:corAngleSin}
	\end{align}
	for any $\hat{x} \in \partial E$.
\end{corollary}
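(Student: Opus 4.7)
The plan is to deduce Corollary \ref{cor_angleGtoD} directly from Corollary \ref{cor_angleGtoD0} by specializing to $s=0$ and then extracting the $\sin$ formula from the $\cos$ formula via the Pythagorean identity.

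First, I would recall the definition $b_\alpha = \sqrt{a_{\alpha\alpha} a^{33}}$. With this choice, when $s=0$ the coefficient $1 - \frac{(b_\alpha + s)^2}{a_{\alpha\alpha} a^{33}}$ in Corollary \ref{cor_angleGtoD0} vanishes identically. Therefore the $t^2$ term in the expansion of $\cos \gamma_{s,t}(\hat{x}) - \cos \bar{\gamma}_{s,t}(\hat{x})$ disappears, leaving
\begin{equation}
\cos \gamma_t(\hat{x}) = \cos \bar{\gamma}_t(\hat{x}) + A(\hat{x}) t^3 + L(h) t^3 + O(t^4),
\end{equation}
which is exactly \eqref{eq:corAngleCos}. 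This is the substantive content; once the defining relation for $b_\alpha$ is unpacked, the cosine formula is essentially immediate.

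For the sine formula, I would use $\sin^2 \theta = 1 - \cos^2 \theta$ together with the expansion \eqref{eq:pfGammaBarDelta} of $\cos \bar{\gamma}_t$ derived in the proof of Proposition \ref{prop:angleg0} (specialized to $s=0$), namely
\begin{equation}
\cos \bar{\gamma}_t(\hat{x}) = 1 - 2\varphi'(0) |\hat{x}|^2 t^2 + A(\hat{x}) t^3 + O(t^4),
\end{equation}
since on $\partial E$ one has $\hat{x}_\alpha^2$ summed over $\alpha$ equal to $|\hat{x}|^2$ (recall that the leading order of $\Sigma_t$ in this case reduces to a disk). Squaring and subtracting from $1$ gives $\sin^2 \bar{\gamma}_t = 4\varphi'(0) |\hat{x}|^2 t^2 + O(t^3)$, so after taking the square root
\begin{equation}
\sin \bar{\gamma}_t(\hat{x}) = 2\sqrt{\varphi'(0)} |\hat{x}| t + O(t^2).
\end{equation}
Combining this with \eqref{eq:corAngleCos} and noting $\cos \gamma_t - \cos \bar{\gamma}_t = O(t^3)$, one obtains $\sin \gamma_t = \sin \bar{\gamma}_t + O(t^2)$, which is \eqref{eq:corAngleSin}.

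There is no real obstacle here, since all the heavy lifting has already been done in Proposition \ref{prop:angleg0} and Corollary \ref{cor_angleGtoD0}; the mildly delicate point is just to verify that the $t^2$ coefficient genuinely vanishes under the specific choice $b_\alpha = \sqrt{a_{\alpha\alpha} a^{33}}$, and to be careful that $|\hat{x}|=1$ on $\partial E$ only in the Euclidean case --- in our setting $\partial E = \{(b_1 \hat x_1^2 + b_2 \hat x_2^2 = 1)\}$, so one should rewrite $\hat{x}_\alpha^2$ appropriately before taking the square root. This is a cosmetic adjustment rather than a genuine difficulty.
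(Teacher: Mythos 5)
Your proposal is correct and follows essentially the same route as the paper: equation \eqref{eq:corAngleCos} is obtained by setting $s=0$ in Corollary \ref{cor_angleGtoD0} (where the $t^2$ coefficient vanishes because $b_\alpha^2=a_{\alpha\alpha}a^{33}$), and \eqref{eq:corAngleSin} follows from the identity $\sin\gamma=\sqrt{1-\cos^2\gamma}$ together with the expansion \eqref{eq:pfGammaBarDelta}. Your closing remark about normalizing $|\hat{x}|$ on $\partial E$ is unnecessary since the stated formula carries the factor $|\hat{x}|$ explicitly, but it does no harm.
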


\begin{proof}
	The equation \eqref{eq:corAngleCos} is a direct consequence of Corollary \ref{cor_angleGtoD0} by taking $s=0$.

	For \eqref{eq:corAngleSin}, we have
	\begin{align*}
		\sin \gamma_t(\hat{x})={}&\sqrt{1-\cos ^2\gamma_t(\hat{x})}=\sqrt{1-\cos ^2\bar{\gamma}_t(\hat{x})}+O(t^2)=\sin \bar{\gamma}_t(\hat{x})+O(t^2).
	\end{align*}
	On the other hand, using \eqref{eq:pfGammaBarDelta}, we have
	\[
		\sin \gamma_t(\hat{x})=2\sqrt{\varphi'(0)}|\hat{x}|t+O(t^2),
	\]
	for any $\hat{x} \in \partial E$.
\end{proof}

\begin{remark}
\label{rmk:cotG0}
Note that $\cos \gamma_t(\hat{x})=1-2\varphi'(0)|\hat{x}|^2t^2+O(t^3)$ and hence, $\cot\gamma_t(\hat{x})=\frac{1}{2\sqrt{\varphi'(0)}|\hat{x}|t}+O(t^2)$.
\end{remark}
Another corollary of Corollary \ref{cor_angleGtoD0} is, we can choose a positive $s$ such that $\gamma_{s,t}>\bar{\gamma}_{s,t}$ if $t$ is sufficient small.
We summarize it here.

\begin{corollary}
	\label{cor_GreaterAngleG0}
	Suppose the conditions in Corollary \eqref{eq:thmLimU} holds.
	Then, for any $s>0$, we can find $t_0>0$ (might rely on $s$) such that for any $t<t_0$, we have
	\begin{equation}
		\gamma_{s,t}(\hat{x})>\bar{\gamma}_{s,t}(\hat{x})
		\label{eq:corGreaterAngle}
	\end{equation}
	for any $\hat{x} \in \partial E_s$.
\end{corollary}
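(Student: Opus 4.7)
The plan is to apply Corollary \ref{cor_angleGtoD0} directly, exploiting the fact that $b_\alpha$ was chosen precisely so that $b_\alpha^2 = a_{\alpha\alpha}a^{33}$, so a strictly positive shift by $s$ strictly breaks this equality on the favorable side.

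First, I would recall from the definition that
\begin{equation}
b_\alpha^2 = a_{\alpha\alpha}\,a^{33}, \qquad \alpha = 1, 2,
\end{equation}
so that for any $s > 0$ and each $\alpha$,
\begin{equation}
1 - \frac{(b_\alpha + s)^2}{a_{\alpha\alpha}\, a^{33}}
= 1 - \frac{(b_\alpha + s)^2}{b_\alpha^2} < 0.
\end{equation}
Write $c_\alpha(s) := \frac{(b_\alpha + s)^2}{a_{\alpha\alpha}a^{33}} - 1 > 0$, and note that $\min(c_1(s), c_2(s)) > 0$ for fixed $s > 0$.

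Next, I would invoke Corollary \ref{cor_angleGtoD0} to expand
\begin{equation}
\cos \gamma_{s,t}(\hat{x}) - \cos \bar{\gamma}_{s,t}(\hat{x})
= -2\varphi'(0)\bigl(c_1(s)\hat{x}_1^2 + c_2(s)\hat{x}_2^2\bigr) t^2 + A(\hat{x})t^3 + L(h) t^3 + O(t^4).
\end{equation}
On the compact set $\partial E_s$, we have $|\hat{x}|^2 \geq \rho_s$ for some $\rho_s > 0$ (explicitly, $\rho_s = 1/(b_1 + s + b_2 + s)$ or similar), so
\begin{equation}
c_1(s)\hat{x}_1^2 + c_2(s)\hat{x}_2^2 \geq \min(c_1(s), c_2(s))\, \rho_s =: C_s > 0
\end{equation}
uniformly on $\partial E_s$. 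The remainder $A(\hat{x}) t^3 + L(h) t^3 + O(t^4)$ is bounded in absolute value by some $C' t^3$ uniformly on $\partial E_s$, where $C'$ depends on $s$ but not on $t$.

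Therefore, choosing $t_0 = t_0(s) > 0$ small enough so that $2\varphi'(0) C_s \, t > C' t^2$ for $t < t_0$, we obtain
\begin{equation}
\cos \gamma_{s,t}(\hat{x}) - \cos \bar{\gamma}_{s,t}(\hat{x}) \leq - \varphi'(0) C_s \, t^2 < 0
\end{equation}
for all $\hat{x} \in \partial E_s$ and all $0 < t < t_0$. Since both $\gamma_{s,t}$ and $\bar{\gamma}_{s,t}$ take values in $(0,\pi)$, the strict decrease of cosine yields $\gamma_{s,t}(\hat{x}) > \bar{\gamma}_{s,t}(\hat{x})$. There is no real obstacle here: the content is entirely in the sign of the $t^2$-coefficient, which is engineered by the choice of $b_\alpha$, and the only care needed is to verify the uniform lower bound $C_s > 0$ on the compact ellipse $\partial E_s$.
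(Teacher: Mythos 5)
Your proposal is correct and follows essentially the same route as the paper: apply Corollary \ref{cor_angleGtoD0}, observe that $b_\alpha^2=a_{\alpha\alpha}a^{33}$ makes the $t^2$-coefficient strictly negative for $s>0$, bound the remainder uniformly, and shrink $t_0$. If anything, your explicit uniform lower bound for $c_1(s)\hat{x}_1^2+c_2(s)\hat{x}_2^2$ on the compact ellipse $\partial E_s$ is slightly more careful than the paper's statement of the same step.
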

\begin{proof}
	By Corollary \eqref{eq:thmLimU}, we have
	\[
		\cos \gamma_{s,t}(\hat{x})\le \cos \bar{\gamma}_{s,t}(\hat{x})-\frac{2\varphi'(0)\hat{x}_\alpha^2s^2t^2}{a_{\alpha\alpha}a^{33}}+Ct^3,
	\]
	where $C$ is a positive constant not relying on $t$.
	Hence, we can take $t_0=\frac{\varphi'(0)\hat{x}_\alpha^2s^2}{C a_{\alpha\alpha}a^{33}}$ and find $\cos \gamma_{s,t}(\hat{x})<\cos \bar{\gamma}_{s,t}(\hat{x})$ for any $\hat{x} \in \partial E_s$ and any $t<t_0$, which is equivalent to \eqref{eq:corGreaterAngle}.
\end{proof}

\subsubsection{Asympotoic behavior of mean curvatures}%
\label{ssub:asympotoic_behavior_of_mean_curvatures}

We define the following functions.
\begin{align*}
	H_{s,t}^{g}(\hat{x}):={}&\text{Mean curvature of $\Sigma_{s,t}$ at $\Sigma_{s,t}(\hat{x})$ under metric $g$},\\
	H_{s,t,\partial M}^{g}(\hat{x}):={}& \text{Mean curvature of $\partial M$ at $(\Phi_{s,t}(\hat{x}),-\varphi(|\Phi_{s,t}(\hat{x})|^2))$ under metric $g$}.
\end{align*}

We also write $H^g_{t}=H^g_{0,t}$ and $H^g_{t,\partial M}=H^g_{0,t,\partial M}$.
In particular, we only focus on the behaviors of mean curvature at $s=0$.

\begin{proposition}
	\label{prop:Hg0}
	The difference of the mean curvature $H^{g_0}_t$ and $H^{g_0}_{t,\partial M}$ can be estimated by the following
\[
	H_t^{g_0}(\hat{x})-H_{t,\partial M}^{g_0}(\hat{x})=-\frac{2\varphi'(0)}{\sqrt{a_{11}}}-\frac{2\varphi'(0)}{\sqrt{a_{22}}}+tL(\hat{x}) +O(t^2),
\]
where $L(\hat{x})$ is a bounded term depending on $\hat{x}$ linearly.
\end{proposition}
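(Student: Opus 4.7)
The plan is to reduce everything to a Taylor expansion of the $g_0$-mean curvature of two fixed paraboloid graphs about the origin. Both $\Sigma_t$ and $\partial M$ are graphs over the $(x_1,x_2)$-plane; crucially, $G_t$ differs from $G_0$ only by the additive constant $-t^2$, so $\Sigma_t$ is a vertical translate of the fixed paraboloid $\{x_3=G_0(x)\}$, and since $g_0$ is translation invariant the $g_0$-mean curvature of $\Sigma_t$ at $\Sigma_t(\hat x)$ depends only on the horizontal coordinate $\Phi_t(\hat x)$. I define smooth functions $\tilde H_\Sigma(y):= H^{g_0}_{\{x_3=G_0\}}(y,G_0(y))$ and $\tilde H_{\partial M}(y):= H^{g_0}_{\partial M}(y,-\varphi(|y|^2))$ on a neighbourhood of the origin, so that $H_t^{g_0}(\hat x)=\tilde H_\Sigma(\Phi_t(\hat x))$ and $H_{t,\partial M}^{g_0}(\hat x)=\tilde H_{\partial M}(\Phi_t(\hat x))$, while $\Phi_t(\hat x)=\tfrac{t\hat x}{\sqrt{\varphi'(0)}}+O(t^3)$.

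First I would compute the constant $\tilde H_\Sigma(0)-\tilde H_{\partial M}(0)$. Writing each surface as a level set $F_1=x_3+\varphi(|x|^2)$ and $F_2=x_3-G_0$, one uses $H^{g_0}=\partial_i\nu^i$ with $\nu^i=g_0^{ij}\partial_j F/|\nabla F|_{g_0}$ (the divergence is ordinary because $g_0$ is constant). At $x=0$ the only nonvanishing Hessian entries are $\partial_\alpha\partial_\alpha F_1=2\varphi'(0)$ and $\partial_\alpha\partial_\alpha F_2=-2\varphi'(0)(b_\alpha-1)$ for $\alpha\in\{1,2\}$. Expanding $H^{g_0}=g_0^{ij}\partial_i\partial_j F/\sqrt{a^{33}}+g_0^{ij}\partial_j F\cdot\partial_i(1/|\nabla F|_{g_0})$ and invoking the identity $a^{\alpha\beta}a^{33}-a^{3\alpha}a^{3\beta}=\tfrac{a_{11}a_{22}}{a_{\alpha\alpha}\det g_0}\delta_{\alpha\beta}$ (already used in the paper) together with $b_\alpha^2=a_{\alpha\alpha}a^{33}$, the combination collapses to $-2\varphi'(0)(\sqrt{a_{11}}+\sqrt{a_{22}})/\sqrt{a_{11}a_{22}}=-2\varphi'(0)/\sqrt{a_{11}}-2\varphi'(0)/\sqrt{a_{22}}$. (The consistency check at $g_0=\delta$ gives $-4\varphi'(0)$, which matches the flat paraboloid-versus-plane computation.)

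For the subleading term I would Taylor-expand $\tilde H_\Sigma$ and $\tilde H_{\partial M}$ around $0$:
\[
\tilde H_\Sigma(\Phi_t(\hat x))-\tilde H_{\partial M}(\Phi_t(\hat x))=\bigl[\tilde H_\Sigma(0)-\tilde H_{\partial M}(0)\bigr]+\tfrac{t\hat x_\alpha}{\sqrt{\varphi'(0)}}\bigl(\partial_\alpha\tilde H_\Sigma(0)-\partial_\alpha\tilde H_{\partial M}(0)\bigr)+O(t^2).
\]
The coefficient of $t$ is a fixed linear function of $\hat x$, which is precisely the advertised $tL(\hat x)$, and the remainder is uniformly $O(t^2)$ on the bounded ellipse $E$.

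The main technical obstacle is the bookkeeping in the constant-order calculation: the derivatives of $1/|\nabla F|_{g_0}$ produce cross terms involving $a^{13}$ and $a^{23}$, and the clean cancellation to $-2\varphi'(0)(1/\sqrt{a_{11}}+1/\sqrt{a_{22}})$ depends precisely on the definition of $b_\alpha$. Once that constant is pinned down, the linearity of $L$ in $\hat x$ and the $O(t^2)$ control follow automatically from the smooth dependence of the two fixed paraboloids' mean curvatures on the base point.
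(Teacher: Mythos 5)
Your proposal is correct, and it takes a genuinely different (and cleaner) route than the paper. The paper proves a general parametric expansion (Lemma \ref{lem_meang0}) for the whole family $\Sigma_t$: it computes tangent frames, the induced metric and its inverse, and $\nabla^{g_0}_{X_\alpha}X_\beta$ order by order in $t$, applies the same machinery to $\partial M$, and then subtracts, using $b_\alpha=\sqrt{a_{\alpha\alpha}a^{33}}$ to collapse the constant to $-2\varphi'(0)(a_{11}^{-1/2}+a_{22}^{-1/2})$. You instead exploit that $G_{0,t}=G_{0,0}-t^2$, so $\Sigma_t$ is an exact vertical translate of a \emph{fixed} paraboloid and translation invariance of the constant metric $g_0$ reduces the whole one-parameter family to a single smooth function $\tilde H_\Sigma-\tilde H_{\partial M}$ of the horizontal base point; the structure $C+tL(\hat x)+O(t^2)$ with $L$ linear then falls out of first-order Taylor expansion composed with $\Phi_t(\hat x)=t\hat x/\sqrt{\varphi'(0)}+O(t^3)$, and only the value at the origin needs to be computed. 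Your level-set computation of that value is consistent with the paper's orientation convention ($H=\operatorname{div}$ of the upward normal $=-g_P^{\alpha\beta}\langle\nabla_{X_\alpha}X_\beta,\nu\rangle$), and the cofactor identity plus $\det g_0\cdot a^{33}=a_{11}a_{22}$ gives exactly $H(0)=\sum_\alpha \partial_\alpha\partial_\alpha F/(a_{\alpha\alpha}\sqrt{a^{33}})$, reproducing both of the paper's intermediate constants and hence the stated difference. The one thing your shortcut does not deliver, and the paper's Lemma \ref{lem_meang0} does, is an expansion valid for the perturbed leaves $\Sigma_{t,t^3u}$ (which are not translates of a fixed surface); but that is not needed for this proposition.
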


We need a lemma to prove this proposition.

\begin{lemma}
	\label{lem_meang0}
	Suppose $\Sigma_t$ is a family of surfaces indexed by $t$ and given by
	\[
          \Sigma_t=\left\{ (At x+O(t^3),a_1 t^2 x_1^2+a_2t^2x_2^2+O(t^4)):x=(x_1,x_2)\in R
	\right\}	\]
	where $R$ is a open set in $\mathbb{R}^2$ and $A$ is a positive number.
	Then the mean curvature $H_{\Sigma_t}$ under the constant metric $g_0$ is given by
	\[
		H_{\Sigma_t}(x)=-\frac{1}{A^2\sqrt{a^{33}}}
		\left( \frac{2a_1}{a_{11}}+\frac{2a_2}{a_{22}} \right)+tL(x)+O(t^2)\quad  \text{ for }x \in R
	\]
        for $t$ small.
\end{lemma}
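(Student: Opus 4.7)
The plan is to compute the mean curvature of $\Sigma_t$ directly from its parameterization, exploiting the fact that $g_0$ is a constant metric: its Levi-Civita connection has vanishing Christoffel symbols and covariant derivatives reduce to ordinary derivatives. The key balance is that the induced metric degenerates like $t^2$ while the second fundamental form also scales like $t^2$, so their contraction produces a finite limit.

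First I would differentiate the parameterization to obtain the tangent vectors
\[
X_\alpha=At\,e_\alpha+2a_\alpha t^2 x_\alpha e_3+O(t^3),\qquad \alpha=1,2,
\]
and compute the induced metric $g^{\Sigma_t}_{\alpha\beta}=g_0(X_\alpha,X_\beta)=A^2t^2 a_{\alpha\beta}+O(t^3)$. Since $(a_{\alpha\beta})_{\alpha,\beta\in\{1,2\}}$ is positive definite, its inverse expands as $(g^{\Sigma_t})^{\alpha\beta}=\tilde a^{\alpha\beta}/(A^2t^2)+O(t^{-1})$, where $\tilde a^{\alpha\beta}$ is the inverse of that $2\times 2$ block. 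By the rotation arranged in the paper, $a_{12}=0$, so $\tilde a^{\alpha\alpha}=1/a_{\alpha\alpha}$. The unit normal under $g_0$, characterized by $g_0(N,X_\alpha)=0$ and $g_0(N,N)=1$, has the form $N=\pm e^3/\sqrt{a^{33}}+O(t)$, where the sign is fixed by the orientation convention so that the principal term of $H_{\Sigma_t}$ comes out negative (consistent with the paraboloid opening ``upward'').

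Next I would compute the second fundamental form. Because $g_0$ is flat, $A^{\Sigma_t}_{\alpha\beta}=g_0(\partial_\beta X_\alpha,N)$, and differentiating $X_\alpha$ again yields $\partial_\beta X_\alpha=2a_\alpha\delta_{\alpha\beta}t^2 e_3+O(t^3)$. Using $g_0(e_3,e^3)=1$, this gives $A^{\Sigma_t}_{\alpha\beta}=\pm 2a_\alpha\delta_{\alpha\beta}t^2/\sqrt{a^{33}}+O(t^3)$. Contracting with the inverse metric,
\[
H_{\Sigma_t}(x)=(g^{\Sigma_t})^{\alpha\beta}A^{\Sigma_t}_{\alpha\beta}=-\frac{1}{A^2\sqrt{a^{33}}}\left(\frac{2a_1}{a_{11}}+\frac{2a_2}{a_{22}}\right)+O(t),
\]
which matches the leading term of the claim. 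Note the leading term is independent of both $t$ and $x$, as expected since the surface scales self-similarly to the paraboloid model.

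To sharpen the $O(t)$ remainder to $tL(x)+O(t^2)$ with $L$ linear in $x$, I would track the next-order corrections individually: the cross-terms between $At\,e_\alpha$ and $2a_\beta t^2 x_\beta e_3$ in $g^{\Sigma_t}_{\alpha\beta}$ produce an $O(t^3)$ correction that is linear in $x$, giving an $O(t^{-1})$ correction to the inverse metric linear in $x$; enforcing $g_0(N,X_\alpha)=0$ at the next order likewise introduces an $O(t)$ correction to $N$ that is linear in $x$; and the $O(t^3),O(t^4)$ ambiguity in the parameterization contributes only at $O(t^2)$. The main obstacle here is the bookkeeping — one must keep enough orders in each of $(g^{\Sigma_t})^{-1}$, $N$, and $\partial_\beta X_\alpha$ to ensure that the subleading contributions precisely combine into $tL(x)$ (with the linear-in-$x$ behavior verified) rather than a coarser $O(t)$, and that no hidden $O(1)$ term is lost in the cancellations that produce the finite leading mean curvature.
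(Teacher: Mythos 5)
Your proposal is correct and follows essentially the same route as the paper: compute the coordinate tangent vectors, note that the induced metric is $A^2t^2\,\mathrm{diag}(a_{11},a_{22})+O(t^3)$ while the second fundamental form (with flat connection, so $\nabla=D$) is $O(t^2)$, identify the normal as $e^3/\sqrt{a^{33}}+O(t)$, and contract; the only cosmetic difference is that the paper rescales the tangent frame by $1/t$ before computing, which does not affect the contraction. Your accounting of the subleading corrections (inverse metric, normal, and parameterization ambiguity) correctly isolates the linear-in-$x$ structure of the $tL(x)$ term, matching the paper's bookkeeping.
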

\begin{proof}
	A basis of the tangent space of $\Sigma_t$ is given by
	\[
		X_\alpha=Ae_\alpha+2a_\alpha t x_\alpha e_3+\sum_{\beta=1}^2O(t^2)e_\beta+O(t^3)e_3.
	\]
	One of the normal vectors $\tilde{\nu}$ at $(x_1,x_2)$ satisfies $\tilde{\nu}:=(-2ta_\alpha x_\alpha+O(t^3)) e^\alpha+(A+O(t^2))e^3$.
	We write $P$ as the space spanned by $\left\{ X_\alpha \right\}_{\alpha=1}^2$ and $g_{\alpha\beta}^P:=g_0(X_\alpha,X_\beta)$.
	Hence
	\[
		H^{g_0}_{\Sigma_t}=-g^{\alpha\beta}_P g_0(\nabla_{X_\alpha}^{g_0}{X_\beta},\tilde{\nu}/|\tilde{\nu}|_{g_0})
	\]
	where $g^{\alpha\beta}_P$ is the inverse matrix of $g^{P}_{\alpha\beta}$ and $g_{\alpha\beta}^P$ is given by
	\[
		g_{\alpha\beta}^P=
		A^2\begin{bmatrix}
			a_{11} & 0 \\
			0 & a_{22} \\
		\end{bmatrix}+t
		A\begin{bmatrix}
			4 a_1 x_1 a_{13} & 2a_1x_2 a_{13}+2a_2x_1a_{23}\\
			2a_1x_2 a_{13}+2a_2x_1a_{23}& 4a_2x_2a_{23} \\
		\end{bmatrix}+O(t^2).
	\]
	For simplicity, we write $L(x)$ as the term depending on $x$ linearly and denote $g^P_{\alpha\beta}$,
	\[
		g^P_{\alpha\beta}=
		A^2\begin{bmatrix}
			a_{11} & 0\\
			0& a_{22} \\
		\end{bmatrix}+t L(x)+O(t^2),\quad 
		g^{\alpha\beta}_P=
		\frac{1}{A^2}\begin{bmatrix}
			\frac{1}{a_{11}}& 0\\
			0& \frac{1}{a_{22}} \\
		\end{bmatrix}-tL(x)+O(t^2).
	\]
	On the other hand, 
	\[
		\nabla_{X_\alpha}^{g_0}X_\beta=A(2a_\alpha\delta_{\alpha\beta}+O(t^2))e_3+\sum_{\beta=1 }^{2}O(t)e_\beta,
	\]
	hence
	\begin{align*}
		g^{\alpha\beta}_P g_0(\nabla_{X_\alpha}^{g_0}X_\beta,\tilde{\nu})={}&\frac{1}{Aa_{11}}g_0(2a_1e_3,e^3)+\frac{1}{Aa_{22}}g_0(2a_2e_3,e^3)+tL(x)+O(t^2)\\
		={}& \frac{2a_1}{Aa_{11}}+\frac{2a_2}{Aa_{22}}+tL(x) +O(t^2).
	\end{align*}
	Note that
	\[
		|\tilde{v}|_{g_0}^2=A^2g^{33}+tL(x)+O(t^2).
	\]
	Hence
	\[
		H_{\Sigma_t}=-\frac{1}{A^2\sqrt{g^{33}}}\left( \frac{2a_1}{a_{11}}+\frac{2a_2}{a_{22}} \right)+tL(x)+O(t^2).
	\]
\end{proof}

\begin{proof}
	[Proof of Proposition \ref{prop:Hg0}]
	Recall that we can write
	\begin{align*}
		\Sigma_t(\hat{x})={}&\left\{ \left(\frac{t \hat{x}}{\sqrt{\varphi'(0)}}+O(t^3),\hat{x}_1^2(b_1-1)+\hat{x}_2^2(b_2-1)+O(t^4)\right):\hat{x} \in E \right\}.
	\end{align*}
	Then from Lemma \ref{lem_meang0}, we have,
	\begin{align*}
				H_{t}^{g_0}(\hat{x})={}&-\frac{\varphi'(0)}{\sqrt{a^{33}}}\left( \frac{2(b_1-1)}{a_{11}}+\frac{2(b_2-1)}{a_{22}} \right)+tL(\hat{x})+O(t^2).
	\end{align*}

	We also note that
	\[
		\partial M=\left\{ \left( \frac{x}{\sqrt{\varphi'(0)}},-|x|^2 +O(|x|^4)\right): \text{ for }x \text{ near }O\right\},
	\]
	then, we have
	\begin{align*}
		H_{t,\partial M}^{g_0}(\hat{x})={}&
		\frac{\varphi'(0)}{\sqrt{a^{33}}}\left(\frac{2}{a_{11}}+\frac{2}{a_{22}} \right)+tL(\hat{x})+O(t^2)
	\end{align*}
	Hence, based on the definition of $b_1,b_2$, we have
	\begin{align*}
		H_t^{g_0}(\hat{x})-H_{t,\partial M}^{g_0}(\hat{x})={} &-\frac{\varphi'(0)}{\sqrt{a^{33}}}\left( \frac{2b_1}{a_{11}}+\frac{2b_2}{a_{22}} \right) +tL(\hat{x})+O(t^2)\\
		={}& -2\varphi'(0)\left( \frac{1}{\sqrt{a_{11}}}+\frac{1}{\sqrt{a_{22}}} \right)+tL(\hat{x})+O(t^2),
	\end{align*}
	for any $\hat{x}\in E$.
	This is exactly what we want.
\end{proof}

\begin{corollary}
	\label{cor_meanGDelta}
	Suppose the metric $g$ can be written as $g=g_0+t h+O(t^2)$ where $g_0$ is a constant metric and $h$ is a bounded symmetric two-tensor. 
	Then, we have the following formula for the behavior of mean curvature
	\begin{equation}
		H_t(\hat{x})=H_{t,\partial M}(\hat{x})-\bar{H}_{t,\partial M}(\hat{x})+2\varphi'(0)\left( 2-\frac{1}{\sqrt{a_{11}}}-\frac{1}{\sqrt{a_{22}}} \right)+tL(\hat{x})+O(t^2),
		\label{eq:corMean}
	\end{equation}
	for any $\hat{x} \in E$.
	Here, we denote $H_t=H^{g}_t,H_{t,\partial M}=H_{t,\partial M}^g,\bar{H}_{t,\partial M}=H^\delta_{t,\partial M}$.
\end{corollary}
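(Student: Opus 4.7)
The plan is to prove \eqref{eq:corMean} by combining Proposition \ref{prop:Hg0} (which gives the analogous identity under the constant metric $g_0$) with two independent pieces: a perturbation estimate that lifts the $g_0$-identity to $g$, and an explicit Taylor expansion of the Euclidean mean curvature of $\partial M$ near the pole.

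First, Proposition \ref{prop:Hg0} reads
\[
H_t^{g_0}(\hat{x}) - H_{t,\partial M}^{g_0}(\hat{x}) = -\tfrac{2\varphi'(0)}{\sqrt{a_{11}}} - \tfrac{2\varphi'(0)}{\sqrt{a_{22}}} + tL(\hat{x}) + O(t^2).
\]
Since $g = g_0 + th + O(t^2)$ with $h$ a bounded symmetric $2$-tensor, I would re-run the parametrization-plus-Christoffel argument of Lemma \ref{lem_meang0} (compare Lemma \ref{lem_meanOnBdry} and Lemma \ref{lem_meanWithFlat}) with $g$ in place of $g_0$; the extra Christoffel contribution together with the $O(t)$ perturbation of the metric coefficients pick up only a $tL(\hat{x}) + O(t^2)$ correction in both $H_t$ and $H_{t,\partial M}$. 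Subtracting yields
\[
H_t(\hat{x}) - H_{t,\partial M}(\hat{x}) = -\tfrac{2\varphi'(0)}{\sqrt{a_{11}}} - \tfrac{2\varphi'(0)}{\sqrt{a_{22}}} + tL(\hat{x}) + O(t^2).
\]

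Next, I would identify $\bar{H}_{t,\partial M}$ by a direct computation. Since $\partial M = \{x_3 = -\varphi(|x|^2)\}$ with $\varphi(r) = \varphi'(0) r + O(r^2)$, the Euclidean mean curvature at the origin is $4\varphi'(0)$ (both principal curvatures equal $2\varphi'(0)$, oriented so that $\bar{H}_{\partial M} > 0$ on the weakly convex $\partial M$). Because $|\Phi_{0,t}(\hat{x})| = O(t)$, a Taylor expansion of the smooth function $\bar{H}_{\partial M}$ along $\partial M$ yields
\[
\bar{H}_{t,\partial M}(\hat{x}) = 4\varphi'(0) + tL(\hat{x}) + O(t^2).
\]
Adding and subtracting $\bar{H}_{t,\partial M}$ in the previous display and substituting gives
\[
H_t = (H_{t,\partial M} - \bar{H}_{t,\partial M}) + \bigl(4\varphi'(0) - \tfrac{2\varphi'(0)}{\sqrt{a_{11}}} - \tfrac{2\varphi'(0)}{\sqrt{a_{22}}}\bigr) + tL(\hat{x}) + O(t^2),
\]
which is exactly \eqref{eq:corMean}.

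The main obstacle is the perturbation step: one has to verify that replacing $g_0$ by $g = g_0 + th + O(t^2)$ really introduces only a $tL(\hat{x}) + O(t^2)$ error in the mean curvatures of $\Sigma_t$ and $\partial M$, despite $|A_{\partial M}|_{g_0}$ blowing up like $t^{-1}$ near the spherical point. This is the same cancellation that makes Lemma \ref{lem_meanOnBdry} go through, so the estimate is expected to carry over, but the leading $L(\hat{x})$-terms have to be tracked carefully. The Taylor expansion for $\bar{H}_{t,\partial M}$ and the final algebraic combination are then routine.
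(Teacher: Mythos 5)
Your proposal is correct and follows essentially the same route as the paper: Proposition \ref{prop:Hg0} for the constant metric $g_0$, the perturbation comparison $H_{t,\partial M}-H_{t,\partial M}^{g_0}=H_t-H_t^{g_0}+tL(\hat{x})+O(t^2)$ obtained by rerunning the argument of Corollary \ref{cor_meanDelta} (i.e.\ Lemmas \ref{lem_meanOnBdry} and \ref{lem_meanWithFlat}), and the explicit value $\bar{H}_{t,\partial M}=4\varphi'(0)+O(t^2)$. One caveat on phrasing: the individual corrections $H_{t,\partial M}^{g}-H_{t,\partial M}^{g_0}$ and $H_t^{g}-H_t^{g_0}$ are only $O(1)$ (the $O(t)$ metric perturbation meets $|\bar{A}_{\partial M}|=O(t^{-1})$), so ``only a $tL(\hat{x})+O(t^2)$ correction in both'' is literally too strong --- it is their \emph{difference} that enjoys this estimate, via exactly the cancellation you identify as the crux in your final paragraph.
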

\begin{proof}
	Note that we can find the following formula if we follow the argument for Corollary \ref{cor_meanDelta},
	\[
		H_{t,\partial M}-H_{t,\partial M}^{g_0}=H_t-H_t^{g_0}+L(D_{P_0}g(0))t+O(t^2).
	\]
	We can also rewrite it in the following form,
	\[
		H_{t,\partial M}(\hat{x})-H^{g_0}_{t,\partial M}(\hat{x})=H_t(\hat{x})-H_t^{g_0}(\hat{x})+tL(\hat{x})+O(t^2).
	\]
	Together with Proposition \ref{prop:Hg0} and note that $\bar{H}_{t,\partial M}=4\varphi'(0)+O(t^2)$, we can show the mean curvature $H_t(\hat{x})$ can be computed by \eqref{eq:corMean}.
\end{proof}

\subsubsection{Construction of barriers when $H_0>0$}%
\label{ssub:construction_of_barriers_when_h_0_0_}
Now, we are ready to give the proof of the existence of a good barrier in the case of $H_0:=\lim_{t\rightarrow 0} H_t(\hat{x})>0$.
Note that by Corollary \ref{cor_meanGDelta}, we know $H_0$ is indeed a constant and it can be computed by
\begin{equation}
	H_0=H_{\partial M}(O)-\bar{H}_{\partial M}(O)+2\varphi'(0)\left( 2-\frac{1}{\sqrt{a_{11}}}-\frac{1}{\sqrt{a_{22}}} \right),
	\label{eq:H0}
\end{equation}
where $H_{\partial M}(O)$ ($\bar{H}_{\partial M}(O)$ respectively) denotes the mean curvature of $\partial M$ at $O$ under metric $g$ ($\delta$ respectively).
Note that $H_0\ge 0$ by $H_{\partial M}\ge \bar{H}_{\partial M}$ and $g|_{\partial M}\ge \delta|_{\partial M}$ on $\partial M$.

\begin{proposition}
	\label{prop_sphereStrictBarrier}
	Suppose the condition in Corollary \ref{cor_meanGDelta} holds and $H_0>0$.
	Then we can choose some $s>0$,$t>0$ such that $H_{s,t}(\hat{x})>0$ for each $\hat{x} \in E_s$ and $\gamma_{s,t}(\hat{x})>\bar{\gamma}_{s,t}(\hat{x})$ for each $\hat{x} \in \partial E_s$.
\end{proposition}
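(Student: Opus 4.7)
The plan is to decouple the two conditions by choosing the parameters in the order $s$ then $t$. The angle condition forces us to take $s>0$ strictly, while the mean curvature condition forces $s$ to be small; the hypothesis $H_0>0$ is exactly what gives us room to satisfy both.

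First I would generalize Corollary \ref{cor_meanGDelta} to arbitrary $s\geq 0$. Lemma \ref{lem_meang0} was stated for a family of paraboloids of the form $(Atx, a_1 t^2 x_1^2 + a_2 t^2 x_2^2)$ and applies directly to $\Sigma_{s,t}$ with $A=1/\sqrt{\varphi'(0)}$ and $a_\alpha=\varphi'(0)(b_\alpha+s-1)$. Plugging in and repeating the steps leading to Corollary \ref{cor_meanGDelta} yields
\begin{equation}
H_{s,t}(\hat{x}) = H_{t,\partial M}(\hat{x}) - \bar{H}_{t,\partial M}(\hat{x}) + 2\varphi'(0)\left(2 - \tfrac{1}{\sqrt{a_{11}}} - \tfrac{1}{\sqrt{a_{22}}}\right) - \tfrac{2\varphi'(0) s}{\sqrt{a^{33}}}\left(\tfrac{1}{a_{11}}+\tfrac{1}{a_{22}}\right) + t L(\hat{x}) + O(t^2),
\end{equation}
uniformly for $\hat{x}\in E_s$, where the extra negative term compared to Corollary \ref{cor_meanGDelta} comes from replacing $b_\alpha$ by $b_\alpha+s$ in the computation of $H^{g_0}_{s,t}$.

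Second, using \eqref{eq:H0} together with the definition of $H_0$, the $t\to 0$ limit of the right-hand side above is
\begin{equation}
\lim_{t\to 0} H_{s,t}(\hat{x}) = H_0 - \tfrac{2\varphi'(0) s}{\sqrt{a^{33}}}\left(\tfrac{1}{a_{11}}+\tfrac{1}{a_{22}}\right).
\end{equation}
Since $H_0>0$ by assumption, I can fix $s_0 > 0$ small enough so that this quantity is bounded below by $H_0/2$, say. The convergence is uniform in $\hat{x}\in E_{s_0}$ because $E_{s_0}$ is compact and the remainder $tL(\hat{x}) + O(t^2)$ is uniformly $O(t)$; so there is $t_1 > 0$ such that $H_{s_0,t}(\hat{x}) \geq H_0/4 > 0$ for all $\hat{x}\in E_{s_0}$ and all $t\in (0,t_1)$.

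Third, I invoke Corollary \ref{cor_GreaterAngleG0} with this fixed $s_0>0$: it produces some $t_0 = t_0(s_0) > 0$ such that $\gamma_{s_0,t}(\hat{x}) > \bar{\gamma}_{s_0,t}(\hat{x})$ for every $\hat{x}\in\partial E_{s_0}$ and every $t\in(0,t_0)$. Choosing $t\in(0,\min(t_0,t_1))$ and setting $s=s_0$ gives a surface $\Sigma_{s,t}$ satisfying both required strict inequalities, which finishes the proof. The main conceptual content is the first step (that the mean curvature has a leading-order monotone dependence on $s$), but it is a routine extension of Lemma \ref{lem_meang0}; the ordering of quantifiers ``fix $s$, then shrink $t$'' is the only place where the strict positivity of $H_0$ is used and is thus the essential point.
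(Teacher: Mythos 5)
Your proposal is correct and follows essentially the same strategy as the paper: fix a small $s>0$ first (using $H_0>0$ to keep the mean curvature positive), then shrink $t$ and invoke Corollary \ref{cor_GreaterAngleG0} for the strict angle inequality. The only difference is that where the paper simply appeals to the continuous dependence of $H_{s,t}$ on $(s,t)$ near $(0,0)$, you make that step quantitative by extending Lemma \ref{lem_meang0} to general $s$ and exhibiting the explicit correction term $-\tfrac{2\varphi'(0)s}{\sqrt{a^{33}}}\bigl(\tfrac{1}{a_{11}}+\tfrac{1}{a_{22}}\bigr)$, which is a valid (and slightly more informative) justification of the same point.
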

\begin{proof}
	Note that $H_{s,t}$ continuously relies on $s,t$ by the definition of $\Sigma_{s,t}$.
	Then if $H_0>0$, we know we can find $s_0,t_0>0$ such that for all $0<t<t_0,0\le s\le s_0$ such that
	\[
		H_{s,t}(\hat{x})>0\quad  \text{ for all }\hat{x} \in E_s.
	\]
	Now we fix some $s \in (0,s_0)$.
	Based on Corollary \ref{cor_GreaterAngleG0}, we can find some $t>0$ small enough such that
	\[
		\gamma_{s,t}(\hat{x})>\bar{\gamma}_{s,t}(\hat{x})
	\]
	for any $\hat{x} \in \partial E_s$.
\end{proof}

\subsection{Construction of foliation}%
\label{sub:construction_of_foliation}

From this subsection, we always assume $H_0=0$, which means we assume 
\begin{equation}
	H_0=0,\quad H_{\partial M}(O)=\bar{H}_{\partial M}(O),\quad \text{ and }\quad a_{11}=a_{22}=1
	\label{eq:assumeG0}
\end{equation}
in view of \eqref{eq:H0}.

Since $\Sigma_t$ is a foliation near $O$, we can define the variational vector field $Y_t$ on $\Sigma_t$ by
\[
	Y_t(\hat{x}):=\frac{\partial }{\partial t}\Sigma_t(\hat{x}).
\]
We write $N_t$ as the unit normal vector field of $\Sigma_t$.
Given $u \in C^{1,\alpha}(\bar{E})\cap C^{2,\alpha}(E)$, we consider the surface
\[
	\Sigma_{t,u}:=\left\{ \Sigma_{t-\frac{u}{\left< Y_t(\hat{x}), N_t(\hat{x}) \right> }}(\hat{x}): \hat{x} \in E \right\}.
\]
Recall that by the definition of $\Sigma_t$, we have
\begin{align}
	Y_t(\hat{x})={} & \left(\frac{\hat{x}}{\sqrt{\varphi'(0)}},2t\hat{x}_1^2 b_1+2t\hat{x}_2^2b_2-2t\right)+O(t^2),\\
	N_t={} & \frac{-2t b_\alpha \hat{x}_\alpha e^\alpha+e^3 /\sqrt{\varphi'(0)}}{\left|-2t b_\alpha \hat{x}_\alpha e^\alpha+e^3 / \sqrt{\varphi'(0)}\right|_{g_0}}+O(t^2),\\
	Y_t\cdot N_t={}& \frac{-2t /\sqrt{\varphi'(0)}}{\left|-2t b_\alpha \hat{x}_\alpha e^\alpha+e^3/ \sqrt{\varphi'(0)}\right|_{g_0}}+O(t^2)=-\frac{2t}{|e^3|_{g_0}}+O(t^2).
 \label{eq:YDotN}
\end{align}
Hence $\Sigma_{t,u}$ is well-defined if we assume $u=o(t^2)$.
Now, let us replace $u$ by $ut^3$ and assume $u=O(1)$. Similar to \eqref{mean curvature taylor expansion} and \eqref{angle variation2}, we have
\begin{align*}
	\frac{H_{t,t^3u}}{t}={} & -\Delta_t^E u +\frac{H_t}{t}+O(t),\\
	\frac{\cos \gamma_{t,t^3u}-\cos \bar{\gamma}_{t,t^3u}}{t^3}={} & 
	-2\sqrt{\varphi'(0)}|\hat{x}| \frac{\partial u}{\partial \nu_t^E}+(A_{\partial M}(\eta_t,\eta_t)-\cos \gamma_t A(\nu_t,\nu_t)\\
&-\bar{A}_{\partial M}(\bar{\eta}_t,\bar{\eta}_t))u+
\frac{\cos \gamma_t-\cos \bar{\gamma}_t}{t^3}+O(t),
\end{align*}
where $\Delta_t^E$ is the Laplace-Beltrami operator on $E$ under the metric $\frac{1}{t^2}\Sigma_t^*(g)$, $\nu_t^E$ the outer unit normal vector field of $\partial E$ in $E$ under metric $\frac{1}{t^2}\Sigma_t^*(g)$.

\begin{corollary}
	\label{cor_secondg0}
	We have the following asymptotic expansion,
\begin{align}
		A_{\partial M}(\eta_t,\eta_t)={}&\cos \gamma_t A(\nu_t,\nu_t)+\bar{A}_{\partial M}(\bar{\eta}_t,\bar{\eta}_t) +O(t).
		\label{eq:corIIG0}
\end{align}
\end{corollary}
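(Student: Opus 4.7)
The plan is to mirror the proof of Lemma \ref{lem_2ndFundForm}: apply Lemma \ref{lem:IIandMean} twice, once to $\Sigma_t$ under $(M,g)$ with its actual contact angle $\gamma_t$ and once under $\delta$ with the Euclidean contact angle $\bar\gamma_t$, then subtract. This reduces the target identity
$A_{\partial M}(\eta_t,\eta_t) - \cos\gamma_t A(\nu_t,\nu_t) - \bar A_{\partial M}(\bar\eta_t,\bar\eta_t) = O(t)$
to showing
\begin{equation*}
(H_{\partial M}-\bar H_{\partial M}) - (\cos\gamma_t H_t - \cos\bar\gamma_t \bar H_t) - \cos\bar\gamma_t \bar A(\bar\nu_t,\bar\nu_t) - (\sin\gamma_t \kappa_t - \sin\bar\gamma_t \bar\kappa_t) = O(t).
\end{equation*}

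Three of the four pieces above are routine and can be handled using results already established. First, $H_{\partial M}-\bar H_{\partial M}=O(t)$ on $\partial\Sigma_t$, because $H_{\partial M}(O)=\bar H_{\partial M}(O)$ by \eqref{eq:assumeG0} and the boundary $\partial\Sigma_t$ lies within distance $O(t)$ of $O$. Second, $H_t=O(t)$ by Corollary \ref{cor_meanGDelta}: with $a_{11}=a_{22}=1$ the constant $2\varphi'(0)(2-1/\sqrt{a_{11}}-1/\sqrt{a_{22}})$ vanishes, and the surviving $H_{t,\partial M}-\bar H_{t,\partial M}+tL$ is $O(t)$ by the same smoothness argument. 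Third, splitting
$\sin\gamma_t \kappa_t-\sin\bar\gamma_t \bar\kappa_t=(\sin\gamma_t-\sin\bar\gamma_t)\kappa_t+\sin\bar\gamma_t(\kappa_t-\bar\kappa_t)$,
the two pieces are respectively $O(t^2)\cdot O(t^{-1})$ and $O(t)\cdot O(1)$, using Corollary \ref{cor_angleGtoD} (which yields $\sin\gamma_t-\sin\bar\gamma_t=O(t^2)$), the diameter $\sim t$ of $\partial\Sigma_t$, and the $O(1)$ closeness of the two induced geodesic curvatures.

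The main obstacle---and the genuine difference from the proof of Lemma \ref{lem_2ndFundForm}---is controlling the order-one pair $\cos\bar\gamma_t \bar H_t$ and $\cos\bar\gamma_t \bar A(\bar\nu_t,\bar\nu_t)$. In the $g=\delta$ setting $\Sigma_t$ is a planar disk, making both vanish automatically; in the present non-Euclidean case $\Sigma_t$ is the paraboloid $\{x^3=\varphi'(0)(b_1-1)x_1^2+\varphi'(0)(b_2-1)x_2^2-t^2\}\cap M$, with $\bar H_t=-4c+O(t^2)$ and $\bar A(\bar\nu_t,\bar\nu_t)=-2c+O(t^2)$ for $c=\varphi'(0)(1/\sqrt{a_{33}}-1)$, each individually of order one. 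The essential observation is that $b_\alpha=\sqrt{a_{\alpha\alpha}a^{33}}$ is tuned precisely so that, combined with the first-order constraint on $\nabla g(O)$ encoded in $H_{\partial M}(O)=\bar H_{\partial M}(O)$, the leading contribution of $A(\nu_t,\nu_t)$ exactly absorbs $\cos\bar\gamma_t\bar A(\bar\nu_t,\bar\nu_t)-\cos\bar\gamma_t\bar H_t$ up to $O(t)$. Verifying this cancellation---either via a direct pointwise computation using Proposition \ref{prop_MeanCur} at $O$, or by applying Lemma \ref{lem:IIandMean} a third time under $g_0$ and invoking $\gamma^{g_0}_t=\bar\gamma_t+O(t^3)$ from Proposition \ref{prop:angleg0}---is the principal technical step; once secured, combining with the three routine estimates above yields the corollary.
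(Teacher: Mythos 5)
Your overall strategy (two applications of Lemma \ref{lem:IIandMean}, once under $g$ and once under $\delta$, then subtract) is the paper's, and your treatment of the mean-curvature terms $H_{t,\partial M}-\bar H_{t,\partial M}$ and $\cos\gamma_t H_t$ is fine. But the proof is incomplete exactly where you say it is: the ``principal technical step'' is never carried out, and the mechanism you propose for it cannot work --- $A(\nu_t,\nu_t)$ is part of the quantity being estimated and has already been traded for $\cos\gamma_t H_t$ and $\sin\gamma_t\kappa_t$ by the first application of the lemma, so it is not available to ``absorb'' anything. Worse, one of your ``routine'' estimates is false, and its failure is precisely where the missing cancellation lives. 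If you apply Lemma \ref{lem:IIandMean} to the paraboloid $\Sigma_t$ under $\delta$, the angle in that identity must be the $\delta$-contact angle of the paraboloid with $\partial M$, call it $\tilde\gamma_t$, not the coordinate-plane angle $\bar\gamma_t$. Repeating the computation of Proposition \ref{prop:angleg0} with $\delta$ as the metric but keeping the shape coefficients $b_\alpha=\sqrt{a^{33}}$ gives $\cos\tilde\gamma_t-\cos\bar\gamma_t=2\varphi'(0)|\hat x|^2(1-a^{33})t^2+O(t^3)$, hence $\sin\tilde\gamma_t-\sin\bar\gamma_t=O(t)$, not $O(t^2)$, whenever $a^{33}\neq 1$. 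Consequently $(\sin\gamma_t-\sin\tilde\gamma_t)\kappa_t=O(t)\cdot O(t^{-1})=O(1)$: the geodesic-curvature group in your reduction is of order one, equal to $\cos\tilde\gamma_t\,\bar A(\bar T,\bar T)+O(t)$, and it is exactly this term that cancels the order-one leftover $\cos\tilde\gamma_t\bigl(\bar H_t-\bar A(\bar\nu_t,\bar\nu_t)\bigr)=\cos\tilde\gamma_t\,\bar A(\bar T,\bar T)$ which you isolate as the obstacle. There is no separate cancellation to be found elsewhere.

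The paper sidesteps the issue by choosing a different Euclidean comparison surface. Under assumption \eqref{eq:assumeG0} one has $b_1=b_2=\sqrt{a^{33}}$, so $\partial\Sigma_t$ is a round circle contained in a single coordinate plane; Lemma \ref{lem:IIandMean} is applied under $\delta$ to the flat coordinate disk bounded by that circle, for which $\bar H=\bar A\equiv 0$, and no paraboloid curvature terms ever appear. The identity then reads
\begin{align}
A_{\partial M}(\eta_t,\eta_t)-\cos\gamma_t A(\nu_t,\nu_t)-\bar A_{\partial M}(\bar\eta_t,\bar\eta_t)
= H_{t,\partial M}-\cos\gamma_t H_t-\bar H_{t,\partial M}-\sin\gamma_t\kappa_t+\sin\bar\gamma_t\bar\kappa_t ,
\end{align}
where $\bar\kappa_t$ is the curvature of the circle in that flat disk. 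The first group tends to $H_{\partial M}(O)-H_0-\bar H_{\partial M}(O)=0$ by \eqref{eq:assumeG0}, and the last group tends to $0$ because $\kappa_t=\bar\kappa_t+O(1)$ (the rescaled boundary circle lies in $\{x_3=0\}$, where $a_{11}=a_{22}=1$ forces $g_0=\delta$) combined with Corollary \ref{cor_angleGtoD}. If you wish to keep the paraboloid as the $\delta$-comparison surface you must work with $\tilde\gamma_t$ throughout and exhibit the cancellation above explicitly; as written, the argument has a gap at its central step.
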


\begin{proof}
	Similar to Lemma \ref{lem_2ndFundForm}, we have
	\begin{align}
{} & A_{\partial M}(\eta_t,\eta_t)-\cos \gamma_t A_t(\nu_t,\nu_t)-\bar{A}_{\partial M}(\bar{\eta}_t,\bar{\eta}_t) \\
		={} & H_{t,\partial M}-\cos \gamma_t H_t-\bar{H}_{t,\partial M}-\sin \gamma_t \kappa_t + \sin \bar{\gamma}_t \bar{\kappa}_t.
		\label{eq:pfIIrewrite}
	\end{align}
	At first, we have
	\begin{equation}
		\lim_{t\rightarrow 0} \left( H_{t,\partial M}-\cos \gamma_t H_t-\bar{H}_{t,\partial M} \right)=H_{\partial M}(O)-H_0-\bar{H}_{\partial M}(O)=0
		\label{eq:pfMean}
	\end{equation}
	by our assumption \eqref{eq:assumeG0}.

	Secondly, we need to deal with $\kappa_t$.
	Similarly with the asymptotic behavior of mean curvature (for example, by the proof Corollary \ref{cor_meanGDelta}), we can show that $\kappa_t=\kappa_t^{g_0}+O(1)$ where $\kappa_t^{g_0}$ is the geodesic curvature of $\partial \Sigma_t$ under metric $g_0$.
Note that $\frac{1}{t}\partial \Sigma_t$ converges to a planar ellipse in $\left\{ x_3=0 \right\}$.
Since we assume $a_{11}=a_{22}=1$ based on \eqref{eq:assumeG0}, we know $g_0|_{\left\{ x_3=0 \right\}}=\delta|_{\left\{ x_3=0 \right\}}$.
Hence, $\kappa_t=\bar{\kappa}_t+O(1)$.
Together with Corollary \ref{cor_angleGtoD}, we have
\begin{equation}
	\lim_{t\rightarrow 0} \sin \bar{\gamma}_t(\hat{x}) \bar{\kappa}_t(\hat{x})-\sin \gamma_t(\hat{x}) \kappa_t(\hat{x})=0,
	\label{eq:pfLimGeodesic}
\end{equation}
for any $\hat{x} \in \partial E$.

Combining \eqref{eq:pfIIrewrite}, \eqref{eq:pfMean}, and \eqref{eq:pfLimGeodesic} gives us \eqref{eq:corIIG0}.
\end{proof}

%

Now we define the operator $\Phi (t, u) = (\Phi_1 (t, u), \Phi_2 (t, u))$ by
setting
\begin{align}
  \Phi_1 (t, u) & =  \frac{H_{t, t^3 u}}{t}  -
  \frac{1}{|E|_{g_0}}  \int_E  \frac{H_{t, t^3 u}}{t} 
  \mathrm{d} V_{g_0}, \\
  \Phi_2 (t, u) & = \frac{\cos \gamma_{t, t^3 u} - \cos \bar{\gamma}_{t, t^3
  u}}{t^3},
\end{align}
for $t \in (0, \varepsilon)$ where $\mathrm{d} V_{g_0}$ is the volume measure
under metric $g_0$ and $|E|_{g_0} = \int_E \mathrm{d}V_{g_0}$.

\begin{remark}
	Here, since we already assumed $a_{11}=a_{22}=1$, we know $g_0=\delta$ when restrict it on $\left\{ x_3=0 \right\}$.
	But we still write it $g_0$ instead of $\delta$ to avoid possible confusion.
\end{remark}

Similar to Proposition \ref{prop:solution}, we have the following result.

\begin{proposition}
	For any $t \in (0,\varepsilon)$ with $\varepsilon$ small enough, there exists a function $u(\cdot,t)$ such that $\Phi(t,u(\cdot,t))=0$.
	Moreover, $u(\cdot,t)$ satisfies
	\begin{equation}
		\lim_{t\rightarrow 0} (u(\hat{x},t)+u(-\hat{x},t))=0
		\label{eq:thmLimU}
	\end{equation}
	for any $\hat{x} \in E$.
	\label{prop:solutiong0}
\end{proposition}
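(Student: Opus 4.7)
The plan is to mimic the structure of Proposition \ref{prop cmc capillary at zero} and Proposition \ref{prop:solution}: extend $\Phi$ continuously to $t = 0$, solve the limiting problem $\Phi(0,u_{0})=0$ by energy minimisation, and then invoke the implicit function theorem. First I would combine the asymptotic expansions of $H_{t,t^{3}u}/t$ and $(\cos\gamma_{t,t^{3}u}-\cos\bar\gamma_{t,t^{3}u})/t^{3}$ written immediately before the statement. The coefficient of $u$ in the $\Phi_{2}$ expansion is $A_{\partial M}(\eta_{t},\eta_{t})-\cos\gamma_{t}\,A(\nu_{t},\nu_{t})-\bar A_{\partial M}(\bar\eta_{t},\bar\eta_{t})$, which by Corollary \ref{cor_secondg0} is $O(t)$ and therefore drops out in the $t\to 0$ limit; the inhomogeneous boundary term $(\cos\gamma_{t}-\cos\bar\gamma_{t})/t^{3}$ has a finite limit by Corollary \ref{cor_angleGtoD}; and the interior source $H_{t}/t$ has a finite limit by Corollary \ref{cor_meanGDelta} using the standing assumptions $a_{11}=a_{22}=1$ and $H_{\partial M}(O)=\bar H_{\partial M}(O)$, which kill the $O(1)$ piece. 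This yields
\begin{equation}
\Phi_{1}(0,u)=-\Delta_{0}^{E}u+\Bigl(f_{1}-\tfrac{1}{|E|_{g_{0}}}\int_{E}f_{1}\,\mathrm{d}V_{g_{0}}\Bigr),\quad \Phi_{2}(0,u)=-2\sqrt{\varphi'(0)}|\hat x|\tfrac{\partial u}{\partial\nu_{0}^{E}}+f_{2},
\end{equation}
where $f_{1}:=\lim_{t\to 0}H_{t}/t$ on $E$ and $f_{2}:=\lim_{t\to 0}(\cos\gamma_{t}-\cos\bar\gamma_{t})/t^{3}$ on $\partial E$, and $E$ is the (round) disk cut out by $b_{1}\hat x_{1}^{2}+b_{2}\hat x_{2}^{2}<1$.

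Next, solving $\Phi(0,u_{0})=0$ amounts to an inhomogeneous Neumann problem with Robin-free boundary datum. I would first check the Fredholm compatibility $\int_{E}\bigl(f_{1}-\overline{f_{1}}\bigr)\,\mathrm{d}V_{g_{0}}=\int_{\partial E}f_{2}/(2\sqrt{\varphi'(0)}|\hat x|)\,\mathrm{d}s_{g_{0}}$; the left side vanishes by construction, and the right side vanishes because $f_{2}$ is odd in $\hat x$ at leading order (by Corollary \ref{cor_angleGtoD0}, its leading term is $A(\hat x)+L(h)$, and the linear-in-$h$ contribution integrates to zero against the symmetric measure on $\partial E$ by the same cancellation used in Lemma \ref{lem:limLam}). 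Then the unique $u_{0}\in C^{2,\alpha}(E)\cap C^{1,\alpha}(\bar E)$ with $\int_{E}u_{0}\,\mathrm{d}V_{g_{0}}=0$ is obtained as the minimiser of the coercive quadratic energy
\begin{equation}
\mathcal{E}(u)=\int_{E}\tfrac{1}{2}|\nabla u|_{g_{0}}^{2}\,\mathrm{d}V_{g_{0}}+\int_{E}\bigl(f_{1}-\overline{f_{1}}\bigr)u\,\mathrm{d}V_{g_{0}}+\int_{\partial E}\tfrac{f_{2}}{2\sqrt{\varphi'(0)}|\hat x|}\,u\,\mathrm{d}s_{g_{0}}.
\end{equation}
The linearisation $D\Phi_{(0,u_{0})}(0,v)=\bigl(-\Delta_{0}^{E}v+\tfrac{1}{|E|_{g_{0}}}\!\int_{E}\Delta_{0}^{E}v,\,-2\sqrt{\varphi'(0)}|\hat x|\partial v/\partial\nu_{0}^{E}\bigr)$ is an isomorphism between the zero-mean $C^{2,\alpha}$ functions and $\mathcal{Z}\times C^{1,\alpha}(\partial E)$ by standard Neumann theory, and the implicit function theorem produces the desired $C^{1}$ family $u(\cdot,t)$ on $[0,\varepsilon)$ with $\Phi(t,u(\cdot,t))=0$.

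For the odd-symmetry claim, I would trace through the reflection $\hat x\mapsto -\hat x$ in the limiting problem. The term $2\varphi'(0)(2-1/\sqrt{a_{11}}-1/\sqrt{a_{22}})$ in Corollary \ref{cor_meanGDelta} vanishes, so $f_{1}$ reduces to $L(\hat x)+\lim_{t\to 0}(H_{t,\partial M}-\bar H_{t,\partial M})/t$, and both pieces are odd in $\hat x$ (the first by Corollary \ref{cor_meanGDelta}, the second by the same $h_{ij}(p_{\hat x})+h_{ij}(p_{-\hat x})=0$ cancellation used in the proof of Lemma \ref{lem:limLam}). Likewise $f_{2}/|\hat x|$ is odd in $\hat x$ by Corollary \ref{cor_angleGtoD0}. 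Since the unique zero-mean solution of a Neumann problem with odd source on a centrally symmetric domain is itself odd, $u_{0}(\hat x)+u_{0}(-\hat x)=0$, and taking $t\to 0$ in the continuous family gives the limit relation. The main obstacle is the honest verification that the full source terms (not merely their leading $\hat x$-linear parts) satisfy the parity cancellations making both the Fredholm compatibility and the oddness of $u_{0}$ hold; this requires careful bookkeeping of the $L(h)$ contributions in Corollaries \ref{cor_angleGtoD0} and \ref{cor_meanGDelta}, using that $h=\partial_{t}g|_{t=0}$ enters linearly through $\hat x_{\alpha}D_{\alpha}g_{ij}(0)$ and thus is itself odd in $\hat x$.
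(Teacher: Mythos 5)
Your proposal is correct and follows essentially the same route as the paper: solve the limiting problem $\Phi(0,u_0)=0$ by variational/elliptic methods, apply the implicit function theorem, and deduce \eqref{eq:thmLimU} from the odd symmetry of $f_1=\lim_{t\to 0}H_t/t$ and $f_2=\lim_{t\to 0}(\cos\gamma_t-\cos\bar\gamma_t)/t^3$ (traced back to $h$ entering through $\hat{x}_\alpha D_\alpha g_{ij}(0)$) together with uniqueness of the zero-mean solution. The paper phrases the symmetry step as ``$-u_0(-\hat{x})$ solves the same problem, hence equals $u_0(\hat{x})$ by uniqueness,'' which is exactly your parity argument.
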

\begin{proof}
	The existence of $u(\cdot,t)$ is similar to the proof of Proposition \ref{prop:solution}.
	We only need to prove \eqref{eq:thmLimU}.

	We write
	\begin{align*}
		f_1(\hat{x}):={} & \lim_{t\rightarrow 0} \frac{H_t(\hat{x})}{t},\\
		f_2(\hat{x}):={}& \lim_{t\rightarrow 0} \frac{\cos \gamma_t-\cos \bar{\gamma}_t}{t^3}.
	\end{align*}
	We define $u(\cdot,0)$ the (unique) solution of the following problem,
	\[
		\begin{cases}
		-\Delta_0^E u + f_1=c, & \text{ in }E,\\
		-2\sqrt{\varphi'(0)}|\hat{x}|\frac{\partial u}{\partial \nu_0^E}+f_2=0, & \text{ on }\partial E,
		\end{cases}
	\]
	with condition $\int_{ E} u dV_{g_0}=0$.
	Here, $\Delta^E_0$ is the Laplace-Beltrami operator on $E$ under metric $g_0$, $\nu_0^E$ is the unit outer unit vector field along $\partial E$ in $E$ under metric $g_0$.
	Note that since $E$ is a disk by assumption \eqref{eq:assumeG0}, we know $|\hat{x}|$ is a constant.
	Since $f_1(\hat{x})$ and $f_2(\hat{x})$ are odd symmetric, we can find $-u(-\hat{x})$ also solves above problem.
	By the uniqueness of solutions, we know
	\begin{equation}
		u(\hat{x},0)+u(-\hat{x},0)=0.
		\label{eq:pfUsymmetric}
	\end{equation}
	
	Note that by the proof of Proposition \ref{prop:solution}, we know $\lim_{t\rightarrow 0} u(\cdot,t)=u(\cdot,0)$.
	Together with \eqref{eq:pfUsymmetric}, we can prove \eqref{eq:thmLimU}.
\end{proof}

Now, we write $u_t(\cdot)=u(t,\cdot) \in C^{2,\alpha}(E)\cap C^{1,\alpha}(\bar{E})$ such that Proposition \ref{prop:solutiong0} holds. 
In particular, we know $H_{t,t^3u_t}$ is a constant function defined on $E$ by the construction of $u_t$.

\begin{proposition}
	\label{prop:CMCg0}
	We can construct a surface $\Sigma$ near $O$ such that the mean curvature of $\Sigma$ is nonnegative and it has prescribed contact angle $\bar{\gamma}$ along $\partial \Sigma$.
\end{proposition}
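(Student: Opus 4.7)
The plan is to mirror the structure of the proof of Proposition \ref{prop:positiveCMC}: take $\Sigma = \Sigma_{t,t^{3}u_{t}}$ for a sufficiently small $t>0$, where $u_{t}$ is the solution produced in Proposition \ref{prop:solutiong0}. The contact angle along $\partial\Sigma$ is then prescribed automatically by $\Phi_{2}(t,u_{t})=0$, so the entire task reduces to showing that the constant mean curvature $h(t):=H_{t,t^{3}u_{t}}$ is nonnegative for all sufficiently small $t$.

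First I would establish $\lim_{t\to 0}\lambda_{t}\ge 0$ where $\lambda_{t}:=h(t)/t$. Since $h(t)$ is constant on $\Sigma_{t,t^{3}u_{t}}$, one has $\lambda_{t}=|E|_{g_{0}}^{-1}\int_{E}(H_{t,t^{3}u_{t}}/t)\,dV_{g_{0}}$. Integrating the expansion $H_{t,t^{3}u_{t}}/t = -\Delta_{t}^{E}u_{t} + H_{t}/t + O(t)$ over $E$, applying the divergence theorem, and inserting the boundary condition $\Phi_{2}(t,u_{t})=0$ together with Corollary \ref{cor_secondg0} (which kills the second-fundamental-form difference at leading order), the boundary contribution reduces, up to $O(t)$, to a multiple of $\int_{\partial E} f_{2}(\hat{x})/|\hat{x}|\,d\sigma_{g_{0}}$ with $f_{2}(\hat{x}):=\lim_{t\to 0}(\cos\gamma_{t}-\cos\bar{\gamma}_{t})/t^{3}$. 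Corollary \ref{cor_angleGtoD} identifies $f_{2}$ as odd in $\hat{x}$, so this integral vanishes. On the interior side, Corollary \ref{cor_meanGDelta} under the standing assumption \eqref{eq:assumeG0} gives $H_{t}/t = (H_{t,\partial M}-\bar{H}_{t,\partial M})/t + L(\hat{x}) + O(t)$ with $L$ odd in $\hat{x}$; since $E$ is a round disk with $g_{0}|_{E}=\delta|_{E}$, the $L$ term integrates to zero, and what remains is $\ge 0$ because of the hypothesis $H_{\partial M}\ge\bar{H}_{\partial M}$. The symmetry \eqref{eq:thmLimU} of $u_{t}$ is needed to dispose of those subleading terms in which $u_{t}$ is multiplied by an even function of $\hat{x}$.

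Next I would run the ODE argument of Theorem \ref{H ode}, in the version used in the proof of Lemma \ref{nonnegativity of mean curvature of every leaf}: with $N_{t}$ pointing in the direction that $t$ decreases, the foliation $\Sigma_{t,t^{3}u_{t}}$ satisfies $h'(t)+\Psi(t)h(t)\ge 0$ with $\Psi(t) = \bigl(\int_{\Sigma_{t,t^{3}u_{t}}}1/v_{t}\bigr)^{-1}\int_{\partial\Sigma_{t,t^{3}u_{t}}}\cot\bar{\gamma}$. Using \eqref{eq:YDotN} and Remark \ref{rmk:cotG0}, a direct asymptotic expansion yields $\Psi(t)=2t^{-1}+C_{1}(t)$ with $C_{1}$ continuous, hence
\[
\tfrac{d}{dt}\Bigl[\exp\Bigl(\int_{0}^{t}C_{1}(s)\,ds\Bigr)\,t^{2}h(t)\Bigr]\ge 0.
\]
Since $t^{2}h(t)=t^{3}\lambda_{t}\to 0$ as $t\to 0^{+}$, monotonicity forces $t^{2}h(t)\ge 0$, so $h(t)\ge 0$ for every small enough $t>0$. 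Choosing such a $t$ produces the desired $\Sigma$.

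The main obstacle I anticipate is carefully verifying the odd-symmetry cancellations in the first step: one must track the $L(h)$ pieces coming from Corollary \ref{cor_angleGtoD} and the $L(D_{P_{0}}g(0))$ pieces from Corollary \ref{cor_meanGDelta}, check that the product $u_{t}\cdot[A_{\partial M}(\eta_{t},\eta_{t})-\cos\gamma_{t}A(\nu_{t},\nu_{t})-\bar{A}_{\partial M}(\bar{\eta}_{t},\bar{\eta}_{t})]$ contributes only $O(t)$ after integration by virtue of Corollary \ref{cor_secondg0}, and confirm that after all these cancellations the only surviving leading contribution is the manifestly nonnegative $(H_{\partial M}-\bar{H}_{\partial M})/t$. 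The normalization \eqref{eq:assumeG0}, the disk structure of $E$, and the symmetry \eqref{eq:thmLimU} are exactly what make this bookkeeping close.
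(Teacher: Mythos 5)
Your proposal follows the paper's proof of Proposition \ref{prop:CMCg0} essentially verbatim: first $\lim_{t\to 0}\lambda_t\ge 0$ via integration of the expansion of $H_{t,t^3u_t}/t$ over $E$, the divergence theorem, and the odd-symmetry cancellations of $f_1$, $f_2$ and the $L(\hat{x})$ terms, and then the ODE/monotonicity argument for $h(t)=H_{t,t^3u_t}$. The one discrepancy is that the paper's asymptotic computation (using $v_t=2t/|e^3|_{g_0}+O(t^2)$ and Remark \ref{rmk:cotG0}) gives $\Psi(t)=\tfrac{2}{a^{33}}t^{-1}+C_1(t)$ rather than $2t^{-1}+C_1(t)$, since \eqref{eq:assumeG0} only normalizes $a_{11}=a_{22}=1$ and leaves $a^{33}$ free; this is harmless, because the exponent $2/a^{33}$ is still positive, so $t^{2/a^{33}}h(t)=t^{1+2/a^{33}}\lambda_t\to 0$ and the monotonicity argument closes exactly as you describe.
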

\begin{proof}
	First $\lim_{t\rightarrow 0} H_{t,t^3u_t}=0$ by assumtion \eqref{eq:assumeG0}.
	We write write $\lambda_t=\frac{H_{t,t^3u_t}}{t}$.
	We use the same method for proving Lemma \ref{lem:limLam} and it gives
	\begin{align*}
		\lim_{t\rightarrow 0} \lambda_t |E|_{g_0}\ge{} & \int_{ E} f_1(\hat{x})dV_{g_0}- \frac{1}{2\sqrt{\varphi'(0)}|\hat{x}|}\int_{ \partial E} f_2(\hat{x}) d\sigma_{g_0}=0,
	\end{align*}
	where $f_1,f_2$ are the functions defined in the proof of Proposition \ref{prop:solutiong0} and we have to use the symmetric properties of those functions.
        So $\lim_{t\rightarrow 0} \lambda_t\ge 0$.
	If $\lim_{t\rightarrow 0} \lambda_t>0$, then we again choose $\Sigma=\Sigma_{t,t^3u_t}$ for $t$ small enough which is a desired surface.
	For the case $\lim_{t\rightarrow 0} \lambda_t=0$, we can follow the proof for Proposition \ref{prop:positiveCMC} and find $h(t):=H_{t,t^3u_t}$ satisfies the differential inequality
	\[
		h'(t)+\Psi(t)h(t)\ge 0,
	\]
    for
	\[
		\Psi(t)=\left( \int_{ \Sigma_{t,t^3u_t}} \frac{1}{v_t} \right)^{-1} \int_{ \partial \Sigma_{t,t^3u_t}} \cot \bar{\gamma}_{t,t^3u_t},
	\]
    and $v_t:=-\left<Y_{t,t^3u_t},N_{t,t^3u_t}\right>$, $Y_{t,t^3u_t}$ is the variational vector field associated with the foliation $\Sigma_{t,t^3u_t}$ and $N_{t,t^3u_t}$ the unit normal vector field along $\Sigma_{t,t^3u_t}$ and pointing upward.
    This time, we note $\Sigma_{t,t^3u_t}$ is a higher order perturbation of $\Sigma_t$ and using \eqref{eq:YDotN}, we have
    \[
    v_t=-\left<Y_{t,t^3u_t},N_{t,t^3u_t}\right>=-\left<Y_{t},N_{t}\right>+O(t^2)=-Y_t\cdot N_t+O(t^2)=\frac{2t}{|e^3|_{g_0}}+O(t^2).
    \]
    Recall that $\frac{\sqrt{\varphi'(0)}}{t}\Sigma_{t,t^3u_t}\to E$ as sets, and $E$ is a disk with radius $(a^{33})^{\frac{1}{4}}$ by our assumption \eqref{eq:assumeG0} and the definition of $E$.
    Hence, we have
    \begin{align}
        \int_{\partial \Sigma_{t,t^3u_t}} \cot \bar{\gamma}_{t,t^3u_t}=2\pi t \frac{(a^{33})^{\frac{1}{4}}}{\sqrt{\varphi'(0)}}\cdot \frac{1}{2(a^{33})^{\frac{1}{4}}\sqrt{\varphi'(0)}t}+O(t)=\frac{\pi}{\varphi'(0)}+O(t),
    \end{align}
    in view of Remark \ref{rmk:cotG0}.
    On the other hand, we can compute
    \begin{align}
        \int_{\Sigma_{t,t^3u_t}}\frac{1}{v_t}=\pi t^2 \frac{\sqrt{a^{33}}}{\phi'(0)}\cdot \frac{|e^3|_{g_0}}{2t}+O(t^2)=\frac{\pi a^{33}}{2\phi'(0)}t+O(t^2).
    \end{align}
    Hence, we find $\Psi(t)=\frac{2}{a^{33}t}+C_1(t)$ for some bounded continuous function of order $O(1)$.
    Then, we know $h(t)$ satisfies
    \[
		\frac{\mathrm{d}}{\mathrm{d}t}\left[ \exp\left( \int_{ 0} ^t C_1(s)\mathrm{d}s \right)t^{\frac{2}{a^{33}}} h(t) \right]\ge 0.
	\]
    Combining with $\lim_{t\rightarrow 0} h(t)=0$, we get $h(t)\ge 0$ for $t \in (0,\varepsilon)$.
	Then we can choose $\Sigma=\Sigma_{t,t^3u_t}$ for any $t \in (0,\varepsilon)$.
\end{proof}

	\subsection{Proof of spherical case of main theorem}%
\label{sub:proof_of_spherical_case_of_main_theorem}

\begin{proof}[Proof of the Case \ref{case spherical} of Theorem \ref{main}]
	Similarly as the Case \ref{case conical}, we can construct a nonnegative mean curvature surface $\Sigma_{t}$ with prescribed angle $\bar{\gamma}$ based on Proposition \ref{prop:positiveCMC}, Proposition \ref{prop_sphereStrictBarrier}, and Proposition \ref{prop:CMCg0} for some small $t$.
	All the remaining proof is essentially the same as the Case \ref{case conical}.
\end{proof}

\section{Generalizations}\label{generalizations}

In this section, we discuss generalizations of Theorem \ref{main} to the
hyperbolic space and space forms with $\mathbb{S}^1$ symmetry. We list the
cases which we are unable to handle.

\subsection{Hyperbolic case}

First, we describe the model. We use the upper half space model of the
hyperbolic 3-space $\mathbb{H}^3$ which is given by the metric
\begin{equation}
  \bar{g} = \tfrac{1}{(x^3)^2} ((\mathrm{d} x^1)^2 + (\mathrm{d} x^2)^2 +
  (\mathrm{d} x^3)), x \in \mathbb{R}^3_+, x^3 > 0.
\end{equation}
Let $t_{\pm}$ be two positive real numbers satisfying $t_- < t_+$, we assume
that the surface $\partial M$ is rotationally symmetric with respect to the
$x^3$-axis and lies between the two coordinate planes
\[ P_{\pm} = \{x \in \mathbb{H}^3 : x^3 = t_{\pm} \} \]
and $\partial M \cap P_{\pm}$ are nonempty. As before, $\partial M \cap
P_{\pm}$ has three types of geometries.

\begin{theorem}
  \label{hyperbolic}Let $(M^3, g)$ be a compact 3-manifold with its scalar
  curvature satisfying the lower bound
  \begin{equation}
    R_g \geq - 6.
  \end{equation}
  such that its boundary $\partial M$ is diffeomorphic to a weakly convex
  surface in $(\mathbb{R}^3_+,\delta)$ and rotationally symmetric with respect to the
  $x^3$-axis. The boundary $\partial M$ bounds a region $\bar{M}$ (which we
  call a model or a reference) in $\mathbb{H}^3$, let the induced metric of
  the hyperbolic metric be $\bar{\sigma}$ and the induced metric of $g$ on
  $\partial M$ be $\sigma$. We assume that $\sigma \geq \bar{\sigma}$ and
  $H_{\partial M} \geq \bar{H}_{\partial M}$ on $\partial M \cap \{x \in
  \mathbb{H}^3 : t_- < x^3 < t_+ \}$.
  We also assume $\partial M$ satisfies one of the following conditions near $p_\pm $.
  \begin{enumerate}
    \item $\partial M \cap P_{\pm}$ is a disk. For this case, we further assume that
    $H_{\partial M} \geq \mp 2$ at $\partial M \cap P_{\pm}$ and the
    dihedral angles forming by $P_{\pm}$ and $\partial M\backslash (P_+ \cup
    P_-)$ are no greater than the hyperbolic reference.
    
    \item $\partial M$ is conical at $p_{\pm}$.
\item $\partial M$ is spherical at $p_{\pm}$. 
  \end{enumerate}
  Then $(M, g)$ is hyperbolic.
\end{theorem}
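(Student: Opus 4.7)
The plan is to mirror the three-case architecture of Theorem \ref{main} with the hyperbolic reference in place of the Euclidean one, adjusting the bookkeeping for the background curvature $R_{\bar g}=-6$ and $\mathrm{Ric}_{\bar g}=-2\bar g$. For the slab case I would set up the functional \eqref{action} with $\cos\bar\gamma$ computed from $\bar g$. The coordinate horospheres $P_\pm$ have hyperbolic mean curvatures $\mp 2$ with respect to the outward normals of the slab, so the hypothesis $H_{\partial M}\ge \mp 2$ on $\partial M\cap P_\pm$ together with the dihedral comparison at the edges supply interior and boundary maximum principle barriers exactly as in Case \ref{case slab}, and produce a regular stable minimal capillary minimiser $\Sigma$.

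Testing \eqref{stability} against $f\equiv 1$ and combining \eqref{sy} and \eqref{boundary sy} with $R_g\ge -6$ yields, via Gauss--Bonnet, the upper bound
\begin{equation}
  \int_{\partial\Sigma}\left(\tfrac{H_{\partial M}}{\sin\bar\gamma} - \tfrac{1}{\sin\bar\gamma}\tfrac{\partial\bar\gamma}{\partial\eta}\right)\mathrm{d}\lambda + \tfrac{1}{2}\int_\Sigma(R_g+6) + \tfrac{1}{2}\int_\Sigma|A|^2 \;\le\; 2\pi\chi(\Sigma) + 3|\Sigma|.
\end{equation}
The heart of the argument is the matching lower bound flagged in Remark \ref{angle comparison}, the hyperbolic decomposition
\begin{equation}
  \int_{c}\left(\tfrac{\bar H_{\partial M}}{\sin\bar\gamma} - \tfrac{1}{\sin\bar\gamma}\tfrac{\partial\bar\gamma}{\partial\bar\eta}\right)\mathrm{d}\bar\lambda \;\ge\; 2\pi + 3|\bar D_c|, \label{hyperbolic decomposition}
\end{equation}
for any curve $c$ in $\partial \bar M$ separating $p_\pm$, where $\bar D_c$ is a suitable comparison region whose area is controlled from below by $|\Sigma|$ under the assumed comparisons. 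To establish \eqref{hyperbolic decomposition} I would first run the hyperbolic analogue of the Darboux-frame calculation leading to \eqref{euclidean pointwise relation}, identifying the integrand as the geodesic curvature of the horocyclic level circle $\ell_s=\partial M\cap\{x^3=s\}$ inside the horosphere $\{x^3=s\}$ plus a background correction recording the ambient sectional curvature $-1$; then a hyperbolic Stokes-type identity applied to an invariant vector field (the conformal field $x^3\partial_{x^3}$, whose $\bar g$-divergence is the constant $-2$) packages the ambient curvature into the area term $3|\bar D_c|$. The metric comparisons $\sigma\ge\bar\sigma$, $H_{\partial M}\ge \bar H_{\partial M}$, and the weak convexity estimate \eqref{derivative of angle comparison} then transport \eqref{hyperbolic decomposition} up to $\partial\Sigma$ under $g$.

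I expect the main obstacle to be the precise identification of $\bar D_c$ together with the verification $|\bar D_c|\ge|\Sigma|$ needed to close the two-sided pinching; this should follow from the minimising property of $\Sigma$ under \eqref{action in terms of surface} realised against a totally geodesic hyperbolic competitor, combined with $R_g+6\ge 0$. Once \eqref{hyperbolic decomposition} is secured, the pinching forces $\chi(\Sigma)=1$, $|A|\equiv 0$, $R_g\equiv -6$ on $\Sigma$, and $\sigma=\bar\sigma$, $H_{\partial M}=\bar H_{\partial M}$ along $\partial\Sigma$, so by the Gauss equation $\Sigma$ is a totally geodesic hyperbolic disk of intrinsic curvature $-1$, infinitesimally rigid in the hyperbolic sense. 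The CMC capillary foliation apparatus of Theorem \ref{foliation near infinitesimal} and the monotonicity ODE of Theorem \ref{H ode} carry over after replacing $\mathrm{Ric}(N)$ by $\mathrm{Ric}(N)+2$ in the stability operator; local rigidity then globalises to $(M,g)$ being hyperbolic. The conical and spherical cases are imported from Sections \ref{conical case} and \ref{sec:spherical}: the hyperbolic Taylor expansions of mean curvatures and contact angles at the pole differ from the Euclidean ones only by $O(t)$ perturbations, so Proposition \ref{prop cmc capillary at zero}, Lemma \ref{nonnegativity of mean curvature of every leaf}, Proposition \ref{prop:solution}, and Proposition \ref{prop:CMCg0} transplant with the same sign conclusions and supply the required barriers.
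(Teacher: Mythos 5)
Your proposal diverges from the paper at the crucial first step, and the divergence opens a gap that your own sketch acknowledges but does not close. You minimise the area-type functional \eqref{action} and work with a \emph{minimal} capillary surface. For $H=0$ the rewrite \eqref{sy} only gives $|A|^2+\operatorname{Ric}(N)=\tfrac12 R_g-K+\tfrac12|A|^2\ge -3-K+\tfrac12|A|^2$, so the Gauss--Bonnet balance carries the uncompensated term $3|\Sigma|$ on the wrong side, exactly as in your displayed inequality. You propose to absorb it by a strengthened boundary lower bound $2\pi+3|\bar D_c|$ together with $|\bar D_c|\ge|\Sigma|$. That last comparison is not available: $\bar D_c$ is determined by the trace of $\partial\Sigma$ on the model, whereas $|\Sigma|$ is the area of a surface inside $(M,g)$, and the hypotheses $\sigma\ge\bar\sigma$, $H_{\partial M}\ge\bar H_{\partial M}$ give no upper bound on areas in $(M,g)$ in terms of model data (the minimising property only bounds $|\Sigma|$ by competitor areas plus $\int|\cos\bar\gamma|\,\mathrm{d}\sigma$, and $|\partial M|_\sigma$ can be arbitrarily large). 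Note also that the stated barrier hypothesis $H_{\partial M}\ge \mp2$ at $\partial M\cap P_\pm$ is the right one for a CMC $-2$ surface, not for a minimal one, which is a further sign the variational problem has been set up with the wrong Lagrangian.

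The paper's proof avoids all of this by adding a bulk term: it minimises $|\partial^\ast E\cap\operatorname{int}M|+2\mathcal H^3(E)-\int\cos\bar\gamma$, so the minimiser has constant mean curvature $-2$. Then $\tfrac12H^2=2$ combines with $R_g\ge-6$ to give $|A|^2+\operatorname{Ric}(N)=\tfrac{R_g+6}{2}-K+\tfrac12|A^0|^2$ with no residual area term, the boundary integrand acquires the extra summand $2\cot\bar\gamma$ from $-H\cot\bar\gamma$ in \eqref{boundary sy}, and the key identity \eqref{hyperbolic decomposition} is a clean \emph{pointwise} statement (proved by testing the stable CMC $-2$ foliation of the model by parallel disks against the first-variation formulas): the combination equals the geodesic curvature $\kappa_s$ of the level circle, after which Lemma \ref{lem_curve_inte} gives $\int\kappa_s\ge2\pi$ and the rigidity argument of Section \ref{slab case} runs unchanged. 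If you replace your functional by the volume-corrected one and re-derive the boundary identity in the CMC $-2$ setting, the rest of your outline (foliation, ODE monotonicity with $\operatorname{Ric}(N)+2$, and the transplantation of the conical and spherical constructions) is consistent with the paper.
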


\begin{remark}
The assumption that $\partial M$ is diffeomorphic to a weakly convex
surface in $(\mathbb{R}^3_+,\delta)$ is used to prove a comparison similar to \eqref{derivative of angle comparison}. See Remark \ref{angle comparison}. 
  \end{remark}

\begin{proof}
  We give some key steps for the case $\partial M \cap P_{\pm}$ are both
  disks. We consider the functional
  \begin{equation}
    I (E) = | \partial^{\ast} E \cap \ensuremath{\operatorname{int}}M| +
    2\mathcal{H}^3 (E) - \int_{\partial^{\ast} E \cap \partial M} \cos
    \bar{\gamma},
  \end{equation}
  where $\bar{\gamma}$ is the angle between the unit normal pointing outward
  of $(\bar{M}, \bar{g})$ and $\tfrac{\partial}{\partial x^3}$. We find a
  minimiser $E$ to this functional whose boundary $\Sigma := \partial E \cap
  \ensuremath{\operatorname{int}}M$ which is of mean curvature $- 2$ and
  satisfies
  \[ \langle X, N \rangle - \cos \bar{\gamma} = 0 \text{ along } \partial
     \Sigma . \]
  The stability gives
  \begin{equation}
    Q (f, f) := - \int_{\Sigma} (f \Delta f + (|A|^2
    +\ensuremath{\operatorname{Ric}}(N)) f^2) + \int_{\partial \Sigma} f
    (\tfrac{\partial f}{\partial \nu} - q f) \geq 0
  \end{equation}
  where $q$ is the same as \eqref{q}. By {\cite{witten-connectedness-1999}} or
  {\cite{andersson-rigidity-2008}}, we have that
  \[ |A|^2 +\ensuremath{\operatorname{Ric}} (N) = \tfrac{R_g + 6}{2} - K +
     \tfrac{|A^0 |^2}{2}, \]
  where $A^0$ is the traceless part of $A$; and from \eqref{boundary sy},
  similar to the steps obtaining \eqref{taking constant} and \eqref{after
  gauss-bonnet}, we obtain
\begin{align}
2 \pi \chi (\Sigma) = & \int_{\Sigma} K + \int_{\partial \Sigma} \kappa
\\
\geq & \int_{\partial \Sigma} \tfrac{H_{\partial M}}{\sin
\bar{\gamma}} + 2 \cot \bar{\gamma} + \tfrac{1}{\sin^2 \bar{\gamma}}
\partial_{\eta} \cos \bar{\gamma} + \tfrac{1}{2} \int_{\Sigma} R_g + 6 +
|A^0 |^2 \\
\geq & \int_{\partial \Sigma} \tfrac{\bar{H}_{\partial M}}{\sin
\bar{\gamma}} + 2 \cot \bar{\gamma} + \tfrac{1}{\sin^2 \bar{\gamma}}
\partial_{\bar{\eta}} \cos \bar{\gamma} .
\end{align}
  Let $\ell_s$ be the curve $\{x \in \partial M : x^3 = s\}$, we denote by
  $\kappa_s$ the the geodesic curvature of $\ell_s$ in the plane $\{x^3 =
  s\}$. We claim that
  \begin{equation}
    \tfrac{\bar{H}_{\partial M}}{\sin \bar{\gamma}} + 2 \cot \bar{\gamma} +
    \tfrac{1}{\sin^2 \bar{\gamma}} \partial_{\bar{\eta}} \cos \bar{\gamma} =
    \kappa_s . \label{hyperbolic decomposition}
  \end{equation}
  We prove this claim \text{{\itshape{differently}}} from Lemma \ref{lemma
  lower bound over separating curve}. We observe the model $(\bar{M},
  \bar{g})$ is foliated by parallel disks which is of mean curvature $- 2$
  (computed with respect to the unit normal pointing to the same direction as
  $\tfrac{\partial}{\partial x^3}$) with $\langle X, N \rangle - \cos
  \bar{\gamma} = 0$ on $\partial \Sigma$ for every leaf and hence stable. The
  variational vector field $Y$ of the foliation can be easily written down. We
  easily see that $\langle Y, N \rangle$ is a constant. Using $f = \langle Y,
  N \rangle$, $H = - 2$ and the rewrites \eqref{boundary sy} in \eqref{angle
  first variation} and \eqref{background angle first variation} leads to our
  claim.
  
  The bound $\int_{\Sigma} \kappa_s \geq 2 \pi$ is easy following the
  same lines as in Lemma \ref{lem_curve_inte}. The rest is fairly similar to
  the Euclidean case. We refer to Section \ref{slab case}.
  
  The conical and spherical cases are similar to the Euclidean case in Section
  \ref{conical case} and Section \ref{sec:spherical} (see {\cite{li-polyhedron-2020}} and
  {\cite{chai-dihedral-2022-arxiv}}). We would like to mention, in proving the
  hyperbolic analog of Lemma \ref{limit of mean}, it is preferable to use directly the
  metric $\bar{g}$, which is comparatively simpler than
  {\cite{chai-dihedral-2022-arxiv}} where we used the Euclidean metric (which
  is conformal to $\bar{g}$). Of course, they lead to the same bound.
\end{proof}

\subsection{Spaces with $\mathbb S^1$-symmetry}

Let $\tilde{x} = (x^2, x^3)$ represent an arbitrary point in $\mathbb{R}^2$,
$t_{\pm}$ be two positive real numbers with $t_- < t_+$, $P_0 = (z_0, t_-)$,
$P_1 = (z_1, t_-)$, $P_2 = (z_2 , t_+)$ and $P_3 = (z_3, t_+)$ be four points
on $\mathbb{R}^2$ where $z_0 < z_1$ and $z_2  < z_3$. We connect $P_0$ with
$P_1$ and $P_2$ with $P_3$ by straight segments. We connect $P_0$ with $P_2$
and $P_1$ with $P_3$ by smooth embedded curves. Then these curves bound a
region $W$ in the plane $W$. We use $P_i P_{i + 1}$ or $P_{i + 1} P_i$ to
denote the segments from $P_i$ to $P_{i + 1}$ (we set $P_4 : = P_0$) where $i$
ranges from $0$ to $3$ despite that not all $P_i$ are straight.

We consider the model $\bar{M} =\mathbb{S}^1 \times W$, and we use $\theta$ to
represent the circle factor. Let $\bar{g}$ be the standard metric $(\mathrm{d}
\theta)^2 + (\mathrm{d} x^2)^2 + (\mathrm{d} x^3)^2$ on $\bar{M}$ or the
hyperbolic metric
\[ \tfrac{1}{(x^3)^2} ((\mathrm{d} \theta)^2 + (\mathrm{d} x^2)^2 +
   (\mathrm{d} x^3)^2) . \]
Let $\tau$ be either the number $- 1$ or 0. Let $\bar{\gamma}$ be the angles
formed by the vector field $\bar{X}$ and $\tfrac{\partial}{\partial x^3}$ at
$\mathbb{S}^1 \times (\{x^3 = t\} \cap \partial M)$, $t_- < t < t_+$. It
extends to $\mathbb{S}^1 \times P_i$ by continuity. For simplicity, we set $M$
to be $\bar{M}$. We assume that $\mathbb{S}^1 \times P_0 P_2$ and
$\mathbb{S}^1 \times P_1 P_3$ are weakly convex in $M$ with the flat metric. Again the weak convexity is used to prove the comparison relation \eqref{derivative of angle comparison}.

\begin{theorem}
  \label{circle factor}Let $g$ be another smooth metric on $M$. If $g$ has
  scalar curvature satisfying $R_g \geq 6 \tau$, $H_{\partial
  M} \geq \bar{H}_{\partial M}$ on all $\partial M$, $\sigma \geq
  \bar{\sigma}$ on $\mathbb{S}^1 \times P_0 P_2$ and $\mathbb{S}^1 \times P_1
  P_3$, and the dihedral angles formed by the four pieces of $\partial M$ are
  no greater than the model. Then $g$ is of constant sectional curvature $\tau$.
\end{theorem}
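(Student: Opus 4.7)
The plan is to adapt the capillary variational argument from the slab case (Case \ref{case slab} of Theorem \ref{main} and its hyperbolic analog in Theorem \ref{hyperbolic}) to the product $M = \mathbb{S}^1 \times W$. The two flat pieces $\mathbb{S}^1 \times P_0 P_1$ and $\mathbb{S}^1 \times P_2 P_3$ lie in $\{x^3 = t_\pm\}$ and in the model have mean curvature $\mp 2\tau$ with respect to the outward normal, so the hypothesis $H_{\partial M} \geq \bar{H}_{\partial M}$ on them produces mean-convex barriers for capillary surfaces of mean curvature $2\tau$. The two curved pieces $\mathbb{S}^1 \times P_0 P_2$ and $\mathbb{S}^1 \times P_1 P_3$ play the role of the ``side'' boundary where the capillary angle condition applies.

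First I would minimize
\begin{equation*}
  I(E) = |\partial^\ast E \cap \operatorname{int} M| - 2\tau \mathcal{H}^3(E) - \int_{\partial^\ast E \cap \partial M} \cos \bar{\gamma}
\end{equation*}
over Caccioppoli sets $E \subset M$ in the relative homology class of the model cylinder $\mathbb{S}^1 \times (\{x^3 = s\} \cap W)$, namely those $E$ containing $\mathbb{S}^1 \times P_0 P_1$ and disjoint from $\mathbb{S}^1 \times P_2 P_3$. The dihedral angle hypothesis together with the barriers keeps the reduced boundary $\Sigma := \partial^\ast E \cap \operatorname{int} M$ strictly between the flat pieces and away from the four edge circles, and capillary regularity \cite{de-philippis-regularity-2015} gives $\Sigma$ smooth, of constant mean curvature $2\tau$, and meeting the side pieces at the prescribed angle $\bar{\gamma}$. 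Topologically $\Sigma$ is a cylinder with two boundary circles, so $\chi(\Sigma) = 0$.

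Applying stability to $f \equiv 1$, using the unified Schoen-Yau / Anderson-Witten rewrite
\begin{equation*}
  |A|^2 + \operatorname{Ric}(N) = \tfrac{1}{2}(R_g - 6\tau) - K + \tfrac{1}{2}|A^0|^2,
\end{equation*}
which is valid when $H = 2\tau$, together with Lemma \ref{lem:IIandMean} and Gauss-Bonnet, produces
\begin{equation*}
  0 = 2\pi \chi(\Sigma) \geq \int_{\partial \Sigma} \left( \tfrac{H_{\partial M}}{\sin \bar{\gamma}} - 2\tau \cot \bar{\gamma} + \tfrac{1}{\sin^2 \bar{\gamma}} \partial_\eta \cos \bar{\gamma} \right) \mathrm{d}\lambda.
\end{equation*}
Running the stable-foliation computation in $(\bar{M}, \bar{g})$ with leaves $\mathbb{S}^1 \times (\{x^3 = s\} \cap W)$ yields the model identity analogous to \eqref{hyperbolic decomposition}: $\tfrac{\bar{H}_{\partial M}}{\sin \bar{\gamma}} - 2\tau \cot \bar{\gamma} + \tfrac{1}{\sin^2 \bar{\gamma}} \partial_{\bar{\eta}} \cos \bar{\gamma} = \kappa_s$, where $\kappa_s$ is the geodesic curvature of the boundary circle $\mathbb{S}^1 \times \{\mathrm{pt}\}$ inside the slice cylinder. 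Since each slice cylinder carries a flat product metric (a constant multiple of $\mathrm{d}\theta^2 + (\mathrm{d}x^2)^2$) and the boundary circles are $\theta$-orbits at fixed $x^2$, they are closed geodesics, so $\kappa_s \equiv 0$.

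Combining with $H_{\partial M} \geq \bar{H}_{\partial M}$, $\mathrm{d}\lambda \geq \mathrm{d}\bar{\lambda}$, and $\partial_\eta \bar{\gamma} \leq \partial_{\bar{\eta}} \bar{\gamma}$ (from $\sigma \geq \bar{\sigma}$ on the side pieces together with their weak convexity, exactly as in \eqref{derivative of angle comparison}) forces the chain $0 \geq 0$ to be an equality throughout. Tracing back gives $R_g = 6\tau$ and $A^0 = 0$ on $\Sigma$ and the matching boundary data $H_{\partial M} = \bar{H}_{\partial M}$, $\sigma = \bar{\sigma}$, $\gamma = \bar{\gamma}$ along $\partial \Sigma$; i.e.\ $\Sigma$ is infinitesimally rigid in the sense of Section \ref{slab case}. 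The CMC capillary foliation of Theorem \ref{foliation near infinitesimal} and the ODE argument of Theorem \ref{H ode} then propagate infinitesimal rigidity to every nearby leaf, and the endgame of the slab case identifies $M$ isometrically with the model, so $g$ has constant sectional curvature $\tau$. The main obstacle I foresee is forcing the minimizer into the correct cylindrical topology rather than a null-homologous competitor; this is addressed by restricting the variational class to the nontrivial element of $H_3(M, \partial M; \mathbb{Z})$ represented by the model cylinder, which separates the two barrier pieces and hence so does its minimizing boundary.
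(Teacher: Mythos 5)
Your proposal is correct and follows essentially the same route as the paper: minimize the capillary functional, get a CMC-$2\tau$ annulus with prescribed angles, combine stability at $f\equiv 1$ with the Gauss--Bonnet theorem and the model identity $\tfrac{\bar H_{\partial M}}{\sin\bar\gamma}-2\tau\cot\bar\gamma+\tfrac{1}{\sin^2\bar\gamma}\partial_{\bar\eta}\cos\bar\gamma=0$ (derived, as you do, from the stable slice foliation as in \eqref{hyperbolic decomposition}), and then run the foliation/ODE rigidity endgame of Section \ref{slab case}. Your version is in fact slightly more careful than the paper's sketch on two points it leaves implicit: the volume term $-2\tau\mathcal{H}^3(E)$ needed in the functional so that the minimizer has mean curvature $2\tau$, and the homological constraint forcing the minimizer to separate the two flat pieces.
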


\begin{proof}
  We consider the functional
  \begin{equation}
    I (E) = | \partial^{\ast} E \cap \ensuremath{\operatorname{int}}M| -
    \int_{\partial^{\ast} E \cap \partial M} \cos \bar{\gamma} .
  \end{equation}
  We find a minimiser $E$ to this functional whose boundary $\Sigma :=
  \partial E \cap \ensuremath{\operatorname{int}}M$ which is of mean curvature
  $2 \tau$ and satisfies
  \[ \langle X, N \rangle - \cos \bar{\gamma} = 0 \text{ along } \partial
     \Sigma \]
  The stability gives
  \begin{equation}
    Q (f, f) := - \int_{\Sigma} (f \Delta f + (|A|^2
    +\ensuremath{\operatorname{Ric}}(N)) f^2) + \int_{\partial \Sigma} f
    (\tfrac{\partial f}{\partial \nu} - q f) \geq 0
  \end{equation}
  where $q$ is the same as \eqref{q}. Similar to the steps obtaining
  \eqref{taking constant} and \eqref{after gauss-bonnet}, we obtain
\begin{align}
2 \pi \chi (\Sigma) = & \int_{\Sigma} K + \int_{\partial \Sigma} \kappa
\\
\geq & \int_{\partial \Sigma} \tfrac{H_{\partial M}}{\sin
\bar{\gamma}} - 2 \tau \cot \bar{\gamma} + \tfrac{1}{\sin^2 \bar{\gamma}}
\partial_{\eta} \cos \bar{\gamma} + \tfrac{1}{2} \int_{\Sigma} R_g + 6
\tau + |A^0 |^2 \\
\geq & \int_{\partial \Sigma} \tfrac{\bar{H}_{\partial M}}{\sin
\bar{\gamma}} - 2 \tau \cot \bar{\gamma} + \tfrac{1}{\sin^2 \bar{\gamma}}
\partial_{\bar{\eta}} \cos \bar{\gamma} .
\end{align}
  Similar to \eqref{hyperbolic decomposition}, we obtained that
  \[ \tfrac{\bar{H}_{\partial M}}{\sin \bar{\gamma}} - 2 \tau \cot
     \bar{\gamma} + \tfrac{1}{\sin^2 \bar{\gamma}} \partial_{\bar{\eta}} \cos
     \bar{\gamma} = 0. \]
  We know that $\Sigma$ is an annulus, so $\chi (\Sigma) = 0$. From here, we
  can trace back the equalities as Section \ref{slab case}, and we omit the
  details.
\end{proof}

\subsection{Unsolved cases}


One interesting question is when $\partial M$ consists more than two
pieces forming dihedral angles and that its geometry at $p_{\pm}$ are not disks. In Figure \ref{fig:multiple-pieces-of-pm}, we draw an example of three pieces with different geometries at $p_{\pm}$.

\begin{figure}[ht]
    \centering
	\begingroup
	\def\svgwidth{0.3\columnwidth}
\begingroup%
  \makeatletter%
  \providecommand\color[2][]{%
    \errmessage{(Inkscape) Color is used for the text in Inkscape, but the package 'color.sty' is not loaded}%
    \renewcommand\color[2][]{}%
  }%
  \providecommand\transparent[1]{%
    \errmessage{(Inkscape) Transparency is used (non-zero) for the text in Inkscape, but the package 'transparent.sty' is not loaded}%
    \renewcommand\transparent[1]{}%
  }%
  \providecommand\rotatebox[2]{#2}%
  \newcommand*\fsize{\dimexpr\f@size pt\relax}%
  \newcommand*\lineheight[1]{\fontsize{\fsize}{#1\fsize}\selectfont}%
  \ifx\svgwidth\undefined%
    \setlength{\unitlength}{211.28138505bp}%
    \ifx\svgscale\undefined%
      \relax%
    \else%
      \setlength{\unitlength}{\unitlength * \real{\svgscale}}%
    \fi%
  \else%
    \setlength{\unitlength}{\svgwidth}%
  \fi%
  \global\let\svgwidth\undefined%
  \global\let\svgscale\undefined%
  \makeatother%
  \begin{picture}(1,1.54945171)%
    \lineheight{1}%
    \setlength\tabcolsep{0pt}%
    \put(0,0){\includegraphics[width=\unitlength,page=1]{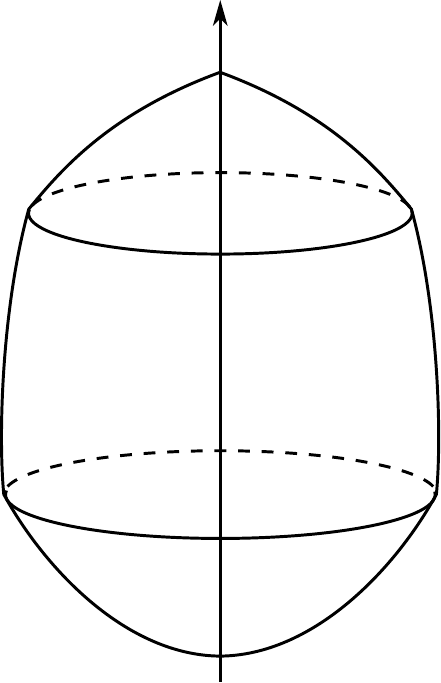}}%
    \put(0.32216138,1.44661517){\color[rgb]{0,0,0}\makebox(0,0)[lt]{\lineheight{0}\smash{\begin{tabular}[t]{l}$x_3$\end{tabular}}}}%
    \put(0,0){\includegraphics[width=\unitlength,page=2]{multiple-pieces-of-pm.pdf}}%
    \put(0.51645331,0.10510106){\color[rgb]{0,0,0}\makebox(0,0)[lt]{\lineheight{0}\smash{\begin{tabular}[t]{l}$p_-$\end{tabular}}}}%
    \put(0.51645331,1.41141644){\color[rgb]{0,0,0}\makebox(0,0)[lt]{\lineheight{0}\smash{\begin{tabular}[t]{l}$p_+$\end{tabular}}}}%
  \end{picture}%
\endgroup%

	\endgroup

    \caption{Multiple pieces of $\partial M$}
    \label{fig:multiple-pieces-of-pm}
\end{figure}

In Theorem \ref{circle factor}, we have assumed in the model that $P_0$,
$P_1$ are of positive distance from each other. It is worthwhile to look into
the case $P_0 = P_1$ similar to the conical case in Theorem \ref{main}, and an
even more difficult case is that when $\partial M$ is smooth at $\mathbb{S}^1
\times \{P_0 = P_1 \}$. We can assume that the curve connecting $P_0$ and
$P_2$ is only piecewise smooth.

Lastly, we would like to remark that our proof relied on the Gauss-Bonnet theorem and the regularity result {\cite[Theorem
1.3]{de-philippis-regularity-2015}} of minimal capillary surface, it is desirable to prove Theorem \ref{main} in higher dimensions.

\appendix
\section{Rotationally symmetric surface in $\mathbb R^3$}

Let $\vec{x} : [0, \varepsilon) \times \mathbb{S}^1 \to \mathbb{R}^3$ be a
rotationally symmetric surface $S$ given by
\[ \vec{x} (\rho, \theta) = (\psi (\rho) \cos \theta, \psi (\rho) \sin \theta,
   - \rho) . \]
Then the tangent vectors are
\[ \vec{x}_{\rho} = (\psi' \cos \theta, \psi' \sin \theta, - 1),
   \vec{x}_{\theta} = (- \psi \sin \theta, \psi \cos \theta) . \]
The unit normal to $S$ is
\begin{equation}
  X = (\cos \theta, \sin \theta, \psi') / \sqrt{1 + (\psi')^2} .
\end{equation}
The angle $\bar{\gamma}$ is given by
\begin{equation}
  \cos \bar{\gamma} = \tfrac{\psi'}{\sqrt{1 + (\psi')^2}} .
\end{equation}
And
\begin{equation}
  \bar{\eta} = \vec{x}_{\rho} / \sqrt{1 + (\psi')^2} .
\end{equation}
Let $A_S$ denote the second fundamental form, then
\begin{equation}
  A_S (\vec{x}_{\rho}, \vec{x}_{\rho}) = - \tfrac{\psi''}{\sqrt{1 +
  (\psi')^2}}, A_S (\vec{x}_{\theta}, \vec{x}_{\theta}) = \tfrac{1}{\sqrt{1 +
  (\psi')^2}} \psi . \label{std second fundamental form}
\end{equation}

Let $\Sigma_{\rho} = \{(\psi (\rho) \hat{x}, - \rho) : \hat{x} \in D\}$. We
use the subscript $\rho$ on every geometric quantities on $\Sigma_{\rho}$, for
example, $\gamma_{\rho}$ denotes the contact angles of $\Sigma_{\rho}$ with
$\partial M$ under the metric $g$ and $\bar{\gamma}_{\rho}$ denotes the
contact angles of $\Sigma_{\rho}$ with $\partial M$ under the flat metric.

Let
\[ \Sigma_{\rho, \rho^2 u} = \{(\psi (\rho + \rho^2 u) \hat{x},  - (\rho +
   \rho^2 u)) : \hat{x} \in D\}, u = u (\cdot, \rho) \in C^{2, \alpha} (D) \]
be a small perturbation of $\Sigma_{\rho}$, we use the subscript $\rho, \rho^2
u$ on every geometric quantities on $\Sigma_{\rho, \rho^2 u}$ except that
$\bar{\gamma}_{\rho, \rho^2 u}$ denotes the value of $\bar{\gamma}_{\rho}$ at
$\Sigma_{\rho, \rho^2 u}$. From the definition of $\bar{\gamma}_{\rho, \rho^2
u}$,
\[ \cos \bar{\gamma}_{\rho, \rho^2 u} = \tfrac{\psi' (\rho + \rho^2
   u)}{\sqrt{1 + (\psi' (\rho + \rho^2 u))^2}}, \hat{x} \in \partial D. \]

It is easy to see that for each $\hat{x} \in \partial D$,
\[ \cos \bar{\gamma}_{\rho, \rho^2 u} - \cos \bar{\gamma}_{\rho} =
   \tfrac{1}{\sqrt{1 + (\psi' (\rho))^2}} \tfrac{1}{1 + (\psi' (\rho))^2}
   \psi'' (\rho) \rho^2 u (\hat{x}, \rho) + O (\rho^3) . \]
It follows from \eqref{std second fundamental form} that
\begin{equation}
  \cos \bar{\gamma}_{\rho, \rho^2 u} - \cos \bar{\gamma}_{\rho} = -
  \bar{A}_{\partial M} (\bar{\eta}, \bar{\eta}) \rho^2 u + O (\rho^3) .
  \label{std angle difference}
\end{equation}
Note that we also have \eqref{background angle difference near pole}. In fact,
\[ \partial_{\bar{\eta}} \cos \bar{\gamma} = \bar{A}_{\partial M} (\bar{\eta},
   \bar{\eta}) \sin^2 \bar{\gamma} \]
on the boundary level set $\{(\psi (\rho) \hat{x},  - \rho) : \hat{x} \in
\partial D\}$.

\section{Mean curvatures and dihedral angles of cones in $\mathbb R^3$}%
\label{appdsec:meanAngleCone}
Suppose the surface $\Sigma$ in $\mathbb{R}^3$ is given by
	\begin{align*}
		\vec{x}(\rho,\theta)={}& (a_1\rho \cos \theta, a_2\rho\sin \theta,-c\rho)\quad\text{ for } \rho \in (0,+\infty), \theta \in [0,2\pi]
	\end{align*}
	for some $a_1,a_2,c>0$. The tangent vectors are
        \begin{align}
		\vec{x}_{\rho}:={} & \frac{\partial \vec{x}}{\partial \rho}=(a_1 \cos \theta,a_2\sin \theta,-c),\\
		\vec{x}_{\theta}:={} & \frac{\partial \vec{x}}{\partial \theta}=(-a_1\rho\sin \theta,a_2 \rho \cos \theta,0).
          \end{align}
So the metric $\sigma$ is given by          
\begin{align}
\sigma&:={} \begin{bmatrix}
                 \vec{x}_{\rho} \cdot \vec{x}_{\rho}
                & \vec{x}_{\rho} \cdot \vec{x}_{\theta} \\
                \vec{x}_{\theta} \cdot \vec{x}_{\rho}&
                 \vec{x}_{\theta} \cdot \vec{x}_{\theta}
		\end{bmatrix} \\
		&=\begin{bmatrix}
			a_1^2\cos^2 \theta+a_2^2\sin^2 \theta+c^2 
			& (a_2^2-a_1^2)\rho \sin \theta \cos \theta\\
			(a_2^2-a_1^2)\rho \sin \theta \cos \theta& 
			\rho^2(a_1^2\sin ^2\rho+a_2^2\cos ^2\theta)
		\end{bmatrix}.
\end{align}
The unit normal $X$ pointing outside the cone is given by         
\[
  X=\frac{(a_2c \cos \theta,a_1c \sin \theta, a_1a_2)}{\sqrt{a_1^2a_2^2+c^2(a_2^2\cos \theta+a_1^2\sin ^2\theta)}}.
  \]
Let $A_\Sigma$ denote the second fundamental form, we see that the only nonzero component of $A_\Sigma$ is 
\[
  A_\Sigma (\vec{x}_\theta, \vec{x}_\theta) =-\vec{x}_{\theta\theta}\cdot X =\frac{- a_1 a_2 c \rho}{\sqrt{a_1^2 a_2^2 + c^2  (a_2^2 \cos \theta + a_1^2
\sin^2 \theta)}}.
  \]
Hence, the mean curvature of $\Sigma$ is given by
\begin{equation}
	H_\Sigma=\mathrm{tr}({\sigma}^{-1}{A_\Sigma})=
		\frac{a_1a_2c (a_1^2\cos^2 \theta+a_2^2\sin^2 \theta+c^2)}{\rho(a_1^2a_2^2+c^2(a_2^2\cos ^2\theta+a_1^2\sin ^2\theta))^{\frac{3}{2}}}.
	\label{append:mean}
\end{equation}
	The contact angle between $\Sigma$ and plane $\left\{ z=-c \right\}$ is $\arccos (X \cdot e_3)$ and we know that
	\begin{equation}
 X\cdot e_3= \frac{a_1a_2}{\sqrt{a_1^2a_2^2+c^2(a_2^2\cos ^2\theta+a_1^2\sin ^2\theta)}}.\label{append:angle}
	\end{equation}
\bibliographystyle{alpha} 
\bibliography{refs} 
\end{document}